\newcommand{\Rand}[1]{\marginpar{#1}}
\renewcommand{\Rand}[1]{}
\newcommand{\be}[1]{\Rand{\vspace{0,6cm}\tt #1}\begin{equation}\label{#1}}
\newcommand{\beL}[1]{\Rand{\vspace{0,6cm}\tt #1}\begin{lemma}\label{#1}}
\newcommand{\belC}[2]{\Rand{\vspace{0,6cm}\tt #1}\begin{lemma}[#2]\label{#1}}
\newcommand{\beP}[1]{\Rand{\vspace{0,6cm}\tt #1}\begin{proposition}\label{#1}}
\newcommand{\bePC}[2]{\Rand{\vspace{0,6cm}\tt #1}\begin{proposition}[#2]\label{#1}}
\newcommand{\beD}[1]{\Rand{\vspace{0,6cm}\tt #1}\begin{definition}\label{#1}}
\newcommand{\beT}[1]{\Rand{\vspace{0,6cm}\tt #1}\begin{theorem}\label{#1}}
\newcommand{\beC}[1]{\Rand{\vspace{0,6cm}\tt #1}\begin{corollary}\label{#1}}
\newcommand{\beA}[1]{\Rand{\vspace{0,6cm}\tt #1}\begin{assumption}\label{#1}}
\newcommand{\bea}[1]{\Rand{\vspace{0,7cm}\tt #1\vspace{-0,7cm}}\begin{eqnarray}\label{#1}}
\newtheorem{xx}{\bf xxx}
 \newcommand{\eps}{\varepsilon}
 \newcommand{\R}{\mathbb{R}}
 \newcommand{\N}{\mathbb{N}}
 \newcommand{\Z}{\mathbb{Z}}
 \newcommand{\bigtimes}{\prod}
 \newcommand{\E}{\mathbb{E}}
 \renewcommand{\P}{\mathbb{P}}
\newcommand{\1}{\mathbbm{1}}
\newcommand{\sm}{\smallskip}
\newcommand{\wt}{\widetilde}
\newcommand{\dr}{\underline{\underline{r}}}
\def\CA{\mathcal{A}}
\def\CB{\mathcal{B}}
\def\CM{\mathcal{M}}
\def\CL{\mathcal{L}}
\def\E{\mathbb{E}}
\def\N{\mathbb{N}}
\def\P{\mathbb{P}}
\def\R{\mathbb{R}}
\def\U{\mathbb{U}}
\def\Z{\mathbb{Z}}
\DeclareMathSymbol{\varNu}{\mathord}{letters}{78}
\newcommand{\wh}{\widehat}
\newcommand{\ee}{\end{equation}}
\newcommand{\eea}{\end{eqnarray}}
\newcommand{\bean}{\begin{eqnarray*}}
\newcommand{\eean}{\end{eqnarray*}}
\newcommand{\noi}{\noindent}
\newcommand{\intl}{\int\limits}
\newcommand{\liml}{\lim\limits}
\newcommand{\suml}{\sum\limits}
\newcommand{\ve}{\varepsilon}
\newtheorem{theorem}{Theorem}[section]
\newtheorem{proposition}[theorem]{Proposition}%[section]
\newtheorem{corollary}[theorem]{Corollary}
\newtheorem{lemma}[theorem]{Lemma}%[section]
\newtheorem{assumption}{Assumption}%[section]
\def\th@newremark{\th@remark\thm@headfont{\bfseries}}   % style) but the body is in normal font instead of italics (as
\theoremstyle{definition}
\theoremstyle{newremark}
\newenvironment{example}
  {\pushQED{\qed}\examplex}
  {\popQED\endexamplex}
\newenvironment{remark}
  {\pushQED{\qed}\remarkx}
  {\popQED\endremarkx}
\newenvironment{definition}
  {\pushQED{\qed}\definitionx}
  {\popQED\enddefinitionx}
\DeclarePairedDelimiter\abs{\lvert}{\rvert}%
\DeclarePairedDelimiter\norm{\lVert}{\rVert}%
\let\oldabs\abs
\def\abs{\@ifstar{\oldabs}{\oldabs*}}
\let\oldnorm\norm
\def\norm{\@ifstar{\oldnorm}{\oldnorm*}}
\newcommand{\vphi}{\varphi}
\renewcommand{\epsilon}{\varepsilon}
\newcommand{\dx}{{\textup{d}}}                                   % the d for differentials
\newcommand{\independent}{\perp\!\!\!\perp}                    % symbol for stochastic independence
\DeclareMathOperator{\supp}{supp}                              % support
\DeclareMathOperator{\diam}{diam}                              % diameter
\DeclareMathOperator{\Poiss}{Poiss}                            % Poisson distribution
\newcommand{\eqd}{\overset{\textup{d}}{=}}                     % *e*qual *i*n *d*istribution
\newcommand{\dunderline}[1]{\underline{\underline{#1}}}        % double underline
\newcommand{\ubar}[1]{\underaccent{\bar}{#1}}                  % makes a bar under the symbol, requires package accents
\newcommand{\mcA}{\mathcal{A}}
\newcommand{\mcB}{\mathcal{B}}
\newcommand{\mcF}{\mathcal{F}}
\newcommand{\mcL}{\mathcal{L}}
\newcommand{\mcM}{\mathcal{M}}
\newcommand{\mcN}{\mathcal{N}}
\newcommand{\mcS}{\mathcal{S}}
\newcommand{\mfe}{\mathfrak{e}}
\newcommand{\mfP}{\mathfrak{P}}
\newcommand{\mfS}{\mathfrak{S}}
\newcommand{\mfu}{\mathfrak{u}}
\newcommand{\mfU}{\mathfrak{U}}
\newcommand{\mfv}{\mathfrak{v}}
\newcommand{\mfV}{\mathfrak{V}}
\newcommand{\mfw}{\mathfrak{w}}
\newcommand{\mfx}{\mathfrak{x}}
\newcommand{\mfX}{\mathfrak{X}}
\newcommand{\mfy}{\mathfrak{y}}
\newcommand{\bbD}{\mathbb{D}}
\newcommand{\bbE}{\mathbb{E}}
\newcommand{\bbK}{\mathbb{K}}
\newcommand{\bbM}{\mathbb{M}}
\newcommand{\bbN}{\mathbb{N}}
\newcommand{\bbP}{\mathbb{P}}
\newcommand{\bbR}{\mathbb{R}}
\newcommand{\bbU}{\mathbb{U}}
\newcommand{\pfd}{\Theta}
\newcommand{\lan}{\langle}
\newcommand{\ran}{\rangle}
\newcommand{\hu}{h}
\newcommand{\hd}{\ubar{h}}
\newcommand{\Levy}{L{\'e}vy }
\DeclareMathOperator{\anz}{\#}
\newcommand{\nn}{\nonumber}
\newcommand{\cadlag}{c\`adl\`ag\xspace}
\newcommand{\dGPr}{d_{\textup{GPr}}}
\definecolor{darkgreen}{rgb}{0.0,0.6,0.1}
\begin{document}

\title
{Branching trees I: Concatenation and infinite divisibility}
\author{P. Gl\"ode$^{1,4,5}$, A. Greven$^{2,4}$, T. Rippl$^{3,4}$}
\date{{\today}\\
%{\tt {\small scripten-ag/ggr/infdiv/\jobname.tex}}
}
\maketitle

\begin{abstract}
The goal of this work is to decompose random populations with a genealogy in subfamilies of a given degree of kinship and to obtain a notion of infinitely divisible genealogies.
We model the genealogical structure of a population by (equivalence classes of) ultrametric measure spaces (um-spaces) as elements of the  Polish space $\U$ which we recall.
In order to then analyze the family structure in this coding we introduce an algebraic structure on um-spaces (a consistent collection of semigroups).
This allows us to obtain a path of decompositions of subfamilies of fixed kinship $ h $ (described as ultrametric measure spaces), for every depth $ h $ as a measurable functional of the genealogy.

Technically the elements of the semigroup are those um-spaces which have diameter less or equal to $2h$ called \emph{$h$-forests} ($h> 0$).
They arise from a given ultrametric measure space by applying maps called $ h-$truncation.
We can define a concatenation of two $ h$-forests as binary operation.
The corresponding semigroup is a Delphic semigroup and any $h$-forest has a unique prime factorization in $h$-trees (um-spaces of diameter  less than $2h$).
Therefore we have a nested $\R^+$-indexed consistent (they arise successively by truncation) collection of Delphic semigroups with unique prime factorization. 

Random elements in the semigroup are studied, in particular infinitely divisible random variables.
Here we define infinite divisibility of random genealogies as the property that the $h$-tops can be represented as
concatenation of independent identically distributed h-forests for every $h$ and obtain a L\'evy-Khintchine 
representation of this object and a corresponding representation via a concatenation of points of a Poisson point process of h-forests. 

Finally the case of discrete and marked um-spaces is treated  allowing to apply the results to both the individual based and most important  spatial populations.

The results have various applications. 
In particular the case of the genealogical  ($\U$-valued) Feller diffusion and genealogical ($\U^V$-valued) super random walk is treated based on the present work in \cite{ggr_tvF14} and \cite{ggr_GeneralBranching}.

In the part II of this paper we go in a different direction and refine the study in the case of continuum branching populations, give a refined analysis
of the Laplace functional and give a representation in terms of a Cox process on h-trees, rather than forests.
\end{abstract}

\noi
{\bf Keywords:}
Genealogy valued random variables, infinite divisibility, random trees, Cox cluster representation, L\'evy-Khintchine formulas,
branching processes, branching tree, (marked) ultrametric measure spaces,
 branching property of semigroups, random variables with values in semigroups.\\

\noi
{\bf AMS  Subject Classification:} Primary 60K35, 60E07\\

\footnoterule
\noi
\hspace*{0.3cm}\\
{\footnotesize $^{1)}$ The Faculty of Industrial Engineering and Management, Technion - Israel Institute of Technology, Technion City, Haifa 3200003, Israel, gkarl@technion.ac.il}\\
{\footnotesize{}}\\
{\footnotesize $^{2)}$ Department Mathematik, Universit\"at Erlangen-N\"urnberg, Cauerstr. 11,
D-91058 Erlangen, Germany, greven@math.fau.de}\\
{\footnotesize $^{3)}$ Institut für Mathematische Stochastik, Goldschmidtstrasse 7, D-37077 G\"ottingen, Germany, trippl@uni-goettingen.de}\\
{\footnotesize $^{4)}$ Supported by DFG-Grant GR 876/15-1,2 of AG}\\
{\footnotesize $^{5)}$ Supported with a postdoc fellowship from Technion, Haifa.}

 \newpage

\tableofcontents
\newpage
\vspace*{15mm}

%%%%%%%%%%%%%%%%%%%%%%%%%%%%%%%%%%%%%%%%%%%%%%%%%%%%%%%%%%%%

\newcounter{secnum}
\setcounter{secnum}{\value{section}}
\setcounter{section}{0}
\setcounter{secnumdepth}{3}
\section{Introduction}
\setcounter{equation}{0}
\renewcommand{\theequation}{\mbox{\arabic{secnum}.\arabic{equation}}}
\numberwithin{equation}{section}

We are interested in random genealogies for example those arising from an evolving branching population.
We use here a concept of genealogy which is suited to consider the \emph{evolution in time} of this structure described by martingale problems aiming eventually at spatial situations and which is based on \emph{ultrametric measure spaces} (more precisely their equivalence classes).
In particular do we follow a \emph{different point of view} then in the literature describing genealogies of populations by \emph{labeled} trees, see for example \cite{Neveu86},\cite{NP89},\cite{LG89},\cite{LJ91},\cite{Abr92} continuing up to the present, or genealogies are modeled as measure $\R$-trees see \cite{EPW06,Gro99}, we comment later on relations.
For more information on our approach to genealogies see the survey article \cite{DG18evolution}.

An important role in this research project will be played by branching processes, for example the genealogy of a continuous state Feller branching diffusion respectively spatial versions thereof as super random walk and here in this paper we lay the foundations to study such objects.
For more on these processes see \cite{ggr_tvF14,ggr_GeneralBranching}
The question is to consider for varying depths of kinship decompositions of the population in subfamilies specifying their genealogy as well as their size and to
 see whether we can give a cluster representation of the genealogy, i.e. can we view the  genealogy of the sub-populations as a \emph{Cox point process} on the space of genealogies \emph{modeled as ultra-metric measure spaces}?
 
The term family decomposition appears in the literature of branching processes frequently, see e.g.~\cite{DG96}, \cite{D93} based on the {\em historical process} \cite{DP91}, but not always corresponding exactly to genealogies, but here we will get in general an interpretation in terms of genealogies, using a specific concept of "genealogy", even for continuum state processes as the limit of the obvious meaning it has in discrete Galton-Watson processes.

In order to study this type of questions following \cite{EPW06,GPW09}, we code the genealogy as equivalence classes of \emph{ultrametric measure spaces} (and  in case of a population distributed in space, where individuals have a location a {\em marked} one \cite{DGP11}) turning the genealogy of the time-$t$ population into a random variable with values in a
Polish space. 
A survey on this approach is found in \cite{DG18evolution}.
Here the distance describes the degree of kinship and is twice the time back to the
most recent common ancestor giving in fact an ultrametric. Now we can fix a degree of kinship, say $h >0$ and decompose the space into 
{\em open 2h-balls} and this induces a number of ultrametric measure spaces, each describing the
genealogy of a subfamily and we can obtain one ultrametric measure space by connecting the 
subspace to a {\em forest}, by giving distance $2h$ to points in different balls. This $h$-{\em concatenation}
of the ultrametric measure spaces describes then the \emph{$2h$-family decomposition}.
In particular, we can use the algebraic structure of the {\em concatenation} and the framework of 
ultrametric measure spaces to characterize the decomposition.
This allows to carry out calculations to obtain  properties of the whole {\em path of 
$2h$-family decompositions} for $h \in (0,\diam (\textup{space})/2)$.
However since we take \emph{equivalence classes} of such objects some care is needed.

The $h$-concatenation is a binary operation and we show that it defines a {\em topological 
semigroup}. Further topological and algebraic properties are established and we notice a close 
relationship in structure of our results and arguments to those in a Cartesian semigroup $(\bbM,\boxplus)$ on metric 
measure spaces, introduced by Evans and Molchanov \cite{EM14}, even though the binary operations used in their and in our paper are \emph{completely different} (but there the algebraic properties are similar).

One of the most important questions for semigroups is whether it is atomic, i.e. every element can be
written as product of irreducible elements, and furthermore if it is a {\em unique factorization domain},
i.e. the representation is unique up to order.
We show the answer to both questions is yes, getting for fixed depths a well-defined \emph{family decomposition} of an ultrametric measure space and finally also a well-defined {\em path} of family decompositions of varying depth. 
Starting with a fixed ultrametric measure space this defines a {\em \cadlag{} path of family decompositions}.
This structure is {\em much richer and more informative} than having just the semigroup structure of $ \U $.
If we decrease the parameter starting from the diameter, then we obtain a succession of refinements of the family  decomposition as $h$ decreases to zero and as limit the original ultrametric measure space.
There are more applications of this structure we exploit in \cite{ggr_GeneralBranching}.

We establish that the association of the path of decompositions  is measurable and hence is indeed suitable to give rise to a legitimate random variable.
The next task is to turn to {\em random} genealogies.
The goal of the research program is to understand better the probabilistic structure of this random path for genealogies of evolving populations.

\sm

We begin by giving a concept of {\em infinite divisibility} on the level of {\em random genealogies} in our coding.
This lifts the concept one has for the population size process (an $\R_+$-valued process) or
if we are interested for example in the Dawson-Watanabe process that of measure-valued processes.
Since this should be related to properties of the family decomposition we want to use the 
{\em algebraic structure} from above (h-forests with the concatenation operation).
There is a quite general concept of infinite divisibility for random variables with {\em values in semigroups} which we follow here (\cite{BCR84}) but now {\em lift it} to a whole \emph{consistent  collection of semigroups}. We shall discuss in Section (\ref{ss.compDMZ08}) the subtle relation to the classical theory of semigroups and negative definite functions i.e. harmonic analysis. 

\sm

We are then establishing a \emph{L\'evy-Khintchine formula} for the Laplace functional implying a representation of the forests
of concatenated 2h-forests corresponding to the random state of the genealogy as a concatenation
of independent {\em subfamily forests} which are generated by a Poisson point process of $2h$-forests
for every $h$ in $(0,t]$ for random ultrametric measure space of diameter $2t$ and which are {\em truncation consistent}.
As example we conclude showing the state of genealogies of branching processes are infinitely divisible if the initial state has this property.

The results hold also for the genealogies of \emph{spatial} models and similarly models where individuals carry a type.
Examples are super random walks or Dawson-Watanabe processes as well as multitype branching processes, provided they exist as um-measure space valued processes (an issue addressed in forth-coming work \cite{ggr_tvF14}).
This allows to establish here a \emph{\Levy-Khintchine formula for genealogies} for this very important class of spatial population models. 
\sm

{\bf Perspectives} \quad
The key results in this paper can be used to get information about concrete stochastic systems.
In \cite{ggr_tvF14} we apply our results to discuss at length the example of the \emph{$\U$-valued Feller diffusion} and its spatial version the \emph{genealogical} ($\U$-valued) \emph{super random walk}, the genealogy of this classical model represented by ultra-metric measure spaces.
We also use results in \cite{ggr_GeneralBranching} to study criteria for generators allowing to obtain a "branching property" from the form of the generator working very well in our context of genealogical and/or historical information.

Finally in part II of this paper \cite{ggr_MBT} we apply the developed techniques to the study of continuum mass branching
populations and give a \emph{more refined} analysis of the present  \Levy-Khintchine representation for that special case. The key point there will be to move
to a representation of the $2h$-tops in terms of the {\em Cox process on $2h$-trees} rather than
forests as in this part I, i.e. splitting into the descending ancestors at depth $h$, that is a {\em representation by the prime}
 elements of $(\U(h))^\sqcup$, i.e. elements of $\U(h)$,  rather than Poisson
point processes on forests. This is related to the {\em Cox cluster representation} of
historical processes associated with spatial branching processes, see \cite{DP91} which is developed in \cite{ggr_GeneralBranching}. 
This will also allow there better to understand the structure of the random genealogy by generating it dynamically.

Further tasks are to apply the present theory also to genealogies of $\alpha$-stable processes and to modify concepts and theory to deal with genealogies appear in the form of algebraic trees as developed by  Löhr and Winter \cite{LW,LMW}.

\sm

{\bf Outline } 
In Section~(\ref{s.basic}) we collect all of the important concepts and results.
 Section~(\ref{s.tt}) contains all of the proofs concerning the structure of the state spaces and semigroups.
The Section~\ref{s.proofsprobm} proves statements on random variables with values in $\U$.
 Proofs for infinite divisibility results can be found in Section~(\ref{s.lkfproof}).
 Finally an appendix contains basic facts on ultrametric measure spaces and on boundedly finite measures.

\section{Basic concepts and Results}
\label{s.basic}

In this section we introduce first in Subsection~(\ref{ss.umms}) the {\em state space} of our genealogy-valued random variables.
Subsection~(\ref{ss.ttfd}) introduces decompositions in disjoint open balls with weights
providing the precise meaning of the concept of {\em family decomposition}.
Subsection~(\ref{ss.anatools}) gives some key analytical instruments, namely {\em(truncated) polynomials} and properties
of quantities related to family decompositions.  
In Subsection~(\ref{ss.anatoolsrandom}) we pass to \emph{random} ultrametric measure spaces and 
introduce {\em Laplace functionals}.
In Subsection~(\ref{ss.infdiv}) we relate {\em infinite divisible random} ``trees'' with {\em Poisson point processes} of ``forests'' giving a version of the L\'evy-Khintchine formula and then we discuss particular cases with additional properties, in Subsection (\ref{ss.branchproc}) an application to {\em branching} with an example of a concrete branching process in Subsection~\ref{ss.exam} and in Subsection (\ref{ss.marked}) to individual based (discrete) populations but more important to our main goal to understand \emph{spatial} populations by  generalizations to genealogies of {\em spatial} populations modeled by {\em marked} metric measure spaces.
Section~\ref{ss.compDMZ08} discusses the relation to harmonic analysis and~\ref{ss.outps} gives an outline for the proof section.

\subsection{Ultrametric measure spaces}
\label{ss.umms}

A building block of our description of genealogies are ultrametric measure spaces. An ultrametric measure
space is a triple $(U,r,\mu)$, with $U$ a set, $r$ an ultrametric on $U$ such that $(U,r)$ is a Polish space
and with $\mu$ a {\em finite} Borel measure on $(U,\CB(U))$ with $\CB$ denoting the Borel $\sigma$-algebra.
Here $U$ describes the individuals of a population, $r$ the genealogical distance between individuals 
and $\mu$ is the multiple (the population
size) of the sampling measure, a probability measure on $(U,\CB(U))$. 

We say that two such triples $(U,r,\mu)$ and $(U',r',\mu')$ are equivalent if there is a {\em measure preserving isometry} between the \emph{two supports} of $\mu$ resp. $\mu^\prime$. 
The {\em equivalence class} of a triple $(U,r,\mu)$ and the total mass is denoted by
\begin{equation}\label{rg0}
\mfu=[U,r,\mu] \mbox{  and  } \bar \mfu=\mu(U).
\end{equation}

Extending the space of {\em ultrametric probability measures spaces} from \cite{GPW09} to \emph{finite} measures see in particular Section 2.4. in \cite{Gl12} or \cite{ALW14a}, the basic state space of our random variables is the following.

\begin{definition}[Ultrametric measure spaces]
\label{D.umsp}

Define the space
\be{rg1}
  \bbU:=\text{set of \emph{isomorphy classes} of ultrametric measure spaces with finite measure}
\end{equation}
endowed with the {\em Gromov-weak topology}.
This topology on $\U$ is metrized by the {\em Gromov-Prokhorov metric $d_{\text{GPr}}$} such that $\U$ is a {\em Polish space}, see \cite{GPW09}, \cite{Gl12}, \cite{DG18evolution}.

Recall that in this topology sequences converge iff all distance matrix measures (see \eqref{rg6}) converge weakly to such a measure of a limit element in $\U$. 
The null measure on a measure space and the null tree $[\{1\},r,0]$ are here both denoted simply by $0$.% The total mass of $\mfu \in \U$ is denoted by $\bar{\mfu}$.
\end{definition}
We refer the reader to the Appendix~(\ref{s.umspaces}) for detailed definitions and and to \cite{D93} for facts on spaces of measures and corresponding weak topologies.

\begin{remark}[Ultrametric spaces and trees]\label{R.umsptr}
Recall that any ultrametric space $(U,r)$ (with finite diameter) can be imbedded isometrically 
into an (rooted) $\bbR$-tree 
such that the {\em leaves} of the $\bbR$-tree correspond to the elements of $U$.
In particular we are then able to talk about a {\em most recent common ancestor} of two points of $U$,
which is the unique point in the $\R$-tree such that the distance to each of two elements of $U$ is
half their distance in $(U,r)$.
\end{remark}

\begin{remark}[Equivalent ultrametrics]\label{R.um.transform}
Under certain conditions transformations of the metric result in equivalent 
topologies that is topologies with the \emph{same converging sequences}. Let 
$\tau:[0,\infty] \to [0,\infty]$ with $\tau(0)=0$ and $\tau((0,\infty]) \subset (0,\infty]$ continuous at zero and (not necessarily strictly) increasing. Let $\mfu = [U,r,\mu] \in \U$ and define a new ultrametric measure space:
\begin{equation}\label{grx52}
 \tau^*(\mfu) = [U, 2 \tau \circ \frac{r}2, \mu] \, .
\end{equation}
By the above remark this can be viewed as transforming the time back to the most recent
ancestor.
It has the {\em same topology} as $\mfu$ but a {\em different geometry}. A particular example is $\tau_a(r)=ar$ written 
\begin{equation}\label{e.ar2}
 a \circledast \mfu := \tau^*_a(\mfu) \mbox{ for } a\geq 0 \mbox{ and } \mfu \in \U. 
\end{equation}
\end{remark}

\subsection{Family decomposition: the semigroup of  \texorpdfstring{$h$}{h}-forests}
\label{ss.ttfd}
 
We want to decompose an ultrametric measure space into disjoint open balls of a fixed radius, say $h>0$, 
corresponding to subfamilies which are descendants of a single MRCA time $h$ back, recall Remark (\ref{R.umsptr}). To do this we need some notation.

For a measurable $A \subset [0,\infty)$ we set 
\begin{equation}\label{e756}
\U(A) = \{\mfu=[U,r,\mu] \in \U:\, \mu^{\otimes 2}(\{(u,v)\in U^2:r(u,v)\not\in A\})=0 \} 
\end{equation}
as the set of ultrametric measure spaces which only realize distances in $A$. 
We give more definitions of sets and a binary operation:
\begin{definition}[Forests, trees and concatenation]\label{D.concat}\mbox{}\\
 Let $h\geq 0$.
 \begin{enumerate}
  \item Define the subset of \emph{$h$-forests}
  \begin{equation}\label{e.tr46}
  \U(h)^\sqcup := \U([0,2h]) = \{[U,r,\mu]\in\bbU: 
\mu^{\otimes2}(\{(u,v)\in U^2:r(u,v)\in(2h,\infty)\})=0\} \, .
  \end{equation}
  \item For $\mfu, \mfv \in \U(h)^\sqcup$ with $\mfu = [U,r_U,\mu], \mfv = [V,r_V,\nu]$ define the 
\emph{$h$-concatenation}:
  \begin{equation}\label{e.tr47}
  \mfu \sqcup^{h} \mfv := [U \uplus V, r_U \sqcup^{h} r_V, \mu + \nu ] \, ,
  \end{equation}
  where $\uplus$ is the disjoint union of sets and $r_U \sqcup^{h} r_V |_{U \times U} = r_U$, $r_U \sqcup^{h} r_V |_{V \times V} = r_V$ and for $x \in U$, $y \in V$:
  \begin{equation}\label{grx53}
   \left( r_U \sqcup^{h} r_V \right) (x,y) =    2h
   \, .
  \end{equation}
  We write $\sqcup$ if $h>0$ has been fixed.
  \item Define the subset of \emph{$h$-trees}
  \begin{equation}\label{e.tr48}
  \U(h) := \U([0,2h)) =
  \{[U,r,\mu]\in\bbU: \mu^{\otimes2}(\{(u,v)\in U^2:r(u,v)\in[2h,\infty)=0\}\,.
  \end{equation}
 \end{enumerate}
 Forests and trees are endowed with the {\em relative topology}  from $\U$.
\end{definition}

\begin{remark}\label{r.765}
For an ultrametric measure space with diameter $2h$ we have always a decomposition into countably many open $h$-balls.
Each of these balls generates an ultrametric measure space by restriction. This decomposition is unique.
Then we expect that the equivalence classes of ultrametric measure spaces generated by this decomposition induces a unique decomposition of the corresponding equivalence class of the full space.
Here we based this however on a \emph{selection of representatives} for all these objects which have a unique decomposition.
We want to lift this to the equivalence classes.
To see that this is feasible the simpled way is to proceed with purely algebraic and topological arguments.
\end{remark}

\begin{remark}\label{r790} 
Recalling Definition \eqref{D.umsp} we observe:
 \begin{enumerate}
  \item Note $\U(h)^\sqcup$ and $\U(h)$  are separable metric spaces in their restricted topologies and the former is the completion of the latter hence Polish, see Proposition (\ref{l.frsts.clsd}).
  \item Observe that $\U \neq \bigcup_{h>0} \U(h)^\sqcup$, since trees of infinite diameter are not included on the r.h.s.
  \item For $h,h' >0$ it is easy to see that the semigroups $(\U(h)^\sqcup, \sqcup^h)$ and 
$(\U(h')^\sqcup, \sqcup^{h'})$ are isomorphic as topological groups via the mapping 
$\tau^\ast_{h'/h}:\bbU(h)^\sqcup\to\bbU(h')^\sqcup$.
\end{enumerate}
\end{remark}

\begin{remark}\label{r800}
We note that the distinction between forests and trees is not preserved under monotone transformations as in Remark~(\ref{R.um.transform}) of
the ultrametric, but as a geometric property only under {\em strictly} monotone ones.
\end{remark}

With each forest we associate a path of decompositions in subfamilies.
We define for that purpose the {\em $h$-truncation}. 

 \begin{definition}[$h$-truncation]\label{D.h-trunc}
  Let $h\geq 0$.
  The mapping 
  \begin{equation}\label{e.tr334} \tau(h): \begin{cases} \U &\to \U(h)^\sqcup,\\
                               \mfu= [U,r,\mu] &\mapsto \lfloor\mfu\rfloor(h) = [U,r \wedge 2h, \mu]
                             \end{cases}
  \end{equation}
  is called $h$-truncation.
 \end{definition}
\begin{remark}\label{r.778}
Note that for $ h \in [0,t], \; \tau(h) \left( (\U(t)^\sqcup) \right) \subseteq (\U(h))^\sqcup $ and this is a homomorphism of these topological semigroups.
\end{remark}
We can now associate with every element of $ \U $ or $ \U(t)^\sqcup $ an object which is a key object in applications but also contains the key mathematical structure of $ \U $ to study infinite divisibility of $ \U-$valued random variables and which allows to make precise the concept of family decompositions.
\begin{definition}[Associated family decomposition: abstract version]\label{D.assfam}
For $ \mfu \in \U $ there is an associated path of family decompositions if $ t \in [0,\infty] $ is the diameter of $ \mfu $:
\begin{equation}\label{e780}
\left(\tau (h)(\mfu)\right)_{h \in (0,t)}.
\end{equation}
\end{definition}
The first result tells us that any forest can be seen as a unique (at most countable) {\em concatenation of trees} i.e. elements of $\U$. 

The result clarifies also the algebraic structure of the binary operation $\sqcup$. Recall that an element $ x $ not the identity $ e $ in a semigroup is called \emph{irreducible} if {\em x=yz} implies {\em that x=y or y=e} and {\em prime} if it divides a concatenation only if it divides one of the factors, in general a stronger property.
We will need the concept of Delphic semigroups which we recall.
\begin{remark}\label{R.delphic}
 \emph{Delphic semigroups} were introduced by \cite{Kendall68} and further studied in \cite{Davidson68}. They are commutative, topological semigroups where the set of divisors of any element is compact and there is a continuous homomorphism into $([0,\infty),+)$ with trivial kernel which is here given by $ \mfu \to \bar \mfu $. 
\end{remark}
\begin{remark}\label{r.796}
Note that we can define for \emph{countable} index sets concatenations by taking limits of the finite concatenations of finitely many elements from the index set and requiring that the limit exists for all choices of the finite subsets.
\end{remark}
We prove in Section \eqref{ss.concsemi} the following:

\begin{theorem}[Semigroup structure, truncation consistency]\label{p.delphic}
~
\begin{enumerate}
\item The algebraic structure
$(\U(h)^\sqcup, \sqcup^{h})$ is a Delphic semigroup, see Remark (\ref{R.delphic}).
The set of irreducible elements is $\U(h)$ and 
any $\mfu \in \U(h)^\sqcup$ has a unique (up to order) decomposition:
 \begin{equation}\label{e.tr49}
 \mfu = \sideset{}{^{h}}\bigsqcup_{i\in I} \mfu_i \, ,
 \end{equation}
 where $I$ is a countable index set and $\mfu_i \in \U(h) \setminus \{0\}$.
 We can associate via $ \tau(h^\prime)$ for each $h^\prime \in [0,h]$ with $\mfu$ a path of $ h^\prime-$ decompositions. 
 
\item The $h$-decompositions are consistent w.r.t. truncation, i.e. $ \mfu \in \U (h)^\sqcup$ and for $h' \in [0,h)$
 \begin{align}
  \label{e.ag112} 
\tau(h')\;(\mfu^{(h)}_1 \sqcup...\sqcup \mfu^{(h)}_\ell) =\mfu^{(h^\prime)}_1 \sqcup...\sqcup \mfu^{(h^\prime)}_m,\\
\nonumber \mbox {where} \quad \lbrace\mfu^{(h')}_1,...\mfu^{(h')}_m \rbrace, \lbrace\mfu^{(h)}_1,...\mfu^{(h)}_\ell \rbrace 
\end{align}
 are the $ h^\prime $ respectively $ h $-decomposition of $\mfu$ and similarly for countable decompositions.
\end{enumerate}
 \end{theorem}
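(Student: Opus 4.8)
Everything will be reduced to one canonical object attached to a representative $(U,r,\mu)$ of $\mfu\in\U(h)^\sqcup$ (chosen with $U=\supp\mu$, $h>0$; the case $h=0$ is the degenerate one where $(\U(0)^\sqcup,\sqcup^0)\cong([0,\infty),+)$): the partition of $U$ into its \emph{open $2h$-balls}. Ultrametricity together with $\diam(U)\le 2h$ forces these balls to partition $U$, each ball to have diameter $<2h$ (so its restriction lies in $\U(h)$), and any two points in distinct balls to have distance \emph{exactly} $2h$; finiteness of $\mu$ leaves only countably many balls of positive mass, with masses summing to $\bar\mfu$. Write $P(\mfu)$ for the multiset of isomorphy classes of these positive-mass balls with restricted data. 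Because a measure-preserving isometry of supports carries $2h$-balls to $2h$-balls preserving mass, $P(\mfu)$ depends only on $\mfu$; and because cross-ball distances in a concatenation are precisely $2h$, the balls of $\mfu\sqcup^h\mfv$ are exactly those of $\mfu$ together with those of $\mfv$, i.e. $P(\mfu\sqcup^h\mfv)=P(\mfu)\biguplus P(\mfv)$. This additive, representative-independent invariant is the backbone of the whole proof.

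\textbf{Part (a).} First I would record the topological-semigroup axioms: $\sqcup^h$ is well defined on isomorphy classes (take the disjoint union of the measure-preserving isometries between supports), commutative, associative, lands in $\U(h)^\sqcup$, and has identity $0$; continuity of $\sqcup^h$ follows from the distance-matrix-distribution characterization of the Gromov-weak topology recalled around \eqref{rg6}, since the distance matrix distribution of $\mfu\sqcup^h\mfv$ is an explicit continuous function of those of $\mfu,\mfv$ and of $\bar\mfu,\bar\mfv$ (a sampled point falls into the $U$-part with probability $\bar\mfu/(\bar\mfu+\bar\mfv)$, cross distances are $2h$). Together with $\overline{\mfu\sqcup^h\mfv}=\bar\mfu+\bar\mfv$ this gives the continuous homomorphism $\mfu\mapsto\bar\mfu$ onto $([0,\infty),+)$ with kernel $\{0\}$; Polishness of $\U(h)^\sqcup$ is Proposition~\ref{l.frsts.clsd} / Remark~\ref{r790}. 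Next, existence of \eqref{e.tr49}: $\mfu=\bigsqcup^{h}_{i}\mfu_i$ with $\{\mfu_i\}=P(\mfu)$, interpreted as in Remark~\ref{r.796} because the finite partial concatenations over $F$ are just the restriction of $\mu$ to $\bigcup_{i\in F}B_i$ and converge to $\mfu$ as the escaping mass $\bar\mfu-\mu(\bigcup_{i\in F}B_i)\to0$. Irreducibility and primality: $\mfu\in\U(h)\setminus\{0\}$ iff $\supp\mu$ is a single positive-mass $2h$-ball iff $P(\mfu)$ is a singleton, and any such $\mfp$ is prime since $\{\mfp\}\subseteq P(\mfa)\biguplus P(\mfb)$ forces $\mfp\in P(\mfa)$ or $\mfp\in P(\mfb)$, hence $\mfp\mid\mfa$ or $\mfp\mid\mfb$; anything with at least two positive-mass balls splits, so the irreducibles are exactly $\U(h)\setminus\{0\}$. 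Uniqueness of \eqref{e.tr49}: for any prime decomposition $\mfu=\bigsqcup^{h}_{i}\mfu_i$, additivity of $P$ gives $P(\mfu)=\biguplus_i P(\mfu_i)=\{\mfu_i:i\}$, so the multiset of factors is forced to be $P(\mfu)$. Finally the Delphic condition of Remark~\ref{R.delphic}: additivity of $P$ together with $\mfu=\bigsqcup^h P(\mfu)$ yields $\mfv\mid\mfu\iff P(\mfv)\subseteq P(\mfu)$, so the divisor set of $\mfu$ equals $\{\bigsqcup^{h}_{i\in S}\mfu_i:S\subseteq I\}$ with $I$ enumerating $P(\mfu)$. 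The map $\{0,1\}^I\to\U(h)^\sqcup$, $S\mapsto\bigsqcup^{h}_{i\in S}\mfu_i$, is continuous: if $S_n\to S$ pointwise then for every finite $F\subseteq I$ one has $S_n\cap F=S\cap F$ eventually, and the elementary $\dGPr$-estimate ``adjoining mass $\le\varepsilon$ perturbs a space by $O(\varepsilon)$'' gives $\dGPr\!\big(\bigsqcup^{h}_{S_n}\mfu_i,\bigsqcup^{h}_{S}\mfu_i\big)\le 2\sum_{i\notin F}\bar\mfu_i$ eventually, with the right side tending to $0$ as $F\uparrow I$. Since $\{0,1\}^I$ is compact, the divisor set is compact, and $(\U(h)^\sqcup,\sqcup^h)$ is Delphic. (Kendall's theory \cite{Kendall68} alone would only deliver existence of factorizations; uniqueness came from $P$.)

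\textbf{Part (b).} By Remark~\ref{r.778}, $\tau(h')\colon\U(h)^\sqcup\to\U(h')^\sqcup$ is a continuous semigroup homomorphism for $h'\in[0,h)$ (truncation at $2h'\le 2h$ replaces within-component distances by $r\wedge 2h'$ and the cross distances $2h$ by $2h\wedge 2h'=2h'$, which is exactly $\sqcup^{h'}$ of the truncated pieces), so it also commutes with the countable concatenations of Remark~\ref{r.796}. Applying it to $\mfu=\bigsqcup^{h}_i\mfu_i^{(h)}$ gives $\tau(h')(\mfu)=\bigsqcup^{h'}_i\tau(h')(\mfu_i^{(h)})$. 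Now the open $2h'$-balls of $\tau(h')(\mfu)$ coincide with those of $\mfu$ (since $(r\wedge 2h')(x,y)<2h'\iff r(x,y)<2h'$), and each such ball lies inside a unique $2h$-ball because $2h'<2h$; hence the $2h'$-partition refines the $2h$-partition $\{B_i\}$. Decomposing each $\tau(h')(\mfu_i^{(h)})=[B_i,r\wedge 2h',\mu|_{B_i}]$ by Part~(a) into its own $h'$-primes (its $2h'$-subballs, now at mutual distance $2h'$; there may be several, since an $h$-tree need not stay an $h'$-tree — cf. Remark~\ref{r800} — and none of them is $0$ because $B_i$ has positive mass) and concatenating over $i$ produces all $2h'$-balls of $\tau(h')(\mfu)$, which by the uniqueness in Part~(a) is precisely its $h'$-decomposition. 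This is \eqref{e.ag112}; the countable case is identical.

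\textbf{Main obstacle.} The conceptual content is light once $P$ is in place; the real work is the bookkeeping that everything is genuinely representative-independent — that the $2h$-ball decomposition, the multiset $P(\mfu)$, and the countable concatenations all pass to isomorphy classes, and that ``equal prime content implies equal element'' — together with the two quantitative $\dGPr$-estimates (continuity of $\sqcup^h$ and of $S\mapsto\bigsqcup^h_{i\in S}\mfu_i$, i.e. control of escaping mass), for which we lean on the Gromov--Prokhorov framework recalled in the appendix and in \cite{GPW09,Gl12,ALW14a}.
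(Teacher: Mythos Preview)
Your argument is correct, but it takes a genuinely different route from the paper's. The paper explicitly avoids the representative-based ball decomposition (see Remark~\ref{r.765}) and instead works purely with algebraic and topological invariants of equivalence classes. Concretely: for existence of~\eqref{e.tr49} the paper verifies Kendall's Delphic axioms (A)--(C) abstractly, obtaining compactness of divisors via the modulus of mass distribution $v_\delta$ and the pre-compactness criterion (Lemma~\ref{l.2.7EM}\eqref{i.2.7EM.d}), and then invokes Kendall's Theorem~III; for uniqueness it builds auxiliary monomials $\Phi^{nm,\phi^{(n)}}$ whose values on $\mfu$ compute the power sums $\sum_i \Phi(\mfu_i)^n$ (Lemma~\ref{l.tr20}), uses the elementary fact that power sums determine the ordered sequence (Lemma~\ref{l.tr21}), and then a pigeonhole-type argument on separating families (Lemmas~\ref{l.tr22}--\ref{l.tr23}) to pin down the factors.

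Your approach is more elementary and more transparent: once you verify that the multiset $P(\mfu)$ of positive-mass $2h$-balls is a well-defined additive invariant on equivalence classes, primality, uniqueness, and the identification of divisors with sub-multisets all become one-line consequences, and your parametrization of the divisor set by the compact $\{0,1\}^I$ is a nice shortcut around the $v_\delta$-machinery. What the paper's route buys is that it never leaves the level of equivalence classes and polynomial test functions --- the same machinery (truncated polynomials as semigroup homomorphisms) is reused throughout for Laplace transforms and the L\'evy--Khintchine formula --- whereas your approach front-loads the bookkeeping of passing to equivalence classes but then needs none of that later apparatus for this particular theorem.
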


In other words there exists for every $h$ for a forest of diameter $t$ a unique family decomposition
with kinship degree $h$. As a consequence we can associate with every forest with diameter $t$
a path of family decompositions, where the "time-index" is degree of kinship $h \in [0,t]$.
The different decompositions are \emph{consistent} 
in the sense that they are successive truncations.

\begin{remark}\label{r.hypoth}
The semigroup $(\bbM_1, \boxplus)$ from \cite{EM14} has the property that an element can be decomposed into countably many  prime elements. 
Since both semigroups allow a unique factorization in prime elements, they can be understood as free semigroups with a certain set of generators, the prime elements.
We do not know of a ``natural'' isomorphism between the two semigroups which is continuous.
However in the sequel we only use that many arguments carry over since they only use the algebraic structure.
\end{remark}

\begin{figure}
  \includegraphics[width=14cm]{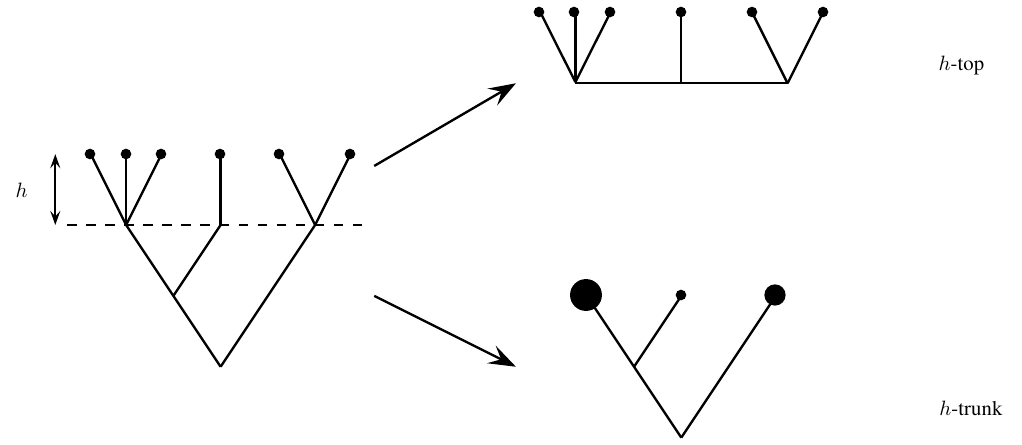}\\
  \caption{Example of $h$-top and $h$-trunk.}\label{f2607131839}
\end{figure}

The previous result allows us next to formalize the idea of a family decomposition for an 
ultrametric measure space and is the key object for the analysis of the genealogies of 
branching populations.

For that purpose we want to extract the $h$-top consisting of the decomposition in the open $2h$-balls of an element $\mfu \in \U$ and in addition an object containing the ancestral relations of the different $2h$-balls by a second element of $\U(h)^\sqcup$ which is complementary to the $h$-top namely the so called $h$-trunk.

\begin{definition}[Tops and trunks]\label{d.tops.trunks}
Let $h>0$ and $\mfu = [U,r,\mu] \in \U$:
 \begin{enumerate}
  \item Define the \emph{$h$-top} $\lfloor\mfu\rfloor(h) := [U,r \wedge 2h, \mu] \in \U(h)^\sqcup$.
  \item Suppose $\lfloor\mfu\rfloor(h) = \bigsqcup^{h}_{i\in I} \mfu_i$ as in \eqref{e.tr49} with at most countable index set $I$ and $\mfu_i \in \U(h)\setminus\{0\}$
  and write $\mfu_i = [U_i,r_i,\mu_i]$ for $i \in I$. The \emph{$h$-trunk} of $\mfu$ %ultrametric measure space $[U,r,\mu]$ into disjoint elements 
is defined as the ultrametric space
\begin{equation}\label{e.ar34a} 
\lceil \mfu\rceil(h) = [I, r^\ast, \mu^\ast],
\end{equation}
with (recall $r$ is ultrametric)
\be{ar1a}
r^\ast (i, i^\prime) = \inf \{ r(u,v) -2h | u \in U_i,  v \in U_{i^\prime} \} \mbox{  for  } i,i^\prime \in I
\ee and the weights
\be{ar2a}
\mu^\ast (\{i\}) = \mu_i(U_i).
\ee \end{enumerate} \end{definition}
Having Theorem~(\ref{p.delphic}) we can regard the $h$-top of $\mfu$ as a collection of elements in $\U(h)$ and we will make use of this identification frequently.
\sm

\begin{remark}[Family decompositions]\label{r.famdecom}
Recall the connection between ultrametric measure spaces and $\R$-trees explained in 
Remark (\ref{R.umsptr}). In particular, we can view $\mfu$ as the leaves of a family tree for a population. Then, 
Theorem (\ref{p.delphic}) applied to $\lfloor\mfu\rfloor(h)$ can be stated in the way that a population represented 
by $\mfu$ has a unique family decomposition of depth $h$, that is, a \emph{decomposition into subfamilies} 
$\mfu_i$, $i\in I$, where within each subfamily all individuals share a common ancestor whose death 
dates back at most time $h$. We will speak of the $h$-family decomposition into sub-families of 
individuals whose degree of kinship is at most $h$. In a similar way we can think of $I$ as the set 
of ancestors who lived at time $h$ back in time. The metric of the trunk encodes the genealogy of 
the $h$-ancestors. If the diameter of $\mfu$ is $2t$, then we can view $\mfu$ as the population alive 
at time $t$ and Theorem (\ref{p.delphic}) applied to each $h\in(0,t]$ induces a (unique) collection of 
family decompositions 
\begin{equation}\label{e757}
{\mfu^h_i: i\in I^h}, h\in(0,t].
\end{equation}
The $h$-trunk has the property that
\begin{equation}\label{e758}
\lceil\mfu\rceil(h) \to \mfu,\mbox{as} \; h\searrow 0,
\end{equation}
see Proposition (\ref{P.approxtrunk}). Together with Theorem (\ref{p.delphic}) this allows to say that we 
can 
approximate any $\mfu \in \U$ in a natural way by um-spaces consisting of at most countably many points,
namely the $h$-trunks for $h \downarrow 0$.
\end{remark}
The following result tells us that the operation $ \tau(h) $ called $h$-truncation is continuous.

\begin{proposition}\label{p.tr.cont}
Let $t>0$. The mappings $\U \mapsto \U(h)^\sqcup,\, \mfu \mapsto \lfloor\mfu\rfloor(h) \mbox{and} \;(0,t]\mapsto \U(t)^\sqcup, h \mapsto \lfloor\mfu\rfloor(h)$ are continuous.
\end{proposition}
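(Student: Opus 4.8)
The plan is to transfer the statement to the level of distance matrix measures, where $h$-truncation becomes an explicit pushforward along a continuously varying family of maps, and then to invoke the mapping theorem for weak convergence together with dominated convergence. Recall from Definition~\ref{D.umsp} (and~\eqref{rg6}) that $\mfu_k\to\mfu$ in the Gromov-weak topology on $\U$ iff the total masses converge, $\bar\mfu_k\to\bar\mfu$, and the distance matrix measures converge weakly, $\nu^{\mfu_k}\Rightarrow\nu^{\mfu}$, on the Polish space $D:=[0,\infty)^{\binom{\N}{2}}$ with its product topology; here $\nu^{\mfu}$ denotes the law of the array $(r(u_i,u_j))_{i<j}$ under i.i.d.\ (normalised) $\mu$-samples of $\mfu=[U,r,\mu]$. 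Since $\U$, the subspace $\U(h)^\sqcup\subseteq\U$ and the interval $(0,t]$ are metrizable, it suffices to check sequential continuity, and a map with values in $\U(h)^\sqcup$ is continuous iff it is continuous as a map into $\U$; note also $\tau(h)(\U)\subseteq\U(h)^\sqcup\subseteq\U(t)^\sqcup$ for $0<h\le t$.

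The first thing I would establish is the identity underlying everything: for $h\ge 0$ let $T_h:D\to D$ be coordinatewise truncation at $2h$, i.e.\ $T_h\big((r_{ij})_{i<j}\big)=(r_{ij}\wedge 2h)_{i<j}$. Then
\[
\nu^{\tau(h)(\mfu)}=(T_h)_\ast\,\nu^{\mfu},\qquad \overline{\tau(h)(\mfu)}=\bar\mfu,
\]
since sampling i.i.d.\ points from $\mfu(h)=[U,r\wedge 2h,\mu]$ produces exactly the $T_h$-image of the array sampled from $\mfu$, while the measure (hence its mass) is untouched. Equivalently this says that a polynomial evaluated at $\mfu(h)$ equals the correspondingly truncated polynomial of Subsection~\ref{ss.anatools} evaluated at $\mfu$. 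The map $T_h$ is $1$-Lipschitz in each coordinate, hence continuous on $D$.

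Granting this, the first map is handled by the mapping theorem: if $\mfu_k\to\mfu$ then $\nu^{\mfu_k}\Rightarrow\nu^{\mfu}$ pushes forward under the continuous map $T_h$ to $\nu^{\mfu_k(h)}\Rightarrow\nu^{\mfu(h)}$, while $\overline{\mfu_k(h)}=\bar\mfu_k\to\bar\mfu=\overline{\mfu(h)}$; hence $\mfu_k(h)\to\mfu(h)$ in $\U$, hence in $\U(h)^\sqcup$. I would also record the stronger fact, provable directly from the definition of $\dGPr$: given isometric embeddings of the two supports into a common metric space $(Z,d_Z)$ and a coupling of the measures, replacing $d_Z$ by $d_Z\wedge 2h$ still embeds the truncated spaces isometrically and can only shrink the distortion set, so $\dGPr(\mfu(h),\mfv(h))\le\dGPr(\mfu,\mfv)$ uniformly in $h$. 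For the second map, fix $\mfu$ and take $h_k\to h$ in $(0,t]$: since $h'\mapsto a\wedge 2h'$ is continuous for each fixed $a\ge 0$, we get $T_{h_k}\to T_h$ pointwise on $D$, hence $f\circ T_{h_k}\to f\circ T_h$ pointwise with $\|f\circ T_{h_k}\|_\infty\le\|f\|_\infty$ for every bounded continuous $f:D\to\R$; as $\nu^{\mfu}$ is a finite measure, dominated convergence gives $\int_D f\,\d(T_{h_k})_\ast\nu^{\mfu}\to\int_D f\,\d(T_h)_\ast\nu^{\mfu}$, and taking $f\equiv 1$ shows the masses all agree (they equal $\bar\mfu$), so $\nu^{\mfu(h_k)}\Rightarrow\nu^{\mfu(h)}$ and $\mfu(h_k)\to\mfu(h)$ in $\U(t)^\sqcup$.

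I do not expect a serious obstacle: the argument is entirely soft. The only genuine content is the pushforward identity above --- that $h$-truncation is literally $(T_h)_\ast$ on distance matrix measures and leaves the total mass fixed --- after which continuity in $\mfu$ is the mapping theorem and continuity in $h$ is dominated convergence against the continuously varying family $T_h$. The remaining care concerns the standard bookkeeping for the finite-measure version of the Gromov-weak topology (normalise $\mu$, carry $\bar\mfu$ separately), with the degenerate case $\bar\mfu=0$, where $\mfu$ and all $\tau(h)(\mfu)$ equal $0$, being trivial.
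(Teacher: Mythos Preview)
Your argument is correct and takes a more direct route than the paper's. The paper proves continuity of $\mfu\mapsto\mfu(h)$ through its truncated-polynomial machinery: it notes that $\Phi(\mfu)=\Phi(\mfu(h))$ for $\Phi\in\Pi_h$ and then invokes Proposition~\ref{p.trpol.topol}, the nontrivial fact that the restricted class $\Pi_h$ already determines the Gromov-weak topology on $\U(h)^\sqcup$. You sidestep that result entirely by recognising $h$-truncation as the pushforward $(T_h)_\ast$ on distance matrix measures and applying the continuous mapping theorem, which yields convergence of \emph{all} distance matrix measures $\nu^{m,\cdot}$ (equivalently all of $\Pi$, not just $\Pi_h$) and hence Gromov-weak convergence directly. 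For continuity in $h$, the paper appeals to the quantitative estimate of Lemma~\ref{l.tr1}, bounding $|\Phi_t(\mfu)-\Phi_s(\mfu)|$ by $\binom{m}{2}\bar\mfu^{m-2}\|\phi\|_\infty\,\nu^{2,\mfu}([2s,2t))$; your dominated-convergence argument from the pointwise continuity of $h\mapsto T_h$ is softer but equally valid. Your aside on the non-expansion $\dGPr(\mfu(h),\mfv(h))\le\dGPr(\mfu,\mfv)$ is precisely what Remark~\ref{r.926} alludes to without proof. The paper's route has the advantage of tying the proposition into the polynomial framework it develops and uses repeatedly elsewhere; yours is self-contained and does not require first establishing that truncated polynomials are convergence-determining.
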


\begin{remark}\label{r.926}
 It is also possible to establish using the explicit definition of Gromov-Prokhorov distance that the first mapping is a non-expansive map. 
\end{remark}

Note that in ultrametric spaces two open balls are either contained in each other or disjoint. 
Hence it makes sense to speak of the number of open balls of radius $h$ in $\mfu$. By Theorem 
(\ref{p.delphic}) this number is unique. Since it is important we introduce a special notation for 
this number.

\begin{definition}[Number of $h$-balls]\label{D.anz}
 If $\mfu\in\bbU$ and let $I$ be the index set belonging to the decomposition of $\lfloor\mfu\rfloor(h)$ as given in Theorem (\ref{p.delphic}). Then we set $\#_h\mfu:=\# I$.
\end{definition}
The following is analog to Lemma 2.4(a) in\cite{EM14} and proved in Section \eqref{ss.proptt}.

\begin{proposition}[Measurability and additivity]\label{l.2.4EM}
 The number of open $2h$-balls $\anz_h$ is measurable. It is an additive functional on 
$(\U(h)^\sqcup,\sqcup^h)$, 
that is
 \begin{equation}\label{grx75}
  \anz_h(\mfu \sqcup \mfv) = \anz_h(\mfu) + \anz_h(\mfv) \, ,
 \end{equation}
 for all $\mfu,\, \mfv \in \U(h)^\sqcup$, where we interpret $\infty + a = a + \infty = \infty$ for 
$a 
\in \N_0 \cup \{\infty\}$.
\end{proposition}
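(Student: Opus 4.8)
The statement splits into two independent halves, which I would dispatch separately.

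\emph{Additivity.} This should follow at once from the unique prime factorization of Theorem~\ref{p.delphic}, using nothing but the algebraic structure (cf.\ Remark~\ref{r.hypoth}). Let $\mfu,\mfv\in\U(h)^\sqcup$ have prime decompositions $\mfu=\bigsqcup^{h}_{i\in I}\mfu_i$ and $\mfv=\bigsqcup^{h}_{j\in J}\mfv_j$ as in~\eqref{e.tr49}, with $\mfu_i,\mfv_j\in\U(h)\setminus\{0\}$. First I would verify that $\mfu\sqcup^{h}\mfv=\bigsqcup^{h}_{k\in I\uplus J}\mfw_k$, where $\mfw_k=\mfu_k$ for $k\in I$ and $\mfw_k=\mfv_k$ for $k\in J$: for finite $I,J$ this is just commutativity and associativity of $\sqcup^{h}$, and for countable index sets it follows from the definition of countable concatenation as the limit of finite sub-concatenations (Remark~\ref{r.796}) together with continuity of $\sqcup^{h}$ (Theorem~\ref{p.delphic}). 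Since every $\mfw_k$ is a nonzero $h$-tree, uniqueness of factorization forces this to be \emph{the} prime decomposition of $\mfu\sqcup^{h}\mfv$, so that $\anz_h(\mfu\sqcup^{h}\mfv)=\#(I\uplus J)=\#I+\#J=\anz_h(\mfu)+\anz_h(\mfv)$, read with the convention $\infty+a=\infty$ when $I$ or $J$ is infinite.

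\emph{Measurability.} The idea is to detect the $2h$-balls through the distance matrix measures, which are continuous functionals of $\mfu$ by the very definition of the Gromov-weak topology. Work with a representative $(U,r,\mu)$, and note that $\{(u,v)\in U^2:r(u,v)<2h\}$ is an equivalence relation on $U$ (strong triangle inequality), whose classes are the open $2h$-balls; by Theorem~\ref{p.delphic} the prime decomposition of $\mfu(h)$ is indexed exactly by the open $2h$-balls of positive $\mu$-mass, so $\anz_h(\mfu)$ equals the number of such balls (for the canonical representative with $U=\supp\mu$, every nonempty open ball meets $\supp\mu$ and hence has positive mass, so this is simply the number of open $2h$-balls). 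I would then establish, for each $n\in\N$,
\begin{equation*}
\anz_h(\mfu)\ge n \iff \mu^{\otimes n}\bigl(\{(u_1,\dots,u_n)\in U^n: r(u_i,u_j)\ge 2h\ \text{for all}\ i\ne j\}\bigr)>0 .
\end{equation*}
Here ``$\Leftarrow$'' is immediate: the set being nonempty yields $n$ pairwise $2h$-separated points, which lie in $n$ distinct open $2h$-balls. For ``$\Rightarrow$'': $n$ distinct open $2h$-balls $B_1,\dots,B_n$ of positive mass satisfy $\mu(B_1)\cdots\mu(B_n)>0$, and $B_1\times\dots\times B_n$ is contained in the set above, because in an ultrametric space any two points lying in distinct open $2h$-balls are at distance $\ge 2h$.

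The right-hand side is precisely $\nu^\mfu_n(A_{n,h})$, where $\nu^\mfu_n$ is the $n$-th distance matrix measure of $\mfu$ (the image of $\mu^{\otimes n}$ under $(u_1,\dots,u_n)\mapsto(r(u_i,u_j))_{1\le i<j\le n}$, see~\eqref{rg6}) and $A_{n,h}=\{(r_{ij})_{i<j}: r_{ij}\ge 2h\}$ is closed in $\R_+^{\binom n2}$. Now $\mfu\mapsto\nu^\mfu_n$ is continuous into the space of finite measures with the weak topology, while $\nu\mapsto\nu(A_{n,h})$ is upper semicontinuous (Portmanteau, $A_{n,h}$ closed) hence Borel; therefore $\{\anz_h\ge n\}=\{\mfu:\nu^\mfu_n(A_{n,h})>0\}$ is Borel for every $n$, and $\{\anz_h=\infty\}=\bigcap_n\{\anz_h\ge n\}$, so $\anz_h$ is measurable on $\U$ and a fortiori on $\U(h)^\sqcup$. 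The step I expect to need the most care is the bookkeeping linking ``number of $h$-trees in the decomposition of $\mfu(h)$'' with ``number of positive-mass open $2h$-balls'' and then with positivity of $\nu^\mfu_n(A_{n,h})$ — in particular, checking that this identification is insensitive to the chosen representative, which is why it is cleanest to work with the support representative as in Definition~\ref{D.anz}.
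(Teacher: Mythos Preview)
Your measurability argument is essentially the paper's: both detect $\{\anz_h\ge n\}$ through positivity of $\nu^{n,\mfu}$ on the closed set $[2h,\infty)^{\binom{n}{2}}$, and deduce Borel measurability from continuity of $\mfu\mapsto\nu^{n,\mfu}$ together with upper semicontinuity of evaluation on a closed set. The paper records the same characterization as~\eqref{e.D.anz} and argues identically.

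Your additivity argument, however, is circular within the paper's logical structure. Although Theorem~\ref{p.delphic} is \emph{stated} before Proposition~\ref{l.2.4EM} in Section~\ref{s.basic}, its \emph{proof} (in Section~\ref{ss.concsemi}) passes through Lemma~\ref{p.5.1EM}, which identifies $\U(h)$ as the set of irreducible elements; and the proof of that lemma explicitly invokes the additivity $\anz_h(\mfv)+\anz_h(\mfw)=\anz_h(\mfu)$ from Proposition~\ref{l.2.4EM}. So you cannot appeal to the prime decomposition to establish additivity without closing a loop. The paper instead proves additivity directly from the distance-matrix formula~\eqref{e.D.anz}, via the elementary identity
\[
\nu^{m,\mfu\sqcup\mfv}\bigl([2h,\infty)^{\binom{m}{2}}\bigr)
=\sum_{k_1+k_2=m}\nu^{k_1,\mfu}\bigl([2h,\infty)^{\binom{k_1}{2}}\bigr)\,\nu^{k_2,\mfv}\bigl([2h,\infty)^{\binom{k_2}{2}}\bigr),
\]
which holds because any $m$-tuple in $\mfu\sqcup\mfv$ with all pairwise distances $\ge 2h$ splits into $k_1$ points from $\mfu$ and $k_2$ from $\mfv$ (each part internally $\ge 2h$, cross-distances automatically $=2h$), and conversely. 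From this one reads off $\anz_h(\mfu\sqcup\mfv)=\anz_h(\mfu)+\anz_h(\mfv)$ directly. Your route would work perfectly well \emph{after} Theorem~\ref{p.delphic} is in place---indeed the paper records exactly your observation as Corollary~\ref{c.tr1}---but it cannot serve as the proof of Proposition~\ref{l.2.4EM} itself.
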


\begin{remark}\label{r.952}
Note however that the map $\#_h$ is neither upper semi-continuous nor lower 
semi-continuous. This can be seen from the following counterexample. Take $\mfu_n = [\N, 
r(x,y) = \1(x \neq y), \delta_{\{1\}}+ \sum_{i\in \N} 
\frac{1}{n}2^{-i}\delta_{\{i\}}]$ and $\mfv_n = [\{a,b\}, r(a,b)=2h-n^{-1}, \delta_{\{a\}} + 
\delta_{\{b\}} ]$ for $n \in \N$.
\end{remark}

Another example of a measurable functional is what we call the {\em path of tops}, associating to every depth $h$ the $h$-subfamilies in a measure description, see Section (\ref{ss.pathsoftops}).
This functional describes the fragmentation of the tree in smaller sub-trees when we are getting closer to the top of the tree.

\smallskip

For a population and its genealogy a natural concept is the genealogy of a sub-population and its sub-genealogy which induces an order of these objects which is also the natural order association with the group structure. 
Indeed we can equip the space of $h$-forest with a {\em partial order} such that if $\mfu,\mfv$ are
$h$-forests and $\mfu$ is a sub-forest of $\mfv$, then $\mfu$ is smaller than $\mfv$. More formally:

\begin{definition}\label{d.po}
Define a partial order $\leq$ on $(\U(h)^\sqcup,\sqcup^h)$ by setting $\mfu \leq_h \mfv$ if 
there exists $\mfw \in \U(h)^\sqcup$ such that $\mfu \sqcup \mfw = \mfv$.

For $\mfu, \mfv \in \U$ we say $\mfu \leq_h \mfw$ if $\lfloor\mfu\rfloor(h) \leq_h \lfloor\mfv\rfloor(h)$ for the $h$-tops.
\end{definition}
We skip the index $h$ on $\leq_h$ if no ambiguities may occur.
Different interesting partial orders on metric measure spaces are developed in \cite{ggr_GeneralBranching} and \cite{GR16}, the latter less restrictive.

\subsection{Analytical tools for  \texorpdfstring{$h$}{h}-forests: polynomials and their truncation}
\label{ss.anatools}

The key objects to study the $h$-forests are {\em distance matrices}, monomials and {\em polynomials}, objects which we define next.
We begin with the finite sub-trees of size $m$ of an element $\mfu \in \U$.

\begin{definition}[Ultrametric distance matrices]
\label{D.umdm}~
\begin{enumerate} 
 \item Define the set of ultrametric distance matrices of order $m\geq2$ by
\be{rg4}
  \bbD_m:
  =\{(r_{ij})_{i\leq i<j\leq m}:r_{ij}\geq0\,,\,r_{ij}\leq r_{ik}\vee r_{kj}\;\,\forall 1\leq i<k<j\leq m\}\, \text{ and } \bbD_1 = \{0\}.
\ee
  \item For an ultrametric space $\mfu=[U,r,\mu]\in\bbU$ and $m\geq2$ we define the \emph{distance matrix map of order $m$}
\be{rg5}
  R^{m,(U,r)}:U^m\to\bbD_m\,,\quad (u_i)_{i=1,\dotsc,m}\mapsto (r(u_i,u_j))_{1\leq i<j\leq m}
\end{equation}
and the \emph{distance matrix measure of order $m$}
  \begin{align} \label{rg6}
  \nu^{m,\mfu}(\dx \underline{\underline{r}}) &:=\mu^{\otimes m}\circ (R^{m,(U,r)})^{-1}\\
  &=\mu^{\otimes m}(\{(u_1,\dotsc,u_m)\in U^m:(r(u_i,u_j))_{1\leq i<j\leq m}\in\dx \underline{\underline{r}}\})\,. \nonumber
\end{align}
For $m=1$ we set $\nu^{1,\mfu} := \bar{\mfu} := \mu(U)$ the \emph{total mass}.
\end{enumerate}
\end{definition}
Note that we cannot define here a nice distance matrix measure on $\bbD_\infty$ since $\mu$ need not be a probability measure.
For that we have to consider $(\bar u, \wh \nu)$, with $\wh \nu = R^{\infty,(U,r)} (\wh \mu)$
and $\wh \mu = (\bar \mu)^{-1} \mu$ and then the normalized measure allows to define
a distance matrix {\em distribution} on a sampling sequence which provides the full information
on the genealogy.

The finite sub-trees with $m$ leaves can be described by the following test functions.

\begin{definition}[Polynomials]
\label{D.polyn}
For $m\geq1$ and $\phi\in C_b(\bbD_m)$, define the \emph{monomial}
\begin{equation}\label{e:0606131101}
  \Phi=\Phi^{m,\phi}:\bbU\to\bbR\,,\quad \mfu\mapsto \langle\phi,\nu^{m,\mfu}\rangle = \int \nu^{m,\mfu}(\dx \dr)\, \phi(\dr)\,.
\end{equation}
The elements of the algebra generated by $\Pi$ are called \emph{polynomials}, 
the corresponding set $\mcA(\Pi)$.

We denote special classes of monomials for $h>0$ as follows:
\begin{equation}\label{rg8}
  \Pi_h := \{ \Phi^{m,\phi} \in \Pi:\, \supp (\phi) \subseteq [0,2h)^{\binom{m}{2}} \}, \Pi_+ := \{\Phi^{m,\phi} \in \Pi:\, \phi \geq 0\} \text{ and } \Pi_{h,+} = \Pi_h \cap \Pi_+ \, .
\end{equation}
and consider the generated algebras $\CA(\Pi_h),\CA_+(\Pi_h)$, where the latter are the polynomials which are positive on $\U$.
\end{definition}

\begin{remark}\label{r.1027}
Obviously $\Pi$ is closed under multiplication, hence 
$\mcA(\Pi)$ is the span of $\Pi$. 
\end{remark}

To characterize treetops we introduce:
\begin{definition}[Truncation]\label{d:trunc}
~
Let $m\in\bbN$ and $\phi:\bbD_m\to\bbR$.
Define the \emph{upper $h$-truncation} of $\phi$:
\begin{align}\label{rg9}
  \phi_{\hu}(\dr):&=\phi(\dr)\cdot\prod_{1\leq i<j\leq m} \mathbbm{1}_{[0,2h)}(r_{ij}),   %\quad\text{and}\quad\\
%  \phi_{\hd}(\dunderline{r})
%   :&=\phi((r_{ij}-2h)_+)_{1\leq i<j\leq m})\,,
\end{align}
%where $x_+=\max\{x,0\}$ for a real number $x$.
For the monomial $\Phi^{m,\phi} \in \Pi$ define
  \begin{align}\label{rg10}
  \Phi^{m,\phi}_{\hu}(\mfu):=\lan\phi_{\hu},\nu^{m,\mfu}\ran. %\quad\text{and}\quad   \Phi^{m,\phi}_{\hd}(\mfu):=\lan\phi_{\hd},\nu^{m,\mfu}\ran\,.
\end{align}
This  extends to polynomials by linearity.
\end{definition}
Note that $\Phi_h$ is $\uparrow$-limit of $\Phi^n \in \CA_+(\Pi_h)$ if $\Phi \in \CA_+(\Pi)$, since we require the test functions in monomials to be continuous.
Truncated polynomials are the key in studying the semigroup 
$(\U(h)^\sqcup,\sqcup^h)$. We list some of the important properties in the next theorem.

\begin{theorem}[Properties of truncation] \label{p.trunc.poly}
 ~
 \begin{enumerate}
  \item\label{i.tr65} For any $\Phi \in \CA(\Pi)$, the mapping $\Phi_{\hu}$: $(\U(h)^\sqcup,\sqcup^h) \to (\R,+)$ is a semigroup homomorphism.
  \item\label{i.tr66} For any $\Phi \in \CA_+(\Pi)$, the mapping $\exp(-\Phi_{\hu}(\cdot))$: $(\U(h)^\sqcup,\sqcup^h) \to ([0,1],\cdot)$ is a semigroup homomorphism.
  \item\label{i.tr121} Elements of $\U(h)^\sqcup$ are identified by truncated monomials:  $\Phi(\mfu) = \Phi(\mfv)$ for all $\Phi \in \Pi_h$ implies that $\lfloor\mfu\rfloor(h) = \lfloor\mfv\rfloor(h)$.
  \item\label{i.tr50}  The topology of $\bbU(h)^\sqcup$ induced by $\bbU$ coincides with the initial topology of $\Pi_h$. That means $\mfu_n \to \mfu$ in $d_{\textup{GPr}}$ iff $\Phi(\mfu_n) \to \Phi(\mfu)$ for all $\Phi \in \CA(\Pi_h)$, i.e. $\CA(\Pi_h)$ is convergence determining.
  \end{enumerate}
\end{theorem}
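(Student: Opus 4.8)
The plan is to treat the four items in order, since each builds on the previous. For item (a): fix $\Phi = \Phi^{m,\phi} \in \Pi$; I would show directly from the definition \eqref{rg6} of the distance matrix measure that for $\mfu, \mfv \in \U(h)^\sqcup$ with representatives supported on $U, V$ respectively, the measure $\nu^{m,\mfu\sqcup^h\mfv}$ decomposes by partitioning the $m$ sample points according to which of $U$ or $V$ they land in. For any block of indices landing partly in $U$ and partly in $V$, at least one pairwise distance equals $2h$ (by \eqref{grx53}), so the factor $\prod \1_{[0,2h)}(r_{ij})$ in $\phi_{\hu}$ (see \eqref{rg9}) kills that contribution. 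Hence only the two ``pure'' terms survive, giving $\Phi_{\hu}(\mfu\sqcup^h\mfv) = \Phi_{\hu}(\mfu) + \Phi_{\hu}(\mfv)$; extend to $\CA(\Pi)$ by linearity. Item (b) is then immediate: $\exp(-\Phi_{\hu}(\mfu\sqcup^h\mfv)) = \exp(-\Phi_{\hu}(\mfu))\cdot\exp(-\Phi_{\hu}(\mfv))$, and $\Phi_{\hu}\geq 0$ on $\U$ when $\Phi \in \CA_+(\Pi)$ so the values lie in $[0,1]$.

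For item (c): note that $\Phi^{m,\phi}(\mfu)$ for $\Phi^{m,\phi} \in \Pi_h$ depends only on the part of $\nu^{m,\mfu}$ supported on $[0,2h)^{\binom m2}$, and this is exactly $\nu^{m,\mfu(\hu)}$ restricted away from the ``diameter-$2h$'' slices; more precisely $\Phi^{m,\phi}(\mfu) = \Phi^{m,\phi}_{\hu}(\mfu(\hu))$ for such $\phi$. So the hypothesis says all distance matrix measures of $\mfu(\hu)$ and $\mfv(\hu)$, tested against functions supported in $[0,2h)^{\binom m2}$, agree. Since $\mfu(\hu), \mfv(\hu) \in \U(h)^\sqcup$ realize only distances in $[0,2h]$, and two open balls in an ultrametric space are nested or disjoint, the prime factorization of Theorem~(\ref{p.delphic}) writes $\mfu(\hu) = \bigsqcup^h_{i} \mfu_i$ with $\mfu_i \in \U(h)$, i.e. each $\mfu_i$ has diameter $<2h$; by item (a) the truncated monomials are additive over this decomposition, and on each tree $\mfu_i$ (diameter $<2h$) the truncation is inert, so the data $\{\Phi(\mfu):\Phi\in\Pi_h\}$ recovers $\sum_i \nu^{m,\mfu_i}$. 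One then invokes the fact (from \cite{GPW09}, cf.\ the Appendix) that a finite measure um-space of diameter $<2h$ — here the disjoint-union space $\bigsqcup^h_i \mfu_i$ viewed with metric $\wedge 2h$ — is determined by its distance matrix measures of all orders; this pins down $\mfu(\hu)$, hence $\mfu(\hu)=\mfv(\hu)$.

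For item (d): one inclusion is that each $\Phi \in \CA(\Pi_h)$ is $d_{\textup{GPr}}$-continuous on $\U$ (standard: monomials $\Phi^{m,\phi}$ with $\phi\in C_b$ are continuous in Gromov-weak topology, see \cite{GPW09}), so $d_{\textup{GPr}}$-convergence implies $\Pi_h$-convergence. For the converse I would argue that $\Pi_h$ is in fact \emph{convergence determining} on $\U(h)^\sqcup$: if $\Phi(\mfu_n)\to\Phi(\mfu)$ for all $\Phi\in\CA(\Pi_h)$, then in particular taking $\phi$ ranging over $C_b$ of each $[0,2h)^{\binom m2}$ shows all distance matrix measures of $\mfu_n(\hu)=\mfu_n$ converge weakly (on the relevant open sets) to those of $\mfu$; since on $\U(h)^\sqcup$ these open-set restrictions already carry the full information (by the same reasoning as in (c), using that the ``boundary slices'' at distance exactly $2h$ contribute in a controlled way — here one needs a tightness/no-mass-escaping-to-$2h$ argument), weak convergence of distance matrix measures follows, which by the characterization of the Gromov-weak topology recalled in Definition~(\ref{D.umsp}) is equivalent to $d_{\textup{GPr}}(\mfu_n,\mfu)\to 0$.

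I expect the main obstacle to be the boundary issue at distance exactly $2h$ in items (c) and (d): the test functions in $\Pi_h$ are supported in the \emph{open} cube $[0,2h)^{\binom m2}$, so they a priori see nothing about how much mass sits on pairs at distance precisely $2h$ — yet elements of $\U(h)^\sqcup$ may well realize the distance $2h$. The resolution is precisely the decomposition from Theorem~(\ref{p.delphic}): the ``distance exactly $2h$'' part is entirely accounted for by \emph{which tree each ball belongs to}, and the within-tree structure (diameter $<2h$) is what $\Pi_h$ sees; so once additivity (item (a)) is in hand, the open-cube test functions do suffice to reconstruct the forest, because they reconstruct each prime tree and the concatenation is the free operation. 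Making this reconstruction argument precise — and checking that weak convergence on open cubes plus convergence of total mass upgrades to genuine Gromov-weak convergence on $\U(h)^\sqcup$ — is the technical heart of the proof.
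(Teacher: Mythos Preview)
Your arguments for (a) and (b) are correct and are essentially the paper's (Proposition~\ref{p2907131206}).

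For (c) there is a genuine gap. You correctly observe that the data $\{\Phi(\mfu):\Phi\in\Pi_h\}$ recovers $\nu^{m,\mfu}|_{[0,2h)^{\binom m2}} = \sum_i \nu^{m,\mfu_i}$, where $\mfu=\bigsqcup_i\mfu_i$ is the prime decomposition. But your next step --- applying Gromov's reconstruction theorem to ``the disjoint-union space $\bigsqcup^h_i\mfu_i$ viewed with metric $\wedge 2h$'' --- fails for two reasons. First, that space is $\mfu(\hu)$ itself, and when $\#_h\mfu\ge 2$ its diameter is \emph{equal} to $2h$, not $<2h$. Second and more importantly, $\sum_i\nu^{m,\mfu_i}$ is \emph{not} the full distance matrix measure $\nu^{m,\mfu(\hu)}$ (the latter carries extra mass on the slices where some $r_{ij}=2h$), so Gromov's theorem does not apply to what you have in hand. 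The missing step is: from the sums $\sum_i\nu^{m,\mfu_i}$ over all $m$, recover the multiset $\{\mfu_i\}$. That is precisely the non-trivial content of the uniqueness of factorization (Proposition~\ref{p.tr1}), whose proof requires the power-sum trick of Lemmas~\ref{l.tr20} and~\ref{l.tr21}; the bare statement of Theorem~\ref{p.delphic} (uniqueness of the decomposition of a \emph{given} element) does not yield it, and you cannot just assert that the open-cube test functions ``reconstruct each prime tree''.

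The paper's route (Proposition~\ref{p.trpol.sep}) is different and more direct: it never invokes the prime decomposition. Instead it shows, by an explicit inclusion-exclusion identity~\eqref{e.tr231} exploiting ultrametricity, that $\nu^{m,\mfu}(A)$ for any product rectangle $A\subseteq[0,2h]^{\binom m2}$ can be expressed in terms of the restrictions $\nu^{n,\mfu}|_{[0,2h)^{\binom n2}}$ for $n\le m$; then Gromov gives $\mfu=\mfv$. The same formula is reused for (d) (Proposition~\ref{p.trpol.topol}), combined with Billingsley's convergence-determining criterion on product rectangles, to upgrade weak convergence of the restricted distance measures to weak convergence of the full ones. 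Your ``no-mass-escaping-to-$2h$'' sketch for (d) shares the gap of (c) and would likewise need either this inclusion-exclusion formula or the full machinery behind Proposition~\ref{p.tr1}.
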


\begin{proof}[Proof of Theorem~\ref{p.trunc.poly}]\label{pr.1059}
The proofs are given via three statements and their proofs, which can be found in Propositions (\ref{p2907131206}) for (a) and (b), (\ref{p.trpol.sep}) for (c), (\ref{p.trpol.topol}) for (d).
\end{proof}

Item \eqref{i.tr121} states that in order to identify an element in $\bbU(h)^\sqcup$ it suffices to know the distance matrices only at distances strictly less than $2h$, moreover, this even suffices to determine the topology of the space.

Theorem (\ref{p.trunc.poly}) implies  that non-negative, truncated monomials on 
$\bbU(h)^\sqcup$ are monotone with respect to the partial order introduced in Definition (\ref{d.po}). 

\begin{corollary}\label{c.1072}
 Let $\mfu,\mfv\in\bbU(h)^\sqcup$ and $\Phi\in \Pi_{h,+}$. Then $\mfu\leq \mfv$ implies $\Phi(\mfu)\leq 
\Phi(\mfv)$.
\end{corollary}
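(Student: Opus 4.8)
The plan is to combine the homomorphism property of truncated monomials (Theorem~\ref{p.trunc.poly}\eqref{i.tr65}, or rather its non-negative variant) with the definition of the partial order. Suppose $\mfu \leq_h \mfv$ in $\bbU(h)^\sqcup$. By Definition~\ref{d.po} there exists $\mfw \in \bbU(h)^\sqcup$ with $\mfu \sqcup^h \mfw = \mfv$. Now fix $\Phi = \Phi^{m,\phi} \in \Pi_{h,+}$, so $\phi \geq 0$ and $\supp(\phi) \subseteq [0,2h)^{\binom{m}{2}}$, which in particular means $\phi = \phi_{\hu}$, i.e.\ $\Phi = \Phi_{\hu}$ on all of $\bbU$. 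By Theorem~\ref{p.trunc.poly}\eqref{i.tr65} the map $\Phi_{\hu} = \Phi$ restricted to $(\bbU(h)^\sqcup,\sqcup^h)$ is a semigroup homomorphism into $(\R,+)$, hence
\begin{equation}\label{e.cor1072a}
\Phi(\mfv) = \Phi(\mfu \sqcup^h \mfw) = \Phi(\mfu) + \Phi(\mfw).
\end{equation}

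It then remains only to observe that $\Phi(\mfw) \geq 0$. This is immediate: $\Phi(\mfw) = \langle \phi, \nu^{m,\mfw}\rangle$ is the integral of the non-negative function $\phi$ against the non-negative measure $\nu^{m,\mfw}$ (the $m$-th distance matrix measure of $\mfw$, which is a pushforward of $\mu_{\mfw}^{\otimes m}$ and hence a non-negative measure). Therefore \eqref{e.cor1072a} gives $\Phi(\mfv) = \Phi(\mfu) + \Phi(\mfw) \geq \Phi(\mfu)$, which is the claim.

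I do not anticipate a genuine obstacle here; the statement is essentially a bookkeeping consequence of already-established facts. The only point requiring a word of care is making sure that for $\Phi \in \Pi_{h,+}$ one indeed has $\Phi = \Phi_{\hu}$ (so that Theorem~\ref{p.trunc.poly}\eqref{i.tr65} applies verbatim), which follows from the support condition $\supp(\phi) \subseteq [0,2h)^{\binom{m}{2}}$ in the definition of $\Pi_h$: the indicator factors $\mathbbm{1}_{[0,2h)}(r_{ij})$ in \eqref{rg9} are all equal to $1$ on $\supp(\phi)$. If one prefers to avoid invoking the homomorphism property on general polynomials, one can also argue directly from the definition of $\sqcup^h$: choosing representatives, $\mfv$ is realized on $U \uplus W$ with measure $\mu + \nu$, so $\nu^{m,\mfv} = \sum$ over the $2^m$ ways of assigning the $m$ sample points to $U$ or $W$, each term a non-negative measure and the all-$U$ term being exactly $\nu^{m,\mfu}$; since $\phi \geq 0$, dropping the remaining non-negative terms yields $\Phi(\mfv) \geq \Phi(\mfu)$ directly. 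Either route is short; I would present the first.
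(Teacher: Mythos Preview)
Your proof is correct and matches the paper's approach exactly: the paper states the corollary as an immediate consequence of Theorem~\ref{p.trunc.poly} without giving a separate proof, and your argument is precisely the intended unpacking (homomorphism property from part~\eqref{i.tr65} applied to $\Phi = \Phi_{\hu}$, plus non-negativity of $\Phi(\mfw)$).
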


\subsection{Analytical tools for random  \texorpdfstring{$h$}{h}-forests: Laplace transforms}
\label{ss.anatoolsrandom}

%{\bf (1) Laplace transform} \quad
We are now considering {\em random} elements in $\bbU$, which we generically denote by the capital letters $\mfU$, $\mfV$.
As in the classical settings of random measures 
or real-valued random variables, also for random trees the Laplace transform is a 
powerful tool.
In the abstract setting of the semigroup we say that given a (semi-)character $\chi: \bbU(h)^\sqcup \to [0,1]$ (or the complex numbers with modulus not greater than $1$) we define a characteristic function $\mcM_1(\U(h)^\sqcup) \to [0,1], \, \mu \mapsto \int \mu(\dx \mfu) \, \chi(\mfu)$.
In particular, they will play a role when we introduce infinitely divisible random trees.

\begin{definition}[Laplace transform]
\label{D.laplf}
  The \emph{Laplace functional} $L_\mfU : \CA_+(\Pi) \to [0,1]$ of a random um-space $\mfU$ is defined by
  \begin{equation}\label{e0606131100}
    L_\mfU(\Phi):=\bbE\left[\exp(- \Phi^{m,\phi} (\mfU) )\right]\,,\quad \Phi=\Phi^{m,\phi}\in \CA_+(\Pi)\,,
  \end{equation}
  the \emph{truncated Laplace functional} $L_\mfU : \mcA_+(\Pi_h) \to [0,1]$ is defined by restriction of the domain.
\end{definition}

The next result tells us that Laplace transforms on $\bbU(h)^\sqcup$ share an important property with Laplace transforms on $[0,\infty)$:
they well-define a probability measure on that space.

\begin{theorem}[Truncated Laplace transform]\label{p.trLap}
~
\begin{enumerate}%[(i)]
    \item\label{i1006132102a} Let $\mfU,\mfU'\in \U(h)^\sqcup$ be random $h$-forests. Then,
     \begin{equation}\label{rg68}
       \mfU\eqd\mfU'\quad\Longleftrightarrow\quad 
L_\mfU(\Phi)=L_{\mfU'}(\Phi)\;\;\forall \Phi\in \mcA_+(\Pi_h)\,.
     \end{equation}
    \item\label{i1006132102b} Let $\mfU,\mfU_n$, $n\in\bbN$, be random 
$h$-forests. Then,
    \begin{equation}\label{rg69}
     \mfU_n\underset{n\to\infty}{\Longrightarrow}\mfU\quad\Longleftrightarrow\quad 
L_{\mfU_n}(\Phi)\underset{n\to\infty}{\longrightarrow} L_\mfU(\Phi)\;\;\forall \Phi\in \mcA_+(\Pi_h) \,.
    \end{equation}
  \end{enumerate}
\end{theorem}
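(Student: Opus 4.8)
The plan is to deduce Theorem~\ref{p.trLap} from the classical one-to-one correspondence between probability measures on a Polish space and their integrals against a convergence-determining algebra of bounded continuous functions, where the algebra in question is $\CA_+(\Pi_h)$ (more precisely, the bounded continuous functions obtained as $\exp(-\Phi)$ and their linear combinations). The crucial input is Theorem~\ref{p.trunc.poly}\eqref{i.tr50}, which tells us that $\CA(\Pi_h)$ is convergence determining on $\U(h)^\sqcup$ and induces the Gromov-weak topology there; since $\U(h)^\sqcup$ is Polish (Remark~\ref{r790}(a)), we are in the standard setting of weak convergence of probability measures on a Polish space.

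For part~\eqref{i1006132102a}: the implication ``$\Rightarrow$'' is immediate from the definition of $L_\mfU$. For ``$\Leftarrow$'', first I would pass from the additive functionals $\Phi\in\CA_+(\Pi_h)$ to the multiplicative ones: knowing $\E[\exp(-\Phi(\mfU))]$ for all $\Phi\in\CA_+(\Pi_h)$ is the same as knowing $\E[\exp(-\sum_j c_j\Phi_j(\mfU))]$ for all finite families $\Phi_j\in\Pi_{h,+}$ and $c_j\geq 0$, because $\CA_+(\Pi_h)$ is generated by $\Pi_{h,+}$ under sums and products and $\Pi_{h,+}$ is closed under multiplication (Remark~\ref{r.1027}). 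Thus the Laplace functional determines the joint Laplace transform of the finite-dimensional vectors $(\Phi_1(\mfU),\dots,\Phi_k(\mfU))$ for any $\Phi_1,\dots,\Phi_k\in\Pi_{h,+}$. By uniqueness of the (multivariate) Laplace transform on $[0,\infty)^k$, the law of $(\Phi_1(\mfU),\dots,\Phi_k(\mfU))$ is determined for every such finite family. Since $\CA(\Pi_h)$ — equivalently the bounded continuous functions generated by $\Pi_{h,+}$ — is convergence determining and hence separates points and is measure determining on the Polish space $\U(h)^\sqcup$, a standard functional-monotone-class / Stone-Weierstrass argument upgrades ``all finite-dimensional distributions agree'' to ``$\mfU\eqd\mfU'$''. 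One small point to handle with care is that $\Phi_{h}$ is an increasing limit of $\Phi^n\in\CA_+(\Pi_h)$ (noted just before Theorem~\ref{p.trunc.poly}) when $\Phi\in\CA_+(\Pi)$, so truncated monomials with closed support up to $2h$ are still reached by monotone limits — but for this direction we only need the genuinely continuous $\Pi_{h,+}$, so no issue arises.

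For part~\eqref{i1006132102b}: again ``$\Rightarrow$'' follows since $\mfu\mapsto\exp(-\Phi(\mfu))$ is bounded and continuous on $\U(h)^\sqcup$ for $\Phi\in\CA_+(\Pi_h)$ (continuity of $\Phi$ being part of Theorem~\ref{p.trunc.poly}\eqref{i.tr50}), so weak convergence $\mfU_n\Rightarrow\mfU$ gives convergence of these expectations. For ``$\Leftarrow$'', the strategy is: (i) show the family $(\mathcal{L}(\mfU_n))_n$ is tight, and (ii) identify every subsequential limit with $\mathcal{L}(\mfU)$ via part~\eqref{i1006132102a}, so that the whole sequence converges. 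Identification of limits is free: if $\mfU_{n_k}\Rightarrow\mfW$ along a subsequence, then by ``$\Rightarrow$'' we get $L_{\mfU_{n_k}}(\Phi)\to L_\mfW(\Phi)$ for all $\Phi\in\CA_+(\Pi_h)$, but the left side converges to $L_\mfU(\Phi)$ by hypothesis, hence $L_\mfW=L_\mfU$ on $\CA_+(\Pi_h)$ and $\mfW\eqd\mfU$ by part~\eqref{i1006132102a}.

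The main obstacle is step~(i), tightness. The natural route is: convergence of $L_{\mfU_n}(\Phi)$ for the specific choices $\Phi=\Phi^{1,\theta\mathbbm{1}}$ (i.e. $\theta\cdot\overline{\mfU}_n$) for $\theta\downarrow 0$ controls the total masses $\overline{\mfU}_n$ and shows they are tight in $[0,\infty)$; then, restricting to the event $\{\overline{\mfU}_n\leq M\}$, one exhausts $\U(h)^\sqcup$ by sets of the form $\{\mfu:\overline{\mfu}\leq M,\ \anz_h(\mfu)\leq N,\ \text{each $h$-tree component lies in a fixed compact of }\U(h)\}$ — this uses the Delphic structure of Theorem~\ref{p.delphic}, in particular that the set of divisors of any element is compact, together with the characterization of relative compactness in the Gromov-weak topology via uniform integrability of masses and the usual distance-distribution modulus-of-compactness condition (see \cite{GPW09}, \cite{Gl12}). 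Convergence of the Laplace functionals against suitable $\Phi\in\Pi_{h,+}$ that detect "many small balls" or "large diameters" then forces the compactness moduli to hold uniformly in $n$, giving tightness. This is the place where one must be genuinely careful: the functional $\anz_h$ is not semicontinuous (Remark~\ref{r.952}), so the compactness criterion has to be phrased purely in terms of the distance-matrix measures $\nu^{m,\mfu}$ and total masses, exactly as in the metric-measure-space setting, rather than in terms of ball counts. Once tightness is in hand the proof closes by the subsequence argument above.
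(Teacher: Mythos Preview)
Your approach to part~(a) is correct but more laborious than the paper's. You pass through multivariate Laplace transforms of the vectors $(\Phi_1(\mfU),\dots,\Phi_k(\mfU))$ and then a monotone-class argument; the paper instead simply observes that $B=\{\mfu\mapsto\exp(-\Phi(\mfu)):\Phi\in\mcA_+(\Pi_h)\}$ is a multiplicatively closed family of bounded continuous functions on the Polish space $\U(h)^\sqcup$ that separates points (Theorem~\ref{p.trunc.poly}\eqref{i.tr121}), and invokes Theorem~3.4.5 of \cite{EK86} to conclude that $B$ is measure-determining. Both routes work; the paper's is shorter.

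For part~(b) the difference is more substantial. The paper's proof is one line: by Theorem~\ref{p.trunc.poly}\eqref{i.tr50} the initial topology of $\Pi_h$ on $\U(h)^\sqcup$ agrees with the Gromov-weak topology, and Lemma~4.1 of \cite{HoffJ76} (a general result that a multiplicatively closed class of bounded continuous functions generating the topology is convergence-determining for probability measures) gives the claim directly. No explicit tightness argument appears --- it is absorbed into the cited lemma. Your strategy of proving tightness and then identifying subsequential limits via part~(a) is sound in principle, and the identification half is fine, but your tightness sketch has a real gap: controlling the modulus of mass distribution $v_\delta(\cdot,h')$ through ``suitable $\Phi\in\Pi_{h,+}$ that detect many small balls'' is not substantiated --- $v_\delta$ is not a polynomial and there is no evident $\Phi$ in $\Pi_{h,+}$ that probes it. The appeal to the Delphic structure and compactness of divisor sets does not help either, since those are statements about a single deterministic $\mfu$, not uniform-in-$n$ control for a random family. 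The clean way to close the argument is precisely the abstract convergence-determining result the paper cites, after which the explicit tightness detour is unnecessary.
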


This result will be established in Section~(\ref{ss.lap}).
Note that we do require \emph{polynomials} in the previous result.
{\em It is open whether we could just use monomials above, at least in the
first claim, more in Remark~(\ref{r.tr1}).}

So, Laplace transforms of the truncated polynomials are a powerful tool for the analysis of the semigroup $(\U(h)^\sqcup,\sqcup)$.
Further properties showing on the importance of the Laplace transforms on semigroups are given in Section~5 of \cite{DMZ08}.

\subsection{Infinite Divisibility}
\label{ss.infdiv}

The next step is to identify the {\em random forests} where we can represent the $h$-tops and path of $h$-tops via {\em Poisson point processes} on $\U(h)^\sqcup$.
Note that we have here not a pure semigroup question, for which one has an abstract theory, compare Section \eqref{ss.compDMZ08}, but we have an $\R^+$ indexed {\em collection} of nested semigroups related via truncation maps for which we want to decompose our law. 
The key concept is therefor the following.

\begin{definition}[Infinite divisibility]\label{d2607131157}

Suppose $t>0$.  A random um-space $\mfU$ taking values in $\U$
which is not identically $0$ is called \emph{infinitely divisible} if for all $h>0$ (or $t$-infinitely divisible if for all
$h\in (0,t]$)
and $n \in \N$ we find i.i.d.~$\mfU_1^{(h,n)},\dotsc, \mfU_n^{(h,n)} \in \U(h)^\sqcup$, s.t.~the $h$-top of $\mfU$ is a concatenation of these random forests:
\be{r10a}
 \mfU(h) \eqd \mfU_1^{(h,n)} \sqcup \dots \sqcup \mfU_n^{(h,n)} \, .
\ee
\end{definition}
Note that by Theorem (\ref{p.trunc.poly}\ref{i.tr66}),~(\ref{p.trLap}) this is equivalent to saying that for all $h>0$ the Laplace functional of the $h$-treetop factorizes for every $n\in \N$:
 \be{r10b}
 \exists \mfU^{(h,n)} \in \mfU(h)^\sqcup \text{ with }  L_\mfU(\Phi) = \left(L_{\mfU^{(h,n)}}(\Phi)\right)^n , \quad \Phi \in \mcA(\Pi_{h,+}) \,. 
  \ee
Given an infinitely divisible $\mfU$ and observing only total population sizes we should get back to the classical concept of infinite divisibility of non-negative $\R$-valued random variables. Indeed the following is proved in Section (\ref{s.lkfproof}).

\begin{proposition}[Infinite divisibility of total mass]\label{p.mass:infdiv}
\hfill
\begin{enumerate}
 \item\label{mass.infdiv1} If $\mfU$ is infinitely divisible (or $t$-infinitely divisible for a 
$t>0$), the total mass $\bar{\mfU}$ is infinitely divisible in the classical sense.
 \item\label{mass.infdiv2} Conversely, let $X$ be an infinitely divisible random variable taking 
values in $\bbR_+$  such that its $\log$-Laplace transform has the form
 \begin{equation}\label{e1175}
 -\log \bbE \left[e^{- t X}\right]= \int_{(0,\infty)} \left(1-e^{-tx}\right)\,\nu(\dx x)\,,\quad t\geq0\,,
 \end{equation}
 where $\nu$ is the L\'{e}vy measure, that is, it is a $\sigma$-finite measure on $(0,\infty)$ 
such that $\int_{(0,\infty)}(1\wedge x)\,\nu(\dx x)<\infty$.
Then, for each $h>0$, there exists a random element $\mfU_h$ taking values in $\bbU(h)^\sqcup$ such 
that $\mfU_h$ is $h$-infinitely divisible and $\bar{\mfU}_h$ has the same distribution as $X$.
% The relation between the L\'{e}vy measures of $X$ and $\mfU$ is as follows:
% \begin{equation}
% \lambda_h\{\mfu\in\bbU(h)^\sqcup\setminus\{0\}:\bar{\mfu}\in\dx x\}=\nu(\dx x)
% \end{equation}
\end{enumerate}
\end{proposition}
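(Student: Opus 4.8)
I would prove the two assertions separately: part (a) by pushing the divisibility identity forward under the total–mass homomorphism, and part (b) by an explicit Poisson construction in which each point of a Poisson process on $(0,\infty)$ contributes a one–point tree carrying the corresponding mass.

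\emph{Part (a).} Fix one $h$ for which the divisibility property of Definition~(\ref{d2607131157}) is available — any $h>0$ in the infinitely divisible case, any $h\in(0,t]$ in the $t$-infinitely divisible case. By Theorem~(\ref{p.delphic}) and Remark~(\ref{R.delphic}), $\mfu\mapsto\bar\mfu$ is a (continuous) homomorphism $(\U(h)^\sqcup,\sqcup^h)\to([0,\infty),+)$, and $h$-truncation leaves the measure unchanged (Definition~(\ref{D.h-trunc})), so $\overline{\mfU(h)}=\bar\mfU$. Applying this homomorphism to the identity $\mfU(h)\eqd\mfU_1^{(h,n)}\sqcup\cdots\sqcup\mfU_n^{(h,n)}$ gives $\bar\mfU\eqd\bar\mfU_1^{(h,n)}+\cdots+\bar\mfU_n^{(h,n)}$, a sum of $n$ i.i.d.\ nonnegative random variables; as $n$ is arbitrary, $\bar\mfU$ is infinitely divisible in the classical sense.

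\emph{Part (b), the construction.} Fix $h>0$ and let $N=\sum_{i\in I}\delta_{x_i}$ be a Poisson point process on $(0,\infty)$ with intensity $\nu$. Since $\int_{(0,\infty)}(1\wedge x)\,\nu(\dx x)<\infty$, almost surely $N$ has only finitely many atoms above each $\eps>0$ and $\sum_{i\in I}x_i<\infty$; ordering the atoms decreasingly, set
\[
  \mfU_h\ :=\ \lim_{n\to\infty}\Big[\{1,\dots,n\},\,r_n,\,{\textstyle\sum_{i=1}^n x_i\,\delta_i}\Big]\ =:\ \sideset{}{^{h}}\bigsqcup_{i\in I}\big[\{1\},0,x_i\delta_1\big],\qquad r_n(i,j)=2h\ \text{for }i\neq j,
\]
the countable $h$-concatenation of one–point spaces of masses $x_i$ (each a genuine $h$-tree, of diameter $0$). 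The limit exists in $\U(h)^\sqcup$ by a Cauchy argument: the tails $\sum_{i>n}x_i$ of the a.s.\ convergent series $\sum_i x_i$ vanish, the Gromov–Prokhorov distance between a forest and that forest with a small extra sub-forest concatenated is controlled by the added total mass, and $\U(h)^\sqcup$ is complete (Proposition~(\ref{l.frsts.clsd})); by the uniqueness in Theorem~(\ref{p.delphic}) (cf.\ Remark~(\ref{r.796})) it is independent of the enumeration, and it is a measurable functional $F(N)$ of $N$, being a pointwise limit of continuous functions of the finitely many largest atoms.

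\emph{Part (b), identifying the laws and $h$-infinite divisibility.} The total mass is $\bar\mfU_h=\sum_{i\in I}x_i$, and the exponential (Campbell) formula for Poisson processes yields $\E[e^{-t\bar\mfU_h}]=\exp\!\big(-\int_{(0,\infty)}(1-e^{-tx})\,\nu(\dx x)\big)=\E[e^{-tX}]$ by~\eqref{e1175}; since Laplace transforms separate laws on $\R_+$, $\bar\mfU_h\eqd X$. For $h$-infinite divisibility I would thin: $N\eqd N_1+\cdots+N_n$ with i.i.d.\ Poisson point processes $N_j$ of intensity $\nu/n$ (still integrating $1\wedge x$), and the functional $F$ is additive under superposition, $F(N_1+\cdots+N_n)=F(N_1)\sqcup\cdots\sqcup F(N_n)$, by commutativity and associativity of $\sqcup^h$ — valid also for countable concatenations by Theorem~(\ref{p.delphic}). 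Hence $\mfU_h=F(N)\eqd F(N_1)\sqcup\cdots\sqcup F(N_n)$ with i.i.d.\ factors in $\U(h)^\sqcup$, and since $n$ is arbitrary, $\mfU_h$ is $h$-infinitely divisible in the sense of Definition~(\ref{d2607131157}).

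\emph{Main obstacle.} The one genuinely delicate point is turning the countable concatenation into a bona fide, measurably defined element of $\U(h)^\sqcup$: this is exactly where $\int(1\wedge x)\,\nu(\dx x)<\infty$ enters, together with completeness and closedness of $\U(h)^\sqcup$ in $\U$ and the unique prime factorization of Theorem~(\ref{p.delphic}) (so that the limit of finite sub-concatenations does not depend on the enumeration). A route that avoids the explicit limit is to compute the truncated Laplace functional $L_{\mfU_h}$ on $\mcA_+(\Pi_h)$ directly — all subfamilies being singletons, a truncated monomial $\Phi_{\hu}^{m,\phi}$ only registers $m$-samples landing in a common family — and to read off from its closed form both the existence of $n$-th roots (via~\eqref{r10b} and Theorem~(\ref{p.trLap})) and the law of $\bar\mfU_h$; but the combinatorics of products of monomials makes the superposition argument cleaner, so I would keep the latter as the primary line.
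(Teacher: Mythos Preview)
Your argument for part~(a) is the same as the paper's: push the concatenation identity forward under the total-mass homomorphism (the paper phrases it via the polynomial $\Phi(\mfu)=\lambda\bar\mfu$ and~\eqref{r10b}, which is the Laplace-transform version of the same step).

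For part~(b) the two proofs diverge. The paper takes an indirect, limiting route: it approximates $X$ in law by compound Poisson variables (Klenke, Satz~16.5), builds the corresponding compound Poisson $h$-forests $\mfU^{(n)}=\textup{CPF}_h(\theta^{(n)},\hat\nu^{(n)}\otimes\mfe)$, verifies tightness, and then invokes Theorem~(\ref{t:LimInfDiv}) (proved only later in the paper) to conclude that any weak limit is $h$-infinitely divisible with the correct total-mass law. Your proof is the direct construction that the paper mentions in Remark~(\ref{r.1158}) but does not carry out: realise the full Poisson point process on $(0,\infty)$ with intensity $\nu$ at once, attach a one-point tree to each atom, and concatenate. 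Your route is more elementary and self-contained --- in particular it avoids the forward reference to Theorem~(\ref{t:LimInfDiv}) --- while the paper's route showcases the stability of infinite divisibility under weak limits and the tightness machinery of Section~\ref{s.tightness}. Both are correct; the ingredients you use (translation invariance of $d_{\textup{GPr}}$ from Lemma~(\ref{l.2.1EM}), completeness of $\U(h)^\sqcup$ from Proposition~(\ref{l.frsts.clsd}), unique factorisation from Theorem~(\ref{p.delphic})) are exactly what is needed to make the countable concatenation well defined.

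One small point you should make explicit: you only exhibit the $n$-fold factorisation at the level $h$ itself, whereas $h$-infinite divisibility in Definition~(\ref{d2607131157}) asks for it at every $h'\in(0,h]$. This follows immediately since $\tau(h')$ is a semigroup homomorphism $(\U(h)^\sqcup,\sqcup^h)\to(\U(h')^\sqcup,\sqcup^{h'})$ (Remark~(\ref{r.778})), so $\mfU_h(h')\eqd\tau(h')F(N_1)\sqcup^{h'}\cdots\sqcup^{h'}\tau(h')F(N_n)$; but it deserves a sentence.
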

\begin{remark}\label{r.1158}
The choice in (b) is not unique. A particular example is to take a star-tree with diameter $h$ with a measure $\mu$, namely $[\N,2h,\mu]$.
Here $\mu$ arises by taking the atoms of the Poisson point process on $(0,\infty)$ with intensity $\nu$ labeled bei $\N$ in decreasing size, this size giving the density of $\mu$ w.r.t. to counting measure.
\end{remark}

Note that only $(0,\infty)$-valued infinitely divisible random variables  
can occur as the total mass of an infinitely divisible random tree. 
Recall their Laplace-transform has no other part than the one we use in the r.h.w. of \eqref{e1175}. 
For the representation of the $\U$-valued state we will discuss this important 
aspect in greater detail in Section \eqref{ss.compDMZ08}. 

\begin{remark}
\label{R.idrm} 
The reader might wonder why not defining infinite divisibility using that $\nu^\mfu$ is a measure on distance matrices generating a sampling sequence using the approach as in Kallenbergs theory of random measures.
However we first note that if $ \mfU $ is not a random ultrametric {\em probability} space, we cannot define $ \nu^\mfU $.
This would be necessary to return to the setup of random measures and apply this classical theory.
However another possible definition would be to consider for a random $ \mfU $ the collection of random measures $ \{\nu^{\mfu,m}, m \in \N \}$ and to require infinite divisibility of the random measure $\nu^{\mfU,m}$ defined in \eqref{rg6} for {\em every} $m\in\N$ . 
 By equation \eqref{r10b} it turns out that the latter is implied by our definition but is not very convenient to work with since we can not define easily from their representation the needed PPP on $ \U $.
 Altogether this means the theory of random measures on $ (\R_+)^{\N \choose 2} $ cannot be used.
\end{remark}

\begin{remark}\label{r.1160}
Why does the definition of infinite divisibility include the parameter $t \in (0,\infty]$? Recall Remark (\ref{R.um.transform}) which tells us that we can define an equivalent ultrametric space $\tau^*(\mfU)$ which is then $\tau(t)$-infinitely divisible. However, we will see in Example (\ref{E2.1}) 
 that a restriction is in general necessary, i.e.~to restrict $h$ to some $(0,t]$ in \eqref{r10a}.
 % Topology is not changed
\end{remark}

\begin{example}[Compound Poisson forest (CPF)]\label{E2.1}
Fix $t>0$. Let $\theta>0$ and $\lambda\in\mcM_1(\bbU(t)^\sqcup \setminus \{0\})$. 
Let $M$ be Poiss$(\theta)$, i.e.~$M$ is a Poisson random variable with parameter $\theta$.
Let $\mfU_i$, $i\in\bbN$, be an i.i.d. sequence of random $t$-forests with $\mcL[\mfU_1]= \lambda$. 
Assume $(\mfU_i)_{i\in\bbN}\independent M$. Let
\be{rg12}
  \mfP_t:= \sideset{}{^{t}}\bigsqcup_{i=1}^M \mfU_i\,,
\end{equation}
be the canonical $t$-concatenation of $(\mfU_i)_{i\in\{1,\dotsc,M\}}$. We then refer to $\mfP$ as a 
\emph{compound Poisson $t$-forest} 
with parameters $\theta$ and $\lambda$, short, a $\textup{CPF}_t(\theta,\lambda)$.  Note if $\mfP$ is a 
$\textup{CPF}_t(\theta,\lambda)$, then $\mfP\in\bbU(t)^\sqcup$, that is, every $\textup{CPF}_t(\theta,\lambda)$ is a random $t$-forest. 
By construction CPF is infinitely divisible, since we can divide 
$M \eqd M_1 +\dots + M_n$ for $(M_i)_{1\leq i\leq n}$ i.i.d.~and Poiss$(\theta/n)$.
\end{example}

This is however not the only possibility, the general case is a {\em Poisson point process on forests} as our main result shows.
It generalizes as in the classical setting of infinite divisibility on $\R$ to limits of CPF's, i.e.~allowing in (\ref{rg12}) a countable concatenation of independent random elements which are not necessarily identically distributed.

Recall that on a Polish space $ E $, where we have defined  bounded sets together with a point infinitely far away $\mcM^\#(E)$ denotes the set of boundedly finite measures on $E$, which we will consider here for the space $E=\U(h)^\sqcup \setminus\{0\})$ with the point $0$ infinitely far away, see the 
discussion before Proposition~(\ref{p.tight-crit}).
Recall that for $ h=\infty $ we get $ E=\U\setminus \{0\} $.

\begin{theorem}[L\'evy-Khintchine representation of $ \U-$valued random variables]\label{T.LK}
An infinitely divisible random ultrametric measure space
$\mfU$ allows for a L\'evy-Khintchine representation of its Laplace functional; more precisely, there exists a unique $\lambda_\infty \in \mcM^\#(\U \setminus \{0\})$ with % $\lambda_h \in \mcM^\#(\U(h)^\sqcup \setminus \{0\})$ with
%\be{ag1b}
$\int (\bar{\mfu} \wedge 1) \lambda_\infty (\dx \mfu) < \infty$
%\ee
such that for any $h \in (0,\infty)$:
\be{ag2}
-\log L_{\mfU}(\Phi_{\hu}) = \int_{\U(h)^\sqcup \setminus \{0\}} \left(1-e^{-\Phi_{\hu}(\mfu)} \right)\, \lambda_h (\dx \mfu) \quad \forall \, \Phi \in \Pi_+ \,, \ee
for
\begin{equation}\label{e.ar1}
  \lambda_{h}(\dx \mfu) = \int_{\U \setminus \{0\}} \lambda_\infty(\dx \mfv) \, \1(\lfloor\mfv\rfloor(h) \in \dx  \mfu)  \in \mcM^\#(\U(h)^\sqcup \setminus \{0\}) \, .
\end{equation}
If $\mfU$ is merely $t$-infinitely divisible, there is a unique $\lambda_t \in \mcM^\#(\U(t)^\sqcup \setminus \{0\})$ such that $\mfu \mapsto (\bar{\mfu}\wedge 1)$ is also integrable, \eqref{ag2} holds for $h\in (0,t]$ and \eqref{e.ar1} holds with $\lambda_t$ instead of $\lambda_\infty$ for $h\in (0,t]$. In either case, 
\be{ag2b}
\lambda_h (\U(h)^\sqcup \setminus \{0\}) = - \log \P(\bar \mfU =0) \in [0,\infty] \text{ for any } h .
\ee
We refer to $\lambda_h$ as the $h$-L\'{e}vy measure and to $\lambda_\infty$ as the L\'{e}vy 
measure. 
\end{theorem}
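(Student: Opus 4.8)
The plan is to reduce the infinite-dimensional statement to the classical one-dimensional Lévy–Khintchine theorem applied separately to each truncated semi-character $\exp(-\Phi_{\hu})$, and then to assemble the family of resulting measures $\{\lambda_h\}_h$ into a single boundedly finite measure $\lambda_\infty$ on $\U\setminus\{0\}$ by a projective limit argument using the truncation consistency of Theorem~\ref{p.delphic}.

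First I would fix $h\in(0,t]$ and work in the Delphic semigroup $(\U(h)^\sqcup,\sqcup^h)$. By Theorem~\ref{p.trunc.poly}\ref{i.tr66} each $\exp(-\Phi_{\hu}(\cdot))$ is a $[0,1]$-valued semi-character, and by $h$-infinite divisibility together with \eqref{r10b} the real random variable $\Phi_{\hu}(\mfU)$ is (nonnegative) infinitely divisible for each $\Phi\in\Pi_+$; applying the classical Lévy–Khintchine representation for $[0,\infty)$-valued infinitely divisible laws gives a representing measure on $(0,\infty)$. The nontrivial point is to upgrade this family of one-dimensional representations, indexed by $\Phi$, to a single measure $\lambda_h\in\mcM^\#(\U(h)^\sqcup\setminus\{0\})$ with
\[
-\log L_{\mfU}(\Phi_{\hu}) = \int_{\U(h)^\sqcup\setminus\{0\}}\bigl(1-e^{-\Phi_{\hu}(\mfu)}\bigr)\,\lambda_h(\dx\mfu).
\]
For this I would use that $\mcA(\Pi_h)$ is convergence determining on $\U(h)^\sqcup$ (Theorem~\ref{p.trunc.poly}\ref{i.tr50}) and that the truncated monomials separate points (item~\ref{i.tr121}): the collection of exponentials $\exp(-\Phi_{\hu})$ is then a multiplicative, point-separating family of semi-characters, so the abstract theory of infinitely divisible random variables with values in a topological semigroup (as in \cite{BCR84}; see also the Delphic-semigroup machinery of Kendall and Davidson, Remark~\ref{R.delphic}) yields existence and uniqueness of the Lévy measure $\lambda_h$ on the irreducible elements, i.e.\ on $\U(h)\setminus\{0\}$, extended to $\U(h)^\sqcup\setminus\{0\}$ with no mass on reducible forests. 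Finiteness of $\int(\bar\mfu\wedge 1)\,\lambda_h(\dx\mfu)$ comes from applying the one-dimensional integrability condition to the total-mass monomial $\Phi^{1}$, using Proposition~\ref{p.mass:infdiv}, and \eqref{ag2b} follows by letting $\Phi_{\hu}\to\infty\cdot\1(\bar\mfu>0)$ (monotone limit of $c\,\bar\mfu$) and comparing with $-\log\P(\bar\mfU=0)$.

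The main obstacle is the consistency and the passage $h\uparrow t$ (or $h\uparrow\infty$): I must show that the measures $\lambda_h$ are compatible under the truncation pushforwards, i.e.\ $\lambda_{h'} = \lambda_h\circ\tau(h')^{-1}$ for $h'\le h$, and that they arise from one boundedly finite $\lambda_\infty$ on $\U\setminus\{0\}$ via \eqref{e.ar1}. Consistency is forced by uniqueness: for $h'<h$ and $\Phi\in\Pi_{h'}^+$ one has $\Phi_{\uh'} = (\Phi_{\uh'})_{\uh}$ and $\Phi_{\uh'}(\mfu) = \Phi_{\uh'}(\mfu(h'))$, so both $\lambda_{h'}$ and the pushforward $\lambda_h\circ\tau(h')^{-1}$ represent the same functional $-\log L_\mfU(\Phi_{\uh'})$ on $\mcA(\Pi_{h'}^+)$, hence coincide by the uniqueness just established; here I use Remark~\ref{r.778} that $\tau(h')$ is a semigroup homomorphism $\U(h)^\sqcup\to\U(h')^\sqcup$ and truncation consistency from Theorem~\ref{p.delphic}(b). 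Given consistency, the family $(\lambda_h)_{h\in(0,t]}$ is a projective system of boundedly finite measures along the continuous maps $\tau(h')$; since $\U(t)^\sqcup$ (resp.\ in the $t=\infty$ case one works on $\U\setminus\{0\}$ directly, noting $\mfu\mapsto(\mfu(h))_{h>0}$ recovers $\mfu$ by \eqref{e758}, Proposition~\ref{P.approxtrunk}) is Polish, Kolmogorov-type extension for boundedly finite measures on Polish spaces (Appendix on boundedly finite measures) produces the unique $\lambda_\infty\in\mcM^\#(\U(t)^\sqcup\setminus\{0\})$, resp.\ $\lambda_\infty\in\mcM^\#(\U\setminus\{0\})$, with $\lambda_h=\lambda_\infty\circ\tau(h)^{-1}$, which is exactly \eqref{e.ar1}; the integrability $\int(\bar\mfu\wedge1)\,\lambda_\infty(\dx\mfu)<\infty$ and boundedly-finiteness transfer from the $\lambda_h$ because $\bar{\mfu(h)}=\bar\mfu$ is truncation-invariant. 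I expect the delicate technical points to be (i) making the boundedly-finite extension rigorous with the point $0$ "infinitely far away" — one must check the bounded sets are the $\{\bar\mfu\ge\eps\}$ and that these are preserved and exhaust under truncation — and (ii) in the $t=\infty$ case, verifying that no mass of $\lambda_\infty$ escapes to trees of infinite diameter, which follows since for infinitely divisible $\mfU\in\U$ the diameter is a.s.\ finite and the $h$-Lévy measures have total mass $-\log\P(\bar\mfU=0)<\infty$ on $\{\bar\mfu>0\}$ uniformly in $h$.
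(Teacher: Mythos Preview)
Your consistency argument (that $\lambda_{h'}=\lambda_h\circ\tau(h')^{-1}$ by uniqueness) and the projective-limit idea for building $\lambda_\infty$ are sound and close in spirit to what the paper does. But the heart of your proof --- the existence of $\lambda_h$ as a measure on $\U(h)^\sqcup\setminus\{0\}$ --- has a real gap.

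You invoke ``the abstract theory of infinitely divisible random variables with values in a topological semigroup (as in \cite{BCR84})'' to obtain $\lambda_h$. As the paper itself warns in Section~\ref{ss.compDMZ08}, that abstract machinery only delivers a L\'evy measure on the \emph{bidual} of the semigroup of characters, together with a possible linear term and a quadratic term; it does not automatically place the measure back on $\U(h)^\sqcup\setminus\{0\}$, and the vanishing of the linear part is not free. The paper addresses both points concretely: it works with the sequence $n P_n^h\,\1(\cdot\neq 0)$ (where $P_n^h$ is the law of the $n$-th root), proves tightness of this sequence in $\mcM^\#(\U(h)^\sqcup\setminus\{0\})$ via the compactness criterion of Proposition~\ref{p.tight-crit}, and identifies the unique limit $\lambda_h$ by applying Kallenberg's L\'evy--Khintchine theorem for infinitely divisible random \emph{measures} to the vector of distance-matrix measures $\nu^{\underline m,\mfU}|_{[0,2h)^{\binom{\underline m}{2}}}$. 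A separate computation (equation \eqref{e.tr18}) shows the linear term $\pi_h^{\underline m}$ vanishes, using the modulus-of-mass-distribution estimate; this is precisely the step you have no substitute for. Your appeal to one-dimensional infinite divisibility of each $\Phi_h(\mfU)$ is not enough: it gives a family of measures on $(0,\infty)$ indexed by $\Phi$, and assembling these into one measure on $\U(h)^\sqcup$ is exactly the nontrivial content.

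Two smaller points. First, you assert $\lambda_h$ lives on the irreducible elements $\U(h)\setminus\{0\}$; in fact the theorem only claims $\lambda_h\in\mcM^\#(\U(h)^\sqcup\setminus\{0\})$, and refining the support to trees is the subject of Part~II. Second, your last sentence uses that $\lambda_h(\U(h)^\sqcup\setminus\{0\})=-\log\P(\bar\mfU=0)$ is finite ``uniformly in $h$'' to control escape of mass; but this quantity may be $+\infty$ (see \eqref{ag2b}), so that argument does not go through. The paper instead proves tightness of $(\lambda_h)_{h>1}$ directly in $\mcM^\#(\U\setminus\{0\})$ using \eqref{e:t-crit:1-e}--\eqref{e:t-crit:tmass} and the consistency relation.
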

An interested question is to find the class of random variables for which the \Levy - measure is in fact concentrated on the trees, i.e. on $ \U(h) \setminus \{0\} $ rather than on the forests $ \U(h)^\sqcup $ as in the above statement.
This will be addressed in part II where the concept of Markov random trees is introduced for $ \U-$valued random variables.

Our goal was to decompose our tree in equally distributed independent pieces such that the
collection is decomposed in a consistent way. Indeed the relation (\ref{ag2}) says that the treetop $\lfloor \mfU(t) \rfloor$ can be seen as a Poisson number of $t$-forests. Then, \eqref{e.ar1} says that $\lfloor \mfU \rfloor(h)$ is a concatenation of the \emph{same} Poisson number of objects, now $h$-forests and these $h$-forests are simply $h$-truncations of the $t$-forests.
% It is remarkable to note that the total mass of $\lambda_h$ does not depend on $h$ in the last theorem. This fact and \eqref{e.tr13} can be phrased in the words that $\mfU$ is decomposed in subfamilies not taking into account the specific $h$. The measure $\lambda_h$ is just a restriction of $\lambda_\infty$ ($\lambda_t$), more precisely the image measure under the mapping $\U \to \U(h)^\sqcup, \, \mfu \mapsto \mfu(h)$.
This is also well understood in the case of the CPF$_t(\theta,\lambda)$ where we have $\lambda_h (\dx \mfv) = \theta \int \lambda (\dx \mfu) \1(\mfu(h) \in \dx \mfv)$, see Proposition (\ref{p1308131621}).
% \begin{remark}\label{R.lambda_h:0}
%  Note that $\lambda_h(\{0\})=0$ does not imply that $\mfU \neq 0$ a.s. In fact, $\P(\mfU = 0) = \exp (-\lambda_h(\U(h)^{\sqcup} \setminus \{0\}) ) \in [0,1)$.
% \end{remark}

In \cite{ggr_tvF14} we determine the \Levy-measure in the case of the $\U$-valued Feller diffusion explicitly and give various representations for the ingredients of the decomposition described above.

\begin{definition}[Treetop canonical measure]\label{D.canonic}

For $\mfU$  $t$-infinitely divisible, the measure $\lambda_t$ is called the \emph{treetop canonical measure} and  $h < t, \lambda_h$ is the canonical measure at \emph{depth} $h$.
\end{definition}

\begin{remark} The equation \eqref{e.ar1} already exhibits aspects of the path of treetop decompositions, however since we decompose here in forests but not in {\em trees} we will want to refine the analysis and go further beyond the information coded in the semigroup related to a fixed $ h $.
\end{remark}

The last theorem allows us to give a reformulation of infinite divisibility in the sense of a Poisson cluster representation:
\begin{corollary}[Poisson cluster representation]\label{c.PCR}
 Let $\mfU$ be infinitely divisible. Then for every $h>0$ there exists a Poisson point process $N^{\lambda_h}$ on $\U(h)^\sqcup$
with intensity measure $\lambda_h \in \CM^\# (\U(h)^\sqcup \setminus\{0\})$ such that 
\be{ag1c}
\lfloor \mfU \rfloor (\hu) \eqd \bigsqcup\limits_{\mfu \in N^{\lambda_h}} \mfu \, .
\ee
If $\mfU$ is $t$-infinitely divisible, then there exists a Poisson point process on
$\U(t)^\sqcup$ such that the $h$-truncations of the points form a Poisson point process
$N^h$ with L\'evy measure $\lambda_h$ with \eqref{ag1c}.
\end{corollary}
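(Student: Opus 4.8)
The plan is to derive Corollary~\ref{c.PCR} directly from the L\'evy--Khintchine representation in Theorem~\ref{T.LK} together with the Laplace-transform characterisation in Theorem~\ref{p.trLap} and the semigroup homomorphism property in Theorem~\ref{p.trunc.poly}\ref{i.tr66}. First I would fix $h>0$ and let $N^{\lambda_h}$ be a Poisson point process on $\U(h)^\sqcup\setminus\{0\}$ with intensity $\lambda_h$; this exists because $\lambda_h\in\mcM^\#(\U(h)^\sqcup\setminus\{0\})$ is a boundedly finite measure on a Polish space (with $0$ the point infinitely far away), so Poisson existence is standard. The integrability $\int(\bar\mfu\wedge 1)\,\lambda_h(\dx\mfu)<\infty$ guaranteed by Theorem~\ref{T.LK} is exactly the condition needed to ensure that the (a priori countable) concatenation $\bigsqcup_{\mfu\in N^{\lambda_h}}\mfu$ converges, in the sense of Remark~\ref{r.796}, to an element of $\U(h)^\sqcup$: the masses of the points are summable on the part where $\bar\mfu\le 1$ and there are only finitely many points with $\bar\mfu>1$, so all finite sub-concatenations have uniformly bounded total mass and one controls convergence via the total-mass homomorphism of the Delphic semigroup (Remark~\ref{R.delphic}) together with Proposition~\ref{l.frsts.clsd} giving completeness of $\U(h)^\sqcup$.

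Next I would identify the law of $\bigsqcup_{\mfu\in N^{\lambda_h}}\mfu$ by computing its truncated Laplace functional. For $\Phi\in\Pi_+$ the map $\exp(-\Phi_{\hu}(\cdot))$ is a semigroup homomorphism from $(\U(h)^\sqcup,\sqcup^h)$ to $([0,1],\cdot)$ by Theorem~\ref{p.trunc.poly}\ref{i.tr66}, so $\exp(-\Phi_{\hu}(\bigsqcup_{\mfu\in N^{\lambda_h}}\mfu))=\prod_{\mfu\in N^{\lambda_h}}\exp(-\Phi_{\hu}(\mfu))$. Taking expectations and applying the exponential (Campbell) formula for Poisson point processes gives
\begin{equation*}
\E\Big[\exp\big(-\Phi_{\hu}(\textstyle\bigsqcup_{\mfu\in N^{\lambda_h}}\mfu)\big)\Big] = \exp\Big(-\int_{\U(h)^\sqcup\setminus\{0\}}\big(1-e^{-\Phi_{\hu}(\mfu)}\big)\,\lambda_h(\dx\mfu)\Big) = L_{\mfU}(\Phi_{\hu}),
\end{equation*}
where the last equality is precisely \eqref{ag2}. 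Since this holds for all $\Phi\in\Pi_+$, hence for all $\Phi\in\mcA_+(\Pi_h)$ by linearity and the monotone-limit remark following Definition~\ref{d:trunc}, Theorem~\ref{p.trLap}\ref{i1006132102a} forces $\bigsqcup_{\mfu\in N^{\lambda_h}}\mfu\eqd\mfU(h)$, which is \eqref{ag1c}. The $t$-infinitely-divisible case is identical with $\lambda_h$ built from $\lambda_t$ via \eqref{e.ar1} and $h\in(0,t]$.

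For the consistency statement in the second part, I would not take the $N^{\lambda_h}$ independently for each $h$, but instead construct a single Poisson point process $N$ on $\U(t)^\sqcup\setminus\{0\}$ (or on $\U\setminus\{0\}$ in the fully infinitely divisible case) with intensity $\lambda_t$ (resp.\ $\lambda_\infty$), and then set $N^h:=\tau(h)_*N$, the pushforward of the point configuration under the $h$-truncation map $\tau(h)$. By the mapping theorem for Poisson point processes, $N^h$ is a Poisson point process, and its intensity is the pushforward $\tau(h)_*\lambda_t$, which is exactly $\lambda_h$ by formula \eqref{e.ar1}. Then $\bigsqcup_{\mfv\in N^h}\mfv=\bigsqcup_{\mfu\in N}\mfu(h)=\tau(h)\big(\bigsqcup_{\mfu\in N}\mfu\big)$ using that $\tau(h)$ is a semigroup homomorphism (Remark~\ref{r.778}) continuous for countable concatenations (Proposition~\ref{p.tr.cont}), and the right-hand side is $\mfU(h)$ in law by the first part applied at level $t$ followed by truncation, so \eqref{ag1c} holds simultaneously for all $h\in(0,t]$ with the single consistent family $(N^h)_h$.

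The main obstacle I expect is the careful handling of the countable concatenation: one must check that $\bigsqcup_{\mfu\in N^{\lambda_h}}\mfu$ is well-defined almost surely as an element of $\U(h)^\sqcup$ in the sense of Remark~\ref{r.796} (existence of the limit over all choices of finite subsets), and that pushing the concatenation through $\tau(h)$ commutes with taking the countable limit. Both rely on the integrability condition $\int(\bar\mfu\wedge 1)\,\lambda_h(\dx\mfu)<\infty$ and on the continuity/homomorphism properties already established, but making the interchange of limit and homomorphism rigorous — i.e.\ that $\exp(-\Phi_{\hu}(\cdot))$ and $\tau(h)$ are sequentially continuous along the approximating finite concatenations in the Gromov--Prokhorov topology — is the technical heart; everything else is a direct application of standard Poisson point process calculus (Campbell's formula, the mapping theorem) combined with the already-proven uniqueness of Laplace functionals on $\U(h)^\sqcup$.
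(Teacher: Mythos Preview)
Your proposal is correct and follows essentially the same approach as the paper: define the concatenation of the Poisson points, compute its truncated Laplace functional via the homomorphism property (Proposition~\ref{p2907131206}) and the Poisson exponential formula, match it with \eqref{ag2}, and conclude equality in law from Theorem~\ref{p.trLap}. If anything, you are more careful than the paper, which gives only the three-line Laplace computation and does not explicitly address the almost-sure well-definedness of the countable concatenation or spell out the mapping-theorem argument for the $t$-infinitely divisible consistency claim.
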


%To conclude this subsection recall the simple fact that the Poisson distribution can be seen as a limit of a binomial distribution.
The role of the L\'evy measure is underlined by the following convergence criterion analogous to Theorem 13.14 in \cite{Kall03}, which we prove in Section \eqref{ss.further}.

\begin{theorem}[Convergence and infinite divisibility]\label{t:LimInfDiv}
 Let $h>0$. Assume $\mfU_m$ are $h$-infinitely divisible random trees and $\mfU_m\Longrightarrow \mfU$ where $ \lambda_h^{(m)} $ are the \Levy  measures for $ \mfU_m $. 
 Then $\mfU$ is $h$-infinitely divisible.
Moreover $\mfU_m\Longrightarrow\mfU$ iff $\lambda^{(m)}_h\Longrightarrow \lambda_h$ on 
$\mcM^\#(\bbU(h)^\sqcup)$.
\end{theorem}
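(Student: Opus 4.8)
The plan is to translate the statement into convergence of truncated Laplace functionals and then run, in this semigroup setting, the classical argument for convergence of infinitely divisible laws (Kallenberg~\cite{Kall03}, Theorem~13.14), with $\lambda_h$ playing the role of the L\'evy measure. By Theorem~(\ref{p.trLap}\ref{i1006132102b}), $\mfU_m\Longrightarrow\mfU$ on $\U(h)^\sqcup$ is equivalent to $L_{\mfU_m}(\Phi)\to L_\mfU(\Phi)$ for all $\Phi\in\mcA_+(\Pi_h)$; and by the depth-$h$ case of Theorem~(\ref{T.LK}), together with the homomorphism property of Theorem~(\ref{p.trunc.poly}\ref{i.tr66}) and monotone approximation of the truncated monomials $\Phi_{\hu}$ from below by elements of $\mcA_+(\Pi_h)$, each $h$-infinitely divisible $\mfU_m$ satisfies
\begin{equation}\label{e.plan1}
-\log L_{\mfU_m}(\Phi)=\int_{\U(h)^\sqcup\setminus\{0\}}\bigl(1-e^{-\Phi(\mfu)}\bigr)\,\lambda_h^{(m)}(\dx\mfu),\qquad \Phi\in\mcA_+(\Pi_h),
\end{equation}
and likewise with $\mfU,\lambda_h$. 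So the theorem reduces to: the right-hand sides of \eqref{e.plan1} converge for every $\Phi\in\mcA_+(\Pi_h)$ iff $\lambda_h^{(m)}\to\lambda_h$ in $\mcM^\#(\U(h)^\sqcup\setminus\{0\})$, the limit being automatically of L\'evy type. Note that for $\Phi\in\mcA_+(\Pi_h)$ the integrand is bounded, continuous on $\U$ (the extension by $0$ of its test function is bounded continuous), and controlled near $0$ by $0\le 1-e^{-\Phi(\mfu)}\le\Phi(\mfu)\le C_\Phi(\bar\mfu\wedge1)$.

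For ``$\mfU_m\Longrightarrow\mfU$ implies $\mfU$ is $h$-infinitely divisible and $\lambda_h^{(m)}\to\lambda_h$'', I would first project to total masses: $\mfu\mapsto\bar\mfu$ being continuous, $\bar\mfU_m\Longrightarrow\bar\mfU$ in $\R_+$, and these are infinitely divisible with L\'evy measures $\bnu_m=\lambda_h^{(m)}\circ(\bar{\,\cdot\,})^{-1}$; the classical characterization of convergence of subordinator laws then supplies (i) $\bnu_m\to\bnu$ vaguely on $(0,\infty]$, (ii) $\sup_m\bnu_m([\epsilon,\infty))<\infty$ for each $\epsilon>0$, and (iii) $\lim_{\epsilon\downarrow0}\limsup_m\int_{(0,\epsilon)}x\,\bnu_m(\dx x)=0$. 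From (ii), the tightness of $(\mfU_m)$ in $\U$, and the Poisson representation of Corollary~(\ref{c.PCR}) I would deduce, via the tightness criterion cited as Proposition~(\ref{p.tight-crit}), that $\lambda_h^{(m)}|_{\{\bar\mfu\ge\epsilon\}}$ is a tight family of finite measures for every $\epsilon>0$; hence $(\lambda_h^{(m)})$ is relatively compact in $\mcM^\#(\U(h)^\sqcup\setminus\{0\})$. Letting $\lambda$ be a subsequential weak-$\#$ limit, I pass to the limit in \eqref{e.plan1} along that subsequence by splitting the integral at $\{\bar\mfu\ge\epsilon\}$: on this set $1-e^{-\Phi}$ is bounded, continuous and of bounded support, so weak-$\#$ convergence applies; on $\{\bar\mfu<\epsilon\}$ the bound $1-e^{-\Phi}\le C_\Phi(\bar\mfu\wedge1)$ together with (iii) makes the tail uniformly negligible. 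This yields $-\log L_\mfU(\Phi)=\int(1-e^{-\Phi(\mfu)})\lambda(\dx\mfu)$ for all $\Phi\in\mcA_+(\Pi_h)$; taking $\Phi(\mfu)=c\bar\mfu$ shows $\int(\bar\mfu\wedge1)\lambda(\dx\mfu)<\infty$, so $\lambda$ is a genuine $h$-L\'evy measure and the associated Poisson point process (Corollary~(\ref{c.PCR})) realizes $\mfU(h)$; thus $\mfU$ is $h$-infinitely divisible with $h$-L\'evy measure $\lambda$. By the uniqueness clause of Theorem~(\ref{T.LK}), $\lambda=\lambda_h$, and since every subsequence has this same limit, $\lambda_h^{(m)}\to\lambda_h$ along the full sequence.

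For the converse, assume $\mfU$ is $h$-infinitely divisible with $h$-L\'evy measure $\lambda_h$ and $\lambda_h^{(m)}\to\lambda_h$ in $\mcM^\#(\U(h)^\sqcup\setminus\{0\})$. Since $\bar{\,\cdot\,}$ is continuous and proper (preimages of sets bounded away from $0$ are bounded away from $0$), the push-forwards satisfy $\bnu_m\to\bnu$; from $\int_{(0,\epsilon)}x\,\bnu(\dx x)\to0$ and vague convergence on $(0,\infty]$ one recovers the uniform estimate (iii) for the $\bnu_m$ by the standard truncation argument for subordinator L\'evy measures. With (iii) in hand, the same splitting of \eqref{e.plan1} at $\{\bar\mfu\ge\epsilon\}$ gives $-\log L_{\mfU_m}(\Phi)\to-\log L_\mfU(\Phi)$, hence $L_{\mfU_m}(\Phi)\to L_\mfU(\Phi)$, for every $\Phi\in\mcA_+(\Pi_h)$, and Theorem~(\ref{p.trLap}\ref{i1006132102b}) yields $\mfU_m\Longrightarrow\mfU$. (By \eqref{ag2b} the total L\'evy masses may be infinite; this is harmless since only $(\bar\mfu\wedge1)$-integrability is used anywhere.)

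The step I expect to be the main obstacle is the tightness of $\lambda_h^{(m)}|_{\{\bar\mfu\ge\epsilon\}}$ in $\mcM^\#(\U(h)^\sqcup\setminus\{0\})$ extracted from the weak convergence of the $\mfU_m$: bounding the total mass on $\{\bar\mfu\ge\epsilon\}$ is immediate since it equals $\bnu_m([\epsilon,\infty))$, but excluding escape of mass ``to infinity'' inside the Gromov-weak geometry of $\U(h)^\sqcup$ requires converting Gromov-weak tightness of $\mfU_m$ into compact containment of the mass of $\lambda_h^{(m)}$ on $\{\bar\mfu\ge\epsilon\}$ --- an atom of $\lambda_h^{(m)}$ of mass $\ge\epsilon$ must be comparable, in distance-matrix terms, to the law of $\mfU_m$ itself. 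This is precisely where the compound-Poisson/Poisson-point-process representation and the tightness criterion (Proposition~(\ref{p.tight-crit})) carry the weight. A secondary, more routine, delicate point is the uniform integrability (iii) near $0$: classical for subordinators, but to be invoked with care because the total L\'evy masses can be $\infty$.
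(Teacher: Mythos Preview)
Your forward direction (limits of $h$-infdiv are $h$-infdiv, and $\lambda_h^{(m)}\to\lambda_h$) is correct but takes a genuinely different route from the paper. You work analytically with the L\'evy measures: project to total masses, import the classical subordinator estimates, then argue tightness of $(\lambda_h^{(m)})_m$ from the tightness criterion of Proposition~(\ref{p.tight-crit}) via the Poisson representation and the additivity of $v_\delta$ (Lemma~(\ref{l.2.7EM}\ref{i.2.7EM.b})). The paper instead never touches the L\'evy measures until the very end: it works with the $n$-th roots $\mfU_m^{(n)}$ directly, observes that $\mfU_m^{(n)}\le_h\mfU_m$ almost surely after a Skorohod coupling, and invokes Proposition~(\ref{p:OrderTight}) (tightness via stochastic domination) to get $\{\mfU_m^{(n)}:m\in\N\}$ tight for each fixed $n$; subsequential limits then satisfy $L_{\mfU^{(n)}}(\Phi)=(L_\mfU(\Phi))^{1/n}$, which is infinite divisibility of $\mfU$. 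The paper's route is shorter because the order structure delivers compactness in one stroke; your route is the natural ``analytic'' one and has the merit of identifying $\lambda_h$ as a genuine limit of $\lambda_h^{(m)}$ along the way, rather than as an afterthought.

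Your converse direction has a real gap. You claim that from weak-$\#$ convergence $\lambda_h^{(m)}\to\lambda_h$ one recovers, via the pushforwards $\bnu_m\to\bnu$ on $(0,\infty]$, the uniform smallness $\lim_{\epsilon\downarrow0}\limsup_m\int_{(0,\epsilon)}x\,\bnu_m(\dx x)=0$ ``by the standard truncation argument for subordinator L\'evy measures''. This does not follow. Take $\lambda_h^{(m)}=m\,\delta_{\mfu_m}$ with $\mfu_m$ a single point of mass $1/m$: every bounded set in $\U(h)^\sqcup\setminus\{0\}$ (with the metric \eqref{e.tr35}) is contained in some $\{\bar\mfu\ge\epsilon\}$, so $\lambda_h^{(m)}\to0$ in $\mcM^\#(\U(h)^\sqcup\setminus\{0\})$; yet $\int_{(0,\epsilon)}x\,\bnu_m(\dx x)=1$ for $m>1/\epsilon$, and indeed $\bar\mfU_m=\mathrm{Poiss}(m)/m\Rightarrow1\neq0$. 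Vague convergence of L\'evy measures near $0$ does not by itself prevent mass from sliding into a drift; the classical equivalence you cite (Kallenberg, Thm.~13.14) needs exactly this extra condition, it is not a consequence of weak-$\#$ convergence alone. So the ``if'' half, as you have argued it, does not go through without an additional hypothesis controlling $\int(\bar\mfu\wedge1)\,\lambda_h^{(m)}(\dx\mfu)$ uniformly. (The paper's own proof is also silent on this half; the direction it actually uses, and proves cleanly via domination, is the forward one.)
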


There are various classes of infinitely divisible random trees, as this is the case in classical $ \R $-valued or even Banach-space valued random variables, which are characterized by different type of properties and which correspond to special forms of the L\'evy-measure, see here Section \eqref{ss.compDMZ08} for references.
Among these are in our case certain random trees, which arise in {\em branching processes}. 
We will discuss what the appropriate concepts are in the realm of random trees, i.e. random ultrametric measure spaces. We discuss this in the next subsections.

\subsection{Genealogies of branching processes and infinite divisibility}\label{ss.branchproc}

We give now a first idea how we can work with the collection of semigroups of \eqref{e780} to study {\em branching processes} on the level of genealogies and we will show that branching processes always have infinitely divisible marginals.
\begin{definition}[Concatenation and Branching Property]~\label{D.concbp}
 \begin{enumerate} 
 \item Let $h \geq 0$. For $P_1,P_2 \in \mcM_1(\U(h)^{\sqcup})$ let us define their $h$-\emph{convolution} $P_1 \ast^h P_2$ by
    \begin{equation}\label{e.tr222}
     P_1 \ast^h P_2 (A) = \int P_1(\dx \mfu_1) \int P_2(\dx \mfu_2) \1(\mfu_1 \sqcup^{h} \mfu_2 \in A), \quad A \in \mcB(\U).
    \end{equation}
 \item Now, let $(Q_t)_{t\geq 0}$ be a semigroup of probability kernels on $\U \times \mcB(\U)$.
    We say that the semigroup $(Q_t)_{t\geq0}$ has the \emph{branching property} if for all $h\geq 0$:
    \begin{equation}\label{e.tr223}
      Q_t(\mfu \sqcup^h \mfv, A) = Q_t(\mfu, \cdot) \ast^h  Q_t(\mfv,  \cdot) (A), \qquad A \in \mcB(\U(t+h)),\ \mfu, \mfv \in \U(h)^\sqcup,\ t\geq 0.
    \end{equation}
 %\item A semigroup $(Q_t)_{t\geq 0}$ of probability kernels on $\U \times \mcB(\U)$ is said to be a \emph{Feller semigroup} if the mapping  $\mfu \mapsto \int Q_t(\mfu,\dx \mfv) f(\mfv)$ is continuous for any $f \in C_b(\U)$.
 \end{enumerate}
\end{definition}
The Markov process generated by a semigroup  and an initial value has the branching property if its semigroup has the branching property.

The Markov process describing the genealogy of the Feller continuum state branching diffusion is such a process, see Section~\ref{ss.exam}.

Clearly, the convolution defined above induces a semigroup structure on the 
probability measures on $\bbU(h)^\sqcup$. 
 As a first 
 application of the convolution we may formulate the following consequence of Theorem 
 (\ref{t:LimInfDiv}) which is already stated for groups in Theorem IV.4.1 of \cite{Par75}.

\begin{proposition}[Subsemigroup of infinitely divisible probability measures]\label{p.IV.4.1}
 The set of infinitely divisible probability measures is a closed sub-semigroup of all probability 
measures together with convolution $(\mcM_1(\U(h)^\sqcup), \ast^h)$.
\end{proposition}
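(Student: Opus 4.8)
The plan is to prove two things: that the set $\mathcal{I}$ of infinitely divisible probability measures on $\U(h)^\sqcup$ is closed under the convolution $\ast^h$, and that it is closed in the topology of weak convergence (which on $\U(h)^\sqcup$, by Theorem~\ref{p.trLap}, is the topology generated by the truncated Laplace functionals). Both parts should follow quickly from what has been established, with Theorem~\ref{t:LimInfDiv} doing the heavy lifting for the closedness-under-limits part.

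For the \emph{semigroup} part, let $P, Q \in \mathcal{I}$, so for each $n \in \bbN$ there are i.i.d.\ factors giving $P = P_n^{\ast^h n}$ and $Q = Q_n^{\ast^h n}$ in the convolution semigroup $(\mcM_1(\U(h)^\sqcup), \ast^h)$. Since $\ast^h$ is commutative and associative (it is the pushforward under the commutative, associative operation $\sqcup^h$, which is a topological semigroup by Theorem~\ref{p.delphic}), we get $P \ast^h Q = (P_n \ast^h Q_n)^{\ast^h n}$, exhibiting an $n$-th convolution root of $P \ast^h Q$ for every $n$; hence $P \ast^h Q \in \mathcal{I}$. Concretely: if $\mfU_1,\dots,\mfU_n$ are i.i.d.\ with law $P_n$ and $\mfV_1,\dots,\mfV_n$ are i.i.d.\ with law $Q_n$, all independent, then $\mfU \sqcup^h \mfV \eqd \bigsqcup^h_{i=1}^n (\mfU_i \sqcup^h \mfV_i)$ by rearranging the concatenation, and the summands $\mfU_i \sqcup^h \mfV_i$ are i.i.d. One should also note $\mathcal{I}$ contains the identity of the convolution semigroup, $\delta_0$ (the law of the null forest), since $\delta_0 = \delta_0^{\ast^h n}$.

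For the \emph{closedness} part, suppose $P_m \in \mathcal{I}$ and $P_m \Rightarrow P$ in $\mcM_1(\U(h)^\sqcup)$. Each $P_m$ is the law of an $h$-infinitely divisible random $h$-forest $\mfU_m$, and $\mfU_m \Rightarrow \mfU$ with $\mcL[\mfU] = P$. Theorem~\ref{t:LimInfDiv}, applied with this fixed $h$, states precisely that under these hypotheses $\mfU$ is $h$-infinitely divisible; equivalently $P \in \mathcal{I}$. (If one wants to be careful about the distinction between ``$h$-infinitely divisible'' as in Definition~\ref{d2607131157} for $\U$-valued variables versus for $\U(h)^\sqcup$-valued ones, note that for a random $h$-forest $\mfU$ we have $\mfU(h) = \mfU$, so the two notions coincide, and the factor random variables $\mfU_i^{(h,n)}$ required by the definition already live in $\U(h)^\sqcup$ by fiat.) This proves $\mathcal{I}$ is a closed subset.

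The main obstacle, such as it is, is bookkeeping rather than mathematics: one must make sure the combinatorial rearrangement $\bigsqcup^h_{i=1}^n(\mfU_i \sqcup^h \mfV_i) \eqd \big(\bigsqcup^h_{i=1}^n \mfU_i\big) \sqcup^h \big(\bigsqcup^h_{i=1}^n \mfV_i\big)$ is legitimate at the level of \emph{equivalence classes} in $\U(h)^\sqcup$ — i.e.\ that $\sqcup^h$ is well-defined on isomorphy classes and genuinely commutative/associative there — but this is exactly the content of Theorem~\ref{p.delphic} (the Delphic semigroup structure) and Remark~\ref{r.796} on countable concatenations, so it may simply be cited. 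One could alternatively bypass the pathwise argument entirely and work with Laplace functionals: $L_{P \ast^h Q}(\Phi_h) = L_P(\Phi_h) L_Q(\Phi_h)$ for $\Phi \in \Pi_+$ by Theorem~\ref{p.trunc.poly}(b), and infinite divisibility of $P \ast^h Q$ then reduces to the fact that a product of two Laplace functionals each admitting $n$-th roots in the class of such functionals again admits $n$-th roots, which together with Theorem~\ref{p.trLap}(a) identifies the root as a genuine probability measure. Either route is short; I would present the pathwise version for transparency and remark on the analytic one.
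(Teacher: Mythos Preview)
Your proof is correct. Both you and the paper handle the topological closedness identically, by citing Theorem~\ref{t:LimInfDiv} (the paper does so in the sentence introducing the proposition rather than in the formal proof). For the semigroup closure, however, you take a different and more elementary route than the paper.

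The paper's argument uses the L\'evy--Khintchine representation (Theorem~\ref{T.LK}): it computes
\[
-\log (P \ast^h Q)[e^{-\Phi_h}] = \int (1 - e^{-\Phi_h(\mfu)})\,(\lambda_h^P + \lambda_h^Q)(\dx\mfu),
\]
thereby showing directly that $P \ast^h Q$ is infinitely divisible with L\'evy measure $\lambda_h^P + \lambda_h^Q$. Your argument instead exhibits explicit $n$-th convolution roots $P_n \ast^h Q_n$ by rearranging the concatenation, using only commutativity and associativity of $\sqcup^h$. Your approach avoids invoking the full L\'evy--Khintchine machinery and is closer in spirit to the classical group-theoretic argument (cf.\ the paper's citation of Parthasarathy). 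The paper's approach, on the other hand, yields the bonus information that the L\'evy measure is additive under convolution, which is useful downstream. Your ``analytic alternative'' via $L_{P \ast^h Q}(\Phi_h) = L_P(\Phi_h)L_Q(\Phi_h)$ is essentially what the paper does, just stopping one step short of plugging in the L\'evy--Khintchine formula.

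One small caveat: by Definition~\ref{d2607131157} the paper explicitly excludes the identically-zero random variable from being called infinitely divisible, so $\delta_0 \notin \mathcal{I}$ in the paper's convention. This does not affect the sub-semigroup claim (a sub-semigroup need not contain the neutral element), but you should drop the remark about $\delta_0$.
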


It is a classical statement that continuous state branching processes have marginal distributions which are infinitely divisible distribution on $[0,\infty)$. 
We can derive the following result in our context.
\begin{theorem}[$ \U-$valued branching processes have infinitely divisible marginals]\label{THM:INFDIV:BRAN}\mbox{}\\
Let $h>0$ and $\pi\in\mcM_1(\bbU(h)^\sqcup)$ be $h$-infinitely divisible.
Suppose $(Q_t)_{t\geq0}$ is a semigroup which has the branching 
property.
Assume that $(\mfU_t)_{t\geq0}$ is the stochastic process induced by $(\pi Q_t)_{t\geq0}$.
Then $\mfU_t$ is $(t+h)$-infinitely divisible.
\end{theorem}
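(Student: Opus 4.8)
The plan is to reduce the statement to infinite divisibility of a single random $(t+h)$-forest, namely $\mfU_t(t+h)$, by exploiting the branching property together with the $h$-infinite divisibility of the initial law $\pi$. First I would fix $n \in \N$ and, using that $\pi$ is $h$-infinitely divisible, write $\pi$ as the $n$-fold $h$-convolution $\pi = \pi_n \ast^h \cdots \ast^h \pi_n$ of some $\pi_n \in \mcM_1(\U(h)^\sqcup)$; equivalently, realize $\mfU_0 \eqd \mfV_1 \sqcup^h \cdots \sqcup^h \mfV_n$ with $\mfV_1,\dots,\mfV_n$ i.i.d.\ of law $\pi_n$. The key is then that the branching property \eqref{e.tr223} lets us push this decomposition forward through the dynamics: since $\mfV_i \in \U(h)^\sqcup$, we have for $A \in \mcB(\U(t+h))$
\be{plan1}
(\pi Q_t)(A) = \int \pi(\dx \mfu)\, Q_t(\mfu, A) = \int \pi_n^{\otimes n}(\dx \mfv_1,\dots,\dx\mfv_n)\, Q_t(\mfv_1 \sqcup^h \cdots \sqcup^h \mfv_n, A),
\ee
and iterating \eqref{e.tr223} gives $Q_t(\mfv_1 \sqcup^h \cdots \sqcup^h \mfv_n, \cdot) = Q_t(\mfv_1,\cdot) \ast^{h} \cdots \ast^{h} Q_t(\mfv_n,\cdot)$ as a measure on $\mcB(\U(t+h))$. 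Hence $\pi Q_t = (\pi_n Q_t) \ast^h \cdots \ast^h (\pi_n Q_t)$ ($n$ factors), i.e.\ the law of $\mfU_t$, restricted to $\U(t+h)$, is an $n$-fold $h$-convolution.

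The remaining point is to upgrade this "$h$-convolution" statement to genuine $(t+h)$-infinite divisibility, i.e.\ to show $\mfU_t(t+h)$ is a concatenation of $n$ i.i.d.\ $(t+h)$-forests for every $n$. Here I would argue: if $\mfW_1,\dots,\mfW_n$ are i.i.d.\ with law $\pi_n Q_t$ (a law on $\U$, and since the kernels map into $\U(t+h)$ — recall $\pi$ sits on $\U(h)^\sqcup$ and $A$ ranges over $\mcB(\U(t+h))$ in \eqref{e.tr223} — actually on $\U(t+h)^\sqcup$), then $\mfU_t \eqd \mfW_1 \sqcup^h \cdots \sqcup^h \mfW_n$. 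Now I apply the $(t+h)$-truncation $\tau(t+h)$, which by Remark~\ref{r.778} is a homomorphism of the topological semigroups, together with the elementary compatibility $\tau(t+h)$ acts as the identity on $\U(t+h)^\sqcup$ and satisfies $\mfu(t+h) \sqcup^h \mfv(t+h) = (\mfu \sqcup^h \mfv)(t+h)$ when restricted appropriately. Since each $\mfW_i$ already lies in $\U(t+h)^\sqcup$, one checks that $\mfW_1 \sqcup^h \cdots \sqcup^h \mfW_n$, viewed via its $(t+h)$-top, is the same as the $(t+h)$-concatenation $\mfW_1 \sqcup^{t+h} \cdots \sqcup^{t+h} \mfW_n$ — this uses that all realized distances between the blocks are $2h \le 2(t+h)$, so truncating at level $t+h$ does not change them. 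Thus $\mfU_t(t+h) \eqd \mfW_1 \sqcup^{t+h} \cdots \sqcup^{t+h} \mfW_n$ with the $\mfW_i$ i.i.d.\ in $\U(t+h)^\sqcup$, which is exactly Definition~\ref{d2607131157} for $t$-infinite divisibility with parameter $t+h$ (valid for all $h' \in (0,t+h]$, after also checking the claim for truncation levels below $h$, which follows from truncation-consistency in Theorem~\ref{p.delphic}(b)).

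**Main obstacle.** The delicate step is the bookkeeping in matching the $h$-concatenation used in the branching property with the $(t+h)$-concatenation required by the definition of $(t+h)$-infinite divisibility, and in particular verifying that $\tau(t+h)$ intertwines them correctly on the relevant subspaces — i.e.\ that concatenating at "height $h$" and then truncating at "height $t+h$" is the same as concatenating $(t+h)$-tops at height $t+h$. This is geometrically clear (the glue distance $2h$ is below the truncation threshold $2(t+h)$, and each $\mfW_i \in \U(t+h)^\sqcup$ is unaffected by $\tau(t+h)$), but it must be done at the level of equivalence classes in $\U$, so I would phrase it via the truncated polynomials $\Pi_{t+h}$ and Theorem~\ref{p.trunc.poly}\eqref{i.tr65}: each $\Phi_{\overline{t+h}}$ is additive under $\sqcup^h$ on forests of diameter $\le 2(t+h)$, which forces the two concatenations to have the same law. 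Everything else — the pushforward of the product measure through $Q_t$, measurability, the Poissonization of the index — is routine given Theorems~\ref{p.delphic},~\ref{p.trunc.poly},~\ref{p.trLap} and the definition of the branching property.
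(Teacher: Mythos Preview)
Your proposal is correct and follows essentially the same route as the paper: decompose the initial law $\pi$ as an $n$-fold $h$-convolution, push this through $Q_t$ via the branching property \eqref{e.tr223}, and read off that the law of $\mfU_t$ is an $n$-fold convolution. The paper streamlines the ``bookkeeping'' step you flag by working directly with Laplace transforms: for $\Phi \in \mcA(\Pi_{t+h})$ it computes $L_{\mfU_t}(\Phi)$ and uses the branching property together with additivity of $\Phi$ under concatenation (exactly Theorem~\ref{p.trunc.poly}\eqref{i.tr65}, as you note) to factorize it as $(L_{\mfU_n}(\Phi))^n$, which is the equivalent characterization \eqref{r10b}; your reconciliation of $\sqcup^h$ with $\sqcup^{t+h}$ via truncated polynomials is thus precisely the content of that computation, just unpacked.
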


Another key feature of branching processes is that processes can be realized jointly for different initial values.
This feature is also conserved in the setting of $h$-forests as the next proposition shows.
Recall the partial order $\leq_h$ from Definition~(\ref{d.po}) and denote the corresponding stochastic order by $\preccurlyeq_h$.

\begin{proposition}[Joint realization of branching processes]\label{t:branchorder}
Let $\mfU=(\mfU_t)_{t\geq0}$, $\mfV=(\mfV_t)_{t\geq0}$ be branching processes with the same 
semigroup such that $\bbU(h)^\sqcup\ni \mfU_0=\mfu\leq_h  \mfv=\mfV_0\in\bbU(h)^\sqcup$. Then 
$\mfU_t\preccurlyeq_{h'} \mfV_t$ for all $h' \in (0,t+h)$ and $t\geq0$.
\end{proposition}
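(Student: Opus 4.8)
The plan is to reduce the statement to the monotonicity of truncated polynomials (Corollary~\ref{c.1072}) together with the branching property and the characterization of stochastic order via monotone test functions. First I would recall that $\mfU_t \preccurlyeq_{h'} \mfV_t$ means $\mfU_t(h') \preccurlyeq_{h'} \mfV_t(h')$ as random elements of $\bbU(h')^\sqcup$, and that, since the cone of nonnegative truncated monomials $\Pi_{h',+}$ is order-determining on $(\bbU(h')^\sqcup,\leq_{h'})$ by Corollary~\ref{c.1072} and convergence-determining by Theorem~\ref{p.trunc.poly}\ref{i.tr50}, it suffices to show
\begin{equation}\label{e.bp.plan}
\E\!\left[\exp\!\big(-\Phi_{\underline{h'}}(\mfU_t)\big)\right] \;\geq\; \E\!\left[\exp\!\big(-\Phi_{\underline{h'}}(\mfV_t)\big)\right] \qquad \forall\, \Phi \in \Pi_+,\ h' \in (0,t+h).
\end{equation}
(One should check that the duality between $\preccurlyeq$ and expectations of $\exp(-\Phi_{\underline{h'}})$ over $\Phi\in\Pi_+$ is indeed valid on $\bbU(h')^\sqcup$; this is the content of combining Corollary~\ref{c.1072} with Theorem~\ref{p.trLap}, and I would state it as a preliminary lemma if it is not already isolated.)

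Next I would exploit the branching property in the form \eqref{e.tr223}. Writing $\mfu \leq_h \mfv$ as $\mfv = \mfu \sqcup^h \mfw$ for some $\mfw \in \bbU(h)^\sqcup$ (Definition~\ref{d.po}), the branching property gives $Q_t(\mfv,\cdot) = Q_t(\mfu,\cdot) \ast^h Q_t(\mfw,\cdot)$, so that $\mfV_t \eqd \mfU_t \sqcup^h \mfW_t$ with $\mfU_t,\mfW_t$ independent and $\mfW_t$ distributed as $Q_t(\mfw,\cdot)$. Applying the $h$-truncation $\tau(h')$: by Remark~\ref{r.778} (truncation is a semigroup homomorphism $\bbU(h)^\sqcup \to \bbU(h')^\sqcup$ for $h'\leq h$; for $h' \in (h, t+h)$ one uses instead that $\tau(h')$ restricted to $\bbU(t+h)^\sqcup$ is a homomorphism, and all states live in $\bbU(t+h)^\sqcup$ since the semigroup maps $\bbU(h)^\sqcup$ into $\bbU(t+h)$ by the measurability constraint in \eqref{e.tr223}), we get $\mfV_t(h') \eqd \mfU_t(h') \sqcup^{h'} \mfW_t(h')$. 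Now $\Phi_{\underline{h'}}$ is an additive homomorphism on $(\bbU(h')^\sqcup,\sqcup^{h'})$ by Theorem~\ref{p.trunc.poly}\ref{i.tr65}, hence $\exp(-\Phi_{\underline{h'}})$ is multiplicative, and independence yields
\begin{equation}\label{e.bp.plan2}
\E\!\left[\exp\!\big(-\Phi_{\underline{h'}}(\mfV_t)\big)\right] = \E\!\left[\exp\!\big(-\Phi_{\underline{h'}}(\mfU_t)\big)\right]\cdot \E\!\left[\exp\!\big(-\Phi_{\underline{h'}}(\mfW_t)\big)\right] \leq \E\!\left[\exp\!\big(-\Phi_{\underline{h'}}(\mfU_t)\big)\right],
\end{equation}
since $\Phi \in \Pi_+$ forces $\Phi_{\underline{h'}} \geq 0$ and so the second factor lies in $[0,1]$. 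Together with the reduction \eqref{e.bp.plan} this proves the claim.

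I expect the main obstacle to be bookkeeping around the range of $h'$: the inequality should hold for all $h' \in (0, t+h)$, not just $h' \leq h$, and for $h' > h$ the map $\tau(h')$ is not literally a homomorphism on $\bbU(h)^\sqcup$ but only on larger forest semigroups, so one must track that the branching semigroup sends $\bbU(h)^\sqcup$ into $\bbU(t+h)^\sqcup$ (which is exactly why $A$ ranges over $\mcB(\bbU(t+h))$ in \eqref{e.tr223}) and invoke Remark~\ref{r.778} on $\bbU(t+h)^\sqcup$. A second, more minor, point is to make the order-duality argument rigorous: one needs that $\mfU_t(h') \preccurlyeq_{h'} \mfV_t(h')$ is \emph{equivalent} to \eqref{e.bp.plan}, which requires knowing that the nonnegative truncated monomials separate the stochastic order on $\bbU(h')^\sqcup$ — this is plausible from Corollary~\ref{c.1072} plus Strassen-type duality but may need the closedness of the order (Proposition~\ref{l.frsts.clsd}) and the Polishness of $\bbU(h')^\sqcup$; if the paper does not already contain such a lemma, I would prove it first. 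Finally one should note that the conclusion is stated for $t \geq 0$ with $\mfU_0, \mfV_0 \in \bbU(h)^\sqcup$, so the case $t=0$ is just the hypothesis $\mfu \leq_h \mfv$ and needs no argument.
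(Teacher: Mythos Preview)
Your core idea matches the paper's: write $\mfv = \mfu \sqcup^h \mfw$ via Definition~\ref{d.po}, use the branching property to decompose $Q_t(\mfv,\cdot)$ as $Q_t(\mfu,\cdot)\ast Q_t(\mfw,\cdot)$, and deduce the order from $\tilde\mfu \leq \tilde\mfu \sqcup \tilde\mfw$. Where you diverge is in the test-function class. The paper does \emph{not} pass through Laplace transforms or truncated monomials; it works directly with \emph{arbitrary} bounded measurable monotone $f$, which is the very definition of $\preccurlyeq$ (Definition~\ref{d:StochOrder}). The computation is simply
\[
\bbE_\mfv[f(\mfV_t)] \;=\; \int Q_t(\mfu,\dx\tilde{\mfu})\int Q_t(\mfw,\dx\tilde{\mfw}) \, f(\tilde{\mfu}\sqcup\tilde{\mfw}) \;\geq\; \int Q_t(\mfu,\dx\tilde{\mfu}) \, f(\tilde{\mfu}) \;=\; \bbE_\mfu[f(\mfU_t)],
\]
using only monotonicity of $f$ at the inequality; then one invokes Strassen (Theorem~1 in \cite{kamae1977}) for the coupling.

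This sidesteps entirely the auxiliary lemma you flagged as the ``second, more minor point'': that the Laplace inequalities \eqref{e.bp.plan} over $\Phi\in\Pi_{h',+}$ are \emph{sufficient} for $\preccurlyeq_{h'}$. Corollary~\ref{c.1072} and Theorem~\ref{p.trLap} give you necessity, not sufficiency, and the paper contains no such order-duality statement for this restricted class of test functions. So your route is correct in outline but rests on a lemma you would have to prove separately (and which is not entirely trivial), whereas the paper's direct use of the definition avoids the detour altogether. Your discussion of the $h'$-bookkeeping, on the other hand, is more careful than the paper's own proof, which is somewhat loose on that point.
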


\subsection{Examples of $\U$-valued branching processes}\label{ss.exam}
Among $\R^+$-valued processes the \emph{continuous state branching processes} have infinitely divisible one-dimensional marginals starting in a fixed point.
Certainly the most prominent example of a continuous state branching process is the \emph{Feller diffusion} $(X_t)_{t \geq 0}$, the solution of $dX_t=\sqrt{bX_t} \; dW_t$ and $X_0=x \in [0,\infty)$, with $b>0$ and $(W_t)_{t \geq 0}$ standard Brownian motion.
The solution defines the Feller diffusion process where marginal distribution is infinitely divisible whose \Levy-measure is given via the density $x \to (t \frac{b}{2})^{-2} \exp(-\frac{x}{tb/2}) \in (0,\infty)$.
This diffusion is the many individuals-small mass-rapid branching limit of individual based binary critical branching in continuous time.

We obtain the \emph{genealogy} as the limit of the Galton-Watson genealogy both taken as $\U$-valued random variables.
For the Galton-Watson tree growing we can explicitly read of the ultrametric (twice the time back to the most recent common ancestor) and we take the counting measure on the leaves as sampling measure.
Then one passes to rapid-branching-small mass limit, see \cite{Gl12} for the proof of tightness and convergence.
Indeed we can define rigorously an \emph{$\U$-valued diffusion} by a martingale problem, which describes the genealogy of a population as equivalence class of ultrametric measure spaces \cite{ggr_tvF14} and is the limit of the $\U$-valued critical Galton-Watson process see \cite{DG18evolution} for a survey.

We recall the operator for the martingale problem of the $\U$-valued Feller diffusion from \cite{ggr_tvF14}. 
We need the concept of a polynomial to get the domain of the operator. 
Fix $ n \in \N $ and $ \phi \in C^1_b(\R^{(\substack{n\\2})}, \R)$. Then define for an equivalence class of an ultrametric measure space $ [U,r,\mu] $ the function

\begin{align}\label{e1309}
\Phi^{n,\phi}([U,r,\mu])= \int_{U^n} \phi((r(x_i,x_j)),\, 1 \leq i<j\in n) \; \mu(dx_1) \ldots \mu(dx_n)
\end{align}
and the action of the operator denoted $\Omega^\uparrow$ by
\begin{align}\label{e1305}
 \Omega^{\uparrow} \Phi^{n,\phi}(\mfu) = \Omega^{\uparrow,\mathrm{grow}} \Phi^{n,\phi}(\mfu) + 
\Omega^{\uparrow,\mathrm{bran}} \Phi^{n,\phi}(\mfu) 
\end{align}
and $\Omega^{\uparrow} \Phi^{n,\phi}(0) = 0.$ The operators on the r.h.s. are given by
\begin{align}\label{e1307}
 \Omega^{\uparrow,\textrm{grow}}\Phi^{n,\phi}(\mfu)  &=  \Phi^{n,2 \overline{\nabla} \phi} (\mfu) , 
\quad \overline{\nabla} \phi =  \suml_{1\leq i<j \leq n} \frac{\partial \phi}{\partial r_{i,j}},
\end{align}
\begin{align}\label{e1308}
 \Omega^{\uparrow,\textrm{bran}}\Phi^{n,\phi}(\mfu) & = an \Phi^{n,\phi}(\mfu) + \frac{b}{\bar{\mfu}} \sum_{1\leq k < l 
\leq n} \Phi^{n,\phi\circ \theta_{k,l}} (\mfu) ,
\end{align}
 where
\begin{equation}\label{eq:theta}
  \left( \theta_{k,l} (\dr) \right)_{i,j}  := r_{i,j}\1_{\{i\neq l, j\neq l\}} + r_{k,j} 
\1_{\{i=l\}} + r_{i,k} \1_{\{j=l\}} ,\quad 1\leq i < j\,.
\end{equation}
For $a=0$ we have the critical Feller diffusion.

Note that the {\em martingale problem} for $(\Omega^{\uparrow},\Pi(C_b^1))$, where $ \Pi(A) $ denotes the polynomials where $ \phi $ is chosen from $ A $, has a  \emph{unique} solution, see 
\cite{ggr_tvF14} and this solution has the \emph{branching property} and \emph{infinitely divisible} marginal distributions if this holds initially.
Therefore we have with this process the key example where the results of this paper apply.

This $\U$-valued process is studied in great detail in \cite{ggr_tvF14}, where its \emph{\Levy-measures} on $\U^\sqcup (h)$ are identified based on the present work and the \emph{branching property} of the process starting in a fixed element is shown via a new generator criterion in \cite{ggr_GeneralBranching}.
In fact in \cite{ggr_tvF14} we are able to derive an explicit representation of the \Levy-measure on $\U \setminus \{0\}$ using $\U_1$-valued coalescents and the $\R$-valued Feller diffusion.
Nice results can also obtained for the sub- and supercritical case as well as with adding immigration.
The reader finds also further material in the survey article \cite{DG18evolution}.

\subsection{Generalizations: discrete and marked setting}\label{ss.marked}

In this subsection we treat two other situations where genealogies are modeled with special versions or extensions of the space $ \U $.

A suitable concept of infinite divisibility is still important in particular for stochastic population models as genealogies of \emph{individual} based {\em Galton-Watson processes} which is a special case with its own features.
More precisely in order to describe genealogies of stochastically evolving populations it is also important to cover the case of {\em individual based} models, where we get population sizes which are {\em natural numbers} or a multiple of it.

On the other hand \emph{spatial} models as {\em super random walks} or just continuum state \emph{multitype} branching are important models requiring an extension of our approach.
If populations are {\em geographically structured} i.e.\ individuals have a location in a geographic space (some complete separable metric space $ \Omega $) then we have to replace $ \U $ by a more general object, similarly if individuals are of different \emph{types} from some set $K$ or we have both.

However the idea here in the spatial case is similar for populations where individuals carry a type or are at a geographic location.
We want to decompose for $h > 0$ the population in subpopulations which have a common ancestor at most time $h$ in the past. 
The new aspect is that now each of these subpopulations consists of individuals which are located in space or carry a type.
However this quality of individuals we will describe in the many individuals-small mass limit by a measure on the geographic space or the types (or both) which give the population size in a geographic set $A$ or a set of types in a set $B$ for $A,B$ varying in the measurable sets of geographic space or type space.
This means that our decomposition is as before in balls, but these subpopulations now have additional structure which however itself has \emph{no} impact on whether an individual belongs to a genealogically defined subpopulation or not.
Of course this has to be made rigorous.

\paragraph{Topology of the state space for spatial models}
In order to incorporate genealogies in {\em spatial} models it is necessary to generalize the concept of ultrametric measure spaces to {\em marked} ultrametric measure spaces.
How to do this has been developed in \cite{DGP11} and for infinite total population size in \cite{GSW}.
We recall the idea.

Consider a Polish mark space with a metric $ (V,r_V) $ which is \emph{fixed}.
We shall assume that:
\begin{equation}\label{e1356}
(V,r) \mbox{  is a topological group, with neutral element  } 0.
\end{equation}

The reader might think here for example of $V=\Z^d$.
Then let $ (U \times V,r,\nu) $ be the new object where $ (U,r) $ is a Polish space with specified metric $ r $ and $ \nu $ a finite Borel measure on $ \CB(U \times V) $.

Then define $ (U^\prime \times V, r^\prime, \nu^\prime) $ and $ (U \times V,r,\nu) $ as {\em equivalent} if there exists a map $ \wt \varphi $ from $ \supp \mu $, where $ \mu (\cdot)=\nu(\cdot \times V)$, to $ \supp \mu^\prime $ which is an {\em $ (r,r^\prime)-$isometry} such that $ \varphi: U \times V \to U^\prime \times V $ satisfies $ \varphi((u,v))=(\wt \varphi (u),v) $ and $ \varphi_\ast(\nu)=\nu^\prime $.
The {\em equivalence} class of $(U,r,\nu)$ is denoted
\begin{equation}\label{e1304}
[U \times V,r,\nu].
\end{equation}

The set of all such equivalences classes of \emph{$ V-$marked}  ultrametric measure spaces is denoted:
\begin{equation}\label{a1308}
\U^V.
\end{equation}
This set is endowed with the $ V-${\em marked Gromov weak topology} (see \cite{DGP11}), which makes $ \U^V $ a {\em Polish space}.
Namely the space is equipped with the marked Gromov-Prokhorov metric, see Section 1.2. in \cite{GSW} generating the topology.
The reader might think of this as follows.
Write $\nu=\bar \nu \; \wh \nu, \bar \nu=\nu(U \times V)$.
A sequence is converging if first the $\bar \nu_n$ converge and second  the space generated by the samples taken i.i.d. with $\wh \nu$ converges as a finite marked metric space for all sample-sizes, drop the latter condition if $\bar \nu_n \to 0$.
This concept allows also to consider boundedly finite measures on $ \U^V$, which in turn allows to define generalized Poisson point processes on $ \U^V$ see Section 2.4. in \cite{DVJ03}, which we need for the \Levy-Khintchine representation.
In that setup we replace $ \U \setminus \{0\} $ used here so far by $ \U^V \setminus \{0\} $ where now the $ 0 $ space is defined similar as before as $ [\{(0,0^V)\}, \underline{\underline{0}}, \underline{0}] $ with $0^V$ we denote the point $0$ in $V$ a distinguished point in $V$.

\begin{remark}\label{r.1423}
Here we have to address the choice of definition for the isomorphy classes of marked ultrametric measure spaces.
If we have a population which is concentrated on a closed subset $V^\prime$ of the space $V$, the question is whether this is an element of $\U^{V^\prime}$ which we have to distinguish from the element of $\U^V$ where the support of $\nu$ is in $U \times V^\prime$.

Another choice to define the isomorphy $\varphi$ would be to say
\begin{align}
&\varphi: supp (\nu) \to supp (\nu^\prime),& \label{a1380a}\\
&\varphi \left((i,v)\right)= \left(\wt \varphi (i),v \right) \;, \; \forall \; (i,v) \in supp (\nu) & \label{a1380b} \\
&\varphi \left(r(i,i^\prime)\right)=r \left(\wt \varphi(i),\wt \varphi(i^\prime)\right), \; \forall \; i,i^\prime \in supp (\mu) & \label{a1380c} \\
&\varphi_\ast \nu = \nu^\prime.& \label{a1380d}
\end{align}
The former choice is the one usually taken that is requiring, i.e. in \eqref{a1380b} the equation to hold for all $i \in supp(\mu)$ and \emph{all} $v \in V$.
However note that the latter choice leads to taking a quotient w.r.t. to a certain subspace (closed) and we simply can work with the quotient topology.
However if one wants to think about random genealogies it is \emph{not} convenient to work with this concept of isomorphy.
\end{remark}

We define again a (distance matrix, mark)-measure on $ (\R)^{n \choose 2} \times V^n $ and its Borel- $ \sigma $-algebra as push forward corresponding to the map
\begin{equation}\label{e1315}
R_n:(U \times V)^n \longrightarrow \left(\big(r(u_i,u_j)\big)_{1 \leq i < j \leq n}, (v_i)_{1 \leq i \leq n}\right).
\end{equation}
This measure is denoted $ \nu^{\mfu,n} $.

The polynomials take now the form
\begin{equation}\label{e1321}
\Phi(\mfu)= \intl_{(U \times V)^n} \; \nu^{\mfu,n} (d(u_1,v_1) \ldots d((u_n,v_n))  \; \varphi \left( (r(u_i,u_j))_{1 \leq i < j \leq n}) \right) g(v_1, \ldots, v_n),
\end{equation}
where $ \varphi \in C_b(\R^{n \choose 2}, \R), g \in C_b(V^n,\R)$ for some $n \in \N$.
The algebra generated by these monomials is separating, see (\cite{GSW}).
Based on these polynomials on $\U^V$ we define the Laplace transform again via \eqref{e0606131100} and we use the same notation.

In the sequel we will choose as mark space $ V $ the geographic space $ \Omega $ for example $ \Omega=\Z^d $ or $ \R^d $.
In that case of a mark space (different from the usual multitype situation where $ V $ may be a finite set) it is necessary to allow also {\em infinite} measures $ \nu $ in $ [U \times V,r,\nu] $.
However then one has to restrict to \emph{boundedly finite measures}.
Namely if $ \Omega $ can be obtained as $ \Omega_n \uparrow \Omega $ with $ \Omega_n \subseteq \Omega $ and $ \Omega_n $ finite or bounded.
Take for example $ \Omega = \Z^d $ or $ \R^d $.
Then we require that $ \nu \mid_{U \times A} $ is {\em finite} for every $ A $ finite (bounded).
These $ \Omega_n $ are chosen for example as $ [-n,n]^d \cap \Z^d $ in the case of $ \Omega=\Z^d $ or more generally on a Polish space with fixed metric balls around a fixed point.
Then the equivalence classes are defined by requiring that all \emph{restrictions to the sub-populations} $ U \times \Omega_n $ are equivalent in the sense defined above \eqref{e1304}.
Then we obtain still a Polish space $ \U^V $ (for $ V=\Omega $) introducing the \emph{$\Omega$-marked Gromov weak$^\#$ topology}, see \cite{GSW} for the details.
Roughly: we define the topology by defining it again by the convergence in \eqref{ad.3488} just using now the polynomials for the marked case with $ g $ having a \emph{bounded} support.

\paragraph{The semigroup structure of the state spaces}
Next we have to introduce the semigroup structure for the \emph{discrete} and the \emph{spatial} case.

The \emph{discrete semigroups} are a sub-semigroup of $h$-forests $\U(h)^\sqcup$, consisting of those with integer-valued measures (or multiples of those).
The binary operation is the concatenation $\sqcup^h$ from \eqref{e.tr47}.

The \emph{marked $h$-forests} are sub-semigroups of marked $h$-forests consisting of those marked um-spaces with genealogical distance bounded by $2h$.
For the marked setting we define the binary operation, the concatenation as follows.
Denote $\wt \mu,\wt \nu$ as the extensions from $U$ resp. $W$ to $U \uplus W$, then set
\begin{equation}\label{e762}
 [U,r_U,\mu] \sqcup_V^h [W,r_W,\nu] = [U \uplus W, r_U \sqcup^h r_W, \wt \mu + \wt \nu], \mbox{  with  } \ [U,r_U,\mu], [W,r_W,\nu] \in \U^V(h)^\sqcup .
\end{equation}
We see in particular that we just lift the operation of concatenation on $(U,r)$ to $(U \times V,r \otimes r_V)$ and the addition of measures from $\CB(U)$ to $\CB(U \times V)$.
Note that measures on a space form a topological semigroup, the genealogical part does as well as we saw.
Therefore the operation on $\U^V$ inherit much of the structure and this is easily seen.

\begin{definition}[Marked ultrametric spaces, discrete spaces]\label{D.markspace}\mbox{}\\
Let $h\geq 0$.
 \begin{enumerate}
  \item Let $a>0$. Define the set
  \begin{align} \label{e760}
&& \U(h,a)^\sqcup = \left\{ \left[ U = \{1,\dotsc,n\},r', a \sum_{i\in U} \delta_i \right] \in \U(h)^\sqcup \mid n \in \N_0 \text{ and } \right.\\
&& \left. r' \text{ a pseudo-ultrametric on } U \right\} \nonumber
\end{align}   

  and call $\U(h,1)^\sqcup$  the set of \emph{discrete $h$-forests}.
  \item Let $V$ be a Polish space.
  The set of \emph{marked $h$-forests} is defined as
  \begin{equation}\label{e761}
   \U^V(h)^\sqcup = \left\{ \mfu \in \U^V \mid \nu^{2,\mfu}\left(  (2h,\infty) \times V^2\right) = 0 \right\} .
  \end{equation}
 \end{enumerate}
 Combinations of the two definitions in $\U^V(h,a)^\sqcup$ are defined analogously.
\end{definition}

Again we can define for $ h \in (0,h^\prime) $ {\em truncation maps} $ \tau_V(h): \U^V(h^\prime)^\sqcup \to \U^V(h) $ by just acting with $ \tau(h^\prime) $ on $ (U,r)$.
Furthermore both sets form semigroups.

We then obtain with our setup:
\begin{proposition}[Semigroup properties]\label{p.tr2}
\hfill\\
\begin{itemize}
\item[(a)] The adapted Theorem (\ref{p.delphic}) holds for the two  semigroups above. 
In particular it is factorial for $h>0$ (i.e. we have \eqref{e.tr49}).

Furthermore we have for $h\geq 0$:
\item[(b)] Let $a>0$. Then $\left(  \U(h,a)^\sqcup, \sqcup^h\right)$ is a closed sub-semigroup of $(\U(h)^\sqcup,\sqcup^h)$.
  
\item[(c)] The set $( \U^V(h)^\sqcup, \sqcup_V^h)$ forms a topological semigroup, i.e. $\sqcup^h$ is continuous as function of two variables.
 \end{itemize}
\end{proposition}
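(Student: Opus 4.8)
The plan is to reduce Proposition~\ref{p.tr2} to the corresponding facts already established for the unmarked case together with general structural facts about spaces of measures, treating each of the three claims in turn.

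\medskip\noindent\textbf{Part (a).} First I would observe that the marked (resp.\ discrete) setting differs from $(\U(h)^\sqcup,\sqcup^h)$ only by carrying along a mark coordinate (resp.\ by restricting to integer-multiple measures), and that both the decomposition into open $h$-balls and the $h$-concatenation $\sqcup_V^h$ act \emph{only} on the genealogical factor $(U,r)$ while leaving marks (and the restriction to discrete measures) untouched. Concretely: an element $\mfu=[U\times V,r,\nu]\in\U^V(h)^\sqcup$ has an underlying unmarked um-space $[\supp\mu,r,\mu]$ with $\mu(\cdot)=\nu(\cdot\times V)$; this lies in $\U(h)^\sqcup$, so by Theorem~\ref{p.delphic} it decomposes uniquely as $\bigsqcup_{i\in I}^h\,[U_i,r_i,\mu_i]$ into at most countably many irreducibles. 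I would then \emph{lift} this decomposition to the marked space by restricting $\nu$ to $U_i\times V$, i.e.\ $\nu_i:=\nu|_{U_i\times V}$, obtaining $\mfu=\bigsqcup_{i\in I}^h\,[U_i\times V,r_i,\nu_i]$. Uniqueness up to order follows because the $U_i$ are forced (they are the open $h$-balls of the support of $\mu$, which is an invariant of the equivalence class by the marked-isomorphy definition, since the isometry $\widetilde\varphi$ acts on $\supp\mu$) and the weights $\nu_i$ are then determined. The Delphic property — commutativity, the divisor set of any element being compact, and the existence of a continuous homomorphism $\mfu\mapsto\bar\mfu$ into $([0,\infty),+)$ with trivial kernel — transfers verbatim: commutativity and the homomorphism $\bar\mfu=\nu(U\times V)$ are immediate, and compactness of the divisor set follows from the unmarked statement since a divisor of $\mfu$ is determined by choosing a subset of $I$ and is thus, as in Theorem~\ref{p.delphic}, governed by a compact set (one can also argue via tightness of the associated distance-matrix-mark measures). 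For the discrete case the same argument works, noting additionally that a sub-forest of a forest with measure $a\sum\delta_i$ again has measure of the same form, so irreducibility within $\U(h,a)^\sqcup$ means a single point of mass $a$, and factoriality for $h>0$ is inherited.

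\medskip\noindent\textbf{Part (b).} Here I would show $\U(h,a)^\sqcup$ is closed in $(\U(h)^\sqcup,\sqcup^h)$ and closed under $\sqcup^h$. Closedness under $\sqcup^h$ is clear: the disjoint union of two finite point sets with masses all equal to $a$ is again of that form. For topological closedness, suppose $\mfu_n\to\mfu$ in $d_{\textup{GPr}}$ with $\mfu_n\in\U(h,a)^\sqcup$; I would use Theorem~\ref{p.trunc.poly}\ref{i.tr50} (the truncated polynomials $\CA(\Pi_h)$ are convergence determining) and pick monomials detecting the atomic structure — e.g.\ $\Phi^{2,\phi}$ with $\phi$ supported near $\{0\}$ off the diagonal — to see that the limit must again have a purely atomic measure with all atoms of size exactly $a$; alternatively, and perhaps more cleanly, observe that $\anz_h$ and the total mass $\bar\mfu$ together with the constraint $\bar\mfu=a\cdot\#_h\mfu$ cut out $\U(h,a)^\sqcup$, and although $\#_h$ is not continuous (Remark~\ref{r.952}), on a convergent sequence inside $\U(h,a)^\sqcup$ one has $\#_h\mfu_n=\bar\mfu_n/a\to\bar\mfu/a$, forcing $\bar\mfu/a\in\N_0$ and then, using that the diameter is $\le 2h$, that $\mfu$ is a discrete $a$-forest. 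I would write this out via the first, polynomial-based, route to keep it robust.

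\medskip\noindent\textbf{Part (c).} The remaining point is that $\sqcup_V^h$ is jointly continuous on $\U^V(h)^\sqcup\times\U^V(h)^\sqcup$. I would argue this exactly as the unmarked joint continuity is argued (Proposition~\ref{p.tr.cont} and the semigroup assertion of Theorem~\ref{p.delphic}), using the marked analogues of distance-matrix-mark measures $\nu^{\mfu,n}$ and the separating algebra of marked polynomials~\eqref{e1321} from \cite{GSW}: for a marked monomial $\Phi$ with test functions $\varphi\otimes g$, the quantity $\langle\varphi_{\hu}\otimes g,\nu^{n,\mfu\sqcup_V^h\mfv}\rangle$ expands, by partitioning the $n$ sample points between the two components exactly as in the proof of Theorem~\ref{p.trunc.poly}\ref{i.tr65}, into a finite sum of products $\langle\varphi'_{\hu}\otimes g',\nu^{k,\mfu}\rangle\,\langle\varphi''_{\hu}\otimes g'',\nu^{n-k,\mfv}\rangle$ (the cross terms using that points in different components are at distance exactly $2h$, which the truncation $\varphi_{\hu}$ either kills or treats as a fixed value). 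Each factor is continuous in its argument by the marked Gromov-weak topology, so $\Phi(\mfu\sqcup_V^h\mfv)$ is jointly continuous, and since the marked polynomials generate the topology this gives joint continuity of $\sqcup_V^h$. One also notes $\sqcup_V^h$ maps into $\U^V(h)^\sqcup$ (the new distances are in $[0,2h]$) and that the neutral element is the null space $[\{(0,0^V)\},\dunderline 0,\uze]$, and associativity and commutativity are immediate from those of disjoint union and measure addition.

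\medskip\noindent\textbf{Main obstacle.} The genuinely delicate point is \emph{well-definedness on equivalence classes} in the marked setting — i.e.\ that the ball decomposition and the concatenation descend from representatives to isomorphy classes. This hinges on the marked-isomorphy definition forcing the isometry to act on $\supp\mu$ (the projected marginal) while only permuting the fibre over each point trivially via $\varphi(u,v)=(\widetilde\varphi(u),v)$; once that is pinned down, the unmarked uniqueness in Theorem~\ref{p.delphic} does the real work and the mark coordinate is carried along passively. I would therefore spend the bulk of the argument carefully invoking the definition of $\U^V$ around~\eqref{e1304}–\eqref{a1380d} to justify that the $h$-balls of $\supp\mu$ are an isomorphy invariant, after which (a), (b), (c) are bookkeeping on top of the already-proved unmarked results. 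The discrete case carries no such subtlety beyond checking closedness, which is the routine part of (b).
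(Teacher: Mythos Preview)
Your overall strategy---reduce to the unmarked case and carry the marks along passively---is correct and close in spirit to the paper's, but the execution differs in a few places worth noting.

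For (a), the paper does \emph{not} project to $\U(h)^\sqcup$, decompose there, and lift via $\nu_i=\nu|_{U_i\times V}$. Instead it re-runs the entire Delphic-semigroup verification (Kendall's conditions (A)--(C) and Lemmas~\ref{l.2.1EM}, \ref{l.2.7EM}) directly in $\U^V(h)^\sqcup$, observing that the only genuinely new ingredient is a \emph{marked} compactness criterion for the divisor set: compactness now also requires tightness of the projections onto $V$, which is immediate since all divisors have mark-measure dominated by that of $\mfu$ (the paper cites Theorem~3 of \cite{DGP11}). Your project-and-lift route is acknowledged by the paper only as an \emph{a posteriori} observation (Remark~\ref{r.3531}); as a primary argument it needs more care on well-definedness at the equivalence-class level, exactly the obstacle you flag, whereas re-running the Delphic proof sidesteps this. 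More importantly, you omit the boundedly-finite (infinite total mass) case of $\U^V$: there $\mu(\cdot)=\nu(\cdot\times V)$ is infinite and your projection leaves $\U$. The paper handles this by restricting to $U\times V_n$ with bounded $V_n\uparrow V$, applying the finite-measure result, and passing to the limit.

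Two smaller points. In (b) your alternative via $\bar\mfu=a\cdot\#_h\mfu$ is false: points in $\U(h,a)^\sqcup$ can sit at distance $<2h$, so the number of $h$-balls need not equal the number of atoms. You don't ultimately rely on this, but the polynomial route you prefer should instead exploit that total mass lies in $a\N_0$ (closed), so eventually the number of atoms is constant, after which closedness follows by compactness of finite configurations. In (c) your expansion of $\langle\varphi_{\hu}\otimes g,\nu^{n,\mfu\sqcup_V^h\mfv}\rangle$ as a ``finite sum of products'' is wrong for \emph{truncated} monomials: by the marked analogue of Proposition~\ref{p2907131206} the truncated monomial is simply \emph{additive}, $\Phi_h(\mfu\sqcup_V^h\mfv)=\Phi_h(\mfu)+\Phi_h(\mfv)$, which already gives joint continuity since truncated marked polynomials determine the topology. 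The paper itself just says (c) follows by the same metric estimate as Lemma~\ref{l.2.1EM}.
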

Note that (c) follows from Theorem \eqref{p.delphic} as well the second part of (b) while closedness is straightforward.
Therefore we will later in Section~\ref{ss.proext} have to verify essentially (a).

\begin{remark}\label{r.1527}
We note two facts. The $h$-subfamily decomposition means now (as we can deduce having \emph{proved} uniqueness of the decomposition) that the $h$-balls in which we decompose now lead to $[U_i \times V, r \mid_{U_i},\nu_i]$ with $\nu_i= \mu \mid_{U_i} \otimes \kappa$ where $\kappa$ is the transition kernel from $U$ to $V$ which can be obtained writing $\nu=\mu \otimes \kappa$ where $\mu(\cdot)=\nu(\cdot \times V)$.
The $[U_i \times V, r \mid_{U_i},\nu_i],i=1,\cdots,$ is now the $h$-family decomposition.
If we project the $\nu_i$ on $V$ we obtain $\lambda_i$, measures on $(V,\mcB(V))$  which form the decomposition of $\lambda=\suml_{i \in I} \lambda_i$ which is the decomposition of the random measure on $V$ induced by $\mfU$, which is what is treated in the Kallenberg \cite{Kall83} on random measures and his concept of infinitely divisible random measures, gives the induced decomposition once we project our decomposition on $V$.
\end{remark}

\begin{remark}\label{r.1414}
 For $h=0$, the semigroup $\U^V(h)^\sqcup$ equals the set of non-negative measures on $V$, which indeed is a semigroup.
\end{remark}

By Proposition (\ref{p.tr2}), for the discrete semigroup $\left(  \U(h,a)^\sqcup, \sqcup^h\right)$ being a closed sub-semigroup of $h$-forests \emph{all} of the results obtained on $ \U(h)^\sqcup $ hold.

Finally the generalizations involve the set of truncated (marked) polynomials, see Definition 3.7 in \cite{DGP13} and \cite{GSW} for more detail, which we denote:
\begin{equation}\label{e763}
 \Pi_h^V = \{ \Phi^{m,\varphi g} \in \Pi^V \mid \Phi^{m,\varphi} \in \Pi_h \} .
\end{equation}
For the \Levy - measures we have now measures on $ \U^V \setminus \{0\} $.
Then we can generalize our results.
\emph{All} results above hold once we make the indicated changes in the statements:
\begin{theorem}[Results in the marked setting]\label{T.resmark}
 For the marked $h$-forests $( \U^V(h)^\sqcup, \sqcup_V^h)$ and the truncation $ \tau_V(h^\prime), h^\prime \in [0,h) $ the following results reworded as indicated above hold: Theorem (\ref{p.trunc.poly}), Theorem (\ref{p.trLap}), Theorem (\ref{T.LK}), Theorem (\ref{t:LimInfDiv}) and Theorem (\ref{THM:INFDIV:BRAN}).
\end{theorem}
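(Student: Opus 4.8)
The plan is to transfer each of the cited results from $\U(h)^\sqcup$ to $\U^V(h)^\sqcup$ by exploiting the fact, already recorded in Proposition~\ref{p.tr2}, that $\sqcup_V^h$ acts on the genealogical coordinate $(U,r)$ exactly as the unmarked concatenation $\sqcup^h$ and acts on the mark coordinate merely by addition of (boundedly finite) measures, and that $\tau_V(h)$ acts only by replacing $r$ with $r\wedge 2h$. First I would reprove Theorem~\ref{p.trunc.poly} in the marked setting. For a marked monomial $\Phi^{m,\varphi g}\in\Pi_h^V$ one samples $m$ points from $\mfu\sqcup_V^h\mfv$ and splits $\nu^{m,\mfu\sqcup_V^h\mfv}$ into the terms in which all points lie in one summand — contributing $\lan\varphi_h g,\nu^{m,\mfu}\ran$ or $\lan\varphi_h g,\nu^{m,\mfv}\ran$ — and the mixed terms, in which at least one pairwise distance equals $2h$ so that $\varphi_h$ vanishes; the mark factor $g$ plays no role in this vanishing. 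This yields parts (a) and (b) verbatim as in Proposition~\ref{p2907131206}. For (c) and (d) one combines this truncation computation with the fact, established in \cite{DGP11,GSW}, that the marked polynomials (hence $\CA(\Pi_h^V)$) are separating and convergence determining for the marked Gromov-weak topology; Theorem~\ref{p.trLap} then follows from the same tightness-plus-identification scheme as in Section~\ref{ss.lap}, now with $\mcA_+(\Pi_h^V)$ in place of $\mcA_+(\Pi_h)$.

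With the marked analogues of Theorems~\ref{p.trunc.poly} and~\ref{p.trLap} in hand, the L\'evy--Khintchine representation, Theorem~\ref{T.LK}, carries over. The adapted Delphic/unique-factorization statement is Proposition~\ref{p.tr2}(a), so the construction of the $h$-L\'evy measure $\lambda_h$ as the vague limit of $n\,\mcL[\mfU^{(h,n)}]$ on $\mcM^\#(\U^V(h)^\sqcup\setminus\{0\})$, the identity \eqref{ag2} via the truncated Laplace functional, the integrability $\int(\bar\mfu\wedge1)\,\lambda_h(\dx\mfu)<\infty$ (the total mass $\bar\mfu$ and its role through the Delphic homomorphism are unchanged), and the mass formula \eqref{ag2b} all reproduce the unmarked arguments of Section~\ref{s.lkfproof} step for step. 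The consistency \eqref{e.ar1} and the passage to a single $\lambda_\infty$ on $\U^V\setminus\{0\}$ use only truncation consistency, i.e.\ Proposition~\ref{p.tr2}(a) together with the marked truncation maps $\tau_V(h)$. Theorem~\ref{t:LimInfDiv} then follows by combining this representation with the continuity of $\lambda_h\mapsto\mcL[\bigsqcup_{\mfu\in N^{\lambda_h}}\mfu]$ and the tightness criteria for $\mcM^\#(\U^V(h)^\sqcup)$; and Theorem~\ref{THM:INFDIV:BRAN} follows as in its unmarked proof, using the marked convolution defined via $\sqcup_V^h$ exactly as in Definition~\ref{D.concbp}, the branching property, and truncation consistency to upgrade $h$-infinite divisibility of $\pi$ to $(t+h)$-infinite divisibility of $\mfU_t$.

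The main obstacle is not the algebra, which is essentially a coordinatewise lift, but the measure-theoretic bookkeeping in the genuinely spatial case $V=\Omega$, where the total mass may be infinite and $\U^V$ is only equipped with the marked Gromov-weak$^\#$ topology for boundedly finite measures. There one must check that the notion of \emph{bounded set} on $\U^V(h)^\sqcup\setminus\{0\}$ — combining ``total mass bounded away from $0$'' with ``marks confined to one of the exhausting bounded sets $\Omega_n$'' — makes $\U^V(h)^\sqcup\setminus\{0\}$ into a Polish space with a point infinitely far away in the sense required by the Poisson point process theory of \cite{DVJ03}, and that the projective construction of $\lambda_\infty$ from $(\lambda_h)_{h>0}$ genuinely produces a boundedly finite measure on $\U^V\setminus\{0\}$ for this structure. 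This is precisely the point where one must invoke the constructions of \cite{GSW} rather than merely cite the unmarked proofs; once it is in place, every remaining step is a transcription.
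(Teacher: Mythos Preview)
Your proposal is correct and follows essentially the same route as the paper: lift each unmarked argument to $\U^V(h)^\sqcup$ using that $\sqcup_V^h$ and $\tau_V(h)$ act only on the genealogical coordinate while marks enter through the product factor $g$ in $\Phi^{m,\varphi g}$, invoke the separating/convergence-determining property of marked polynomials from \cite{DGP11,GSW}, and then treat the genuinely spatial (infinite-mass) case by restriction to an exhausting sequence $V_n\uparrow V$ and a projective-limit argument for the L\'evy measures. The paper's own proof is at the same level of sketch (``we leave further details to the reader''), and your identification of the boundedly-finite bookkeeping as the only nontrivial step matches exactly what the paper isolates.
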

We can apply this to the states of the genealogies of the {\em  super random walk} the spatial version of the Feller diffusion which is a well known measure valued process (see \cite{D93}).
More precisely we talk about the genealogy (modeled as $\U^V$-valued process) of the Markov process $X(t)=(x_i(t))_{i \in \Omega}$ for $\Omega$ countable abelian group given by the SSDE
\begin{equation}\label{e1558}
dx_i(t)=\sum a_{i,j} (x_j(t)-x_i(t))dt + \sqrt{bx_i(t)} \; dw_i(t), \; i \in V,
\end{equation}
with $(w_i(t))_{t \geq 0}$ i.i.d. standard brownian motions, $a$ is a homogeneous summable transition matrix on $\Omega \times \Omega$ and $b>0$.
The corresponding $\U^\Omega$-valued process of genealogies is treated in \cite{ggr_tvF14} and \cite{ggr_GeneralBranching} in great detail.
However once we can construct the latter then we can apply the theorem to the above object.

\subsection{Discussion: Relation to negative definite functions  \texorpdfstring{\cite{DMZ08}}{DMZ08}}
\label{ss.compDMZ08}

We have here a collection of semigroups in $ h $ which are all consistent with respect to the \emph{additional operation} of truncation maps (besides concatenation and scalar multiplication) which gives the interesting features given here as well as those worked out in \cite{ggr_GeneralBranching}.
Nevertheless we can focus on a particular $ h $ and see what one can get from abstract theory for that object alone. That means we now relate our set up to the general theory of
 characters and semigroups, i.e. to {\em harmonic analysis}. 

The semigroup $(\bbK,+) := (\U(h)^\sqcup, \sqcup^h)$ can be seen as an example of a convex cone as defined in Section~2 of \cite{DMZ08} (here their relation 2.5. does not hold).
The multiplication by positive scalars there works in the way: $a [U,r,\mu] = [U,r,a\mu]$ for $a \in (0,\infty)$ and $[U,r,\mu] \in \bbK$.
The origin coincides with the neutral element $0$ and $\bbK$ is a normed cone w.r.t.~the Gromov-Prokhorov metric $\dGPr$.
We have shown in Theorem (\ref{p.trunc.poly}) that the semigroup $(\bbK,+)$ possesses a strictly separating  class of homomorphisms.

The Laplace-transform is an important tool for the analysis of random elements taking values in $\bbK$ as presented in Section 5 of \cite{DMZ08}.
In particular, as already known by classical results on positive definite functions (\cite{BCR84}), infinitely divisible random elements allow a representation of the Laplace functional via a \Levy-Khintchine formula.
Our formula \eqref{ag2}, however, has some special features in comparison to general \Levy-Khintchine formulas.
We list these features below in a list after having explained briefly the relation to positive definite functions.

For an infinitely divisible random element in $\U(h)^\sqcup$ one can check that the map
\begin{equation}\label{e.tr84}
 \begin{cases}
 \E\Pi_h:\{\exp(-\Phi(\cdot)): \Phi \in \Pi_h\} & \to [0,\infty) \\
  \exp(-\Phi(\cdot)) & \mapsto -\log \E[ \exp(-\Phi(\mfU))] ,
 \end{cases}
\end{equation}
on the semigroup $\E\Pi_h$ is {\em negative definite}, see Section 5.2 of \cite{DMZ08}.
Note that $\E\Pi_h$ is a subset of the semigroup homomorphisms from $\bbK$ to $[0,1]$, denoted by $\tilde{K}$ in that reference.
Then Theorem 4.3.19 in \cite{BCR84} establishes the existence of a type of \Levy-Khintchine formula for the map defined in \eqref{e.tr84} compare (6.5) in Section 6.1. of \cite{DMZ08}.

What is the relation to our setup and our \Levy-Khintchine formula \eqref{ag2} to the one obtained for fixed $ h $?
\begin{itemize}
\item In general the \Levy-measure will be a measure on the {\em bidual} space of $\bbK$, see Section 7.2 of \cite{DMZ08}.
  In our Theorem (\ref{T.LK}), we see that it is actually supported on $\bbK$ (more precisely $\iota(\bbK)$ if $\iota$ denotes the injection of a space into its bidual).
To us it is not clear how to get this from abstract grounds in our case.
 \item There is no quadratic form part.
      This comes from the fact that $\E\Pi_h$ has no involution, except the identity, see Theorem 4.3.20 in \cite{BCR84} or (6.6) in \cite{DMZ08}.
 \item There is no linear term. We have no {\em easy} explanation for that, recall we deal with ultrametric \emph{measure} spaces here. The result follows from the fact that in \eqref{e.LK:1} we can show that $\pi_h=0$.
 Another argument would be to use an analogous result to Lemma 5.8 of \cite{EM14} which states that all non-decreasing continuous functions are constant and their application in their Section 9.
 \end{itemize}

\subsection{Outline proof section}
\label{ss.outps}
The proofs are presented in three sections and an appendix. We prepare in Sections (\ref{s.tt}),(\ref{s.proofsprobm}) the ground by establishing first the results on properties of the state space and the semigroups structure and then the results on the properties of probability laws on these structures. The Section (\ref{s.lkfproof}) contains the proofs of the main results on the infinite divisibility. In the appendix more technical points are collected.

\section{Proofs of statespace description and semigroup results}\label{s.tt}

We collect here in five subsections the main technical ingredients for the proofs of our theorems, topological basics 
concerning our
state spaces and key objects of tree description, trunks, evaluations of
polynomials reading off tree tops only (Section (\ref{ss.proptt})-(\ref{ss.pot})) and  the algebraic structure of our subfamily decomposition (Section (\ref{ss.concsemi}), (\ref{ss.uniqfact})).
We work here with polynomials rather than the metric structure.

%Before we start this section we give a quick remark on \cite{EM14}.
%\begin{remark}
% In Definition \ref{D.concat} we defined a one-parameter family of semigroups $(\U(h)^\sqcup, \sqcup^h)_{h >0}$.
% A similar definition can be be made in \cite{EM14}.
% For $g>0$ let $\bbM_g = \{ \mfx \in \bbM:\, \bar{\mfx} = g\}$.
% For $\mfx = [X,r_X,\mu_X],\, \mfy =[Y,r_Y,\mu_Y] \in \bbM_g$ define
% \begin{equation}
%  \mfx \boxplus_g \mfy = [X \times Y, r_X \oplus r_Y, \frac{1}{g} \mu_X \otimes \mu_Y ] \in \bbM_g \, .
% \end{equation}
% The operation $\oplus$ on the metric was explained in \cite{EM14}.
% The operation $\boxplus_g$ coincides with the definition $\boxplus$ when $g=1$ and it is easy to see that this defines a semigroup isomorphic to $(\bbM_1,\boxplus_1)$.
% One may also augment the space and consider $\bbM_{\leq g} =  \{ \mfx \in \bbM:\, \bar{\mfx} \leq g\}$.
%\end{remark}

\subsection{Concatenation semigroup: Proof of Theorem \ref{p.delphic}}\label{ss.concsemi}

In this subsection we will 
follow \cite{EM14} and leave out some of the proofs since this reference provides elaborated proofs 
on rather similar statements with minor modifications.

Before we start this section we give a quick remark on \cite{EM14}.
\begin{remark}\label{r.2107}
In Definition (\ref{D.concat}) we defined a one-parameter family of semigroups
\begin{equation}\label{e1699}
(\U(h)^\sqcup, \sqcup^h)_{h >0}.
\end{equation}
Another binary operation $ \boxplus $, leading to a very different tree, is defined in \cite{EM14}.
 For $g>0$ let $\bbM_g = \{ \mfx \in \bbM:\, \bar{\mfx} = g\}$.
 For $\mfx = [X,r_X,\mu_X],\, \mfy =[Y,r_Y,\mu_Y] \in \bbM_g$ define
 \begin{equation}\label{e2112}
  \mfx \boxplus_g \mfy = [X \times Y, r_X \oplus r_Y, \frac{1}{g} \mu_X \otimes \mu_Y ] \in \bbM_g \, .
 \end{equation}
 The operation $\oplus$ on the metric was explained in \cite{EM14}.
 The operation $\boxplus_g$ coincides with the definition $\boxplus$ when $g=1$ and it is easy to see that this defines a semigroup isomorphic to $(\bbM_1,\boxplus_1)$.
 One may also augment the space and consider $\bbM_{\leq g} =  \{ \mfx \in \bbM:\, \bar{\mfx} \leq g\}$. Even though we get very different trees from this operation, algebraic properties are very similar.
\end{remark}
\begin{proposition}%[$\U ([0,2h])$ is closed]%
[Topological properties of $h$-forests]\label{l.frsts.clsd}
 The subset $\U(h)^\sqcup \subset \U$ is closed in the Gromov-weak topology. Its subset $\U([0,2h))$ 
is a $G_\delta$ subset of $\U$ which is dense in $\U(h)^\sqcup$.
\end{proposition}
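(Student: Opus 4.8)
The plan is to work throughout with the characterization of Gromov-weak convergence via the distance matrix measures $\nu^{m,\mfu}$ recalled after Definition \ref{D.umsp}, since the defining conditions for $\U(h)^\sqcup$ and $\U([0,2h))$ in \eqref{e.tr46} and \eqref{e.tr48} are phrased purely in terms of $\nu^{2,\mfu}$. For closedness of $\U(h)^\sqcup$, suppose $\mfu_n \to \mfu$ in the Gromov-weak topology with $\mfu_n \in \U(h)^\sqcup$ for all $n$; then $\nu^{2,\mfu_n} \to \nu^{2,\mfu}$ weakly on the space of finite measures on $\bbD_2 = [0,\infty)$. The set $(2h,\infty)$ is open, so by the portmanteau theorem $\nu^{2,\mfu}((2h,\infty)) \le \liminf_n \nu^{2,\mfu_n}((2h,\infty)) = 0$; hence $\mfu \in \U(h)^\sqcup$. (One must be mildly careful that weak convergence here is convergence of possibly-subprobability measures with converging total masses $\bar\mfu_n \to \bar\mfu$, but the portmanteau inequality for open sets still holds; the case $\bar\mfu = 0$, i.e. $\mfu = 0$, is trivially in $\U(h)^\sqcup$.)

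For the $G_\delta$ claim, I would write $\U([0,2h)) = \bigcap_{k \ge k_0} A_k$ where $A_k := \{\mfu \in \U : \nu^{2,\mfu}([2h - \tfrac1k, \infty)) < \varepsilon_k\}$ for a suitable sequence, or more cleanly $A_k := \{\mfu : \nu^{2,\mfu}([2h-\tfrac1k,\infty)) = 0\}$ is not open, so instead use that a point of $\U([0,2h))$ realizes only distances in the compact-in-spirit set $[0, 2h - \delta]$ for some $\delta>0$: indeed if $\mfu = [U,r,\mu] \in \U([0,2h))$ with $\bar\mfu>0$ then $\mu^{\otimes 2}(\{r \ge 2h\}) = 0$, and since the support of $\nu^{2,\mfu}$ is closed and contained in $[0,2h)$ one can only conclude $\esssup r \le 2h$, not $<2h$. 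So the correct description is $\U([0,2h)) = \{\mfu : \nu^{2,\mfu}(\{2h\}) \text{ together with all larger distances vanish}\}$, and one should instead realize it as $\bigcap_{k}\{\mfu : \nu^{2,\mfu}((2h - \tfrac1k, \infty)) < \tfrac1k \cdot(\text{something})\}$ — the honest route is: $\U([0,2h)) = \bigcup_{\delta > 0}\U([0,2h-\delta])$ is an $F_\sigma$ inside the closed set $\U([0,2h]) = \U(h)^\sqcup$, hence I should instead exhibit its complement-in-$\U(h)^\sqcup$ as $F_\sigma$. Concretely, $\U(h)^\sqcup \setminus \U([0,2h)) = \{\mfu \in \U(h)^\sqcup : \nu^{2,\mfu}(\{2h\}) > 0\} = \bigcup_{k} \{\mfu \in \U(h)^\sqcup : \nu^{2,\mfu}(\{2h\}) \ge \tfrac1k\}$, and each set in this union is closed in $\U(h)^\sqcup$ because $\mfu \mapsto \nu^{2,\mfu}(\{2h\})$ is upper semicontinuous (the singleton $\{2h\}$ is closed, and the distances $2h$ cannot be approached from above within $\U(h)^\sqcup$, only from strictly below — this one-sidedness is exactly what makes the atom at $2h$ u.s.c. rather than merely the usual l.s.c. one gets for mass of open sets). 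Thus $\U([0,2h))$ is the complement in the closed (hence Polish) set $\U(h)^\sqcup$ of an $F_\sigma$, i.e. a $G_\delta$ in $\U(h)^\sqcup$, and since $\U(h)^\sqcup$ is itself closed in the Polish space $\U$, it is $G_\delta$ in $\U$.

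For density of $\U([0,2h))$ in $\U(h)^\sqcup$, given $\mfu = [U,r,\mu] \in \U(h)^\sqcup$ I would apply a strictly increasing rescaling of the metric of the type in Remark \ref{R.um.transform}: pick $\tau = \tau^{(n)}$ continuous, strictly increasing, with $\tau(s) = s$ for $s \le 2h - \tfrac1n$ and $\tau(s) < 2h$ for all $s \le 2h$ (e.g. a piecewise-linear map contracting $[2h-\tfrac1n, 2h]$ into $[2h - \tfrac1n, 2h - \tfrac1{2n}]$). Then $(\tau^{(n)})^*(\mfu) \in \U([0,2h))$, it has the same topology as $\mfu$, and the distance matrices change only on the set $\{r > 2h - \tfrac1n\}$, so $\nu^{m, (\tau^{(n)})^*\mfu} \to \nu^{m,\mfu}$ weakly for every $m$ as $n \to \infty$ (dominated convergence, using that $\tau^{(n)} \to \mathrm{id}$ pointwise on $[0,2h]$ and uniform boundedness of the integrands), whence $(\tau^{(n)})^*(\mfu) \to \mfu$ in the Gromov-weak topology. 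Finally, separability of both spaces follows from separability of $\U$ (stated in Definition \ref{D.umsp}) since subspaces of separable metric spaces are separable; combined with closedness this gives that $\U(h)^\sqcup$ is Polish, matching the assertion in Remark \ref{r790}(a). The main obstacle I anticipate is getting the upper-semicontinuity of $\mfu \mapsto \nu^{2,\mfu}(\{2h\})$ on $\U(h)^\sqcup$ rigorously right — it relies crucially on the fact that within $\U(h)^\sqcup$ no distance exceeds $2h$, so mass near $2h$ cannot "leak in" from above; without this one-sided constraint the atom size would not be u.s.c. and the $G_\delta$ argument would collapse.
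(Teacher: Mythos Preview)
Your proof is correct, and the closedness argument matches the paper exactly. For the remaining two parts you take slightly different but equally valid routes.

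For the $G_\delta$ claim, the paper works directly in $\U$: the map $\mfu \mapsto \nu^{2,\mfu}([2h,\infty))$ is upper semicontinuous on all of $\U$ (composition of the continuous $\mfu \mapsto \nu^{2,\mfu}$ with $\nu \mapsto \nu(F)$ for the closed set $F=[2h,\infty)$), so $\U(h) = \bigcap_{c>0}\{\mfu: \nu^{2,\mfu}([2h,\infty))<c\}$ is a countable intersection of open sets in $\U$. Your route via the complement being $F_\sigma$ in $\U(h)^\sqcup$ is equivalent, since on $\U(h)^\sqcup$ one has $\nu^{2,\mfu}([2h,\infty)) = \nu^{2,\mfu}(\{2h\})$. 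Note, however, that your anxiety about ``one-sidedness'' is misplaced: upper semicontinuity of $\nu \mapsto \nu(\{2h\})$ follows from the portmanteau theorem for closed sets with no further hypothesis, so the restriction to $\U(h)^\sqcup$ is not needed to make the u.s.c.\ work. The paper's version is a bit cleaner because it avoids the detour through ``$G_\delta$ in a closed subspace implies $G_\delta$ in the ambient space''.

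For density, the paper simply uses the $h$-truncation $\mfu(h-n^{-1}) = [U, r\wedge(2h-2n^{-1}),\mu]$, which lies in $\U([0,2h-2n^{-1}]) \subset \U(h)$ and converges to $\mfu$ since $\Phi(\mfu(h-n^{-1})) \to \Phi(\mfu)$ for every polynomial $\Phi$. Your strictly increasing rescaling $\tau^{(n)}$ also works and has the aesthetic advantage of not collapsing distinct distances, but the truncation is more economical and reuses a map already defined in the paper.
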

\begin{proof}\label{p.2124}
$\U(h)^\sqcup$: Suppose $(\mfu_n)_{n\in \N} \subset \U(h)^\sqcup$ and $\mfu_n \to \mfu$ in the 
Gromov-weak topology. This includes saying that $\nu^{2,\mfu_n} \Rightarrow \nu^{2,\mfu}$ weakly as 
measures on $[0,\infty)$ as $n\to \infty$; here $\nu^{2,\cdot}$ are the distance matrix measures 
introduced in \eqref{rg6}. But  $\nu^{2,\mfu_n}((2h,\infty)) = 0$ for all $n \in \N$, $(2h,\infty)$ 
is an open subset of $[0,\infty)$ and thus the Portmanteau theorem says that then also 
$\nu^{2,\mfu}((2h,\infty)) = 0$.

$\U(h)$: this is proven much like Proposition 5.1 in \cite{EM14}. The mapping $\nu^{2,\cdot}: \, \U 
\to \mcM_f([0,\infty)), \, \mfu \mapsto \nu^{2,\mfu}$ is continuous. Let $A \subset [0,\infty)$ be 
closed (in $[0,\infty)$). By the Portmanteau theorem the mapping $\mcM_1([0,\infty)) \to 
[0,\infty),\, \nu \mapsto \nu(A)$ is upper semi-continuous. Thus the mapping $\U \mapsto 
[0,\infty),\, \mfu \mapsto \nu^{2,\mfu}([2h,\infty))$ is upper semi-continuous and the set $\{ \mfu 
:\, \nu^{2,\mfu}([2h,\infty)) < c \}$ is open for any $c >0$. We write $\U(h) = \bigcap_{c >0} \{ 
\mfu :\, \nu^{2,\mfu}([2h,\infty)) < c \} $ and note that this is a countable intersection of open 
sets, i.e.~$G_\delta$.
 
 For any $\mfu \in \U(h)^\sqcup$ it is true that $\mfu(h-n^{-1}) \rightarrow \mfu$ in 
$d_{\textrm{GPr}}$, since $\Phi(\mfu(h-n^{-1})) \to \Phi(\mfu)$ as $n\to \infty$ for any $\Phi \in 
\Pi$. Moreover $\mfu(h-n^{-1}) \in \U([0,2h-2n^{-1}]) \subset \U(h)$ for any $n \in \N$..
\end{proof}

\begin{lemma}[Lemma 2.1 in \cite{EM14}]\label{l.2.1EM} ~
\begin{enumerate}
 \item\label{i.tr11} The operation $\sqcup^h: \U(h)^\sqcup \times \U(h)^\sqcup \to \U(h)^\sqcup$ is 
continuous. Moreover,
 \begin{equation}\label{grx84}
  d_{\text{GPr}}(\mfu_1 \sqcup \mfu_2, \mfu_1' \sqcup \mfu_2') \leq d_{\text{GPr}}(\mfu_1  ,\mfu_1') 
+ d_{\text{GPr}}(\mfu_2,\mfu_2') \, ,
 \end{equation}
 when $\mfu_1,\mfu_1',\mfu_2,\mfu_2' \in \U(h)^\sqcup$.
 \item\label{i.tr12} The metric $d_{\text{GPr}}$ is translation invariant w.r.t.~$\sqcup$.
 \item\label{i.tr13} For $\mfu,\mfv,\mfw_1,\mfw_2 \in \U(h)^\sqcup$ we have
 \begin{equation}\label{grx85}
  d_{\text{GPr}} (\mfu,\mfv) \leq d_{\text{GPr}}(\mfu \sqcup \mfw_1, \mfv \sqcup \mfw_2) + 
d_{\text{GPr}}(\mfw_1, \mfw_2) \, .
 \end{equation}
\end{enumerate}
\end{lemma}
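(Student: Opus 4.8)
The plan is to establish (a) directly from the coupling description of the Gromov--Prokhorov metric and then to obtain (b) and (c) from (a) together with the ordinary triangle inequality, the only genuinely new point being a ``reverse translation invariance'' which I would handle last.

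\textit{Step 1: the estimate \eqref{grx84} and continuity.} Recall (Appendix, \cite{GPW09}) that, up to an additive total-mass term $|\bar\mfu-\bar\mfv|$, $\dGPr(\mfu,\mfv)$ equals the infimum over couplings $\pi$ of the sampling measures of the least $\varepsilon>0$ with $(\pi\otimes\pi)(\{((x,y),(x',y')):|r_U(x,x')-r_V(y,y')|>\varepsilon\})\le\varepsilon$. Given near-optimal couplings $\pi_i$ for the pairs $(\mfu_i,\mfu_i')$ at levels $\varepsilon_i>\dGPr(\mfu_i,\mfu_i')$, $i=1,2$, set $\pi:=\pi_1+\pi_2$; this is a coupling of $\mfu_1\sqcup^h\mfu_2$ and $\mfu_1'\sqcup^h\mfu_2'$ supported on $(U_1\times U_1')\cup(U_2\times U_2')$. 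Expanding $\pi\otimes\pi$, the two diagonal parts $\pi_i\otimes\pi_i$ only involve within-$U_i$ (resp. within-$U_i'$) distances, which the concatenation leaves unchanged, so each contributes at most $\varepsilon_i$ to the bad set at level $\varepsilon_1+\varepsilon_2$; the two mixed parts $\pi_1\otimes\pi_2$, $\pi_2\otimes\pi_1$ consist of pairs of points lying in different summands, to which \eqref{grx53} assigns the distance $2h$ on \emph{both} sides, hence zero distortion, so they do not meet the bad set at all. With additivity of the mass term this yields \eqref{grx84} after letting $\varepsilon_i\downarrow\dGPr(\mfu_i,\mfu_i')$, and continuity of $\sqcup^h$ is immediate.

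\textit{Step 2: reducing (b) and (c).} From \eqref{grx84} with $\mfu_2=\mfu_2'=\mfw$ we get $\dGPr(\mfu\sqcup\mfw,\mfv\sqcup\mfw)\le\dGPr(\mfu,\mfv)$, which is half of (b); with $\mfu_1=\mfu_1'=\mfv$ it gives $\dGPr(\mfv\sqcup\mfw_2,\mfv\sqcup\mfw_1)\le\dGPr(\mfw_1,\mfw_2)$, so by the triangle inequality (intermediate point $\mfv\sqcup\mfw_2$) one has $\dGPr(\mfu\sqcup\mfw_1,\mfv\sqcup\mfw_1)\le\dGPr(\mfu\sqcup\mfw_1,\mfv\sqcup\mfw_2)+\dGPr(\mfw_1,\mfw_2)$. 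Hence both the remaining inequality of (b) and all of (c) follow once we prove the \emph{reverse translation invariance} $\dGPr(\mfu,\mfv)\le\dGPr(\mfu\sqcup^h\mfw,\mfv\sqcup^h\mfw)$ for all $\mfu,\mfv,\mfw\in\U(h)^\sqcup$.

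\textit{Step 3: the main obstacle.} Here I would follow \cite{EM14} closely. Take a near-optimal coupling $\pi$ of $\mu_U+\mu_W$ and $\mu_V+\mu_W$ at level $\varepsilon$ and split it into the blocks $\pi_{UV},\pi_{UW},\pi_{WV},\pi_{WW}$ according to which copies of $U,V,W$ the two coordinates hit; comparing the marginal masses carried by the two copies of $W$ forces the $\mu_U$-mass coupled to the $W$-copy and the $\mu_V$-mass coupled to the $W$-copy to coincide. One then uses the \emph{canonical isometry} between the two copies of $\mfw$ to re-route those two leftover pieces of mass through $\mfw$ itself, turning $\pi_{UV},\pi_{UW},\pi_{WV}$ into an honest coupling of $\mu_U$ and $\mu_V$; since by \eqref{grx53} the $W$-copies lie at distance $2h$ from the $U$- and $V$-parts, the $\mfw$-contributions decouple from the $\mfu$/$\mfv$-contributions below scale $2h$ and may be discarded at no cost. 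The delicate point --- and the reason to quote \cite{EM14} rather than redo the computation --- is to carry out this disentangling so that the resulting coupling still has distortion controlled by $\varepsilon$ rather than a multiple of it, which genuinely uses that it is the \emph{same} $\mfw$ being cancelled on the two sides. (The merely algebraic cancellativity of $\sqcup^h$ needs none of this: if $\mfu\sqcup^h\mfw=\mfv\sqcup^h\mfw$, then since $\Phi_{\hu}$ is a homomorphism and $\Phi_{\hu}=\Phi$ for $\Phi\in\Pi_h$ by Theorem~\ref{p.trunc.poly}, we get $\Phi(\mfu)=\Phi(\mfv)$ for all $\Phi\in\Pi_h$, whence $\mfu=\mfu(\hu)=\mfv(\hu)=\mfv$ by the separation statement there.)
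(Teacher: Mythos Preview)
Your proposal is correct and takes the same route as the paper, which simply defers to Lemmas~2.1 and~2.2 of \cite{EM14}; you have merely unpacked that reference by sketching the coupling argument for (a), the reduction of (b) and (c) to the reverse translation inequality, and the disentangling construction for the latter. One small caution: the distortion--coupling description of $\dGPr$ you invoke in Step~1 is not the definition used in this paper (which is via isometric embeddings and the Prokhorov metric, see \eqref{e.dGPr}), so either cite the equivalence explicitly or run Step~1 directly from the embedding definition---cap each ambient space $Z_i$ at diameter $4h$ (harmless since $\mathrm{diam}(U_i),\mathrm{diam}(U_i')\le 2h$ and capping only enlarges $\varepsilon$-neighbourhoods), glue $Z_1$ and $Z_2$ with cross-distance $2h$, and use subadditivity $d_{\mathrm{Pr}}(\mu_1+\mu_2,\mu_1'+\mu_2')\le d_{\mathrm{Pr}}(\mu_1,\mu_1')+d_{\mathrm{Pr}}(\mu_2,\mu_2')$.
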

\begin{proof}\label{p.2163}
 This result is established as Lemmas 2.1 and 2.2 in \cite{EM14}.
\end{proof}

\begin{lemma}\label{l.2168}
 $(\U(h)^\sqcup, \sqcup^h)$ is a topological semigroup which is commutative and $0$ is the neutral 
element.
\end{lemma}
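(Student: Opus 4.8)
The plan is to verify the three semigroup axioms — closure, associativity, commutativity — together with the existence of the neutral element and continuity of the operation. Closure is immediate: by Proposition \ref{l.frsts.clsd} the set $\U(h)^\sqcup = \U([0,2h])$ is closed in $\U$, and by construction the concatenation $\mfu \sqcup^h \mfv$ in \eqref{e.tr47}–\eqref{grx53} only realizes distances that already appear in $\mfu$ or $\mfv$ together with the value $2h$, all of which lie in $[0,2h]$; hence $\mfu \sqcup^h \mfv \in \U(h)^\sqcup$. Continuity of $\sqcup^h \colon \U(h)^\sqcup \times \U(h)^\sqcup \to \U(h)^\sqcup$ is exactly Lemma \ref{l.2.1EM}\ref{i.tr11} (indeed with the quantitative bound \eqref{grx84}), so nothing new is needed there.

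For commutativity, note that the disjoint union $U \uplus V$ and the sum of measures $\mu + \nu$ are symmetric in the two arguments, and the glued ultrametric in \eqref{grx53} assigns the constant value $2h$ to all cross pairs regardless of order; thus the map swapping the two "halves" is a measure-preserving isometry between $\mfu \sqcup^h \mfv$ and $\mfv \sqcup^h \mfu$, so the two equivalence classes coincide. For the neutral element, I would check that the null tree $0 = [\{1\}, r, 0]$ (in the notation fixed in Definition \ref{D.umsp}) satisfies $\mfu \sqcup^h 0 = \mfu$: since the adjoined point carries zero mass, it lies outside the support of $\mu + 0 = \mu$, and passing to supports in the definition of the equivalence relation removes it, leaving $[U, r, \mu] = \mfu$. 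One should also observe $0 \in \U(h)^\sqcup$ for every $h \ge 0$, which is clear.

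The only genuinely nontrivial point is \textbf{associativity}, and this is where care is needed because we work with equivalence classes. Given representatives $\mfu = [U, r_U, \mu]$, $\mfv = [V, r_V, \nu]$, $\mfw = [W, r_W, \rho]$, I would form the triple disjoint union $U \uplus V \uplus W$ with the measure $\mu + \nu + \rho$ and with the metric that restricts to each of $r_U, r_V, r_W$ on the diagonal blocks and equals $2h$ on every off-diagonal pair; call this $\mfu \sqcup^h \mfv \sqcup^h \mfw$. Then I would exhibit the obvious bijections identifying both $(\mfu \sqcup^h \mfv) \sqcup^h \mfw$ and $\mfu \sqcup^h (\mfv \sqcup^h \mfw)$ with this common object as measure-preserving isometries — the content being that when one first glues $U$ and $V$ at level $2h$ and then glues the result to $W$ at level $2h$, every cross distance is still exactly $2h$ (here the ultrametric inequality is automatic since all the new distances equal the common value $2h$, so no triangle-type obstruction arises), and similarly for the other bracketing. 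Hence both products equal $\mfu \sqcup^h \mfv \sqcup^h \mfw$ as elements of $\U$. The main obstacle, such as it is, is purely bookkeeping: making sure the identification of representatives descends correctly to equivalence classes and that the glued metric is genuinely an ultrametric on $[0,2h]$; both follow from the fact that all adjoined distances take the single value $2h$. (Alternatively, one can avoid representatives altogether and deduce associativity from Theorem \ref{p.trunc.poly}\ref{i.tr65}: the truncated monomials $\Phi_h$, $\Phi \in \Pi_h$, are homomorphisms into $(\R,+)$ which is associative, and by Theorem \ref{p.trunc.poly}\ref{i.tr121} they separate points of $\U(h)^\sqcup$; however this uses results proved later, so the direct argument is preferable here.)
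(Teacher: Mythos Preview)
Your proposal is correct and follows the same approach as the paper: direct verification of the semigroup axioms at the level of representatives, together with continuity from Lemma~\ref{l.2.1EM}\ref{i.tr11}. The paper's own proof simply declares all of this ``elementary'' and ``obvious'' without spelling out any of the steps you carefully worked through.
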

\begin{proof}\label{p.2172}
  It is elementary to show that the binary continuous operation $\sqcup^{h}$ on $\U(h)^\sqcup$ defines a 
semigroup with the neutral element $0$. Likewise commutativity is obvious.
\end{proof}

Recall the partial order from Definition (\ref{d.po}) and the modulus of mass distribution $v_\delta(\cdot,h)$ from \cite{GPW09}:
\begin{equation}\label{e2113}
 v_\delta(\mfu,h) = \int \mu^\mfu (\dx x) \1\left( \mu^\mfu (B_{2h}(x)) < \delta \right) \, .
\end{equation}
Then we can show the following lemma.
\begin{lemma}[Lemma 2.7 in \cite{EM14}]\label{l.2.7EM}~
\begin{enumerate}
 \item\label{i.2.7EM.a} $\nu^{2,\mfu \sqcup \mfv} = \nu^{2,\mfu} + \nu^{2,\mfv} + 2 
\bar{\mfu}\bar{\mfv} \delta_{2h}$, $u,v\in{\U(h)}^\sqcup$.
 \item\label{i.2.7EM.b} For $h'\leq h$: $v_\delta(\mfu \sqcup \mfv, h') = v_\delta(\mfu,h') + 
v_\delta(\mfv,h')$,  $u,v\in{\U(h)}^\sqcup$.
 \item\label{i.2.7EM.c} For $h'<h$, $\mfu \leq \mfv \in \U(h)^\sqcup$: $v_\delta(\mfu, h') \leq 
v_\delta(\mfv,h')$,  $u,v\in{\U(h)}^\sqcup$.
 \item\label{i.2.7EM.d} For any compact set $A \subset \U(h)^\sqcup$, the set $\bigcup_{\mfu \in A} \{ 
\mfv : \, \mfv \leq \mfu \}$ is compact. Additionally, $\{(\mfu,\mfv) \in (\U(h)^\sqcup)^2:\, \mfu \leq 
\mfv \in A\}$ is compact.
 \item\label{i.2.7EM.e} The mapping $K$ from $\U(h)^\sqcup$ to the compact subsets of $U([0,2h])$ 
defined by $K(\mfu) = \{\mfv: \, \mfv \leq \mfu\}$ is upper semi-continuous, i.e.~if $F\subset 
\U(h)^\sqcup$ closed, then $\{\mfu:\, F\cap K(\mfu) \neq \emptyset \}$ is closed. Equivalently, if 
$\mfu_n \to \mfu$ and $\mfv_n \in K(\mfu_n)$ converges to $\mfv$, then $\mfv \in K(\mfu)$.
\end{enumerate}
\end{lemma}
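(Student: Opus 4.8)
The plan is to follow \cite{EM14}, where the corresponding statements are established for the product operation $\boxplus$; the computations transfer directly to the concatenation $\sqcup^h$, and the topological assertions will be reduced to the Gromov-weak relative compactness criterion (\cite{GPW09}, in the finite-measure form of \cite{Gl12}) together with the distance estimates of Lemma~\ref{l.2.1EM}.

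For (a) I would write $\mfu \sqcup^h \mfv = [U \uplus V, r_U \sqcup^h r_V, \mu + \nu]$ and expand $(\mu+\nu)^{\otimes 2} = \mu^{\otimes 2} + \mu\otimes\nu + \nu\otimes\mu + \nu^{\otimes 2}$ before applying the order-$2$ distance matrix measure \eqref{rg6}: on the two diagonal blocks the pair distance is $r_U$ resp.\ $r_V$, which reproduces $\nu^{2,\mfu}$ and $\nu^{2,\mfv}$, while on each of the two mixed blocks the distance is constantly $2h$ by \eqref{grx53}, each contributing mass $\bar\mfu\bar\mfv$ at the point $2h$, for a total of $2\bar\mfu\bar\mfv\,\delta_{2h}$. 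For (b) the key point is that, for $h'<h$ and $x$ in the $U$-copy inside $\mfu \sqcup \mfv$, the ball $B_{2h'}(x)$ meets no point of the $V$-copy, all cross distances being $2h>2h'$; hence $(\mu+\nu)(B_{2h'}(x)) = \mu(B_{2h'}^{U}(x))$, and symmetrically for $x$ in the $V$-copy, so the integral defining $v_\delta(\cdot,h')$ in \eqref{e2113} splits as the sum of the two one-sided integrals. (This is meaningful on isomorphy classes since $v_\delta$ only sees the support.) Part (c) is then immediate: if $\mfu \le \mfv$, pick $\mfw$ with $\mfv = \mfu \sqcup \mfw$, apply (b), and drop the non-negative term $v_\delta(\mfw,h')$.

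For (d) I would first check that $D(A) := \bigcup_{\mfu\in A}\{\mfv : \mfv \le \mfu\}$ is relatively compact: the total masses are bounded, since $\mfv \le \mfu$ forces $\bar\mfv \le \bar\mfu$; the order-$2$ distance matrix measures of elements of $\U(h)^\sqcup$ live on $[0,2h]$ and are thus tight; and by (c), $\sup_{\mfv\in D(A)} v_\delta(\mfv,h') \le \sup_{\mfu\in A} v_\delta(\mfu,h') \to 0$ as $\delta\downarrow 0$ because $A$ is compact (radii $h'\ge h$ being harmless as then the ball exhausts the bounded space), so the compactness criterion applies. For closedness, put $S := \{(\mfv,\mfu) : \mfv \le \mfu \in A\}$, a subset of the compact set $\overline{D(A)}\times A$; if $(\mfv_n,\mfu_n)\to(\mfv,\mfu)$ with $\mfv_n \sqcup \mfw_n = \mfu_n$, then Lemma~(\ref{l.2.1EM}\ref{i.tr13}) with commutativity gives $\dGPr(\mfw_m,\mfw_n) \le \dGPr(\mfu_m,\mfu_n) + \dGPr(\mfv_m,\mfv_n)\to 0$, so $(\mfw_n)$ is Cauchy, converges to some $\mfw$, and continuity of $\sqcup^h$ (Lemma~(\ref{l.2.1EM}\ref{i.tr11})) gives $\mfv \sqcup \mfw = \mfu$, i.e.\ $(\mfv,\mfu)\in S$; hence $S$ is compact, and so are its image $D(A)$ under the first projection and the second set in (d), which is $S$ up to swapping coordinates. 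For (e), the sequential reformulation is precisely the ``$(\mfw_n)$ Cauchy'' argument just used; for the closed-set formulation, given $F$ closed and $\mfu_n\to\mfu$ with $\mfv_n \in F$, $\mfv_n \le \mfu_n$, the set $A = \{\mfu\}\cup\{\mfu_n : n\in\bbN\}$ is compact, so $\mfv_n\in D(A)$ has a convergent subsequence $\mfv_{n_k}\to\mfv \in F$, and the sequential statement yields $\mfv \le \mfu$.

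The only genuine difficulty I anticipate is (d): one must invoke the correct Gromov-weak relative compactness criterion --- in which the distance-distribution condition is rendered trivial by the uniform diameter bound $2h$ on $\U(h)^\sqcup$ --- and one must keep the ``complementary summand'' $\mfw$ in a factorisation $\mfv \sqcup \mfw = \mfu$ under control, which is exactly what the translation-type inequality of Lemma~(\ref{l.2.1EM}\ref{i.tr13}) delivers; all remaining steps are bookkeeping inherited from \cite{EM14}.
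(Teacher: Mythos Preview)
Your proof is correct and follows essentially the same route as the paper: parts (a)--(c) and (e) are handled identically, and in (d) both you and the paper invoke the Gromov-weak relative compactness criterion via part (c) and then establish closedness by controlling the complementary factor $\mfw_n$ before applying continuity of $\sqcup^h$. The only minor difference is in how $\mfw_n$ is shown to converge: the paper observes that $\mfw_n$ is itself a divisor of an element of $A$, hence lies in the already pre-compact set $D(A)$, and extracts a subsequential limit; you instead use Lemma~\ref{l.2.1EM}\eqref{i.tr13} to show $(\mfw_n)$ is Cauchy directly, which is slightly cleaner since it bypasses the extra bookkeeping of placing $\mfw_n$ in $D(A)$.
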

\begin{proof}\label{p.2199}
 The first claim is obvious. For \eqref{i.2.7EM.b} set $\mfu = [U,r^\mfu,\mu^\mfu]$, $\mfv = 
[V,r^\mfv,\mu^\mfv]$ and $\mfu \sqcup \mfv = [U\uplus V, r, \mu^{\mfu}+\mu^{\mfv}]$. For $x \in U$ 
it is clear that $\{ y \in U \uplus V:\, r(x,y) < 2h'\} \subset U$ and therefore 
$(\mu^\mfu+\mu^\mfv)(B_{2h'}^{(r)}(x)) = \mu^\mfu (B_{2h'}^{(r^\mfu)}(x))$. Likewise for $z \in V$ 
and therefore:
 \begin{align}\label{grx85b}
  v_\delta(\mfu \sqcup \mfv, h') & = \int (\mu^\mfu + \mu^\mfv)(\dx x) \1\left( (\mu^\mfu+\mu^\mfv) 
(B_{2h'}(x)) < \delta \right)  \\
  & = \int_U  \mu^\mfu (\dx x) \1\left( (\mu^\mfu+\mu^\mfv) (B_{2h'}(x)) < \delta \right)   \nonumber \\ 
  & + \int_V  
\mu^\mfv(\dx z) \1\left( (\mu^\mfu+\mu^\mfv) (B_{2h'}(z)) < \delta \right)  \nonumber \\
  & = v_\delta(\mfu) + v_\delta(\mfv) \, .\nonumber
 \end{align}
 \eqref{i.2.7EM.c} is a trivial consequence of \eqref{i.2.7EM.b}.
 
To show \eqref{i.2.7EM.d} see the following: Since $A$ is compact, we know by Proposition 
(\ref{p.pre-comp}) that for all $h>0$, $\eps >0$ there is a $\delta(h,\eps)>0$ s.t.
  \begin{equation}\label{grx86}
   \sup_{\mfv \in A} v_{\delta}(\mfv,h) < \eps \, .
  \end{equation}
 But, if $\mfu \leq \mfv$, then $v_\delta(\mfu,h) \leq v_\delta(\mfv,h)$ for $h>0$ by the Lemma's part \eqref{i.2.7EM.c}. Thus,
  \begin{equation}\label{grx87}
   \sup_{\mfu \in \{\mfu:\,\exists\, \mfv \in A:\,\mfu\leq \mfv \}} v_\delta(\mfu,h) \leq \sup_{\mfv 
\in A} v_{\delta}(\mfv,h) < \eps \, \text{  for all  } h > 0.
  \end{equation}
However it suffices by Remark (\ref{R.pre-comp}) to show the above for all $h > 0$ to continue.
Namely we can also establish 
the closedness. Suppose $(\mfu_n)_{n\in \N} \subset \bigcup_{\mfv \in A} \{\mfu: \, \mfu \leq 
\mfv\}$ and $\mfu_n \to \mfu_\infty$. For any $n \in \N$ there are $\mfv_n \in A$ and $\mfw_n \in 
\bigcup_{\mfv \in A} \{\mfu: \, \mfu \leq \mfv\}$ with $\mfu_n \sqcup \mfw_n = \mfv_n$. This allows 
to deduce sub-sequential limits $\mfv_n \to \mfv_\infty \in A$ (by compactness) and $\mfw_n \to 
\mfw_\infty \in \U(h)^\sqcup$ (by pre-compactness). By continuity of $\sqcup$ (in Lemma (\ref{l.2.1EM}) \eqref{i.tr11}) we deduce, $\mfu_\infty 
\sqcup \mfw_\infty = \mfv_\infty$. Thus $\mfu_\infty \in \bigcup_{\mfv \in A}\{\mfu:\, \mfu \leq 
\mfv\}$. Similar arguments lead to the second statement.
 \eqref{i.2.7EM.e} is a consequence of \eqref{i.2.7EM.d}'s second statement.
\end{proof}

An element $\mfu \in \U(h)^\sqcup$ is called \emph{irreducible} if $\mfu \neq 0$ and $\mfv \leq \mfu$ 
for $\mfv \in \U(h)^\sqcup$ implies that $\mfv$ is either $0$ or $\mfu$. We characterize the set of 
irreducible elements; that it is a measurable subset of $\U(h)^\sqcup$ was provided in Lemma 
(\ref{l.frsts.clsd}). This is analogous to Proposition 5.1 in \cite{EM14}.
\begin{lemma}[Trees are the irreducible elements]\label{p.5.1EM}
 The set $\U(h)$ is the set of irreducible elements in $\U(h)^\sqcup$.
\end{lemma}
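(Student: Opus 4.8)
The plan is to prove the two inclusions separately: for ``tree $\Rightarrow$ irreducible'' I would use the additivity of the order-$2$ distance matrix measure under concatenation recorded in Lemma~\ref{l.2.7EM}\eqref{i.2.7EM.a}, and for ``irreducible $\Rightarrow$ tree'' I would split a non-tree along its open $2h$-balls. (Strictly speaking one proves that $\U(h)\setminus\{0\}$ is the set of irreducibles, since $0\in\U(h)$ is by convention not irreducible; I keep the harmless abuse of notation as in Theorem~\ref{p.delphic}. Measurability of $\U(h)$, needed to speak of it as a set of elements in the measurable framework, is already in Proposition~\ref{l.frsts.clsd}.)

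First I would show every nonzero $\mfu\in\U(h)=\U([0,2h))$ is irreducible. Suppose $\mfv\leq_h\mfu$, i.e. $\mfu=\mfv\sqcup^h\mfw$ for some $\mfv,\mfw\in\U(h)^\sqcup$. By Lemma~\ref{l.2.7EM}\eqref{i.2.7EM.a}, $\nu^{2,\mfu}=\nu^{2,\mfv}+\nu^{2,\mfw}+2\bar{\mfv}\bar{\mfw}\,\delta_{2h}$, hence $\nu^{2,\mfu}(\{2h\})\geq 2\bar{\mfv}\bar{\mfw}$. But membership of $\mfu$ in $\U([0,2h))$ forces $\nu^{2,\mfu}([2h,\infty))=0$, in particular $\nu^{2,\mfu}(\{2h\})=0$, so $\bar{\mfv}\bar{\mfw}=0$; thus $\mfv=0$ or $\mfw=0$, i.e. $\mfv\in\{0,\mfu\}$. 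Since also $\mfu\neq 0$, $\mfu$ is irreducible.

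For the converse I would argue the contrapositive: any $\mfu\in\U(h)^\sqcup\setminus\U(h)$ with $\mfu\neq 0$ is reducible. Since $\mfu\in\U(h)^\sqcup=\U([0,2h])$ all realized distances are $\leq 2h$, while $\mfu\notin\U([0,2h))$ gives $\nu^{2,\mfu}([2h,\infty))>0$; together these yield $\nu^{2,\mfu}(\{2h\})>0$. Choose a representative $(U,r,\mu)$ with $\supp\mu=U$. As $r$ is an ultrametric, $x\sim y:\Leftrightarrow r(x,y)<2h$ is an equivalence relation whose classes are precisely the open $2h$-balls; each is open, hence (being the complement of the union of the others) also closed, hence $\mu$-measurable, and since $\supp\mu=U$ each carries positive $\mu$-mass. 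Now $\nu^{2,\mfu}(\{2h\})>0$ forces the existence of two distinct classes (if there were only one, all distances would be $<2h$). Fix one class $B$ and set $\mfv:=[B,r|_B,\mu|_B]$, $\mfw:=[U\setminus B,r|_{U\setminus B},\mu|_{U\setminus B}]$, both nonzero elements of $\U(h)^\sqcup$. For $x\in B$, $y\in U\setminus B$ we have $r(x,y)\geq 2h$ (different $2h$-balls) and $r(x,y)\leq 2h$ (membership in $\U(h)^\sqcup$), so $r(x,y)=2h$; hence the representative is literally $(B\uplus(U\setminus B),\,r|_B\sqcup^h r|_{U\setminus B},\,\mu|_B+\mu|_{U\setminus B})$, i.e. $\mfu=\mfv\sqcup^h\mfw$ with $\mfv\notin\{0,\mfu\}$. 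So $\mfu$ is not irreducible.

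The only points needing a little care — the closest thing to an obstacle here — are the passage between isomorphy classes and chosen representatives (one must note that the $h$-concatenation of the two restrictions reproduces $\mfu$ as an isomorphy class, which is immediate once all cross-distances are shown to equal $2h$) and the equivalence ``$\nu^{2,\mfu}(\{2h\})>0$ iff there are at least two $2h$-balls of positive mass''; both follow directly from the ultrametric structure and the choice $\supp\mu=U$. This parallels Proposition~5.1 in \cite{EM14}, so I would keep the write-up terse.
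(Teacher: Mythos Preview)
Your proof is correct and follows essentially the same route as the paper. The only cosmetic difference is in the ``tree $\Rightarrow$ irreducible'' direction: the paper invokes the additivity of the ball count $\anz_h$ (Proposition~\ref{l.2.4EM}) to conclude $\anz_h(\mfv)+\anz_h(\mfw)=1$, whereas you use the additivity of $\nu^{2,\cdot}$ from Lemma~\ref{l.2.7EM}\eqref{i.2.7EM.a} to force $\bar{\mfv}\bar{\mfw}=0$; both are equivalent additive invariants detecting a trivial factor, and your converse via splitting off one open $2h$-ball is identical to the paper's.
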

\begin{proof}\label{p.2283}
 By the definition of $\anz_h$ we have $\U(h) = \{\mfu \in \U(h)^\sqcup:\, \anz_h(\mfu) =1 \} $.

 We show first that $\U(h) \subset$ irreducible elements. Let $\mfu \in \U(h)$, i.e.~$\anz_h(\mfu) = 
1$ and $\mfv \leq \mfu$, i.e.~there is a $\mfw \in \U(h)^\sqcup$ with $\mfv \sqcup \mfw = \mfu$. By 
Lemma (\ref{l.2.4EM}) we know that $\anz_h(\mfv) + \anz_h(\mfw) = 1$. This lets us with the two 
possibilities $\anz_h(\mfv) = 0$ or $\anz_h(\mfw)=0$, the first implying, that $\mfv = 0$ and the 
latter that $\mfv = \mfu$.
 
 On the contrary suppose $\mfu = [U,r,\mu]$ is irreducible. Suppose $\anz_h(\mfu) \geq 2$. For a 
point $x \in U$ consider $B(x,2h) = \{y \in U: \ r(x,y) < 2h \}$. Then $U' = U \setminus B(x,2h)$ 
has positive mass w.r.t.~$\mu$. Define $\mfv = [B(x,2h), r, \mu]$ and $\mfw = [U',r,\mu]$ where we 
always restrict $r$ and $\mu$ to the corresponding sets. It is easy to show that $\mfu = \mfv \sqcup 
\mfw$, which is a contradiction to irreducibility of $\mfu$.
\end{proof}

\begin{proposition}[Decomposition of forests in trees] \label{l.delphic}%[$\U(h)^\sqcup$ is a Delphic semigroup]
 $(\U(h)^\sqcup,\sqcup^h)$ is a Delphic semigroup with $0$, the neutral element, as the only 
infinitely divisible element. Any $\mfu \in \U(h)^\sqcup$ can be represented as
 \begin{equation}\label{e.tr55}
  \mfu = \bigsqcup_{i \in I} \mfu_i 
 \end{equation}
for a countable index set $I$ and $\mfu_i \in \U(h)$.
\end{proposition}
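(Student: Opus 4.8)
The plan is to verify the three defining properties of a Delphic semigroup (see Remark~\ref{R.delphic}) for $(\U(h)^\sqcup,\sqcup^h)$, then identify the only infinitely divisible element, and finally deduce the countable decomposition \eqref{e.tr55} by a standard iteration-and-limit argument.

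\textbf{Step 1: Delphic structure.} Commutativity, the topological semigroup property, and $0$ being the neutral element are already recorded in Lemma~\ref{l.2168}. The continuous homomorphism into $([0,\infty),+)$ with trivial kernel is $\mfu \mapsto \bar{\mfu}$: additivity is clear since $\overline{\mfu\sqcup^h\mfv} = \bar\mfu + \bar\mfv$, continuity is the continuity of total mass in the Gromov-weak topology, and triviality of the kernel holds because $\bar\mfu = 0$ forces $\mu$ to be the null measure, i.e. $\mfu = 0$. It remains to check that the set of divisors of any $\mfu$ is compact; by Definition~(\ref{d.po}) the divisors are exactly $K(\mfu) = \{\mfv : \mfv \leq \mfu\}$, and this is compact by Lemma~\ref{l.2.7EM}\eqref{i.2.7EM.d} applied to the compact set $A = \{\mfu\}$.

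\textbf{Step 2: $0$ is the only infinitely divisible element.} If $\mfu$ is infinitely divisible in the semigroup, then for each $n$ we can write $\mfu = \bigsqcup^{h,n}_{k=1}\mfv^{(n)}_k$ with all $\mfv^{(n)}_k$ equal (or at least each $\mfv^{(n)}_k \leq \mfu$). Applying the additive functional $\anz_h$ from Proposition~\ref{l.2.4EM} gives $\anz_h(\mfu) = n\,\anz_h(\mfv^{(n)}_1)$ for every $n$, which is impossible for a positive integer unless $\anz_h(\mfu) = 0$, forcing $\mfu = 0$ (alternatively one can run the same argument with $\bar\mfu$ in place of $\anz_h$, but $\bar\mfu$ can be non-integer, so $\anz_h$ is the cleaner choice here). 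Conversely $0$ is trivially infinitely divisible.

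\textbf{Step 3: countable decomposition into trees.} Given $\mfu \in \U(h)^\sqcup$, if $\mfu = 0$ take $I = \emptyset$. Otherwise, if $\mfu$ is itself irreducible it lies in $\U(h)$ by Lemma~\ref{p.5.1EM} and we are done. In general, if $\mfu$ is not irreducible, split off one nontrivial irreducible factor and iterate; the cleanest way is to use $\anz_h$: pick one open $2h$-ball, which by the proof of Lemma~\ref{p.5.1EM} realizes $\mfu = \mfu_1 \sqcup^h \mfu'$ with $\mfu_1 \in \U(h)\setminus\{0\}$ and $\anz_h(\mfu') = \anz_h(\mfu) - 1$. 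If $\anz_h(\mfu) < \infty$ this terminates after finitely many steps. If $\anz_h(\mfu) = \infty$, we obtain a sequence $\mfu_1, \mfu_2, \dotsc \in \U(h)\setminus\{0\}$ with $\bigsqcup^h_{i=1}^n \mfu_i \leq \mfu$ for every $n$; since the partial sums are increasing in $\leq_h$ and bounded by $\mfu$, they lie in the compact set $K(\mfu)$, so a subsequence converges, and using the translation-invariance and contraction estimates of Lemma~\ref{l.2.1EM} together with $\sum_i \bar{\mfu_i} \leq \bar\mfu < \infty$ one shows the full sequence of partial concatenations converges; call the limit $\mfw$. Then $\mfw \leq \mfu$, and one checks $\bar\mfw = \bar\mfu$ (mass accounting), so by Lemma~\ref{l.2.1EM}\eqref{i.tr13} and triviality of the mass kernel $\mfw = \mfu$, giving the countable decomposition $\mfu = \bigsqcup^h_{i\in I}\mfu_i$.

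\textbf{Main obstacle.} The delicate point is the convergence of the infinite concatenation and the verification that it recovers $\mfu$ exactly rather than a proper sub-forest — that is, ruling out "mass at infinity" or a leftover residual factor. This is where one genuinely needs the boundedness of the divisor set (Lemma~\ref{l.2.7EM}\eqref{i.2.7EM.d}) together with the finiteness $\sum_i \bar{\mfu_i} < \infty$ and the metric contraction inequalities of Lemma~\ref{l.2.1EM}; the analogue in \cite{EM14} (their treatment of the factorization of $\boxplus$) can be followed closely here since only the algebraic and order structure, which we have fully reproduced, is used. Uniqueness of the decomposition (the "up to order" statement of Theorem~\ref{p.delphic}) is not part of this Proposition and is deferred to the unique-factorization section~\ref{ss.uniqfact}.
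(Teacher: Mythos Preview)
Your route differs from the paper's: the paper verifies Kendall's three Delphic axioms (A), (B), (C) and then cites his Theorems~II and~III directly, whereas you check only (A) and (B) in Step~1 and then attempt \emph{ad hoc} arguments for Steps~2 and~3. The paper's Remark~\ref{R.delphic} is admittedly incomplete on this point, but Kendall's definition includes the null-triangular-array axiom~(C): if $\Delta(\mfu(i,j))\to 0$ uniformly in $j$ and $\mfv_i=\bigsqcup_{j\le i}\mfu(i,j)\to\mfv$, then $\mfv$ must be infinitely divisible. The paper proves the stronger statement $\mfv=0$ via the modulus of mass distribution (Lemma~\ref{l.2.7EM}\eqref{i.2.7EM.b}): for $\delta>c(i)$ one has $v_\delta(\mfv_i,h/2)=\bar{\mfv}_i$, and tightness then forces $\bar{\mfv}=0$. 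This single computation simultaneously delivers axiom~(C), the fact that $0$ is the only infinitely divisible element (Kendall's Theorem~II), and the decomposition (Kendall's Theorem~III).

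Your Step~2 has a genuine gap: the relation $\anz_h(\mfu)=n\,\anz_h(\mfv_1^{(n)})$ rules out $\anz_h(\mfu)\in\{1,2,\dots\}$ but is perfectly consistent with $\anz_h(\mfu)=\infty$ (just take $\anz_h(\mfv_1^{(n)})=\infty$ for every $n$). You dismiss the mass argument, but in fact neither $\anz_h$ nor $\bar{\mfu}$ alone settles the $\anz_h=\infty$ case; one needs an additional observation, for instance that every open $2h$-ball of $\mfu$ sits inside a single copy of $\mfv_1^{(n)}$ and hence has mass $\le \bar{\mfu}/n$ for all $n$, forcing all ball masses to vanish. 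The paper's $v_\delta$ argument is exactly the clean way around this, and once you have it you may as well invoke Kendall rather than redo Step~3 by hand (where your ``mass accounting'' $\bar{\mfw}=\bar{\mfu}$ also needs the explicit statement that the iteration enumerates \emph{all} positive-mass balls, not just some of them).
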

Before we give a proof we say that the semigroup is also \emph{sequentially Delphic} in the sense of 
\cite{Davidson68}, since $\U(h)^\sqcup$ is first countable (metric space!).
\begin{proof}\label{p.2309}
 We establish criterions (A), (B) and (C) in \cite{Kendall68}.

 (A): The total mass mapping $\Delta: \U(h)^\sqcup \to [0,\infty),\, \mfu \mapsto \bar{\mfu}$ is a 
semigroup homomorphism since $\Delta(\mfu \sqcup \mfv) = \overline{\mfu \sqcup \mfv} = (\mu + \nu) 
(U \uplus V) = \mu(U) + \nu(V) = \bar{\mfu} + \bar{\mfv} = \Delta(\mfu) + \Delta(\mfv)$ for 
$\mfu=[U,r_U,\mu], \mfv = [V,r_V,\nu] \in \U(h)^\sqcup$. The mapping is continuous by definition of 
the Gromov-weak topology.
 
 (B): For any $\mfu \in \U(h)^\sqcup$, the set $\{ \mfv \in \U(h)^\sqcup: \, \exists \mfw \ \mfu= \mfv 
\sqcup \mfw \}$ of divisors of $\mfu$ is compact. This is true by Lemma (\ref{l.2.7EM}) 
\eqref{i.2.7EM.d}.
 
 (C): Suppose $\{ \mfu(i,j) \in \U(h)^\sqcup :\, 1\leq j \leq i \in \N\}$ is a null-triangular array, 
i.e.~for any $i \in \N$ there is a $c(i) \geq 0$ such that $\Delta(\mfu(i,j)) \leq c(i)$ for all $j 
\leq i$ and $\lim_{i\to \infty} c(i) = 0$. Suppose
 \begin{equation}\label{e2114}
  \mfv_i := \sideset{}{^{h}}\bigsqcup_{1\leq j \leq i} \mfu(i,j)
 \end{equation}
 converges to a limit $\mfv$ ($\in \U(h)^\sqcup$ by closedness, see Lemma (\ref{l.frsts.clsd})). We want 
to establish that $\mfv = 0$, which is stronger than what Kendall requires in (C), but also states 
that \emph{the only infinitely divisible element} in the semigroup is $0$ by his Theorem II. In order 
for $\mfv(i)$ to converge the sequence needs to be tight in the Gromov-weak topology. However, for 
any $\delta >0$ we find an $i(\delta)$ with $c(i') < \delta$ for $i' \geq i(\delta)$ and thus
 \begin{equation}\label{e2115}
  v_\delta (\mfv_{i'}, h/2) = \sum_{j=1}^{i'} v_\delta(\mfu(i',j), h/2) = \sum_{j=1}^{i'} 
\bar{\mfu}(i',j) = \bar{\mfv}_{i'} \, 
 \end{equation}
by Lemma (\ref{l.2.7EM}) \eqref{i.2.7EM.b}. In order to satisfy the tightness criterion in Proposition 
(\ref{p.tight-crit}) we need to have that $\bar{\mfv}_{i'} < \eps$ for all large $i'$ and thus to have 
$\bar{\mfv} \to 0$. This means that for the limit $\bar{\mfv} = 0$ and hence $\mfv = 0$. Thus we 
have established that $\U(h)^\sqcup$ is a Delphic semigroup.

By Theorem III in Kendall's article and with Lemma 6 we know that for any $\mfu \in \U(h)^\sqcup$ a representation as 
in \eqref{e.tr55} exists; he calls irreducible elements ``indecomposable''.
\end{proof}

\begin{proof}[Proof of Theorem~\ref{p.trLap}]\label{pr.1903}
From the Proposition~\ref{l.delphic} above we get part (a) of the Theorem. The part (b) follows since the truncation provides an $h^\prime$-decomposition, so that the uniqueness shown in (a) gives the claim.
\end{proof}

\subsection{Properties of truncated monomials}
\label{ss.proptt}

We begin studying truncation and monomials.

\begin{proposition}[Truncated monomials of concatenation]\label{p2907131206}
  Let $h > 0$ and $\mfu_i \in \U(h)^\sqcup$, $i \in I$ for a finite or countable set $I$. Let 
$\mfu=\bigsqcup_{i \in I}\mfu_i$ be the $h$-concatenation of $(\mfu_i)_{i\in I}$ as in Definition 
(\ref{D.concat}). Then, for every $\Phi \in \Pi$,
  \begin{equation}\label{e2907131046}
    %\Phi_{\hu}(\mfu)= \lan\phi_{\hu},\nu^\mfu\ran
    %=\sum_{i\in I}\lan\phi,\nu^{\mfu_i}\ran=\sum_{i\in I}\Phi(\mfu_i)
    \Phi_{\hu}(\mfu)=\Phi_{\hu}(\lfloor\mfu\rfloor(h))=\sum_{i\in I}\Phi(\mfu_i).
  \end{equation}
%   If the concatenation is canonical, then $\mfu$ is uniquely determined by $\{\Phi_{\hu}:\Phi\}$.
\end{proposition}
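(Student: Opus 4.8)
Write $\Phi=\Phi^{m,\phi}$ with $\phi\in C_b(\bbD_m)$, so that $\Phi_{\hu}(\mfv)=\langle\phi_{\hu},\nu^{m,\mfv}\rangle$ with $\phi_{\hu}(\dr)=\phi(\dr)\prod_{1\le k<l\le m}\1_{[0,2h)}(r_{kl})$. I would prove the two equalities in turn, reducing the countable case of the second to the finite one. For the first equality, note that for $\mfu=[U,r,\mu]$ the $h$-top is $\mfu(\hu)=[U,r\wedge 2h,\mu]$, and one has $\1_{[0,2h)}\bigl((r\wedge 2h)(x,y)\bigr)=\1_{[0,2h)}(r(x,y))$ for all $x,y$, while $\phi\bigl((r\wedge 2h)(x,y)\bigr)=\phi(r(x,y))$ whenever $r(x,y)<2h$. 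Hence the two maps $(x_1,\dots,x_m)\mapsto\phi_{\hu}\bigl(((r\wedge 2h)(x_k,x_l))_{k<l}\bigr)$ and $(x_1,\dots,x_m)\mapsto\phi_{\hu}\bigl((r(x_k,x_l))_{k<l}\bigr)$ coincide on $U^m$, and integrating both against $\mu^{\otimes m}$ gives $\Phi_{\hu}(\mfu(\hu))=\Phi_{\hu}(\mfu)$ (this works for every $\mfu\in\U$, not just forests).

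For the second equality I would first treat a \emph{finite} index set, $\mfu=\mfu_1\sqcup^h\cdots\sqcup^h\mfu_n$ with $\mfu_i=[U_i,r_i,\mu_i]$, so $\mfu=[U,r,\mu]$ with $U=\biguplus_{i=1}^n U_i$, $\mu=\sum_i\mu_i$, $r|_{U_i\times U_i}=r_i$, and $r(x,y)=2h$ whenever $x\in U_i$, $y\in U_j$ with $i\ne j$. Expanding $\mu^{\otimes m}=\sum_{f\colon\{1,\dots,m\}\to\{1,\dots,n\}}\mu_{f(1)}\otimes\cdots\otimes\mu_{f(m)}$ along the partition $U^m=\biguplus_f\bigl(U_{f(1)}\times\cdots\times U_{f(m)}\bigr)$, I would split $\Phi_{\hu}(\mfu)=\langle\phi_{\hu},\nu^{m,\mfu}\rangle$ into the corresponding sum of block integrals. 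If $f$ is non-constant, pick $k<l$ with $f(k)\ne f(l)$; then $r(x_k,x_l)=2h$ on the entire block, so $\1_{[0,2h)}(r(x_k,x_l))\equiv 0$ there and the block contributes $0$. If $f\equiv i$, the block is $U_i^m$, on which $r=r_i$ and $\mu^{\otimes m}$ restricts to $\mu_i^{\otimes m}$, so it contributes $\langle\phi_{\hu},\nu^{m,\mfu_i}\rangle=\Phi_{\hu}(\mfu_i)$. Thus $\Phi_{\hu}(\mfu)=\sum_{i=1}^n\Phi_{\hu}(\mfu_i)$; when the $\mfu_i$ lie in $\U(h)$ — as the irreducible factors of $\mfu$ do by Theorem~\ref{p.delphic} — $\phi_{\hu}$ and $\phi$ agree $\nu^{m,\mfu_i}$-a.e., so one may replace $\Phi_{\hu}(\mfu_i)$ by $\Phi(\mfu_i)$, giving the displayed identity for finite $I$.

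For a \emph{countable} index set, I would not invoke continuity of $\Phi_{\hu}$ (which fails, see below) but exploit the additivity just obtained. Fix an exhaustion $I_n\uparrow I$ by finite sets and put $\mfu^{(n)}=\bigsqcup^h_{i\in I_n}\mfu_i$ and $\mfw^{(n)}=\bigsqcup^h_{i\in I\setminus I_n}\mfu_i$; the latter converges by Remark~\ref{r.796}, and continuity of $\sqcup^h$ (Lemma~\ref{l.2.1EM}) gives $\mfu=\mfu^{(n)}\sqcup^h\mfw^{(n)}$. The two-fold case already proved then yields $\Phi_{\hu}(\mfu)=\Phi_{\hu}(\mfu^{(n)})+\Phi_{\hu}(\mfw^{(n)})=\sum_{i\in I_n}\Phi(\mfu_i)+\Phi_{\hu}(\mfw^{(n)})$. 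Finally $\bigl|\Phi_{\hu}(\mfw^{(n)})\bigr|\le\|\phi\|_\infty\,\nu^{m,\mfw^{(n)}}(\bbD_m)=\|\phi\|_\infty\,(\bar{\mfw}^{(n)})^m$, and $\bar{\mfw}^{(n)}=\sum_{i\notin I_n}\bar{\mfu}_i\to 0$ since total mass is additive under $\sqcup^h$ and $\sum_{i\in I}\bar{\mfu}_i=\bar{\mfu}<\infty$; letting $n\to\infty$ gives $\Phi_{\hu}(\mfu)=\sum_{i\in I}\Phi(\mfu_i)$ with absolutely convergent series.

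The expansion over colourings is routine bookkeeping, so the point I would single out as the main obstacle is the countable case: $\Phi_{\hu}$ is \emph{not} continuous on $\U(h)^\sqcup$, because its integrand $\phi_{\hu}$ has jumps on the set $\{\text{some }r_{kl}=2h\}$, which can carry positive $\nu^{m,\cdot}$-mass for a forest (e.g.\ for $[\{a,b\},r(a,b)=2h,\delta_a+\delta_b]$). One therefore cannot simply pass to a weak limit in $\nu^{m,\mfu^{(n)}}\Rightarrow\nu^{m,\mfu}$; instead one must lean on the additivity coming from the finite case together with the crude total-mass bound $|\Phi_{\hu}(\mfv)|\le\|\phi\|_\infty\,\bar{\mfv}^{\,m}$ to control the tail $\mfw^{(n)}$.
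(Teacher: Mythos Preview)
Your proof is correct and its core is the same as the paper's: expand $\mu^{\otimes m}=(\sum_i\mu_i)^{\otimes m}$ over the partition $U^m=\biguplus_f U_{f(1)}\times\cdots\times U_{f(m)}$ and observe that every non-constant block is killed by the truncation indicator because cross-distances equal $2h$. Two differences are worth noting.

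First, the paper does \emph{not} split into finite and countable cases. It works directly on a representative of the countable concatenation, where $\mu=\sum_{i\in I}\mu_i$ is already a single finite Borel measure; the multinomial expansion of $\mu^{\otimes m}$ over the countable partition of $U^m$ is then an exact identity of measures, with no limit to take. Your concern about the discontinuity of $\Phi_{\hu}$ is legitimate in the abstract, but it simply does not arise here: one never passes through $\nu^{m,\mfu^{(n)}}\Rightarrow\nu^{m,\mfu}$. Your tail-bound argument is correct, just more work than needed.

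Second, you are more careful than the paper on one point. The expansion yields $\Phi_{\hu}(\mfu)=\sum_i\Phi_{\hu}(\mfu_i)$ for $\mfu_i\in\U(h)^\sqcup$; replacing $\Phi_{\hu}(\mfu_i)$ by $\Phi(\mfu_i)$ requires that within each $U_i$ all distances are $<2h$, i.e.\ $\mfu_i\in\U(h)$. The paper's proof drops the indicator in the last line without comment. You correctly flag this and supply the justification. (For the downstream application --- the homomorphism property of $\Phi_{\hu}$ in Theorem~\ref{p.trunc.poly}\eqref{i.tr65} --- it is the version with $\Phi_{\hu}(\mfu_i)$ that matters, and that holds for arbitrary $\mfu_i\in\U(h)^\sqcup$.)
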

This establishes Theorem (\ref{p.trunc.poly}) \eqref{i.tr65} and \eqref{i.tr66}.
\begin{proof}\label{p.1573}
 Suppose that $\Phi = \Phi^{m,\phi} \in \Pi$ is a monomial. Let $\mfu_i = (U_i, r_i, 
\mu_i)$ for $i \in  I$. Recall that for $\mfu = (U,r,\mu)$: $\mu = \sum_{i\in  I} \mu_i$ and thus
 \begin{align}\label{grx74}
  \Phi_{\hu} (\mfu) = & \int_U \mu^{m} (\dx \underline{x}) \, \phi_h(\dr(\underline{x}))  = \int_U 
(\sum_{i\in  I} \mu_i)^m (\dx \underline{x}) \, \phi(\dr(\underline{x})) \prod_{1\leq i < j\leq m} 
\1(r(x_i,x_j) < 2h) \\
  & = \sum_{i \in  I} \int_{U_i} \mu_i (\dx \underline{x}) \, \phi(\dr(\underline{x}))  = \sum_{i 
\in  I} \Phi(\mfu_i), \nonumber
 \end{align}
where we used that $r(x,y) = 2h$ whenever $x$ and $y$ are not contained in the same $U_i$, $i \in  
I$. The other equality follows by $\Phi_{\hu}(\lfloor \mfu \rfloor(\hu)) = \Phi_{\hu}(\mfu)$.
\end{proof}
In the previous result we did not require $\Phi \in \Pi_h$; it sufficed to have $\Phi \in \Pi$.

Recall that $\#_h(\mfu)$ is defined as the number of prime elements of $\lfloor\mfu\rfloor(h)$.
Let $\#_h(\mfu)$ be the (unique) number of open balls of radius $h$ in $\mfu$, (it is easy to see that one can recover $\#_h(\mfu)$ from the distance matrix distributions): for $\mfu\in\bbU$,  
\begin{equation}\label{e.D.anz}
    \anz_h(\mfu) = \sup \{ m \in \N: \ \nu^{m,\mfu} ([2h,\infty)^{\binom{m}{2}}) >0, \,  
 \nu^{m+1,\mfu} 
  ([2h,\infty)^{\binom{m+1}{2}})=0 \} \in \N_0 \cup \{\infty \} \, .
\end{equation}
Now we can prove Proposition (\ref{l.2.4EM}) which states the additivity and measurability of the 
map $\#_h:\bbU\to\bbN_0$.

\begin{proof}[Proof of Proposition \ref{l.2.4EM}]\label{p.pl24em}
 Measurability is clear, since $\1(\nu^{m,\mfu}([0,2h)^{\binom{m}{2}} > 0)$ is measurable for all $m \in \N$.
 It is true that for $m \in \N$:
 \begin{equation}\label{grx76}
  \nu^{m, \mfu \sqcup \mfv} ( [2h,\infty)^{\binom{m}{2}} ) = \sum_{k_1+k_2 = m} \nu^{k_1,\mfu} 
([2h,\infty)^{\binom{k_1}{2}}) \nu^{k_2,\mfv} ([2h,\infty)^{\binom{k_2}{2}}) \, .
 \end{equation}
 This establishes the claim, since taking $m > \anz_h(\mfu) + \anz_h(\mfv)$ does not allow one to find a 
pair $(k_1,k_2)$ such that the right hand side is positive, whereas $m\leq \anz_h(\mfu) + 
\anz_h(\mfv)$ allows at least one positive summand on the right hand side choosing $k_1 \leq 
\anz_h(\mfu)$ and $k_2 \leq \anz_h(\mfv)$.
\end{proof}
The previous lemma directly implies the following result.
\begin{corollary}\label{c.tr1}
 For $\mfu \in \U(h)^\sqcup$ with $\mfu = \bigsqcup_{i=1}^k \mfu_i$ for $\mfu_i \in \U(h), i\leq k \in \N \cup \{\infty\}$ we have that $\anz_h(\mfu)= k$.
\end{corollary}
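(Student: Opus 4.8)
The plan is to obtain this at once from the additivity of $\anz_h$ proved in Proposition~\ref{l.2.4EM}, together with the observation in the proof of Lemma~\ref{p.5.1EM} that a nonzero $h$-tree $\mfv\in\U(h)\setminus\{0\}$, being irreducible, satisfies $\anz_h(\mfv)=1$. (I read the decomposition $\mfu=\bigsqcup_{i=1}^{k}\mfu_i$ as the unique one of Theorem~\ref{p.delphic}, in particular with every $\mfu_i\neq 0$; admitting a zero factor would trivially break the asserted identity, since $\anz_h(0)=0$.)

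First I would treat $k\in\N$: applying the additivity relation \eqref{grx75} inductively along $\mfu=\mfu_1\sqcup\cdots\sqcup\mfu_k$ gives $\anz_h(\mfu)=\sum_{i=1}^{k}\anz_h(\mfu_i)=\sum_{i=1}^{k}1=k$.

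For $k=\infty$ one cannot simply pass to the limit in $n$, since $\anz_h$ is neither upper nor lower semicontinuous (Remark~\ref{r.952}); instead I would argue by $\le_h$-monotonicity. As $\anz_h$ is additive and $\N_0\cup\{\infty\}$-valued, it is non-decreasing for $\le_h$: if $\mfw=\mfv\sqcup\mfw'$ then $\anz_h(\mfw)=\anz_h(\mfv)+\anz_h(\mfw')\geq\anz_h(\mfv)$. Put $\mfv_n:=\mfu_1\sqcup\cdots\sqcup\mfu_n$. For every $m\geq n$ we have $\bigsqcup_{i=1}^{m}\mfu_i=\mfv_n\sqcup\bigsqcup_{i=n+1}^{m}\mfu_i$, hence $\mfv_n\leq_h\bigsqcup_{i=1}^{m}\mfu_i$. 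Since $\bigsqcup_{i=1}^{m}\mfu_i\to\mfu$ as $m\to\infty$ (meaning of the countable concatenation, Remark~\ref{r.796}), the upper semicontinuity of the divisor correspondence $K(\mfw)=\{\mfv:\mfv\leq_h\mfw\}$ (Lemma~\ref{l.2.7EM}\,\eqref{i.2.7EM.e}) then gives $\mfv_n\in K(\mfu)$, i.e.\ $\mfv_n\leq_h\mfu$. Therefore, by the finite case already settled, $\anz_h(\mfu)\geq\anz_h(\mfv_n)=n$ for every $n\in\N$, which forces $\anz_h(\mfu)=\infty=k$.

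The whole argument is light bookkeeping on top of the additivity already in hand; the only step needing a moment's care is the countable case, where the failure of continuity of $\anz_h$ forbids a naive limit and makes the route through $\le_h$-monotonicity (plus the upper semicontinuity of the divisor correspondence, rather than a countable-additivity statement which has not been recorded) the natural one.
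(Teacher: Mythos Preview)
Your proof is correct. The paper's own proof is literally one line --- ``The previous lemma directly implies the following result'' --- meaning immediate from the additivity of $\anz_h$ in Proposition~\ref{l.2.4EM} together with $\anz_h=1$ on $\U(h)\setminus\{0\}$ (as noted at the start of the proof of Lemma~\ref{p.5.1EM}). Your finite-$k$ argument is exactly this.

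For $k=\infty$ the paper does not spell anything out; presumably what is intended is to work with an explicit representative $\bigl[\biguplus_i U_i,\,r,\,\sum_i\mu_i\bigr]$ of the countable concatenation and read off directly from the formula~\eqref{e.D.anz} (or~\eqref{grx76}) that $\nu^{m,\mfu}\bigl([2h,\infty)^{\binom{m}{2}}\bigr)\geq \prod_{i=1}^m\bar\mfu_i>0$ for every $m$. Your route via $\leq_h$-monotonicity plus the upper semicontinuity of the divisor correspondence (Lemma~\ref{l.2.7EM}\eqref{i.2.7EM.e}) is a genuine alternative: it is slightly longer but has the virtue of staying at the level of equivalence classes and not touching representatives, and it correctly sidesteps the non-continuity of $\anz_h$ flagged in Remark~\ref{r.952}. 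Your reading that the $\mfu_i$ must be nonzero is also right and worth making explicit.
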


Above we saw truncated monomials are homomorphisms on our topological semigroup.
The next two results state that the class of truncated monomials allows to identify elements and that their initial topology on $\U(h)^\sqcup$ coincides with the induced topology from $(\U,d_{\textup{GPr}})$.

\begin{proposition}[$h$-truncated monomials characterize $h$-top]\label{p.trpol.sep}
 Let $h  > 0$ and $\mfu,\mfu'\in\bbU(h)^{\sqcup}$. Then $\mfu=\mfu'$ iff $\Phi(\mfu) = \Phi(\mfu')$
  for all $\Phi \in \CA_+(\Pi_h)$.
 Furthermore, the element $\lfloor\mfu\rfloor(h)\in\bbU(h)^{\sqcup}$ is uniquely determined by
  \begin{equation}\label{e.tr226}
    \Phi(\mfu)=\Phi(\lfloor\mfu\rfloor(h))\,, \quad \Phi \in \CA_+(\Pi_h).
  \end{equation}
\end{proposition}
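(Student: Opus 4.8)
\textit{Proof strategy.} The ``only if'' direction is trivial: every $\Phi\in\CA(\Pi)$ is by construction a function on isomorphy classes, and $\Phi(\mfu)=\langle\phi,\nu^{m,\mfu}\rangle$ is finite since $\mfu\in\U(h)^\sqcup$ has finite mass. Everything else reduces to the single implication
\[
(\star)\qquad \mfu,\mfu'\in\U(h)^\sqcup,\quad \Phi(\mfu)=\Phi(\mfu')\ \ \forall\,\Phi\in\CA_+(\Pi_h)\ \Longrightarrow\ \mfu=\mfu'.
\]
Granting $(\star)$ the ``furthermore'' follows: for $\mfu\in\U$ the element $\mfu(h)\in\U(h)^\sqcup$ satisfies $\Phi(\mfu(h))=\Phi_h(\mfu(h))=\Phi_h(\mfu)=\Phi(\mfu)$ for $\Phi\in\Pi_h$ by Proposition~\ref{p2907131206} (the outer equalities because $\Phi=\Phi_h$ once $\supp\phi\subseteq[0,2h)^{\binom{m}{2}}$), hence on all of $\CA_+(\Pi_h)$; so $\mfu(h)$ is a solution of ``$\Phi(\cdot)=\Phi(\mfu)$ on $\CA_+(\Pi_h)$'', and $(\star)$ makes it the unique one in $\U(h)^\sqcup$. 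Finally, since $\Pi_{h,+}\subseteq\CA_+(\Pi_h)$ and any $\phi\in C_b(\bbD_m)$ with $\supp\phi\subseteq[0,2h)^{\binom{m}{2}}$ is a difference $\phi^+-\phi^-$ of two nonnegative functions of the same support type, the hypothesis of $(\star)$ is equivalent to $\langle\phi,\nu^{m,\mfu}\rangle=\langle\phi,\nu^{m,\mfu'}\rangle$ for all $m\ge1$ and all such $\phi$.

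\textit{Step 1 (the data fixes the ``interior'' of each distance matrix measure).} The admissible test functions $\phi$ are exactly the compactly supported continuous functions on the locally compact Polish space $\bbD_m\cap[0,2h)^{\binom{m}{2}}$, so by the Riesz representation theorem they determine the finite (Radon) measure $\Lambda_m^{\mfu}:=\nu^{m,\mfu}\big|_{[0,2h)^{\binom{m}{2}}}$. Hence the hypothesis of $(\star)$ is equivalent to $\Lambda_m^{\mfu}=\Lambda_m^{\mfu'}$ for all $m\ge1$, and it remains to recover the full measures $\nu^{m,\mfu}$ (which also charge the ``$2h$-faces'') from $(\Lambda_n^{\mfu})_{n\le m}$.

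\textit{Step 2 (reconstructing $\nu^{m,\mfu}$ from $\Lambda_1^{\mfu},\dots,\Lambda_m^{\mfu}$ --- the heart).} By Theorem~\ref{p.delphic} write $\mfu=\bigsqcup^{h}_{i\in I}\mfu_i$ with $\mfu_i\in\U(h)\setminus\{0\}$. An $m$-sample from $\mfu$ induces a set partition $P=\{B_1,\dots,B_k\}$ of $\{1,\dots,m\}$ recording which $\mfu_i$ each point lands in; within a block the sampled distances are the corresponding $\mfu_i$-distances (all $<2h$), across blocks they all equal $2h$, and ``across'' is read off $P$. Thus $\nu^{m,\mfu}=\sum_P\nu^{m,\mfu}_P$, where $\nu^{m,\mfu}_P$ is carried by the stratum whose $2h$-entries are exactly the cross-block pairs of $P$; that stratum is canonically $\prod_{l}\big(\bbD_{|B_l|}\cap[0,2h)^{\binom{|B_l|}{2}}\big)$, and there $\nu^{m,\mfu}_P\cong\sum_{(i_1,\dots,i_k)\ \text{pairwise distinct}}\bigotimes_{l=1}^{k}\nu^{|B_l|,\mfu_{i_l}}$. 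Every mass in sight is finite (for $n\ge1$, $\sum_i\bar{\mfu}_i^{\,n}\le\bar{\mfu}\,(\sup_i\bar{\mfu}_i)^{\,n-1}<\infty$), so M\"obius inclusion--exclusion over the partition lattice turns the ``pairwise distinct'' sum into a finite signed combination of products of terms $\sum_i\bigotimes_{l\in C}\nu^{|B_l|,\mfu_i}$; and since $\bigotimes_{l\in C}\nu^{|B_l|,\mfu_i}$ is a coordinate-forgetting pushforward of $\nu^{n_C,\mfu_i}$ with $n_C:=\sum_{l\in C}|B_l|\le m$, each such term is a pushforward of $\sum_i\nu^{n_C,\mfu_i}=\Lambda_{n_C}^{\mfu}$. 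Hence $\nu^{m,\mfu}$ is one fixed, universal expression in $\Lambda_1^{\mfu},\dots,\Lambda_m^{\mfu}$, so Step~1 gives $\nu^{m,\mfu}=\nu^{m,\mfu'}$ for all $m$.

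\textit{Step 3 and the main obstacle.} Elements of $\U$ are determined by the family of their distance matrix measures (Definition~\ref{D.umsp}; \cite{GPW09}), so $\nu^{m,\mfu}=\nu^{m,\mfu'}$ for all $m$ forces $\mfu=\mfu'$, which proves $(\star)$ and with it the Proposition. I expect the real work to be Step~2: cleanly organizing the inclusion--exclusion over set partitions and checking the pushforward and finiteness identities that express the ``$2h$-faces'' of $\nu^{m,\mfu}$ through the interiors $\Lambda_n^{\mfu}$, $n\le m$ --- this is exactly the point where the free, disjoint-union structure of the concatenation semigroup is decisive, and it is what makes the \emph{strictly} truncated monomials $\Pi_h$ (probing distances $<2h$ only) already suffice. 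The measure-theoretic justification in Step~1 is routine by comparison.
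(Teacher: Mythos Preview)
Your argument is correct and in spirit close to the paper's, but the organization of the key combinatorial step differs in an interesting way.

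The paper, like you, first extracts from the hypothesis that $\nu^{m,\mfu}|_{[0,2h)^{\binom{m}{2}}}=\nu^{m,\mfu'}|_{[0,2h)^{\binom{m}{2}}}$ for all $m$ (via a separating-algebra argument, Theorem 3.4.5 in \cite{EK86}, rather than Riesz; your Riesz argument is also fine since $\supp\phi$ is automatically compact). It then recovers the full $\nu^{m,\mfu}$ on product sets $A=\prod A_{ij}$ with each $A_{ij}\subset[0,2h)$ or $A_{ij}=\{2h\}$ by reorganizing the indices via a permutation so that the $\{2h\}$-pattern in $A$ corresponds to contiguous blocks, and then applying ordinary inclusion--exclusion on the events ``$r_{l_p l_q}<2h$'' to express $\nu^{m,\mfu}(A)$ through the restricted measures $\nu^{n,\mfu}|_{[0,2h)^{\binom{n}{2}}}$, $n\le m$. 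This is done entirely at the level of the distance matrix measures, with no appeal to the decomposition of $\mfu$ into $h$-trees.

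Your Step~2 instead invokes the decomposition $\mfu=\bigsqcup_i\mfu_i$ and runs M\"obius inversion over the partition lattice of the $m$-sample to turn the ``pairwise distinct'' sum into pushforwards of the $\Lambda_n^\mfu$. This is correct and arguably more conceptual, making transparent \emph{why} the strictly truncated data suffice. One caution on logical order: in the paper Theorem~\ref{p.delphic} (with uniqueness) is only completed after the present Proposition, via Proposition~\ref{p.tr1} in Section~\ref{ss.uniqfact}. You only need \emph{existence} of a decomposition into elements of $\U(h)$, which is Proposition~\ref{l.delphic} and is indeed established beforehand; you should cite that instead to avoid any appearance of circularity. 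With that adjustment the proof stands.
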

\begin{proof}\label{p.1627}
The second statement is an easy consequence of the first one. For the first statement necessity is 
obvious. 

For sufficiency, note that for fixed $m \in \N$ we have $\lan \phi, \nu^{\mfu,m} \ran = \lan \phi , \nu^{\mfu',m} \ran$ for all $\phi \in C_b([0,\infty)^{\binom{m}{2}}, [0,\infty))$ which are equal to zero outside of $[0,2h)^{\binom{m}{2}}$.
These functions $\phi$ \emph{generate an algebra} of bounded continuous functions \emph{separating points} in $[0,2h)^{\binom{m}{2}}$.
Therefore Theorem 3.4.5 in \cite{EK86} implies that
 \begin{equation}\label{e.tr227}
  \nu^{m,\mfu}|_{[0,2h)^{\binom{m}{2}}} = \nu^{m,\mfu'}|_{[0,2h)^{\binom{m}{2}}} \, .
 \end{equation}
We need to show $\nu^{m,\mfu} = \nu^{m,\mfu'}$ for all sets in 
$\mcB(\R^{\binom{m}{2}}), m\geq 1.$ Then $\mfu = \mfu'$ by Gromov’s
reconstruction theorem for mm-spaces. %(see Section 3$\frac{1}{2}$.5 in [Gro99]).

Fix $m \geq 1.$ We will
deduce an expression for $\nu^{\mfu,m}(A)$ for $A \in \mcB(\R^{\binom{m}{2}})$ only relying on values 
of $\nu^{\mfu,m}|_{[0,2h)^{\binom{m}{2}}}$. This derivation can then also be done for $\nu^{\mfu',m}$ 
giving the result. It suffices to check the equality for
 \begin{equation}\label{e.tr228}
  A \in \mcB(\R^{\binom{m}{2}}), \text{ where } A = \bigtimes_{1\leq i < j \leq m} A_{ij},\, A_{ij} =[a_{ij},b_{ij})
 \end{equation}
 for some $a_{ij},b_{ij} \in \R$.
 However, since $\nu^{\mfu,m}$ and $\nu^{\mfu',m}$ only have 
positive mass on $\bbD_m \cap ([0,2h)\cup \{2h\})^{\binom{m}{2}}$% by Lemma~\ref{l.nueps}
 , we can further restrict to $A$ of the form in \eqref{e.tr228} with the \emph{additional} property that:
 \begin{equation}\label{e49}
   A \in \mcB(\R^{\binom{m}{2}}), \text{ where } A_{ij}\subset [0,2h) \text{ or } A_{ij} = \{2h\}, 
1\leq i< j \leq m.
 \end{equation}
Define a permutation $\pi$ on $\{1,\dotsc,m\}$ depending on $A$ in the following inductive way: let 
$\pi(1)=1$ and let $(i_2,\dots, i_{l_1})$ be the ordered collection of indices $i \in 
\{2,\dotsc,m\}$ with $A_{1i} \subset [0,2h)$.
Define $\pi(i_j) = j$.
If there are any indices left (that is if $l_1 <m$), then take the smallest one, that is $m_1 = \inf ( \{1,\dotsc,m\} \setminus 
\{1,i_2,\dotsc, i_{l_1}\})$ and let $\{i_{l_1+1},\dots, i_{l_2}\}$ those indices with $A_{m_1,*} 
\subset [0,2h)$. Define $\pi(i_j) = j.$ Continue until no indices are left.
The considerations with the permutation $\pi$ allows us to give a ``subtree decomposition'' $\pi^\ast A$ of $A$ of the form in \eqref{e49}.

We define now a symbol $V$ as follows.
By symmetry of $\nu$, we know that $\nu^{\mfu,m}(A) = \nu^{\mfu,m}(\pi_*A)$, where $\pi_*A = \{\dr: 
r_{\pi^{-1}(i)\pi^{-1}(j)} \in A\}.$ 
Thus we may work with the rearranged $A$ now. Then by \eqref{e49},
\begin{equation}\label{e1768}
r_{ij} = 2h$, if $1\leq i\leq l_p< j\leq m \text{  for some  } p \in \{1,\dots, k\}.
\end{equation}
Thus we may work with $\pi_\ast A$ instead of $A$.
We can w.l.o.g.~restrict to $A$ of the form.
 \begin{align}\label{e.tr229}
  A &= A_{l_1}^{(1)} \vee \dots \vee A_{l_k}^{(k)} \qquad \text{ for } 0=l_0<l_1< \cdots l_k =m, \\
  \intertext{and requiring for $p \in \{1,\dots,k\}$ that}
  r_{ij} &\in A \Leftrightarrow \begin{cases} r_{ij} & \in A_{l_p}^{(p)} \quad \text{ if 
}l_{p-1}<i<j \leq l_p \\
                                r_{ij} & = 2h  \quad \text{ otherwise}.
                               \end{cases}\label{e.tr229b}
 \end{align}

Then we have, restricted to $\bbD_m$:
\begin{align}\label{e.tr230}
 A &= \{ r_{ij} \in A_{l_p}^{(p)} \quad \text{ if }l_{p-1}<i<j \leq l_p,\ p\leq k,\ \text{  otherwise \eqref{e1768} holds  }\\
 & = \{ r_{ij} \in A_{l_p}^{(p)} \quad \text{ if }l_{p-1}<i<j \leq l_p\} \label{e764b}\\
 & \qquad \setminus \{ r_{ij} \in A_{l_p}^{(p)} \quad \text{ if }l_{p-1}<i<j \leq l_p, \label{e764c}\\
 & =: \hat{A} \setminus (\hat{A} \cap) \text{  (the negation of  \eqref{e1768} holds)}.\label{e764d}
\end{align}
Now we can use the inclusion-exclusion formula to obtain:
\begin{align}\label{e.tr231}
 \nu^{\mfu,m} (A) = \nu^{\mfu,m}(\hat{A}) - \sum_{l=1}^{\binom{k}{2}} (-1)^{l+1} \sum_{I\subset \{ 
(p,q):\, 1\leq p < q \leq k\}, |I|=l} \nu^{\mfu,m}( \hat{A} \cap \bigcap_{(p,q)\in I} \{r_{l_p l_q} 
\in [0,2h) \} ).
\end{align}
But this is a formulation where on the right hand side in between two chosen points either their 
distance is less than $2h$ or there is no restriction at all. This allows to calculate 
$\nu^{\mfu,m}(A)$ using $\nu^{\mfu,n}|_{[0,2h)^{\binom{n}{2}}}$, $1\leq n \leq m$ only. That is what 
we had to show.
% It is not too difficult to calculate the parts of the sum only using $\nu^{\mfu,m}$ restricted to 
%$[0,2h)^{\binom{m}{2}}$, e.g.~for $I= \{(1,2),(1,3)\}$ we obtain
% \begin{align}\label{e.tr232}
% \nu^{\mfu,m}( \hat{A} \cap & \bigcap_{(p,q)\in \{(1,2),(1,3)\}} \{r_{l_p l_q} \in [0,2h) )  = 
%\int_{U^m} \mu^{\otimes l_3}(\dx x) \mu^{\otimes m-l_3}(\dx x) \\
% & \qquad \qquad \1(  r(x_i,x_j) \in A_{l_p}^{(p)} , \, l_{p-1}<i<j \leq l_p) \1(r(x_{l_1,l_2} \leq 
%2h, r(x_{l_1,l_3} \leq 2h ) ).
% \end{align}
% The general case works similarly and is suppressed here.
\end{proof}

In the next proposition we show that the class of truncated continuous monomials is also convergence-determining.
The metric $d_{\textup{GPr}}$ is defined in (\ref{e.dGPr}).
\begin{proposition}[Convergence criterion for $h$-forests] \label{p.trpol.topol}
 For $\mfu_n ,\mfu \in \U(h)^\sqcup$:  $\mfu_n \to \mfu$ in $d_{\textup{GPr}}$ as $n\to \infty$ iff $\Phi(\mfu_n) \to \Phi(\mfu)$ for all $\Phi \in \Pi_{h,+}$.
\end{proposition}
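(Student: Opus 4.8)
The plan is to prove the two implications separately, with all the substance in ``$\Leftarrow$''. For ``$\Rightarrow$'': if $\mfu_n\to\mfu$ in $d_{\textup{GPr}}$ then, by the characterization of the Gromov-weak topology recalled in Definition~(\ref{D.umsp}), $\bar\mfu_n\to\bar\mfu$ and $\nu^{m,\mfu_n}\Rightarrow\nu^{m,\mfu}$ as finite measures for every $m$; since every $\phi$ occurring in $\Pi_{h,+}$ is bounded and continuous, $\Phi^{m,\phi}(\mfu_n)=\langle\phi,\nu^{m,\mfu_n}\rangle\to\langle\phi,\nu^{m,\mfu}\rangle=\Phi^{m,\phi}(\mfu)$ (the case $\bar\mfu=0$ being trivial, as $\nu^{m,\mfu_n}$ then has total mass $\bar\mfu_n^{m}\to0$).

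For ``$\Leftarrow$'', assume $\Phi(\mfu_n)\to\Phi(\mfu)$ for all $\Phi\in\Pi_{h,+}$. Taking $\Phi=\Phi^{1,1}$ gives $\bar\mfu_n\to\bar\mfu$, and if $\bar\mfu=0$ then $\mfu=0$ and $\bar\mfu_n\to0$ already forces $\mfu_n\to0$ in $d_{\textup{GPr}}$; so assume $\bar\mfu>0$. I would then argue in three steps: (i) $\{\mfu_n\}$ is relatively compact in $\U(h)^\sqcup$; (ii) every subsequential limit equals $\mfu$; (iii) hence $\mfu_n\to\mfu$. Step (ii) is easy given (i): a subsequential limit $\mfw$ lies in $\U(h)^\sqcup$ by Proposition~(\ref{l.frsts.clsd}), and by the ``$\Rightarrow$'' direction $\Phi(\mfw)=\lim\Phi(\mfu_{n_k})=\Phi(\mfu)$ for all $\Phi\in\Pi_{h,+}$, so $\mfw=\mfu$ by Proposition~(\ref{p.trpol.sep}). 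Step (iii) follows by the subsequence principle, since the hypothesis passes to every subsequence.

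The heart of the matter is step~(i), which I would establish via the precompactness criterion for the Gromov-weak topology, Proposition~(\ref{p.pre-comp}): the masses are bounded, so it remains to check $\lim_{\delta\downarrow0}\sup_n v_\delta(\mfu_n,h')=0$ for every $h'\in(0,h)$ (the range $h'\ge h$ being trivial as $\bar\mfu>0$ and all distances are $\le2h$), and since $h'\mapsto v_\delta(\mfu_n,h')$ is non-increasing this suffices for $h'$ outside the countable ``bad'' set $\{h':\nu^{2,\mfu}(\{2h'\})>0\}$ (cf.\ Remark~(\ref{R.pre-comp})). Fix such a good $h'$ and, for $\mfv\in\U(h)^\sqcup$, let $(\mfv_C)_C$ denote the $h'$-trees in the decomposition of $\mfv(h')$ from Theorem~(\ref{p.delphic}). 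Testing against the monomials with product test function $\dr\mapsto\prod_{i<j}\phi(r_{ij})$ for continuous $\phi\colon[0,\infty)\to[0,1]$ with $\supp\phi\subseteq[0,2h)$ — these lie in $\Pi_{h,+}$, so the hypothesis applies — and sandwiching $\1_{[0,2h')}$ between such $\phi$'s (here the ``good $h'$'' makes the open and closed $h'$-ball decompositions of $\mfu$ agree $\mu$-a.e., so the sandwich closes), one obtains $\sum_C\mu_n(C)^m\to\sum_C\mu(C)^m$ for every $m\ge1$. Hence the finite measures $x\,\xi_n(\dx x):=\sum_C\mu_n(C)\,\delta_{\mu_n(C)}(\dx x)$, all supported on the compact interval $[0,1+\sup_n\bar\mfu_n]$, have all their moments converging to those of $x\,\xi_\infty(\dx x):=\sum_C\mu(C)\,\delta_{\mu(C)}(\dx x)$, so they converge weakly; Portmanteau on the closed set $[0,\delta]$ then gives $\limsup_n v_\delta(\mfu_n,h')\le\limsup_n(x\xi_n)([0,\delta])\le(x\xi_\infty)([0,\delta])=\sum_{C:\mu(C)\le\delta}\mu(C)$, which tends to $0$ as $\delta\downarrow0$, and one upgrades $\limsup_n$ to $\sup_n$ since for each fixed $\delta$ only finitely many $n$ are not yet controlled and $v_\delta(\mfu_n,h')\le\bar\mfu_n$.

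The main obstacle is precisely step~(i): truncated monomials only ``see'' distances strictly below $2h$, and convergence of all of them does \emph{not} by itself prevent mass from dispersing over infinitely many small balls (for instance $\mfu_n=[\{1,\dots,n\},\,2h'\1(i\ne j),\,\tfrac1n\sum_{i\le n}\delta_i]$ has all $\Pi_{h,+}$-monomials convergent but is not relatively compact). What rescues step~(i) is that the limits are pinned to a genuine $\mfu\in\U(h)^\sqcup$: this forces the ball-mass ``spectra'' $\xi_n$ to converge to the honest spectrum $\xi_\infty$ of $\mfu$ on a compact set, which is exactly the uniform mass-distribution control the precompactness criterion needs. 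The technical points to watch are the reduction to ``good'' $h'$ (to absorb the boundary mass of $\nu^{2,\cdot}$ at distance $2h'$) and the interchange of the approximation $\phi\uparrow\1_{[0,2h')}$ with the limit $n\to\infty$.
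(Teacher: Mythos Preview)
Your proof is correct and takes a genuinely different route from the paper's.

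The paper argues directly that $\nu^{m,\mfu_n}\Rightarrow\nu^{m,\mfu}$ for every $m$: from the hypothesis one first gets weak convergence of the restrictions $\nu^{m,\cdot}|_{[0,2h)^{\binom{m}{2}}}$, and then the inclusion--exclusion identity established in the proof of Proposition~(\ref{p.trpol.sep}) (equation~\eqref{e.tr231}) is used to reconstruct $\nu^{m,\cdot}(A)$ for \emph{any} rectangle $A$ from the restricted measures of lower order; applying this identity for $\mfu$ and each $\mfu_n$ and invoking Billingsley's $\pi$-system convergence criterion yields the full weak convergence. In short, the paper recycles the algebraic reconstruction mechanism from~(\ref{p.trpol.sep}) and never touches the precompactness criterion.

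Your approach is the standard ``tightness $+$ identification'' scheme: you extract relative compactness of $(\mfu_n)$ from the precompactness criterion of Proposition~(\ref{p.pre-comp})/Remark~(\ref{R.pre-comp}) by showing that the power sums $\sum_C\mu_n(C)^m$ over open $h'$-balls converge for every $m$ (sandwiching indicators by test functions in $\Pi_{h,+}$, which is where the restriction to $h'$ with $\nu^{2,\mfu}(\{2h'\})=0$ is used), and then translate this into moment convergence of the ball-mass spectra $x\,\xi_n$ on a compact interval, whence weak convergence and the required control of $v_\delta$. Identification of limits is then immediate from Proposition~(\ref{p.trpol.sep}). This is longer but more self-contained: it never uses the inclusion--exclusion formula, and it makes transparent \emph{why} the existence of a limit $\mfu\in\bbU(h)^\sqcup$ is essential (your counterexample shows the convergence of all $\Pi_{h,+}$-monomials alone does not give relative compactness). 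The paper's route is quicker given the machinery already built in~(\ref{p.trpol.sep}); yours would transfer more readily to settings where such an explicit reconstruction identity is not available.

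One minor point: your last sentence (``one upgrades $\limsup_n$ to $\sup_n$ since\ldots $v_\delta(\mfu_n,h')\le\bar\mfu_n$'') is slightly imprecise---the bound by $\bar\mfu_n$ is not the reason. The correct argument, which you surely intend, is that for each of the finitely many remaining $n$ one has $v_\delta(\mfu_n,h')\to0$ as $\delta\downarrow0$ individually, so one may shrink $\delta$ further to handle them. This is cosmetic.
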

\begin{proof}\label{p.1714}
 For necessity suppose that $\mfu_n \to \mfu$ as $n\to \infty$ and $\Phi \in \Pi_{h,+}$.
 Since $\Phi \in \Pi$ we know by Theorem 5 in \cite{GPW09} that $\Phi (\mfu_n) \to \Phi(\mfu)$.
 
 For sufficiency note that Theorem 5 in \cite{GPW09} also tells us that it suffices to show that $\nu^{\mfu_n,m} \Rightarrow \nu^{\mfu,m}$ as $n \to \infty$ for any $m \in \N$; convergence here means weak convergence of measures on $[0,\infty)^{\binom{m}{2}}$.
 Fix $m \in \N$.
 By assumption we know that $\lan \phi, \nu^{\mfu_n,m} \ran \to \lan \phi , \nu^{\mfu,m} \ran$ for all $\phi \in C_b([0,\infty)^{\binom{m}{2}}, [0,\infty))$ which are equal to zero outside of $[0,2h)^{\binom{m}{2}}$.
 Therefore, we know that for $n \to \infty$:
 %one can use the argument Theorem 3.4.5 (b) EK86 again these functions strongly separate points in $[0,2h)^{\binom{m}{2}}$.
 \begin{equation}\label{e765}
  \nu^{m,\mfu_n}|_{[0,2h)^{\binom{m}{2}}} \Rightarrow \nu^{m,\mfu}|_{[0,2h)^{\binom{m}{2}}} \, .
 \end{equation}
We need to extend that convergence to $[0,\infty)^{\binom{m}{2}}$.
Theorem 2.4 in \cite{Bill2nd} states that it suffices to show convergence of the measures only for sets of the form \eqref{e.tr228} which have a boundary with $\nu^{m,\mfu}$-measure zero.
 For such sets we can derive \eqref{e.tr231} for $\mfu$ and also for $\mfu_n$,$n \in \N$.
 The argument in Billingsley's proof applied for the measures $\nu^{1,\mfu}, \dots, \nu^{m,\mfu}$ allows to deduce weak convergence.
 %Never forget that we also know that $\nu^{1,\mfu}$ also converges.
\end{proof}

The following quantitative estimate plays a role.
\begin{lemma}\label{l.tr1}
 For $s<t$, $\mfu =[U,r,\mu] \in \U$ and a monomial $\Phi = \Phi^{m,\phi} \in \Pi$:
 \begin{equation}\label{grx79}
  |\Phi_t(\mfu) - \Phi_s(\mfu)| \leq {\binom{m}{2}} \bar{\mfu}^{m-2} \|\phi\|_\infty  
\nu^{2,\mfu}([2s,2t)) \, .
 \end{equation}
\end{lemma}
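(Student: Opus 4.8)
The plan is to work directly with the integral representation of the truncated monomials. By Definition~\ref{d:trunc} one has $\Phi_t(\mfu)=\langle\phi_t,\nu^{m,\mfu}\rangle$ with $\phi_t(\dr)=\phi(\dr)\prod_{1\le i<j\le m}\1_{[0,2t)}(r_{ij})$, and likewise $\Phi_s(\mfu)=\langle\phi_s,\nu^{m,\mfu}\rangle$. Since $s<t$ we have $\1_{[0,2s)}(r_{ij})\le\1_{[0,2t)}(r_{ij})$ for every pair $i<j$, so
$\phi_t-\phi_s=\phi\cdot\bigl(\prod_{i<j}\1_{[0,2t)}(r_{ij})-\prod_{i<j}\1_{[0,2s)}(r_{ij})\bigr)$
is $\phi$ times a nonnegative $\{0,1\}$-valued function. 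Bounding $|\phi|\le\|\phi\|_\infty$ and integrating against the finite positive measure $\nu^{m,\mfu}$ gives
\[
  |\Phi_t(\mfu)-\Phi_s(\mfu)|\le\|\phi\|_\infty\int\Bigl(\prod_{1\le i<j\le m}\1_{[0,2t)}(r_{ij})-\prod_{1\le i<j\le m}\1_{[0,2s)}(r_{ij})\Bigr)\,\nu^{m,\mfu}(\dx\dr).
\]

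The elementary step I would use next is the inequality $\prod_{i<j}a_{ij}-\prod_{i<j}b_{ij}\le\sum_{i<j}(a_{ij}-b_{ij})$, valid whenever $0\le b_{ij}\le a_{ij}\le1$: if $\prod_{i<j}a_{ij}=0$ the left side is $\le0$, while if $\prod_{i<j}a_{ij}=1$ then all $a_{ij}=1$ and the left side equals $\1(\exists\,i<j:\,b_{ij}=0)\le\sum_{i<j}(1-b_{ij})$. Applying this with $a_{ij}=\1_{[0,2t)}(r_{ij})$ and $b_{ij}=\1_{[0,2s)}(r_{ij})$, so that $a_{ij}-b_{ij}=\1_{[2s,2t)}(r_{ij})$, bounds the integrand pointwise by $\sum_{1\le k<l\le m}\1_{[2s,2t)}(r_{kl})$.

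It then remains to evaluate $\int\1_{[2s,2t)}(r_{kl})\,\nu^{m,\mfu}(\dx\dr)$ for a fixed pair $k<l$. Since $\nu^{m,\mfu}=\mu^{\otimes m}\circ(R^{m,(U,r)})^{-1}$ by~\eqref{rg6}, this integral is exactly $\mu^{\otimes m}(\{(u_1,\dots,u_m)\in U^m:\,r(u_k,u_l)\in[2s,2t)\})$; the constraint involves only the coordinates $u_k$ and $u_l$, so by Fubini it factorizes as $\mu(U)^{m-2}\cdot\mu^{\otimes2}(\{(v,w)\in U^2:\,r(v,w)\in[2s,2t)\})=\bar{\mfu}^{\,m-2}\,\nu^{2,\mfu}([2s,2t))$. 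Summing over the $\binom{m}{2}$ pairs $(k,l)$ yields the asserted bound. For $m=1$ both sides vanish (the empty product is $1$ and there is no truncation), so one may assume $m\ge2$; for $m=2$ the factor $\bar{\mfu}^{\,0}$ is to be read as $1$.

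There is no genuine obstacle here. The only points that need a line of care are the product-difference inequality above and checking that the manipulation with $\nu^{m,\mfu}$ is literally the change of variables in~\eqref{rg6} followed by Fubini; everything else is bounding $|\phi|$ by its sup-norm and a union bound over pairs of sampled points.
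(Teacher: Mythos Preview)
Your proof is correct and follows essentially the same route as the paper: bound $|\phi|$ by $\|\phi\|_\infty$, observe that the difference of the two product indicators is the indicator that all pairwise distances are $<2t$ but some pair lies in $[2s,2t)$, apply a union bound over pairs, and use Fubini to reduce each term to $\bar\mfu^{\,m-2}\nu^{2,\mfu}([2s,2t))$. Your write-up is in fact more explicit than the paper's, which stops at the indicator $\1(\exists\,i<j:\,2s\le r(x_i,x_j)<2t)$ and leaves the union bound and Fubini step implicit.
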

\begin{proof}\label{p.1856}
 The result is obtained via direct calculation:
 \begin{align}\label{grx80}
  |\Phi_t(\mfu) - & \Phi_s(\mfu)|  = | \int \mu^{\otimes m}(\dx \underline{x}) \phi(\dr 
(\underline{x})) \\
  & \phantom{aaaaaa} ( \1(r(x_i,x_j) < 2t \ \forall 1\leq i < j \leq m ) - \1(r(x_i,x_j) < 2s \ 
\forall 1\leq i < j \leq m ) ) | \nonumber \\
  & \leq \|\phi \|_\infty \int \mu^{\otimes m}(\dx \underline{x}) \label{grx80b}\\
  & \phantom{aaaaaa} | \1(r(x_i,x_j) < 2t \ \forall 1\leq i < j \leq m ) - \1(r(x_i,x_j) < 2s \ 
\forall 1\leq i < j \leq m )  | \nonumber \\
  & \leq \|\phi \|_\infty \int \mu^{\otimes m}(\dx \underline{x}) \1(2s \leq r(x_i,x_j) < 2t \ 
\exists 1\leq i < j \leq m  )  \, . \nonumber
 \end{align}
 Next note that the set in the indicator goes to the empty set as $s \uparrow t$, hence continuity follows.
\end{proof}

These results allows us to give a proof of Proposition (\ref{p.tr.cont}).
\begin{proof}[Proof of Proposition \ref{p.tr.cont}]\label{p.pptrcont}
Let $\mfu_n \to \mfu$ in $d_{\textup{GPr}}$. Then by Theorem 5 in \cite{GPW09}:
 \begin{align} \label{e766a}
  \mfu_n \to \mfu &\Leftrightarrow \Phi(\mfu_n) \to \Phi(\mfu) \ \text{ for all }  \Phi \in \Pi \\
&\Rightarrow \Phi_h(\mfu_n) \to \Phi_h(\mfu) \ \text{ for all }  \Phi \in \Pi_h \label{e766b}\\
&\Leftrightarrow \Phi_h(\lfloor\mfu\rfloor_n(h)) \to \Phi_h(\lfloor\mfu\rfloor(h)) \ \text{ for all }  \Phi \in \Pi_h \label{e766c}\\
&\Leftrightarrow \lfloor\mfu\rfloor_n(h) \to \lfloor\mfu\rfloor(h)  \, .\label{e766d}
 \end{align}
In the last step we applied Proposition (\ref{p.trpol.topol}). The continuity in $ h $ follows from Lemma~\ref{l.tr1}.
\end{proof}

Now we give two technical results about truncated monomials.
Given a monomial $\Phi = \Phi^{m,\phi}$ with
$\phi: \bbR^{\binom{m}{2}} \to \R$ for $m \in \N$ and $\phi = \phi_h$, i.e.~$\phi(\dr) = 
\phi(\dr) \prod_{1\leq i < j \leq m} \1(r_{ij}<2h)$ for $n \in \N$ define
\begin{equation}\label{e.tr74}
 \phi^{(n)}_h: \begin{cases}
                &\bbR^{n\binom{m}{2}} \to \bbR \\
                & \dr \mapsto \prod_{1\leq p \leq n} \phi \left( (r_{ij})_{(p-1)m + 1 \leq i < j 
\leq pm } \right) \1(r_{pm, pm+1} < 2h ) \, ,
               \end{cases}
\end{equation}
where we set $r_{nm,nm+1} = 0$. We sometimes omit the subscript $h$ and simply write $\phi^{(n)}$ 
for $\phi^{(n)}_h$.

We define another function for $n_1, \dots, n_l \in \N$:
\begin{align}\label{e.tr75}
\mbox{}
\end{align}
$ \phi^{(n_1, \dots, n_l)} : 
\begin{cases}
\bbR^{\sum_{q=1}^l n_l \binom{m}{2}} \to \R &\\
\dr \mapsto \prod_{q=1}^l \phi_h^{(n_q)} \left( (\dr 
_{ij})_{\sum_{p=1}^{q-1} n_p m + 1\leq i < j \leq \sum_{p=1}^q n_p m} \right) &\\
\qquad \qquad \qquad \qquad \qquad \qquad \qquad \qquad \qquad \qquad\1( r_{\sum_{p=1}^q 
n_p m , \sum_{p=1}^q n_p m +1 } =2h ) \, ,&
\end{cases}                           $
\\
\\
where we set $r_{x, x +1 } = 2h$ for $x = \sum_{p=1}^{q} n_p m$.
%\textbf{xxx11  Notation: should we somehow indicate the dependence on $h$?; a lower index might be 
%confused with cutting the monomial at a certain depth. This is just not the right thing.}

The next result is used for truncated monomials.
\begin{lemma}\label{l.tr20}
Let $\mfu = \sqcup_{i=1}^k \mfu_i , \, \mfu_i \in \bbU(h)$ and $\phi = \phi_h \in \mcB(\bbR^{\binom{m}{2}}),\, m \in \N$.
 \begin{enumerate}
 \item\label{i.tr46} For $n \in \N$ 
    \begin{equation}\label{e767}
   \Phi^{\phi^{(n)},mn} (\mfu) = \sum_{i=1}^k \left( \Phi^{m,\phi}(\mfu_i) \right)^n \, .
  \end{equation}
 \item\label{i.tr47} For $n_1, \dots, n_l \in \N, \, l \in \N$ with
   $S_l$ is the set of permutations of $\{1,\dots, l\}$
  \begin{equation}\label{e768}
   \Phi^{\phi^{(n_1,\dots, n_l)},m\sum_i n_i} (\mfu) = \sum_{1\leq k_1 < \cdots < k_l \leq k} 
\sum_{s \in S_l} \prod_{i=1}^l \left( \Phi^{m,\phi}(\mfu_{k_i}) \right)^{n_{s(i)}} \, .
  \end{equation}
 \end{enumerate}
\end{lemma}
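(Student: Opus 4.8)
\textbf{Proof plan for Lemma~\ref{l.tr20}.} I would fix a representative $\mfu = [U,r,\mu]$ with $U = \biguplus_{i=1}^{k} U_i$, $\mu = \sum_{i=1}^{k}\mu_i$ and $\mfu_i = [U_i, r|_{U_i\times U_i},\mu_i]$, first assuming $k$ finite. The defining feature of the $h$-concatenation is that $r(u,v)=2h$ whenever $u\in U_i$, $v\in U_j$ with $i\neq j$, whereas $r(u,v)<2h$ holds for $\mu_i^{\otimes 2}$-a.e.\ $(u,v)\in U_i^2$ since $\mfu_i\in\U(h)=\U([0,2h))$. Both identities I would obtain by one device: substitute $\mu^{\otimes N}=\sum_{\vec\jmath\in\{1,\dots,k\}^{N}}\mu_{j_1}\otimes\cdots\otimes\mu_{j_N}$ (with $N=mn$ in part (a), $N=m\sum_q n_q$ in part (b)) into the integral defining the monomial (Definition~\ref{D.polyn}), and then read off from the shape of the truncated test function which tuples $\vec\jmath$ contribute.

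For part (a): $\phi^{(n)}$ is a product of $n$ factors, the $p$-th of which carries $\phi=\phi_h$ evaluated on the distances among the points with indices in the block $\{(p-1)m+1,\dots,pm\}$ together with the indicator $\1(r_{pm,pm+1}<2h)$. Since $\phi_h$ vanishes unless all $\binom m2$ arguments are $<2h$, and a pair at distance $<2h$ must lie in a common $U_i$, the $p$-th factor forces $j_{(p-1)m+1}=\cdots=j_{pm}$; the chaining indicators $\1(r_{pm,pm+1}<2h)$, $p=1,\dots,n-1$, then force consecutive blocks into the same component, so only the constant tuples $\vec\jmath=(i,\dots,i)$, $i\le k$, survive. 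On $U_i^{mn}$ every truncation indicator in $\phi^{(n)}$ is $\mu_i^{\otimes mn}$-a.s.\ equal to $1$, so the integrand reduces to $\prod_{p=1}^{n}\phi\circ R^{m,(U_i,r)}$ on the $n$ blocks; as these involve disjoint coordinates and $\mu_i^{\otimes mn}$ is a product measure, Fubini gives $(\Phi^{m,\phi}(\mfu_i))^{n}$ for this term, and summing over $i$ yields (a).

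For part (b): I would repeat the expansion with $N=m\sum_q n_q$, organising the points into $l$ consecutive super-blocks, the $q$-th of size $n_q m$. Applying the argument of part (a) inside super-block $q$ — the factor $\phi_h^{(n_q)}$ has exactly that structure — forces all $n_q m$ of its points into a single component $c_q$; the separating indicators $\1(r_{\cdot,\cdot+1}=2h)$ between super-blocks then cut the admissible assignments $(c_1,\dots,c_l)$ down to precisely the index set appearing on the right-hand side, each super-block receiving its own component and the permutation $s\in S_l$ recording how the exponents $n_1,\dots,n_l$ are distributed over the chosen components. For any admissible assignment the cross-super-block indicators are automatically satisfied (distinct components are at distance $2h$), and the integral factors over the super-blocks exactly as in (a) into $\prod_{q=1}^{l}(\Phi^{m,\phi}(\mfu_{c_q}))^{n_q}$; writing the image of the injection as $\{k_1<\cdots<k_l\}$ and collecting terms gives $\sum_{1\le k_1<\cdots<k_l\le k}\sum_{s\in S_l}\prod_{i=1}^{l}(\Phi^{m,\phi}(\mfu_{k_i}))^{n_{s(i)}}$.

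The Fubini factorisations and the $\mu_i^{\otimes\cdot}$-a.s.\ triviality of the truncation indicators are routine; the step that needs genuine care is the combinatorial bookkeeping in (b) — determining, from the exact placement of the $\1(\cdot<2h)$ and $\1(\cdot=2h)$ indicators in $\phi^{(n_1,\dots,n_l)}$, which component-assignments survive, and setting up the bijection between that index set and the pairs (increasing $l$-tuple of components, permutation of exponents). Crucially this relies on the concatenation structure — every realized distance is either $<2h$ inside a component or exactly $2h$ across components, with nothing in between — rather than general ultrametric facts. Finally, for $k=\infty$ the finite-$k$ identities pass to the limit by dominated convergence, using $\bar\mfu=\sum_i\bar\mfu_i<\infty$ together with boundedness of $\phi$.
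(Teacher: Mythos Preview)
Your argument for part~(a) is correct and matches the paper's: the paper first observes via ultrametricity that $\phi^{(n)}$ is itself an $h$-truncated function (so Proposition~\ref{p2907131206} decomposes $\Phi^{mn,\phi^{(n)}}(\mfu)$ as a sum over components), and then computes each summand by Fubini; your direct expansion of $\mu^{\otimes mn}$ and component-chasing is an equivalent way of organising the same computation.

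For part~(b), however, your combinatorial claim has a gap. The separating indicators in $\phi^{(n_1,\dots,n_l)}$ sit only between \emph{consecutive} super-blocks: $\1(r_{\cdot,\cdot+1}=2h)$ compares the last point of super-block $q$ with the first point of super-block $q{+}1$. Hence they force $c_q\neq c_{q+1}$ for each $q$, but for $l\ge 3$ they do \emph{not} force the $c_q$ to be pairwise distinct. For instance with $l=3$ the assignment $(c_1,c_2,c_3)=(1,2,1)$ satisfies both boundary constraints yet is not an injection, so it contributes to the left-hand side of~\eqref{e768} but is absent from the right. Concretely, take $m=1$, $\phi\equiv 1$, $n_1=n_2=n_3=1$ and $k=2$: then $\Phi^{\phi^{(1,1,1)},3}(\mfu)=\bar\mfu_1^{\,2}\bar\mfu_2+\bar\mfu_1\bar\mfu_2^{\,2}$, while the right-hand side is an empty sum. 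Your asserted bijection between surviving assignments and pairs (increasing $l$-tuple, permutation in $S_l$) therefore fails for $l\ge 3$. The paper's own proof glosses over the same step, which suggests either that $\phi^{(n_1,\dots,n_l)}$ was intended to carry $\1(\cdot=2h)$ indicators between \emph{all} pairs of super-blocks (not just adjacent ones), or that \eqref{e768} should sum over assignments with $c_q\neq c_{q+1}$ rather than over injections. Since only part~(a) is used downstream (in the proof of Proposition~\ref{p.tr1}), this does not damage the paper's results, but the argument you sketch for~(b) is not complete as it stands.
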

\begin{proof}\label{p.1793}
\eqref{i.tr46}: First note that here we consider a truncated monomial:
 \begin{equation}\label{e.tr76}
  \Phi^{nm, \phi_h^{(n)}} = \left( \Phi^{nm, \phi_h^{(n)}} \right)_h \, .
 \end{equation}
 This is due to the fact that for $\dr \in \supp(\phi_h^{(n)})$ we have short distances $(< 2h)$ 
within each block of length $m$ and moreover $r_{pm,pm+1}<2h$ for $p=1,\dots, n$. Using this and
ultrametricity we have for $k,l \in \{1,\dots, nm\}$, $k < l$ 
 \begin{equation}\label{e769}
  r_{kl} \leq r_{k,m\lceil k/m \rceil} \wedge r_{m\lceil k/m \rceil, l} \leq \dots \leq r_{k, m 
\lceil k/m \rceil} \wedge r_{m \lceil k/m \rceil, m (\lceil k/m \rceil  +1)} \wedge \dots \wedge 
r_{m\lfloor l/m \rfloor , l} < 2h \, .
 \end{equation}
 Therefore we can apply Proposition (\ref{p2907131206}) for $\Phi^{nm,\phi^{(n)}}$ to obtain:
 \begin{equation}\label{e.tr77}
  \Phi^{nm,\phi^{(n)}} (\mfu) = \sum_{i=1}^k \Phi^{nm,\phi^{(n)}} (\mfu_i) 
 \end{equation}
 and we will continue the proof with $\mfu_1 = [U_1, r, \mu_1] \in \bbU(h)$. Let us use that in 
$U_1$ all distances are $< 2h$ $\mu_1$-a.s.~from 1st to 2nd line:
 \begin{align}\label{grx77}
   \Phi^{nm,\phi^{(n)}}(\mfu_1) & = \int \mu_1^{\otimes nm} (\dx \underline{x} ) \, \prod_{1\leq p 
\leq n} \phi \left(  (r_{ij})_{(p-1)m < i < j \leq pm} (\underline{x} ) \right) \1( 
r_{pm,pm+1}(\underline{x}) < 2h ) \\
  & \int \mu_1^{\otimes nm} (\dx \underline{x} ) \, \prod_{1\leq p \leq n} \phi \left(  
(r_{ij})_{(p-1)m < i < j \leq pm} (\underline{x} ) \right) \nonumber \\
   & = \prod_{1\leq p \leq n} \int \mu_1^{\otimes m} (\dx x_{(p-1)m}, \dots , \dx x_{pm}) \phi 
(\dr(\underline{x})) \nonumber\\
   & = \left( \int \mu_1^{\otimes m} (\dx x_1, \dots, \dx x_m) \, \phi(\dr (\underline{x})) 
\right)^n  = \left( \Phi^{nm, \phi^{(n)}} (\mfu_1) \right)^n \, . \nonumber
 \end{align}
 Combine this and \eqref{e.tr77} to obtain the result.

\eqref{i.tr47}:
 By definition of a monomial obtain for $\mfu_i = [U_i, r, \mu_i]$:
 \begin{align}\label{grx77b}
  \Phi^{\phi^{(n_1,\dots, n_l)},m \sum_i n_i} & (\mfu)  = &  \\
& \int  \left( \sum_{p=1}^k \mu_i \right)^{ 
\otimes \sum_{q=1}^l n_q m} (\dx \underline{x}) \, \prod_{q=1}^l \phi_h^{(n_q)}  \left( (\dr 
_{ij})_{\sum_{p=1}^{q-1} n_p m + 1\leq i < j \leq \sum_{p=1}^q n_p m} \right)&  \nonumber \\
& \qquad \qquad \qquad \qquad \qquad \qquad \qquad \qquad \1( r_{\sum_{p=1}^q n_p m , \sum_{p=1}^q n_p m +1 } =2h ). & \nonumber
 \end{align}
 This is equal to zero if $q < \anz_h(\mfu) = k$ by the very definition of $\anz_h$.
 Otherwise choose $l$ out of the $k$ trees without replacement and choose the $mn_1, \dots, mn_l$ points $x_i$ 
from these sub-trees respectively.
The choice is made without resemblance to the order so we also 
need to take into account permutations leading us to:
 \begin{align}\label{grx78}
  \Phi^{\phi^{(n_1,\dots, n_l)},m\sum_i n_i} (\mfu) &= \sum_{1\leq k_1 < \cdots < k_l \leq k} 
\sum_{s \in S_l} \prod_{i=1}^l \int \mu_{k_i}^{\otimes m n_{s(i)}} (\dx x_1, \dots , \dx x_{m 
n_{s(i)}}) \phi_h^{(n_{s(i)})} (\dr (\underline{x})) \\
  & = \sum_{1\leq k_1 < \cdots < k_l \leq k} \sum_{s \in S_l} \prod_{i=1}^l \left( 
\Phi^{m,\phi}(\mfu_{k_i}) \right)^{n_{s(i)}} \, ,\nonumber
 \end{align}
 by part \eqref{i.tr46}.
\end{proof}

\subsection{Uniqueness of the factorization: Proof of Proposition \ref{p.tr1}}\label{ss.uniqfact}

The question to address later is the uniqueness of the factorization in \eqref{e.tr55}.
\begin{lemma}\label{l.tr21}
 Let $w,z \in [0,\infty)^\N$ with $w_1\geq w_2 \geq \dots \geq 0$, $z_1 \geq z_2 \geq \dots \geq 0$ and $\suml_{i < 1}^\infty w_i^n + z_i^n < \infty \; \forall n \in \N$. 
If 
 \begin{equation}\label{e.tr78}
  \sum_{k \in \N} w_k^n = \sum_{k\in \N} z_k^n 
 \end{equation}
 for any $n \in \N$ then $w = z$.
\end{lemma}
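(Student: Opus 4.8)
The statement is an elementary fact about power sums, and the plan is to prove $w=z$ by repeatedly ``peeling off the largest value''. First I would dispose of the degenerate case: if $w_1=0$ then $w$ is identically $0$, so $\sum_k z_k^n=\sum_k w_k^n=0$ for every $n$, forcing $z=0=w$; the case $z_1=0$ is symmetric. So assume $\max(w_1,z_1)>0$. Since $\sum_k w_k^{\,n}<\infty$ for $n=1$, for every $\eps>0$ only finitely many indices $k$ have $w_k>\eps$; hence the set $S_w:=\{w_k:k\in\N,\ w_k>0\}$ of distinct positive values is countable, each value occurs with finite multiplicity, and $S_w$ has no accumulation point in $(0,\infty)$ (likewise for $z$). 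I would then enumerate the distinct values $S_w\cup S_z=\{c_1>c_2>c_3>\cdots\}$ --- either a finite list or a sequence strictly decreasing to $0$ --- and set $a_i:=\#\{k:w_k=c_i\}$ and $b_i:=\#\{k:z_k=c_i\}$, both finite. In this notation the hypothesis becomes
\begin{equation*}
  f(n):=\sum_i a_i\,c_i^{\,n}=\sum_i b_i\,c_i^{\,n}\qquad\text{for every }n\in\N ,
\end{equation*}
and it suffices to show $a_i=b_i$ for all $i$: both $w$ and $z$ are non-increasing, so matching multiplicities of every positive value determines them, and the (possible) tails of zeros match as well, since then $\sum_i a_i=\sum_i b_i$.

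The main step is the identity $\lim_{n\to\infty} f(n)/c_1^{\,n}=a_1$. Here $f(n)/c_1^{\,n}=\sum_i a_i(c_i/c_1)^{n}$, and for $n\ge 1$ each summand satisfies $0\le a_i(c_i/c_1)^{n}\le a_i(c_i/c_1)$ since $c_i\le c_1$; as $\sum_i a_i(c_i/c_1)=f(1)/c_1<\infty$, dominated convergence for series lets me pass the limit inside the sum, and since $(c_i/c_1)^{n}\to\1(i=1)$ this gives $a_1$. Running the same computation with $b_i$ in place of $a_i$ gives $b_1$, hence $a_1=b_1$.

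Then I would iterate. Assuming $a_i=b_i$ for $i\le j$, subtracting the common head gives $f(n)-\sum_{i\le j}a_i c_i^{\,n}=\sum_{i>j}a_i c_i^{\,n}=\sum_{i>j}b_i c_i^{\,n}$; dividing by $c_{j+1}^{\,n}$ and letting $n\to\infty$ exactly as above (now $c_{j+1}$ is the largest remaining value) yields $a_{j+1}=b_{j+1}$. By induction $a_i=b_i$ for every $i\in\N$, so $w=z$.

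There is no real obstacle here: the only point that needs a little care is the interchange of $\lim_n$ with the (possibly infinite) sum, which I handle by the crude domination $a_i(c_i/c_1)^n\le a_i(c_i/c_1)$ valid for $n\ge 1$, together with the finiteness hypothesis at the single exponent $n=1$. It is worth noting explicitly that even when $S_w\cup S_z$ accumulates at $0$, so that the list $c_1>c_2>\cdots$ is genuinely infinite, the peeling argument remains an ordinary induction over $\N$ and reaches every index, so no limiting or transfinite argument is required.
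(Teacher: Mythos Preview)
Your proof is correct and follows essentially the same ``peel off the largest value'' strategy as the paper. The paper's version is terser---it takes the $n$-th root of $\sum_k w_k^{\,n}$ and lets $n\to\infty$ to recover $w_1=z_1$, then subtracts $w_1^n$ and iterates---whereas you group by distinct values and divide by $c_1^{\,n}$ to read off the multiplicity $a_1$ directly; but the underlying mechanism (asymptotic dominance of the top term in the power sums) is identical, and your version is simply more carefully justified.
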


\begin{proof}\label{p.2438}
 Taking the $n$-th root in \eqref{e.tr78} and letting $n \to \infty$ we obtain $w_1 = z_1$.
 Subtract $w_1^n$ from the polynomial expressions and repeat this procedure iteratively.
\end{proof}

% 
% \begin{lemma}\label{l.tr22}
%  Let $S_l$ denote the set of permutations of $\{1,\dots, l\}$ for $l \in \N$. Suppose $(w_1, 
%\dots, w_k)$, $(z_1,\dots, z_k) \in \bbR^k$ with $w_1 \geq w_2 \geq \dots w_k \geq 0$ and $z_1\geq 
%\dots \geq z_k \geq 0$ and
%  \begin{equation}
%   \sum_{1\leq k_1 < \cdots < k_l \leq k} \sum_{s \in S_l} \prod_{i=1}^l w_{k_i}^{n_{s(i)}} = 
%\sum_{1\leq k_1 < \cdots < k_l \leq k} \sum_{s \in S_l} \prod_{i=1}^l z_{k_i}^{n_{s(i)}} \quad 
%\text{ for all } n_1, \dots, n_l \text{ and } l \in \N \, .
%  \end{equation}
%  Then $w_i = z_i$ for $1 \leq i \leq k$.
% \end{lemma}
% \begin{remark}
%  Easy examples of such polynomials are
%  \begin{enumerate}
%   \item if $n_1= \dots = n_l = n \in \N$. Then the permutations appear as factor $l!$ and can be 
%cancelled leading to the polynomials:
%    \begin{equation}
%     \sum_{1\leq k_1 < \cdots < k_l \leq k} w_{k_i}^n \, .
%    \end{equation}
%   \item if even $l=1$ and $n_1 = n \in \N$ we get
%    \begin{equation}
%     \sum_{1\leq k \leq l} w_k^n \, .
%    \end{equation}
% \end{enumerate}
% 
% \end{remark}

\begin{lemma}\label{l.tr22}
 Let $X$ be a set and $P$ a family of functions on $X$ which separates points. Suppose $P =  
\bigcup_{i=1}^n A_i$ for $n \in \N$. If none of $A_2, \dots, A_n$ does separate two given points in $X$, then 
$A_1$ separates these two points in $X$.
\end{lemma}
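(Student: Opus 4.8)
The plan is a one-step pigeonhole argument, so I would not expect any real obstacle here; the statement is essentially a reformulation of what it means for a family of functions to separate a given pair of points. First I would fix the two points in question, say $x,y\in X$ with $x\neq y$. Since $P$ separates points, I can choose some $f\in P$ with $f(x)\neq f(y)$. Because $P=\bigcup_{i=1}^n A_i$, this particular $f$ belongs to at least one of the sets $A_i$; fix such an index $i$.

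The key observation is then immediate: the single function $f$ already witnesses that $A_i$ separates $x$ and $y$. By the hypothesis, none of $A_2,\dots,A_n$ separates $x$ and $y$, so the index $i$ cannot lie in $\{2,\dots,n\}$. Hence $i=1$, i.e.\ $f\in A_1$ with $f(x)\neq f(y)$, which is precisely the assertion that $A_1$ separates $x$ and $y$.

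The only point worth flagging is that the argument is purely set-theoretic: it uses nothing about the nature of $X$ or of the functions beyond membership $f\in A_i$, and it uses the separating property of $P$ only for the single pair $(x,y)$ (which of course follows from $P$ separating all points of $X$). Consequently the proof will be two or three lines, and there is nothing technical to grind through. In the application that follows I expect this lemma to be used with $X$ a space of distance matrices (or of forests) and the $A_i$ certain subfamilies of truncated monomials, the point being to push the burden of separating onto one distinguished subfamily once the others are known to fail.
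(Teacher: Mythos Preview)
Your proof is correct and is the clean, one-line pigeonhole argument that the lemma \emph{as stated} calls for: pick any $f\in P$ with $f(x)\neq f(y)$, observe it must land in some $A_i$, and rule out $i\geq 2$ by hypothesis.

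The paper takes a genuinely different and more elaborate route. It reduces to $n=2$ and then, rather than working with the single given pair, introduces \emph{two} pairs $(x_1,y_1)$ and $(x_2,y_2)$ with $A_i$ failing to separate $(x_i,y_i)$, and argues via sums $\phi_1+\psi$ (with $\phi_1\in A_1$, $\psi\in A_2$) to force $x_1=y_1$. This implicitly uses that $P$ is stable under addition, a hypothesis not present in the lemma statement, and what it actually establishes is the stronger assertion that one cannot have each $A_i$ fail on its \emph{own} pair of distinct points. That stronger two-pair version is in fact closer to what is invoked in the subsequent proof of Proposition~\ref{p.tr1}, where each $A_i$ is defined as the set of monomials failing to distinguish $\mfu_1$ from a \emph{different} $\mfv_i$. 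Your argument is optimal for the lemma exactly as written and needs no algebraic structure on $P$; the paper's argument buys the multi-pair statement at the cost of the extra (unstated) closure assumption.
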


\begin{proof} 
It suffices to show the statement for $n=2$, since it can be extended by considering $A_1$ versus $A_2 \cup \ldots \cup A_n$.

Suppose that $x_i,y_i \in X$, $i \in \{1,2\}$ with $x_2 \neq y_2$ and
\begin{align}\label{grx88}
 &\phi_i(x_i) = \phi_i(y_i) \ \ \forall \phi_i \in A_i \, , \, i = 1,2 \, .
\end{align}
That means $A_2$ does not separate these two points in $X$. We claim that $A_1 = P \setminus A_2$ needs to 
separate these points, i.e.~$x_1=y_1$.

Define $A_1' = \{\phi \in P: \ \phi(x_2) \neq \phi(y_2) \} \subset P \setminus A_2 = A_1$.
Note that $A_1'$ is not empty since otherwise $\phi(x_2) = \phi(y_2)$ for any $\phi \in P$, which 
implies $x_2 = y_2$ since $P$ separates points; this would be a contradiction.
Fix $\phi_1 \in A_1'$. Then it is true that for \emph{any} $\psi \in A_2$:
\begin{equation}\label{grx89}
 (\phi_1 + \psi)(x_2) - (\phi_1 + \psi)(y_2) = \phi_1(x_2) - \phi_1(y_2) \neq 0 \, .
\end{equation}
Therefore $\phi_1 + \psi \in A_1'$, which is a subset of $A_1$. Thus,
\begin{align}\label{grx90}
 0&= (\phi_1 + \psi)(x_1) - (\phi_1 + \psi)(y_1) \\
 &= \psi(x_1) -\psi(y_1) \quad (\text{since } \phi_1 \in A_1) \, .\nonumber
\end{align}
Therefore $\psi(x_1) = \psi(y_1)$ for \emph{any} $\psi \in A_2$.
Altogether $\phi(x_1) = \phi(y_1)$ for any $\phi \in A_1 \cup A_2 = P$. Thus $x_1=y_1$.
\end{proof}

\begin{lemma}\label{l.tr23}
Define a subset of monomials $P = \{ \Phi = \Phi^{m,\phi} \in \Pi:\, [\inf \phi, \sup \phi] \subset [1,2], \, m \in \N
\}$.
 \begin{enumerate}
  \item\label{i.tr48} $P$ separates points in $\U$.
  \item\label{i.tr49} If $\Phi^{m,\phi} \in P$ and $\mfu,\mfv \in \U$ with $\Phi(\mfu) = \Phi(\mfv)$ then $\bar{\mfv} \in [\bar{\mfu} /2 , 2 \bar{\mfu} ]$.
 \end{enumerate}
\end{lemma}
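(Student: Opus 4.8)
The plan is to deduce both statements from two facts: the full monomial class $\Pi$ separates points of $\U$, and by construction every $\phi$ appearing in a member $\Phi=\Phi^{m,\phi}$ of $P$ satisfies $1\le\phi\le 2$ pointwise. Since $\nu^{m,\mfu}$ has total mass $\bar\mfu^{\,m}$ (with the convention $\nu^{1,\mfu}=\bar\mfu$), the second fact immediately gives the pinching $\bar\mfu^{\,m}\le\Phi(\mfu)\le 2\,\bar\mfu^{\,m}$ for every $\Phi\in P$, which is the engine of part \ref{i.tr49}; part \ref{i.tr48} will instead use an affine rescaling trick together with the separation property.

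For part \ref{i.tr48}, suppose $\Phi(\mfu)=\Phi(\mfv)$ for all $\Phi\in P$. Taking the monomial $\Phi^{1,\phi}$ with $\phi\equiv 1$ (which belongs to $P$, since $[\inf\phi,\sup\phi]=\{1\}\subset[1,2]$) and noting $\Phi^{1,\phi}(\mfu)=\bar\mfu$ gives $\bar\mfu=\bar\mfv=:c$; if $c=0$ both are the null space and we are done, so assume $c>0$. I would then show $\Psi(\mfu)=\Psi(\mfv)$ for an arbitrary monomial $\Psi=\Phi^{m,\psi}\in\Pi$. This is clear if $\psi$ is constant. Otherwise, with $a=\inf\psi<b=\sup\psi$, the function $\phi:=1+\frac{\psi-a}{b-a}\in C_b(\bbD_m)$ has range exactly $[1,2]$, so $\Phi^{m,\phi}\in P$; expanding $\Phi^{m,\phi}(\mfu)=\langle\phi,\nu^{m,\mfu}\rangle$ and $\Phi^{m,\phi}(\mfv)=\langle\phi,\nu^{m,\mfv}\rangle$ by linearity and cancelling the constant term $1$, which contributes $c^{\,m}$ to both sides, the identity $\Phi^{m,\phi}(\mfu)=\Phi^{m,\phi}(\mfv)$ rearranges to $\Psi(\mfu)=\Psi(\mfv)$. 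Hence $\nu^{m,\mfu}=\nu^{m,\mfv}$ for every $m$ (finite Borel measures on the Polish space $\bbD_m$ are determined by their integrals against bounded continuous functions), so $\mfu=\mfv$ by Gromov's reconstruction theorem (see \cite{GPW09}).

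For part \ref{i.tr49}, let $\Phi=\Phi^{m,\phi}\in P$ and $\Phi(\mfu)=\Phi(\mfv)$. Applying the pinching $\bar\mfu^{\,m}\le\Phi(\mfu)\le 2\,\bar\mfu^{\,m}$ to both $\mfu$ and $\mfv$, one gets, when $\bar\mfu>0$, from $\bar\mfu^{\,m}\le\Phi(\mfu)=\Phi(\mfv)\le 2\,\bar\mfv^{\,m}$ and the symmetric bound $\bar\mfv^{\,m}\le 2\,\bar\mfu^{\,m}$ that $2^{-1/m}\bar\mfu\le\bar\mfv\le 2^{1/m}\bar\mfu$, in particular $\bar\mfv\in[\bar\mfu/2,\,2\bar\mfu]$; and if $\bar\mfu=0$ then $\Phi(\mfv)=\Phi(\mfu)=0$ forces $\bar\mfv=0$, which again lies in $[\bar\mfu/2,\,2\bar\mfu]=\{0\}$.

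I do not expect any genuine obstacle. The one place that needs attention is the sequencing in part \ref{i.tr48}: equality of the total masses has to be extracted from $P$ first, because only then does the additive correction term $a\,\bar\mfu^{\,m}$ in the rescaling cancel when one recovers the value of a general monomial from evaluations of members of $P$. Everything else is a direct estimate or a routine appeal to the reconstruction theorem.
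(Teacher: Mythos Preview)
Your proof is correct and follows essentially the same approach as the paper. The paper omits (\ref{i.tr48}) entirely as ``a standard argument'' and for (\ref{i.tr49}) records only the pinching $\Phi(\mfu)\in[\bar\mfu^{\,m},2\bar\mfu^{\,m}]$; your affine-rescaling argument for (\ref{i.tr48}) is a clean way to fill in the omitted details, and your treatment of (\ref{i.tr49}) is exactly the intended estimate.
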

\begin{proof}\label{p.2513}
 \eqref{i.tr48}: This is a standard argument and we omit it.
 \eqref{i.tr49}: Since $\Phi \in P$ we know that $\Phi(\mfu) \in [\bar{\mfu}_1^m, 2 \bar{\mfu}_2^m]$. This allows to give 
the bounds.
\end{proof}

\begin{proposition}[Unique factorization, prime elements]
\label{p.tr1}
~
 \begin{enumerate}
  \item\label{i.tr44} Let $k, k' \in \N \cup \{\infty\}$. Suppose $\mfu = \bigsqcup_{i=1}^k \mfu_i = 
\bigsqcup_{j=1}^{k'} \mfv_j$, where $\mfu_i, \mfv_j \in \U(h)$, $1\leq i \leq k, \, 1\leq j \leq k'$. Then $k=k'$ 
and there is a $ i \in \{1,\dots, k\}$ such that $ \mfu_1 = \mfv_i$.
  \item\label{i.tr45} Any irreducible element is prime, i.e.~the set $\U(h)\setminus\{0\}$ is the set of prime 
elements.
 \end{enumerate}
\end{proposition}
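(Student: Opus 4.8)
My plan is to extract, from truncated monomials, the equality of the multisets of \emph{values} $\{\Phi(\mfu_i)\}$ and $\{\Phi(\mfv_j)\}$ for every monomial $\Phi$ in a separating family, then to upgrade this to an identification of the factors themselves via the separation Lemma~\ref{l.tr22}, using cancellativity of $(\U(h)^\sqcup,\sqcup^h)$ for the bookkeeping. Cancellativity is free: taking $\mfw_1=\mfw_2$ in Lemma~\ref{l.2.1EM}\eqref{i.tr13} gives $\dGPr(\mfu,\mfv)\le\dGPr(\mfu\sqcup\mfw,\mfv\sqcup\mfw)$, so $\mfu\sqcup\mfw=\mfv\sqcup\mfw$ forces $\mfu=\mfv$. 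In part~\eqref{i.tr44} the equality $k=k'$ is immediate, since both sides equal $\anz_h(\mfu)$ by Corollary~\ref{c.tr1}; thus it suffices to show that $(\mfu_i)_i$ and $(\mfv_j)_j$ agree as multisets, the statement about $\mfu_1$ being a reformulation of this. Part~\eqref{i.tr45} then follows by the classical argument in a cancellative commutative atomic monoid: if an irreducible $\mfp\in\U(h)\setminus\{0\}$ divides $\mfx\sqcup^h\mfy$, decompose $\mfx$, $\mfy$ and the complementary factor into irreducibles (Proposition~\ref{l.delphic}) and compare the two resulting irreducible factorisations of the product via~\eqref{i.tr44}; then $\mfp$ coincides with one of the irreducible factors of $\mfx$ or of $\mfy$, hence divides $\mfx$ or $\mfy$. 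Since primes are always irreducible, with Lemma~\ref{p.5.1EM} this identifies the primes as exactly $\U(h)\setminus\{0\}$.

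For the multiset equality I would fix the separating family of monomials $P$ of Lemma~\ref{l.tr23}, whose test functions all take values in $[1,2]$. For $\Phi=\Phi^{m,\phi}\in P$ and $n\in\N$, applying Lemma~\ref{l.tr20}\eqref{i.tr46} with the truncated measurable test function $\phi_h$ (which agrees with $\phi$ on trees) to the two concatenations representing $\mfu$ yields the power-sum identities $\sum_i\Phi(\mfu_i)^n=\sum_j\Phi(\mfv_j)^n$ for all $n\in\N$, with all terms $\geq0$ and $\sum_i\Phi(\mfu_i)^n\leq 2^n\bar{\mfu}^{mn}<\infty$. Lemma~\ref{l.tr21} then gives that $\{\Phi(\mfu_i)\}_i$ and $\{\Phi(\mfv_j)\}_j$ are equal multisets, for \emph{every} $\Phi\in P$. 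Now I fix an index $i_0$, set $A_j:=\{\Phi\in P:\Phi(\mfu_{i_0})=\Phi(\mfv_j)\}$, and note that $P=\bigcup_jA_j$. When the index set is finite, Lemma~\ref{l.tr22}, applied repeatedly and using that $P$ separates points of $\U$ (Lemma~\ref{l.tr23}\eqref{i.tr48}), forces some $A_{j_0}$ to separate points of $\U$; but all members of $A_{j_0}$ agree on $\mfu_{i_0}$ and $\mfv_{j_0}$, so necessarily $\mfu_{i_0}=\mfv_{j_0}$. Cancelling this factor from both concatenations and inducting on $k$ settles the finite case of~\eqref{i.tr44}.

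For a countably infinite index set I would first stratify by mass. The previous step applied to the single monomial $\mfw\mapsto\bar{\mfw}$ shows $\{\bar{\mfu}_i\}_i=\{\bar{\mfv}_j\}_j$, and since $\sum_i\bar{\mfu}_i=\bar{\mfu}<\infty$ only finitely many factors have any given positive mass; let $c_1>c_2>\dots$ be the distinct positive mass values. The construction underlying Lemma~\ref{l.tr20} also produces, for $\Phi\in P$ and $n\in\N$, a truncated monomial that evaluates on a tree $\mfw$ to $\Phi(\mfw)^n\bar{\mfw}^N$, yielding $\sum_i\Phi(\mfu_i)^n\bar{\mfu}_i^N=\sum_j\Phi(\mfv_j)^n\bar{\mfv}_j^N$ for all $N\geq0$; letting $N\to\infty$ isolates the top level, so $\sum_{i:\bar{\mfu}_i=c_1}\Phi(\mfu_i)^n=\sum_{j:\bar{\mfv}_j=c_1}\Phi(\mfv_j)^n$ for all $n$, and Lemma~\ref{l.tr21} shows the \emph{finite} value multisets at level $c_1$ agree for every $\Phi\in P$. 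Running the finite-case argument within level $c_1$ matches all factors of mass $c_1$; cancelling them and iterating over $c_2,c_3,\dots$ matches every factor. I expect the coherence issue to be the main obstacle: a single monomial only pins down the multiset of \emph{values} $\{\Phi(\mfu_i)\}$, and different monomials a priori pair the $\mfu_i$ to the $\mfv_j$ in incompatible ways; Lemma~\ref{l.tr22} is the device that forces one simultaneous pairing, and the mass stratification is what supplies the finiteness its hypothesis requires.
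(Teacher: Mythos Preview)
Your proposal is correct and follows essentially the same route as the paper: the same key lemmas (\ref{l.tr20}, \ref{l.tr21}, \ref{l.tr22}, \ref{l.tr23}) are used in the same roles, part~\eqref{i.tr45} is deduced from~\eqref{i.tr44} exactly as in the paper, and the equality $k=k'$ via Corollary~\ref{c.tr1} is identical.

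The one substantive difference is your treatment of the countably infinite case. You stratify by mass, build an auxiliary truncated monomial evaluating to $\Phi(\mfw)^n\bar{\mfw}^N$ on trees, and let $N\to\infty$ to isolate mass levels, then run the finite argument level by level with cancellation and induction. The paper avoids all of this with a single observation: by Lemma~\ref{l.tr23}\eqref{i.tr49}, any $\Phi\in P$ with $\Phi(\mfu_1)=\Phi(\mfv_j)$ forces $\bar{\mfv}_j\in[\bar{\mfu}_1/2,\,2\bar{\mfu}_1]$; since $\sum_j\bar{\mfv}_j<\infty$, only finitely many indices $j$ can satisfy $\bar{\mfv}_j\ge\bar{\mfu}_1/2$, so the covering $P=\bigcup_j A_j$ is already a finite union and Lemma~\ref{l.tr22} applies directly. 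This handles the finite and infinite cases uniformly in one stroke, with no stratification, no cancellation, and no induction. Note also that the statement only asks for $\mfu_1=\mfv_i$ for \emph{some} $i$, not the full multiset equality, so the paper stops there; your extra work buys you the stronger conclusion but is not needed for what is claimed.
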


\begin{proof}[Proof of Proposition \ref{p.tr1}]\label{p.ptr1}
\mbox{}

 \eqref{i.tr44}: First it is clear that $k=k'$ since $\#_h$ is a well-defined 
function on the semigroup and thus $ \#_h (\mfu) = k = k' \in \N \cup \{\infty\}$ by Corollary (\ref{c.tr1}).
 
Fix $\Phi = \Phi^{m,\phi} \in \Pi$. Define the real numbers
 \begin{equation}\label{grx91}
  w_i := \Phi(\mfu_i) \, , \, z_i := \Phi(\mfv_i)\, , \ 1\leq i \leq k                                 
\end{equation}
and note that $w_i = \Phi_h(\mfu_i)$, $z_i=\Phi_h(\mfv_i)$ since $\mfu_i, \mfv_i \in \U(h)$.
Recall that for any monomial $\Psi$ that $\Psi_h(\bigsqcup_i \mfu_i) = \Psi_h (\bigsqcup_i \mfv_i)$ implies by 
Lemma (\ref{l.tr20}):
 \begin{equation}\label{grx92}
  \sum_{i\leq k} w_{i}^{n} = \sum_{i \leq k} z_{i}^{n} \,, \ n \in \N .
 \end{equation}
 But Lemma (\ref{l.tr21}) tells us that there is an $i \in \{1,\dots, k\}$ s.t.~$w_1 = z_i$, meaning 
that 
 \begin{equation}\label{grx93}
   \Phi(\mfu_1) = \Phi(\mfv_i)
 \end{equation}
 for the particular $\Phi = \Phi^{m,\phi}$.
 
 We can do the above for any $\Phi^{m,\phi} \in \Pi_h$.
 Note it is possible that we always get a different index $i \in \{1\,\dotsc, 
k\}$ such that $\Phi(\mfu_1) = \Phi(\mfv_i)$, and therefore $i = i(\Phi) \in \N \cap [1,k]$.
Altogether we have,
 \begin{equation}\label{e.tr73}
  \forall \Phi \in \Pi_h \ \exists i \text{ s.t. } \Phi(\mfu_1) = \Phi(\mfv_i) \, .
 \end{equation}

Since $\infty > \sum_{i =1}^k \bar{\mfv}_i$ we know that there is a $k'' < \infty$ such that 
$\bar{\mfv}_i < \bar{\mfu}/2$ for all $i \geq k'' +1$. Now use Lemma (\ref{l.tr23}) \eqref{i.tr49}. 
Thus, we know that \eqref{e.tr73} with $[\min \phi, \max \phi] \subset [1,2]$ can only hold for 
$i(\Phi) \in \{1,\dots, k''\}$.
 
 Therefore define the sets of monomials
 \begin{equation}\label{grx94}
  A_i := \{ \Phi^{m,\phi} \in \Pi_h:\, \Phi^{m,\phi}(\mfu_1) = \Phi^{m,\phi}(\mfv_i),\, [\min \phi, \max 
\phi] \subset [1,2]  \} \ , \, 1\leq i \leq k'' \, .
 \end{equation}
 By \eqref{e.tr73} we know that $P = A_1 \cup \dots \cup A_{k''}$. Lemma (\ref{l.tr23}) \eqref{i.tr48} 
says that $P$ is separating and by Lemma (\ref{l.tr22}) we know that at least one of the sets $A_i$, 
say $A_{i*}$, must be separating two given points. Thus, $\mfu_1 = \mfv_{i*}$.
 
 \eqref{i.tr45}: Suppose $\mfu_1$ is irreducible and divides $w+z$. By Lemma 
(\ref{l.delphic}) we know that $w=\bigsqcup_{i \in I_1} w_i$ and $z=\bigsqcup_{j \in I_2} z_j$ for some 
irreducible $w_i,z_j$ and at most countable sets $I_1$ and $I_2$, which we assume to be disjoint. 
Thus, $w+z = \bigsqcup_{i \in I} \mfv_i$ for $I=I_1 \cup I_2$ and $\mfv_i = w_i \1(i \in I_1) + z_i \1(i \in 
I_2)$. By \eqref{i.tr44} we know that there is $i* \in I$ such that $\mfu_1 = \mfv_{i*}$. If $i* \in I_1$, 
then $\mfu_1 | w$ and if $i* \in I_2$, then $\mfu_1 | z$. This is all we needed to show.
\end{proof}

\begin{proof}[Proof of Theorem \ref{p.delphic}]\label{p.2360}
 The combination of Proposition (\ref{l.delphic}) and Proposition (\ref{p.tr1}) \eqref{i.tr44} allows to 
deduce the result.
\end{proof}

The final remarks in this subsections are about the measurability of the prime factorization.
Recall the notation $\mcN^\#(E)$ as the set of locally bounded point measures on $E$, see Section (\ref{s.tightness}).
It is natural to write 
\begin{equation}\label{e2116}
P_\mfu = \sum_k m_k \delta_{\mfu_k} \mbox{ for any } \mfu = \bigsqcup_{k \in I} 
(\bigsqcup_{i=1}^{m_k} \mfu_k)
\end{equation}
as a measure concentrated on the set of irreducible elements 
$\U(h)$, $m_k \in \N$, $\mfu_k \in \U(h)$, $k \in I \subset \N$.
\begin{lemma}\label{l.6.1.EM14}
 The mapping $\pfd = \pfd_h: \, \U(h)^\sqcup \to \mcN^\#(\U(h)\setminus \{0\}), \, \mfu \mapsto 
P_\mfu$ is a bijection and bi-measurable.
\end{lemma}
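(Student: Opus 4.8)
The plan is to treat the purely set-theoretic claim (that $\Theta$ is a bijection) and the two measurability statements in that order, throughout mimicking the strategy of the proof of Lemma~6.1 in \cite{EM14}, the substitutes for their factorisation theorem and polynomial identities being Theorem~\ref{p.delphic}, Proposition~\ref{p2907131206} and Lemma~\ref{l.tr20}. \emph{Well-definedness.} By the unique (up to order) prime factorisation of Theorem~\ref{p.delphic} one writes $\mfu=\bigsqcup_k\bigsqcup_{i=1}^{m_k}\mfw_k$ with pairwise distinct primes $\mfw_k\in\U(h)\setminus\{0\}$ and $m_k\in\N$, so that $P_\mfu=\sum_k m_k\delta_{\mfw_k}$ does not depend on the chosen ordering; it is boundedly finite because the total mass is the additive (Delphic) homomorphism of Theorem~\ref{p.delphic}, whence $\sum_k m_k\bar{\mfw}_k=\bar{\mfu}<\infty$, while $\mfv_n\to0$ in $\dGPr$ is equivalent to $\bar{\mfv}_n\to0$, so every set bounded in $\U(h)^\sqcup\setminus\{0\}$ (with $0$ the point at infinity) lies in some $\{\mfv:\bar{\mfv}\geq\varepsilon\}$, on which $P_\mfu$ has mass at most $\bar{\mfu}/\varepsilon$.

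\emph{Bijectivity.} Injectivity is immediate: $P_\mfu=P_\mfv$ means the two forests are the same countable concatenation of primes, hence equal, by commutativity and associativity of $\sqcup^h$ and the definition of countable concatenations (Remark~\ref{r.796}). For surjectivity, given $P=\sum_k m_k\delta_{\mfw_k}$ I would list the $\mfw_k$ with multiplicity and in order of decreasing mass; the tails then have arbitrarily small total mass, so by translation invariance of $\dGPr$ (Lemma~\ref{l.2.1EM}) the finite partial concatenations form a Cauchy sequence and converge, in the Polish space $\U(h)^\sqcup$ (Remark~\ref{r790}), to an $\mfu$ with $\Theta(\mfu)=P$. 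The summability $\int\bar{\mfv}\,P(\dx\mfv)=\sum_k m_k\bar{\mfw}_k<\infty$ entering here is exactly what characterises the image of $\Theta$, and I would record this explicitly (so that $\mcN^\#(\U(h)\setminus\{0\})$ is to be read with this constraint, or one restricts to it).

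\emph{Measurability of $\Theta^{-1}$.} Since $\U(h)^\sqcup$ is Polish and the truncated monomials are convergence determining (Theorem~\ref{p.trunc.poly}), its Borel $\sigma$-algebra is generated by a countable subfamily of $\{\Phi_h:\Phi\in\Pi\}$, so it suffices that $P\mapsto\Phi_h(\Theta^{-1}(P))$ be $\mcN^\#$-measurable for each $\Phi\in\Pi$. By Proposition~\ref{p2907131206} this map is $P\mapsto\sum_k m_k\Phi(\mfw_k)=\int_{\U(h)}\Phi\,\dx P$, and as $|\Phi^{m,\phi}(\mfv)|\leq\|\phi\|_\infty\bar{\mfv}^{\,m}$ the integrand is bounded on each $\{\bar\cdot\geq\varepsilon\}$ and tends to $0$ at the point at infinity, so this is a monotone limit of the basic measurable functionals $P\mapsto\int\Phi\,\1(\bar\cdot\geq\varepsilon)\,\dx P$, defined on the measurable image $\{P:\int\bar{\mfv}\,\dx P<\infty\}$ of $\Theta$.

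\emph{Measurability of $\Theta$, the main obstacle.} The $\sigma$-algebra on $\mcN^\#(\U(h)\setminus\{0\})$ is generated by $P\mapsto P(B)$ along a $\pi$-system of bounded Borel sets, and I would take $B=\Psi^{-1}(\prod_{j=1}^{d}(a_j,b_j])$ with $\Psi=(\Psi_1,\dots,\Psi_d)$, $\Psi_j=\Phi^{m_j,\phi_j}\in\Pi_{h,+}$, $a_1>0$, $\inf\phi_1\geq1$ (so $B$ is bounded away from $0$), first checking that such sets generate the bounded Borel structure of $\U(h)\setminus\{0\}$, using that the monomials separate points (Proposition~\ref{p.trpol.sep}). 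Then $P_\mfu(B)=(\Psi_\ast P_\mfu)(\prod_j(a_j,b_j])$, with $\Psi_\ast P_\mfu=\sum_i\delta_{\Psi(\mfu_i)}$ a point measure on $[0,\infty)^d$ whose atoms accumulate only at the origin. The crucial point is that its mixed power sums depend measurably on $\mfu$: a product of monomials is again a monomial (Remark~\ref{r.1027}), so $\sum_i\prod_j\Psi_j(\mfu_i)^{n_j}=\sum_i\Xi(\mfu_i)=\Xi_h(\mfu)$ for the single monomial $\Xi=\prod_j\Psi_j^{n_j}=\Phi^{M,\chi}\in\Pi$, by Proposition~\ref{p2907131206} (compare Lemma~\ref{l.tr20}), and $\Xi_h(\mfu)=\langle\chi_h,\nu^{M,\mfu}\rangle$ with $\chi_h$ the upper $h$-truncation (Definition~\ref{d:trunc}) is measurable in $\mfu$ because $\mfu\mapsto\nu^{M,\mfu}$ is continuous and $\nu\mapsto\langle g,\nu\rangle$ is Borel for fixed bounded Borel $g$. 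It then remains to recover, measurably, a boundedly finite point measure on $[0,\infty)^d$ with atoms accumulating only at the origin from its mixed power sums: in dimension one this is the greedy reconstruction underlying Lemma~\ref{l.tr21}, and in general I would route through $t\mapsto\sum_i(e^{\langle t,\Psi(\mfu_i)\rangle}-1)$, a pointwise limit of the above power-sum functionals which determines $\Psi_\ast P_\mfu$ by L\'evy--Khintchine uniqueness, invoking that an injective Borel map between standard Borel spaces has Borel inverse. The genuinely delicate ingredient is precisely this last reconstruction step, together with the bookkeeping that cylinder sets over finitely many monomials generate the bounded Borel structure of $\U(h)\setminus\{0\}$; everything else reduces to Theorem~\ref{p.delphic}, Proposition~\ref{p2907131206} and Lemma~\ref{l.tr20}.
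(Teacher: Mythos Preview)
Your approach matches the paper's: the paper does not give a proof but simply cites Proposition~6.1 of \cite{EM14}, noting that the argument there uses only algebraic semigroup properties and carries over once $\boxplus$ is replaced by $\sqcup^h$. Your write-up is precisely that transcription, with Theorem~\ref{p.delphic}, Proposition~\ref{p2907131206} and Lemma~\ref{l.tr20} slotted in where expected, so there is nothing to compare.

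One point you raise is in fact a genuine correction to the statement: $\Theta_h$ is \emph{not} onto all of $\mcN^\#(\U(h)\setminus\{0\})$. Any $P_\mfu$ in the image satisfies $\int\bar\mfv\,P_\mfu(\dx\mfv)=\bar\mfu<\infty$, whereas a boundedly finite counting measure need only have finitely many atoms in each $\{\bar\mfv\geq\varepsilon\}$; taking trees with masses $1/n$ gives a $P\in\mcN^\#$ with $\int\bar\mfv\,P(\dx\mfv)=\infty$ outside the image. Your fix (restrict the codomain to $\{P:\int\bar\mfv\,P(\dx\mfv)<\infty\}$) is correct, and since the lemma is only used for measurability of $\Theta_h$ and its inverse on the image (Proposition~\ref{p:pathmeas}, Lemma~\ref{l.Phi_s.cadlag}), the imprecision is harmless downstream.
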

This is exactly the result in \cite{EM14}'s Proposition 6.1 using only algebraic properties hence could be copied exchanging the binary operations, we omit the proof.
The previous result allows to define functionals via polynomials on the set $\mcN^\#(\U(h)\setminus\{0\})$. For $n  \in 
\mcN^\#(\U(h)\setminus\{0\})$  and $\Phi \in \Pi$ set:
\begin{equation}\label{grx96}
 n(\Phi) := \int_{\U(h)\setminus \{0\}} n(\dx \mfy) \, \Phi(\mfy) \, .
\end{equation}

We obtain the following easy lemma.
\begin{lemma}\label{l.tr2}
 Let $\mfu \in \U$ and $h>0$. Then for $\Phi \in \Pi$:
 \begin{equation}\label{grx97}
   \left( \pfd_h(\lfloor\mfu\rfloor(h)) \right) (\Phi) = \Phi_h(\mfu) = \sum_{i \leq \#_h(\mfu)} \Phi_h(\mfu_i)\, 
,
 \end{equation}
 the last equation under the assumption that $\mfu = \bigsqcup_{i\leq \#_h(\mfu)} \mfu_i$.
\end{lemma}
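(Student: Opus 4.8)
The plan is to unwind every definition and reduce the claim to Proposition~\ref{p2907131206}, which already records that truncated monomials turn $h$-concatenations into sums. Fix $\mfu\in\U$, $h>0$ and a monomial $\Phi=\Phi^{m,\phi}\in\Pi$ (the extension to general polynomials $\Phi\in\mcA(\Pi)$ being by linearity). I would first fix the $h$-decomposition $\mfu(h)=\bigsqcup^h_{i\in I}\mfu_i$ of the $h$-top provided by Theorem~\ref{p.delphic}, so that $\mfu_i\in\U(h)\setminus\{0\}$ and $\#I=\#_h\mfu$ by Definition~\ref{D.anz} and Corollary~\ref{c.tr1}; this is the decomposition referred to in the statement. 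By Lemma~\ref{l.6.1.EM14} the point measure $\pfd_h(\mfu(h))$ is precisely $\sum_{i\in I}\delta_{\mfu_i}$, atoms counted with multiplicity, so the defining formula \eqref{grx96} gives at once $\left(\pfd_h(\mfu(h))\right)(\Phi)=\sum_{i\in I}\Phi(\mfu_i)$.

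Next I would replace each $\Phi(\mfu_i)$ by $\Phi_h(\mfu_i)$. Since $\mfu_i\in\U(h)=\U([0,2h))$, the distance matrix measure $\nu^{m,\mfu_i}$ is concentrated on $[0,2h)^{\binom m2}$, on which the truncated test function $\phi_h$ coincides with $\phi$; hence $\Phi(\mfu_i)=\langle\phi,\nu^{m,\mfu_i}\rangle=\langle\phi_h,\nu^{m,\mfu_i}\rangle=\Phi_h(\mfu_i)$. This already produces the rightmost member $\sum_{i\le\#_h\mfu}\Phi_h(\mfu_i)$ of the asserted chain, so it only remains to identify the common value with $\Phi_h(\mfu)$.

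For that I would invoke Proposition~\ref{p2907131206} applied to the $h$-concatenation $\mfu(h)=\bigsqcup^h_{i\in I}\mfu_i$, whose factors lie in $\U(h)^\sqcup$: it yields $\Phi_h(\mfu(h))=\sum_{i\in I}\Phi(\mfu_i)$. Finally $\Phi_h(\mfu)=\Phi_h(\mfu(h))$: writing $\Phi_h(\cdot)=\int(\cdot)^{\otimes m}(\dx\underline x)\,\phi\big((r(x_i,x_j))_{i<j}\big)\prod_{i<j}\1_{[0,2h)}(r(x_i,x_j))$, on the set where all $r(x_i,x_j)<2h$ the metrics $r$ and $r\wedge 2h$ coincide, while off it both integrands vanish, so passing from $\mfu$ to $\mfu(h)=[U,r\wedge 2h,\mu]$ leaves the integral unchanged (the same identity is used implicitly in the proof of Proposition~\ref{p.tr.cont}). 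Chaining the three steps gives $\left(\pfd_h(\mfu(h))\right)(\Phi)=\sum_i\Phi(\mfu_i)=\Phi_h(\mfu(h))=\Phi_h(\mfu)=\sum_i\Phi_h(\mfu_i)$, which is the assertion.

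I do not expect a genuine obstacle: the argument is bookkeeping with the definitions of $\pfd_h$, of $n(\Phi)$ in \eqref{grx96}, of $h$-truncation, and of the $h$-top, together with two already-established facts, Proposition~\ref{p2907131206} and Lemma~\ref{l.6.1.EM14}. The only points needing a line of care are the tree-level identity $\Phi=\Phi_h$ on $\U(h)$, which really uses that trees realize only distances strictly below $2h$ rather than merely at most $2h$, and the identity $\Phi_h(\mfu)=\Phi_h(\mfu(h))$ for a possibly infinite-diameter $\mfu\in\U$; both are immediate at the level of distance-matrix measures.
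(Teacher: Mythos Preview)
Your proof is correct and follows exactly the route the paper intends: the paper's own proof is the one-line ``Recall \eqref{e2116} and use the fact that $\Phi_h$ is an $\sqcup^h$-homomorphism,'' and you have simply unpacked this, invoking the definition of $\pfd_h$, the functional pairing \eqref{grx96}, Proposition~\ref{p2907131206} for the homomorphism property, and the elementary identities $\Phi(\mfu_i)=\Phi_h(\mfu_i)$ on $\U(h)$ and $\Phi_h(\mfu)=\Phi_h(\mfu(h))$.
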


\begin{proof}\label{pr.2480}
Recall \eqref{e2116} and use the fact that $\Phi_h$ is an $\sqcup^h$-homomorphism.
\end{proof}

\begin{proposition}[Countable support of $ \nu^{2,\mfu}$]\label{pr.1871}
 For every ultrametric measure space $\mfu \in \bbU$ and $\eps >0$ the measure 
$\nu^{2,\mfu}|_{[\eps,\infty)}$ has a countable support, i.e.~there exist $x_n \in [\eps, \infty)$ 
and $m_n >0$, $n\in I$ for a countable index set $I$ such that
 \begin{equation}\label{grx81}
  \nu^{2,\mfu}|_{[\eps,\infty)} = \sum_{n\in I} m_n \delta_{x_n} \, .
 \end{equation}
\end{proposition}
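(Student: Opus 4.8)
The plan is to exploit the ultrametric structure together with separability of the space. Fix a representative $(U,r,\mu)$ of $\mfu$ and fix $\eps>0$. First I would note that the relation $u\sim v :\Leftrightarrow r(u,v)<\eps$ is an equivalence relation, by the strong triangle inequality, so the open balls $B_\eps(u)=\{v\in U: r(u,v)<\eps\}$ form a partition of $U$ into nonempty open sets. Since $(U,r)$ is Polish, hence separable, any family of pairwise disjoint nonempty open sets is at most countable (a countable dense set meets each of them in a distinct point); therefore there are at most countably many such $\eps$-balls, which I enumerate as $B_1,B_2,\dots$ (the enumeration may be finite), and one has $\sum_n \mu(B_n)=\mu(U)<\infty$, each $B_n$ being Borel.

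The key step is to show that $r$ is constant on $B_m\times B_n$ whenever $m\neq n$. Let $u,u'\in B_m$ and $v,v'\in B_n$. Then $r(u,u')<\eps\le r(u,v)$, the latter because $v\notin B_m$; by the isosceles property of ultrametrics (if $r(a,b)<r(a,c)$ then $r(b,c)=r(a,c)$) this gives $r(u',v)=r(u,v)$, and then $r(v,v')<\eps\le r(u',v)$ gives $r(u',v')=r(u',v)=r(u,v)$. So $r$ takes a single value $d_{m,n}\in[\eps,\infty)$ on all of $B_m\times B_n$.

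Finally, since the diagonal and each $B_n\times B_n$ contribute only distances $<\eps$, we have the disjoint decomposition $\{(u,v)\in U^2:\, r(u,v)\ge\eps\}=\bigsqcup_{m\neq n}(B_m\times B_n)$, so applying the definition of $\nu^{2,\mfu}$ as the push-forward of $\mu^{\otimes2}$ under $(u,v)\mapsto r(u,v)$ yields
\[
 \nu^{2,\mfu}\big|_{[\eps,\infty)} \;=\; \sum_{m\neq n}\mu(B_m)\,\mu(B_n)\,\delta_{d_{m,n}},
\]
a countable sum of point masses with total mass $\le\mu(U)^2<\infty$. Collecting the terms that sit at a common location in $[\eps,\infty)$ and discarding those of zero weight gives precisely the asserted representation $\nu^{2,\mfu}|_{[\eps,\infty)}=\sum_{n\in\N} m_n\delta_{x_n}$ with $x_n\in[\eps,\infty)$ and $m_n>0$.

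I do not expect a genuine obstacle here; the only two points needing care are the separability argument that caps the number of $\eps$-balls and the isosceles computation showing $r$ is constant across distinct $\eps$-balls, both of which are standard facts about (separable) ultrametric spaces.
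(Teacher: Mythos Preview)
Your proof is correct. The core idea is the same as the paper's: partition $U$ into open balls of a fixed radius, observe that the partition is countable, and use the isosceles property of ultrametrics to see that $r$ takes a single value on each off-diagonal product $B_m\times B_n$, so that the push-forward $\nu^{2,\mfu}$ restricted to $[\eps,\infty)$ is a countable sum of Dirac masses.

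Where you differ from the paper is in how you obtain the countable partition. The paper invokes its Theorem~\ref{p.delphic}: it forms the $\eps$-truncation $\tau(\eps)(\mfu)$ and applies the unique prime factorisation in the Delphic semigroup $(\bbU(\eps)^\sqcup,\sqcup^\eps)$ to produce the countable family of pieces $\mfu(\eps,i)$, then picks a point $x_i$ from each and runs the same isosceles computation you do. You instead work directly with a representative and use the separability of $(U,r)$ to bound the number of pairwise disjoint open $\eps$-balls. Your route is more elementary and entirely self-contained---it does not rely on any of the semigroup machinery built up in Section~\ref{s.tt}---whereas the paper's proof has the virtue of exhibiting the proposition as a direct by-product of the structural Theorem~\ref{p.delphic}. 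Both are short; yours is the one a reader could follow without having absorbed the rest of the paper.
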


\begin{proof}\label{p.1880}
Recall the remark~\ref{r.778}. Consider the $\tau(\eps)(\mfu)$. By Theorem~\ref{p.delphic} there is a 
unique prime factorization of $\tau(\eps)(\mfu)$ into countably many elements:
 \begin{equation}\label{grx82}
 \tau(\eps)(\mfu) = \bigsqcup_{i\in \N} \mfu(\eps,i) \, .
 \end{equation}
 For $\mfu(\eps, i)= [U_i,r,\mu_i]$ denote by $x_i$ an element in $U_i$. If $y_i$ is another element 
in $U_i$, then for $i\neq j$ by ultrametricity: $r(y_i,x_j) \leq r(y_i,x_i) \vee r(x_i,x_j) = 
r(x_i,x_j)$; moreover $r(y_i,x_j) \geq r(x_i,x_j)$, since $r(x_i,y_i) < 2 \ve \leq r(x_i,y_i)$. Thus 
$r(x_i,x_j) = r(y_i,x_j)$ for any element $y_i \in U_i$ and it suffices to only take one $x_i$ from 
each prime element $U_i$. Then we can calculate for $A \subset [\eps, \infty)$ with a countable sum:
 \begin{align}\label{grx83}
  \nu^{2,\mfu} (A) = \sum_{ i,j \in \N} \bar{\mfu}_i \bar{\mfu}_j \1( r(x_i,x_j) \in A) \, .
 \end{align}
\end{proof}

\subsection{Paths of family decompositions}\label{ss.pathsoftops}

For $\mfu\in\bbU(h)^\sqcup$, we can observe the {\em path} of family 
decompositions $\mfu(s)$ for $s\in[0,h]$. We denote the space of c\`{a}dl\`{a}g paths from 
$I\subseteq \bbR_+\to E$ by $D(I,E)$ equipped with the $(J_1)$-Skorohod topology. It is convenient to work with measure-valued representations. 

The $h$-family decomposition whose existence and uniqueness is guaranteed by 
Theorem~\ref{p.delphic} naturally induces a {\em point measure on subfamilies}. This measure represents 
the set of $h$-subfamilies in an equivalent way. The measure is in general not finite, but 
it is boundedly finite, that is, it is finite on bounded subsets. We denote the set of 
boundedly finite measures on a metric space $(E,d)$, by $\mcN^\#(E)$, and equip it with the 
weak$^\#$-topology, see \cite{DVJ03}.

In our case $E= \U(h)\setminus\{0\}$, the $h$-trees is equipped with the metric
\label{e1912}\[ d(\mfx,\mfy) = \dGPr (\mfx, \mfy) + \abs{ \bar{\mfx}^{-1} - \bar{\mfy}^{-1} } , \ \mfx, \mfy \in \U(h)\setminus \{0\} . \]

Define
\begin{equation}\label{e770}
 \pfd_h:\begin{cases}
         \bbU(h)^\sqcup&\to \mcN^\#(\bbU(h)\setminus\{0\})\,,\\
         \mfu=\bigsqcup_{i=1}^{\#_h(\mfu)}\mfu_i 
         &\mapsto\sum_{i=1}^{\#_h(\mfu)}\delta_{\mfu_i}\,.        
\end{cases}
\end{equation}
One can view $\pfd_h$ also as a map on $\bbU$ via the composition with $h$-truncation: $\pfd_h := \pfd_h \circ \cdot(h)$.
This map is a bi-measurable bijection (see Lemma~\ref{l.6.1.EM14}).  We have the following result:

\begin{proposition}[Path of tops measurable] \label{p:pathmeas}
Let $t>0$. The mapping
 \begin{equation}\label{grx99b}
  \begin{cases}
   \U & \to D ([0,t),\mcN^\#(\U(t)\setminus\{0\}))\,, \\
   \mfu & \mapsto (\Theta_{t-s}(\lfloor \mfu \rfloor (t-s)))_{s \in [0,t)}\,,
  \end{cases}
 \end{equation}
 is measurable. 
\end{proposition}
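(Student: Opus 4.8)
The codomain $D([0,t),E)$ with $E:=\mcN^\#(\U(t)\setminus\{0\})$ is a Skorohod space over a Polish space: by Proposition~\ref{l.frsts.clsd} the set $\U(t)$ is $G_\delta$ in the Polish space $\U$, hence Polish, $\U(t)\setminus\{0\}$ is open in it and thus Polish for the metric $d$, and the space of boundedly finite measures on a Polish space is Polish in the weak$^\#$-topology. I will use the standard fact (e.g.\ \cite{EK86}) that for Polish $E$ the Borel $\sigma$-algebra of $D([0,t),E)$ is generated by the evaluation maps $\pi_s$, $s\in[0,t)$. It therefore suffices to prove: (i) for every $\mfu\in\U$ the curve $s\mapsto\Theta_{t-s}(\mfu(t-s))$ lies in $D([0,t),E)$, i.e.\ is right-continuous with left limits; and (ii) for each fixed $s\in[0,t)$ the map $\mfu\mapsto\Theta_{t-s}(\mfu(t-s))$ is Borel measurable $\U\to E$.

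Step (ii) is the routine part: $\mfu\mapsto\mfu(t-s)$ is continuous $\U\to\U(t-s)^\sqcup\subseteq\U(t)^\sqcup$ by Proposition~\ref{p.tr.cont}; the map $\Theta_{t-s}$ is (bi-)measurable $\U(t-s)^\sqcup\to\mcN^\#(\U(t-s)\setminus\{0\})$ by Lemma~\ref{l.6.1.EM14}; and the tautological map $\mcN^\#(\U(t-s)\setminus\{0\})\to E$ is measurable, because $\U(t-s)\setminus\{0\}$ is a Borel (indeed $G_\delta$) subspace of $\U(t)\setminus\{0\}$ carrying the same metric $d$, every bounded measurable $f$ on $\U(t)\setminus\{0\}$ satisfies $\langle f,n\rangle=\langle f|_{\U(t-s)},n\rangle$, and $d$-bounded subsets of $\U(t)\setminus\{0\}$ meet $\U(t-s)\setminus\{0\}$ in $d$-bounded sets, so boundedly finite measures go to boundedly finite measures. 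Composing the three maps gives (ii).

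For Step (i), fix $\mfu=[U,r,\mu]$ and write $n_h:=\Theta_h(\mfu(h))=\sum_i\delta_{\mfu_i^{(h)}}$ for the point measure carried by the $h$-tree factors of $\mfu(h)$ provided by Theorem~\ref{p.delphic}. Since $\sum_i\overline{\mfu_i^{(h)}}=\bar\mfu<\infty$ for every $h\in(0,t]$, all the $n_h$ lie in one bounded subset of $E$ (a $d$-bounded set forces mass $\ge\delta$ for some $\delta>0$, and at most $\bar\mfu/\delta$ of the factors have mass $\ge\delta$). The first move is to test against truncated monomials: by Lemma~\ref{l.tr2}, $\langle\Phi,n_h\rangle=\Phi_h(\mfu)=\langle\phi\cdot\prod_{i<j}\1_{[0,2h)}(r_{ij}),\nu^{m,\mfu}\rangle$ for every monomial $\Phi=\Phi^{m,\phi}$; with $h=t-s$ and $\phi\ge0$ this is a right-continuous function of $s$ (monotone convergence, since $\1_{[0,2h)}\uparrow\1_{[0,2h_0)}$ as $h\uparrow h_0$) whose left-hand limit is $\langle\phi\cdot\prod_{i<j}\1_{[0,2h_0]}(r_{ij}),\nu^{m,\mfu}\rangle$, and by Lemma~\ref{l.tr1} together with the countability of the support of $\nu^{2,\mfu}|_{[\varepsilon,\infty)}$ (Proposition~\ref{pr.1871}) it has only countably many discontinuities, all of them jumps. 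Hence $s\mapsto\langle\Phi,n_{t-s}\rangle$ is c\`adl\`ag for every $\Phi\in\Pi_{t,+}$. It then remains to upgrade this to c\`adl\`ag-ness of the $E$-valued path: since by Theorem~\ref{p.trunc.poly} the $d$-metric topology on $\U(t)\setminus\{0\}$ is the initial topology of the truncated monomials $\Pi_t$ (the total mass being one of them), one expects that on the bounded, conserved-total-mass family of point measures arising here weak$^\#$-convergence is determined by the functionals $n\mapsto\langle\Phi,n\rangle$, $\Phi\in\Pi_{t,+}$; granting this, $s\mapsto n_{t-s}$ is right-continuous, with left limit at $s_0$ equal to the $\Theta$-image of the decomposition of $\mfu(t-s_0)$ into \emph{closed} $2(t-s_0)$-balls (the pieces of mass below a given $\delta$ being invisible to bounded-support test functions).

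The step I expect to be the main obstacle is precisely this last upgrade: converting c\`adl\`ag-ness of the scalar curves $s\mapsto\Phi_{t-s}(\mfu)$ into c\`adl\`ag-ness of the measure-valued path, i.e.\ proving that the truncated monomials form a convergence-determining family for the weak$^\#$-topology on the relevant bounded subset of $\mcN^\#(\U(t)\setminus\{0\})$ (this needs a tightness/Stone--Weierstrass argument on top of Theorem~\ref{p.trunc.poly}), and the accompanying bookkeeping of the left limit at a ``split time'' via the closed-ball decomposition, including the control that the total mass of the spurious small pieces tends to zero as $h\uparrow h_0$. Parts (ii), the Polishness of the codomain, and the reduction through the coordinate projections are routine.
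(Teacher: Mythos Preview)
Your reduction to coordinate evaluations and your Step~(ii) are essentially the paper's argument: the paper approximates the c\`adl\`ag path by step functions in the Skorohod topology and then checks measurability of $\mfu\mapsto\Theta_r(\mfu(r))$ for each fixed $r$, using exactly the ingredients you list (Proposition~\ref{p.tr.cont}, Lemma~\ref{l.6.1.EM14}, and that $\U(r)$ sits measurably inside $\U(t)$). So far the two proofs agree.

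The divergence is in Step~(i), and your self-diagnosed obstacle is real. The paper does not try to ``upgrade'' the scalar c\`adl\`ag-ness of $s\mapsto\langle\Phi,n_{t-s}\rangle$ directly. Instead (this is Lemma~\ref{l.Phi_s.cadlag}, proved immediately after the proposition) it runs a compactness-plus-identification argument: for $t_n\nearrow t$ one first observes that $\{\Theta_{t_n}(\mfu)\}_n$ is tight in $\mcN^\#(\U(t)\setminus\{0\})$ (this is almost free from Proposition~\ref{p.tight-crit}, since the total mass $\bar\mfu$ is fixed), and then identifies the unique limit point by testing not against $\Phi$ but against the bounded functionals $\mfv\mapsto 1-e^{-\Phi(\mfv)}$, which are convergence-determining on $\mcM^\#(\U(h)^\sqcup\setminus\{0\})$ by Proposition~\ref{c.tr2}. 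The scalar estimate needed for identification is then Lemma~\ref{l.tr2} combined with Lemma~\ref{l.tr1}, exactly the ones you invoke. The right-limit case $s_n\searrow s$ is handled by the same tightness-plus-Cauchy argument.

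Two remarks on why the paper's route is smoother than yours. First, your linear test functionals $n\mapsto\langle\Phi,n\rangle$ are unbounded on $\mcN^\#$ and do not a priori determine weak$^\#$ convergence, whereas the $1-e^{-\Phi}$ functionals are bounded and already known to be convergence-determining by Proposition~\ref{c.tr2}; switching to these removes the need for any ad hoc Stone--Weierstrass/tightness argument ``on top of'' Theorem~\ref{p.trunc.poly}. Second, the tightness you would need for your upgrade is supplied for free by Proposition~\ref{p.tight-crit}, so the identification step is all that matters---and that is precisely what Proposition~\ref{c.tr2} furnishes. Your description of the left limit at a split time via closed-ball decompositions is correct in spirit but is not needed once one argues through tightness and uniqueness of limit points.
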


This is important when we want to consider the evolution of the family 
decomposition of a random forest of diameter $2t$ as $h$ varies, that is stochastic
process taking values in $\bbU(h)^\sqcup$.

\begin{proof}\label{pr.2041}
Like any \cadlag function, the function $(\Theta_{t-s}(\mfu))_{s\in [0,t)}$ can be approximated in the Skorohod topology via a step function
 \begin{equation}\label{e.EK86.ex.3.11.12}
   \Theta_{t-\cdot}(\mfu) = \lim_{n\to \infty} \sum_{k=1}^\infty \1\left( n(t-\cdot) \in [k-1,k) \right) \Theta_{k/n}(\mfu) \, .
 \end{equation}
 Thus, it suffices to check measurability of the mapping $\U \to \mcN^\#(\U(t)\setminus \{0\}), \, \mfu \mapsto \Theta_r(\mfu)$ for any $r \in(0,t)$.
 But this is obvious by the following two points. First, the mapping $\bbU \to \bbU(r)^\sqcup, \, \mfu \mapsto \mfu(r)$ is continuous by Lemma~\ref{p.tr.cont} and second, 
 the mapping $\Theta_r: \U(r)^\sqcup \to \mcN^\#(\U(r)\setminus \{0\})$ is measurable by Lemma~\ref{l.6.1.EM14}. We also used that $\mfU(r)$ is a measurable subset of $\lfloor \mfU \rfloor (t)$.
\end{proof}

\begin{lemma}\label{l.Phi_s.cadlag}
Suppose $t>0$.
Let $\mfu \in \U$.
 \begin{enumerate} 
 \item\label{i.tr16} For $t_n \nearrow t$:  $\pfd_{t_n}(\mfu) \Rightarrow 
\pfd_t(\mfu)$ converges boundedly weak in $\mcN^\#(\U(t)\setminus\{0\})$.
  \item\label{i.tr17} For $s_n \searrow s$ :  $\pfd_{s_n}(\mfu)$ converges boundedly 
weak in $\mcN^\#(\U(s_1)\setminus\{0\})$.
 \item\label{i.tr14} Define the path $[0,t) \mapsto \mcN^\#(\U(t)\setminus\{0\})$ by
 \begin{equation}\label{grx98}
  [0,t) \ni s \mapsto \pfd_{t-s}(\mfu(t-s)) \, .
 \end{equation}
 This path lies in $D ([0,t), \mcN^\#(\U(t)\setminus\{0\}))$.
%   The mapping
%  \begin{equation}\label{grx99}
%   \begin{cases}
%    \U & \to D ([0,t),\mcN^\#(\U(t)\setminus\{0\})) \\
%    \mfu & \mapsto (\pfd_{t-s}(\mfu(t-s)))_{s \in [0,t)}
%   \end{cases},
%  \end{equation}
%  is measurable.
\end{enumerate}
\end{lemma}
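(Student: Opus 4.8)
The plan is to establish parts \eqref{i.tr16} and \eqref{i.tr17} by tracking the open (resp.\ closed) ball decompositions as the radius parameter varies monotonically, and to deduce \eqref{i.tr14} by combining them with the characterisation of $D$-paths as right-continuous paths with left limits. Throughout one fixes $\mfu=[U,r,\mu]\in\U$ and realises all $h$-tops concretely inside one fixed Polish ultrametric space $(W,\rho)$ (take $\rho=r\wedge 2t$ for \eqref{i.tr16} and $\rho=r\wedge 2s_1$ for \eqref{i.tr17}); by Theorem~\ref{p.delphic}, $\pfd_h(\mfu)=\sum_j\delta_{[B_j,\rho,\mu]}$, where $\{B_j\}$ is the partition of $W$ into open $2h$-balls $B_{2h}(x)=\{y:\rho(x,y)<2h\}$ of positive $\mu$-mass. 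The one structural fact I will use repeatedly is that a Polish ultrametric space has, for each $c>0$, only countably many open $2c$-balls and only countably many closed $2c$-balls (the quotient by ``at distance $<2c$'', resp.\ ``$\le2c$'', is $2c$-separated, hence countable), so the positive-mass balls at a given scale exhaust $\mu$ up to a null set and their masses sum to $\bar\mfu<\infty$.

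For \eqref{i.tr16}, given $t_n\nearrow t$ one has $B_{2t_n}(x)\uparrow B_{2t}(x)$ and hence $\mu(B_{2t_n}(x))\uparrow\mu(B_{2t}(x))$ for every $x$. I would enumerate the positive-mass open $2t$-balls as $U_1,U_2,\dots$ with decreasing masses, fix representatives $x_j\in U_j$ (so $B_{2t_n}(x_j)$, $j\ge1$, are pairwise distinct for \emph{every} $n$, since distinct $2t$-balls force $\rho(x_j,x_k)\ge2t>2t_n$), and fix $\eps>0$ outside the countable set $\{\mu(U_j)\}$. Picking $L'$ with $\sum_{j>L'}\mu(U_j)<\eps/2$, the identity $\bar\mfu-\sum_{j\le L'}\mu(B_{2t_n}(x_j))\to\sum_{j>L'}\mu(U_j)$ forces, for all large $n$, every $2t_n$-ball other than $B_{2t_n}(x_1),\dots,B_{2t_n}(x_{L'})$ to have mass $<\eps$; and among these $L'$ balls, the $j$-th has mass $\ge\eps$ eventually iff $\mu(U_j)>\eps$. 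Thus for large $n$ the restriction of $\pfd_{t_n}(\mfu)$ to $\{\bar\mfx\ge\eps\}$ is the finite sum $\sum_{j:\mu(U_j)>\eps}\delta_{[B_{2t_n}(x_j),\rho,\mu]}$, and $B_{2t_n}(x_j)\uparrow U_j$ together with the elementary bound $d_{\mathrm{Pr}}(\nu_1,\nu_2)\le\nu_1(W)-\nu_2(W)$ for $\nu_2\le\nu_1$ gives $[B_{2t_n}(x_j),\rho,\mu]\to\mfu_j$ in the metric of $\U(t)\setminus\{0\}$ (the $|{\bar\cdot}^{-1}-{\bar\cdot}^{-1}|$-term being harmless since the masses stay $\ge\eps$). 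Hence $\pfd_{t_n}(\mfu)|_{\{\bar\mfx\ge\eps\}}\Rightarrow\pfd_t(\mfu)|_{\{\bar\mfx\ge\eps\}}$; letting $\eps\downarrow0$ along the co-countable set and noting the sets $\{\bar\mfx\ge\eps\}$ exhaust the bounded subsets of $\U(t)\setminus\{0\}$ gives the claimed bounded weak convergence. Part \eqref{i.tr17} is handled by the identical argument with $s_n\searrow s$: here $B_{2s_n}(x)\downarrow\{y:\rho(x,y)\le2s\}=:C(x)$ and $\mu(B_{2s_n}(x))\downarrow\mu(C(x))$ by continuity from above, one enumerates the positive-mass \emph{closed} $2s$-balls by decreasing mass, and the limit is $\sum_C\delta_{[C,\rho,\mu]}$; each such $C$ has all pairwise distances $\le2s<2s_1$ and is therefore an (irreducible) element of $\U(s_1)$, so the limit is indeed a boundedly finite measure on $\U(s_1)\setminus\{0\}$.

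For \eqref{i.tr14} I set $h=t-s$, so the path is $s\mapsto\pfd_h(\mfu(h))=\pfd_h(\mfu)$, $h\in(0,t]$; each value is boundedly finite on $\U(t)\setminus\{0\}$ since $\pfd_h(\mfu)$ is supported on $\U(h)\setminus\{0\}\subseteq\U(t)\setminus\{0\}$ and assigns mass $\le\bar\mfu/\eps$ to $\{\bar\mfx\ge\eps\}$. A path on an interval lies in $D$ (for the $J_1$-topology) iff it is right-continuous with left limits. Right-continuity at $s_0\in[0,t)$ says $\pfd_h(\mfu)\to\pfd_{h_0}(\mfu)$ as $h\nearrow h_0:=t-s_0$, which is part \eqref{i.tr16} run with limiting radius $h_0\le t$ and ambient space $\U(t)$ — and the argument of the preceding paragraph never used that the limiting radius equals the ambient radius, so it applies verbatim. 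Existence of a left limit at $s_0\in(0,t)$ says $\pfd_h(\mfu)$ converges as $h\searrow h_0$, which is part \eqref{i.tr17} (again with ambient space $\U(t)$). This proves the path is càdlàg. The step I expect to be the real obstacle is the uniform control used in proving \eqref{i.tr16}--\eqref{i.tr17}: one must rule out a ``spurious'' sequence of $2t_n$-balls (resp.\ $2s_n$-balls) of mass $\ge\eps$ whose limiting ball has small mass persisting for infinitely many $n$. This is exactly what the combination of (i) countability of balls, (ii) the monotone convergence $\mu(B_{2t_n}(x))\to\mu(\text{limit ball})$, and (iii) the tail estimate $\sum_{j>L'}\mu(U_j)\to0$ is designed to accomplish; once it is in place, the convergence of the surviving pieces as ultrametric measure spaces and the reduction to the sets $\{\bar\mfx\ge\eps\}$ are routine.
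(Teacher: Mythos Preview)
Your proof is correct and takes a genuinely different route from the paper's. The paper argues abstractly: it first asserts tightness of $(\pfd_{t_n}(\mfu))_n$ in $\mcM^\#(\U(t)\setminus\{0\})$ via the criterion of Proposition~\ref{p.tight-crit}, and then pins down the unique limit point by testing against the functionals $1-e^{-\Phi}$, using the identity $(\pfd_h(\mfu))(\Phi)=\Phi_h(\mfu)$ (Lemma~\ref{l.tr2}) together with the quantitative bound $|\Phi_{t_n}(\mfu)-\Phi_t(\mfu)|\le\binom{m}{2}\|\phi\|_\infty\,\nu^{2,\mfu}([2t_n,2t))\,\bar\mfu^{m-2}$ (Lemma~\ref{l.tr1}) and the separation result Proposition~\ref{c.tr2}. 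Part \eqref{i.tr17} is handled by the analogous Cauchy estimate, and \eqref{i.tr14} is deduced from \eqref{i.tr16}--\eqref{i.tr17} exactly as you do.

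By contrast you work inside one fixed representative and track the open (resp.\ closed) $2h$-balls as $h$ varies monotonically, reducing the weak$^\#$ convergence to the convergence of finitely many atoms on each slab $\{\bar\mfx\ge\eps\}$. This is more elementary: it avoids the $\mcM^\#$-tightness criterion and the separating class $\{1-e^{-\Phi}\}$ entirely, and it has the bonus of explicitly identifying the left limit in \eqref{i.tr17} as the decomposition into \emph{closed} $2s$-balls. The paper's approach, on the other hand, reuses infrastructure (Lemmas~\ref{l.tr1}, \ref{l.tr2}, Proposition~\ref{c.tr2}) that it needs anyway for the \Levy--Khintchine theory, so it is shorter in context. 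One small point worth making explicit in your write-up of \eqref{i.tr17}: the balls $B_{2s_n}(y_j)$ for $j\le L'$ are only pairwise distinct once $2s_n<\min_{j\ne k\le L'}\rho(y_j,y_k)$, a finite minimum strictly above $2s$; you implicitly use this when you bound the residual mass, and it is the one place where the argument is not literally ``identical'' to \eqref{i.tr16}.
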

\begin{remark}\label{r.Phi_s.noncont}
 However, the mapping is not continuous. Consider for example $\mfu_n = [\{a,b,c\}, r(a,b)=2-n^{-1}, r(a,c) = 2+n^{-1}, \delta_a+ \delta_b + \delta_c]$ for $n \in \N \cup \{\infty\}$. Then $\mfu_n \to \mfu_\infty$ in $d_{\textup{GPr}}$. However, it is not true that the paths of the tops converge in the Skorohod $J_1$ topology. In \cite{MAX_Griesshammer} a finer metric on $\U$ is defined under which the mapping is continuous.
\end{remark}

\begin{proof}[Proof of Lemma~\ref{l.Phi_s.cadlag}]\label{p.1971}
\mbox{}

\eqref{i.tr16}:
 First, $\pfd_{t_n}(\mfu) \in \mcN^\#(\U(t_n) \setminus \{0\}) \subset \mcN^\#(\U(t)\setminus 
\{0\})$, so the statement makes sense.
The topology on $\mcN^\#(\U(t)\setminus\{0\})$ is that of boundedly finite measures. 
Tightness of the sequence $\{\pfd_{t_n}:\, n \in \N\}$ is obvious from Proposition (\ref{p.tight-crit}).
So we only need to show that the only limit point of that sequence is $\pfd_t$.
Suppose $\pfd'$ was another limit point for the sequence $t_n \to t$.
Let $\Phi = \Phi^{m,\phi} \in \Pi_+$:
\begin{align}\label{grx100}
 \abs{ \pfd'(1-e^{-\Phi(\cdot)}) - \left(\pfd_t(\mfu)\right) (1-e^{-\Phi(\cdot)}) } & \leq \abs{ \pfd'(1-e^{-\Phi(\cdot)}) - 
\left(\pfd_{t_n}(\mfu)\right) (1-e^{-\Phi(\cdot)}) } \\ &\quad  + \abs{ \left(\pfd_{t_n}(\mfu)\right)(\Phi) - 
\left(\pfd_t(\mfu)\right)(\Phi) } .\label{grx100b}
 \end{align}
 The first expression vanishes by definition of the limit and so we consider the second expression further using Lemmas~\ref{l.tr2} and~\ref{l.tr1}:
 \begin{align}\label{e771}
  \abs{ \left(\pfd_{t_n}(\mfu)\right) (\Phi) - \left(\pfd_{t}(\mfu)\right) (\Phi) }  & = \abs{ 
\Phi_{t_n}(\mfu) - \Phi_t(\mfu) } \\
 & \leq {\binom{m}{2}} \norm{\phi}_{\infty} \nu^{\mfu,2}\left([t_n,t)\right) \bar{\mfu}^{m-2} \, .\nonumber
\end{align}
But $[t_n,t) \searrow \emptyset$ and thus the right hand side approaches zero.
So the expression on the left-hand side of \eqref{grx100} is arbitrarily small.
By Proposition (\ref{c.tr2}) we obtain that $\pfd_t$ is the unique limit point of $\{\pfd_{t_n}:\, n 
\in \N\}$

\eqref{i.tr17}:
First, note that $(\Theta_{s_n}(\mfu))_{n \in \N}$ is tight in $\mcM^\#(\U(t)^\sqcup)$ by Proposition (\ref{p.tight-crit}).

A similar argumentation as before allows to derive that there is only a unique limit point with the help of Proposition (\ref{c.tr2}).
Let $\Phi = \Phi^{m,\phi} \in \Pi_+$.
For the increasing sequence $s_n$ we get for $1\leq k \leq n$:
\begin{align}\label{grx101}
 & \abs{ \left(\Theta_{s_n}(\mfu)\right)(1-e^{-\Phi(\cdot)}) - \left(\Theta_{s_k}(\mfu)\right)(1-e^{-\Phi(\cdot)}) }  \leq \abs{ \left(\Theta_{s_n}(\mfu)\right)(\Phi) - \left(\Theta_{s_k}(\mfu)\right)(\Phi) } \\
 &\qquad = \abs{ \Phi_{s_n}(\mfu) - \Phi_{s_k}(\mfu) } \ (\text{Lemma~\ref{l.tr2}}) \label{grx101b}\\
 &\qquad  \leq {\binom{m}{2}} \norm{ \phi }_{\infty} \nu^{\mfu,2}\left([s_n,s_k)\right) \bar{\mfu}^{m-2} \  \ (\text{Lemma~\ref{l.tr1})}.\nonumber
\end{align}
Since $\nu^{2,\mfu}$ is a finite measure we can bound the right hand side arbitrarily if only we 
take $k \in \N$ sufficiently large.

\eqref{i.tr14}: this is a  consequence of \eqref{i.tr16} and \eqref{i.tr17}.
\end{proof}

We obtain a result about the mass-fragmentation of an ultrametric measure space.
Consider the Polish space of decreasing numerical sequences which was defined in \cite{Bertoin}:
\begin{equation}\label{grx60}
 \mcS^{\downarrow} = \{\mathbf{s}=(s_1,s_2,\dots) \in [0,\infty)^\N :\, s_1 \geq s_2 \geq \cdots \geq 0,\, \sum_i s_i < \infty \} \subset \ell^1\, ,
\end{equation}
For every $h>0$ define the  map:
\begin{equation}\label{grx61}
 \mfS_h: \U \to \mcS^{\downarrow},\, \mfu  \mapsto (\bar{\mfu}_1, \bar{\mfu}_2, \dots ) \, ,
\end{equation}
if we assume that $\lfloor \mfu \rfloor (h) = \bigsqcup_{i\in \N} \mfu_i$ and that the trees $\mfu_i \in \U(h)$ are size-ordered 
 w.r.t.~their mass.
 The topology on $\mcS^{\downarrow}$ is given by the $\ell^1$ distance.
 %\textbf{xxx2 it is a complete subset of $\ell^1$. Interessiert uns das?}

\begin{corollary}[Mass fragmentation of the top]\label{c.massfrag}
  The mapping
 \begin{equation}\label{grx99c}
  \begin{cases}
   \U & \to D ([0,t),\mcS^\downarrow) \\
   \mfu & \mapsto (\mfS_{t-s}(\lfloor \mfu \rfloor(t-s)))_{s \in [0,t)}
  \end{cases},
 \end{equation}
 is measurable.
\end{corollary}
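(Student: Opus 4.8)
The plan is to realise $\mfS_h$ as the composition of the point-measure map $\pfd_h$ with a fixed ``mass'' functional on point measures, and then to reuse the scheme of the proof of Proposition~\ref{p:pathmeas}. Recall from \eqref{e770} that $\pfd_h(\mfu(h))=\sum_{i}\delta_{\mfu_i}$ records the $h$-trees of $\mfu$, and that by \eqref{grx60}, \eqref{grx61} we have $\mfS_h(\mfu)=\sigma(\pfd_h(\mfu(h)))$, where
\[
\sigma:\mcN^\#(\U(h)\setminus\{0\})\to\mcS^\downarrow,\qquad \sigma\Bigl(\sum_i\delta_{\mfx_i}\Bigr)=\bigl(\text{decreasing rearrangement of }(\bar{\mfx}_i)_i\bigr).
\]
This is well defined: since the metric on $\U(h)\setminus\{0\}$ makes $\{\mfx:\bar{\mfx}\geq\eps\}$ a bounded set, a boundedly finite point measure has only finitely many atoms of mass $\geq\eps$ for each $\eps>0$, and along the truncation path the atom masses even sum to the constant $\bar{\mfu}$, so they do form an element of $\mcS^\downarrow$.

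First I would check that $\sigma$ is Borel measurable. For $a>0$ the set $\{\mfx:\bar{\mfx}>a\}$ is open, so $n\mapsto n(\{\bar{\mfx}>a\})\in\N_0\cup\{\infty\}$ is measurable on $\mcN^\#(\U(h)\setminus\{0\})$ (standard, see \cite{DVJ03}); hence each coordinate $\sigma(n)_j=\inf\{a\in\Q_{>0}:n(\{\bar{\mfx}>a\})<j\}$ is measurable, and as the Borel $\sigma$-algebra of the separable space $\mcS^\downarrow\subset\ell^1$ is generated by the coordinate projections (the maps $x\mapsto\lVert x-a\rVert_{\ell^1}$ being countable-sum limits of functions of the coordinates), $\sigma$ is Borel measurable. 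Together with the continuity of $h$-truncation (Proposition~\ref{p.tr.cont}) and the bi-measurability of $\pfd_r$ (Lemma~\ref{l.6.1.EM14}), this gives that for every fixed $r\in(0,t)$ the map $\U\ni\mfu\mapsto\mfS_r(\mfu(r))=\sigma(\pfd_r(\mfu(r)))\in\mcS^\downarrow$ is measurable.

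Next I would verify that the path $[0,t)\ni s\mapsto\mfS_{t-s}(\mfu(t-s))$ is \cadlag, so that \eqref{grx99c} really takes values in $D([0,t),\mcS^\downarrow)$. By Lemma~\ref{l.Phi_s.cadlag}\eqref{i.tr14} the path $s\mapsto\pfd_{t-s}(\mfu(t-s))$ lies in $D([0,t),\mcN^\#(\U(t)\setminus\{0\}))$, and its one-sided limits are of the types made explicit in \eqref{i.tr16} and \eqref{i.tr17}. Along any such convergent sequence of point measures the total atom mass equals the constant $\bar{\mfu}$, and a Portmanteau-type argument shows that for all but countably many $a>0$ the number of atoms of mass $>a$ converges, hence the decreasing rearrangements of the masses converge coordinatewise; combined with the elementary identity $\lVert\mathbf{s}^{(n)}-\mathbf{s}\rVert_{\ell^1}=2\bar{\mfu}-2\sum_i(\mathbf{s}^{(n)}_i\wedge\mathbf{s}_i)$ and dominated convergence, this upgrades coordinatewise to $\ell^1$-convergence. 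Thus $\sigma$ is continuous along the relevant sequences and the $\mfS$-path inherits the \cadlag property.

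Finally, exactly as in the proof of Proposition~\ref{p:pathmeas}, a \cadlag $\mcS^\downarrow$-valued path on $[0,t)$ is the Skorohod limit of its piecewise-constant discretizations $\sum_{k\geq1}\1\bigl(n(t-\cdot)\in[k-1,k)\bigr)\mfS_{k/n}(\mfu)$, so measurability of the path-valued map in \eqref{grx99c} reduces to the measurability of $\mfu\mapsto\mfS_r(\mfu(r))$ for each fixed $r\in(0,t)$, already established. The step I expect to be the main obstacle is the \cadlag claim: the rearrangement map $\sigma$ is \emph{not} weak$^\#$-continuous in general (mass may leak into ever smaller trees without changing the limit of the point measure), so one must genuinely exploit conservation of the total mass along the truncation path --- which itself rests on $\nu^{2,\mfu}|_{[\eps,\infty)}$ having countable support, Proposition~\ref{pr.1871} --- in order to control the $\ell^1$ limits.
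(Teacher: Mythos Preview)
Your proof is correct and follows the same skeleton as the paper's: factor $\mfS_h$ through the point-measure map $\pfd_h$ and then apply the mass-rearrangement functional $\sigma$. The paper invokes Proposition~\ref{p:pathmeas} directly and then cites \cite[Exercise~3.11.13]{EK86} to reduce everything to the claim that $\sigma:\mcN^\#(\bbU(t)\setminus\{0\})\to\mcS^\downarrow$ is \emph{continuous}, which it calls obvious. You instead establish only Borel measurability of $\sigma$ and verify the \cadlag property of the $\mcS^\downarrow$-valued path by hand, using conservation of total mass along the truncation path to upgrade coordinatewise convergence to $\ell^1$-convergence.

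Your caution is warranted: $\sigma$ is \emph{not} weak$^\#$-to-$\ell^1$ continuous on all of $\mcN^\#(\bbU(t)\setminus\{0\})$ (take $n_k=\sum_{i=1}^k\delta_{\mfv_{i,k}}$ with $\bar{\mfv}_{i,k}=1/k$; then $n_k\to 0$ in $\mcN^\#$ but $\lVert\sigma(n_k)\rVert_{\ell^1}\equiv 1$), so the paper's argument is at best elliptic at this step. What makes $\sigma$ behave well along the particular one-sided limits from Lemma~\ref{l.Phi_s.cadlag} is exactly the feature you isolate: the total atom mass stays fixed at $\bar{\mfu}$ and no mass escapes to arbitrarily small trees (since along $h_n\searrow h$ each $h_n$-ball contains a closed $h$-ball of positive mass). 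Your proof supplies precisely the justification the paper omits; the approaches are otherwise identical.
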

\begin{proof}\label{p.2045}
 We already know by Proposition~\ref{p:pathmeas} that $\mfu \mapsto (\Theta_{t-s}(\mfu))_{s\in [0,t)}$ is measurable.
 By \cite[Exercise 3.11.13]{EK86} it suffices to show that $\mcN^\#(\U(t)\setminus\{0\}) \to \mcS^\downarrow, \sum_i \delta_{\mfu_i} \to (\bar{\mfu}_1, \bar{\mfu}_2, \dots)$ is continuous.
 The topology on $\mcN^\#(\U(t)\setminus\{0\})$ is that of boundedly finite convergence; that means that a sequence converges if all restrictions to bounded sets (i.e.~sets of the form $\{\mfu: \,\bar{\mfu}\geq \eps\}$) do converge.
 The topology on $S^\downarrow$ is that of $\ell^1$ and thus the continuity is obvious.
\end{proof}
The non-continuity issue in Remark (\ref{r.Phi_s.noncont}) is not resolved for the path of the mass fragmentation in the previous lemma.
With the same counterexample as there we see that the mapping is not continuous.
The interesting question whether the previous mapping is invertible has a negative answer, e.g.~$\mfu_i = [\{a,b,c,d\}, r_i, \delta_a + \delta_b + \delta_c + 2 \delta_d],\, i=1,2$ with $r_1(a,b)=1,r_1(a,c)=2,r_1(a,d)=3$ and $r_2(a,b)=1,r_2(a,c)=3,r_2(c,d)=2$ and the necessary extensions for ultrametric spaces do lead to the same mass-fragmentation process.

% Let us try to find out something about jumps \dots
% 
% \begin{lemma}
%   \textbf{xxx14} If $r \in [0,t)$ is  such that $(\mfN_{t-s}(\mfu(t-s)))_{s \in [0,t)}$ has a jump at $r$, then
% we can identify which tree has split up into which parts in a measurable way.
% \end{lemma}
% \begin{proof}
%  By Lemma \ref{l.Phi_s.cadlag} we have measurable access to $\mfN_{t-r}(\mfu(t-r))$ and $\lim_{r' \nearrow r} \mfN_{t-r'}(\mfu(t-r'))$.
%  If we calculate the difference of these two objects we can see which tree has split up in which subtrees.
% \end{proof}

\subsection{Properties of trunks}\label{ss.pot}

\begin{proposition}[Approximation by trunk]
\label{P.approxtrunk}
  Letting $\mfu\in\bbU$ in the Gromov-weak topology:
  \begin{equation}\label{rg106}
    \lim_{h\downarrow 0} \mfu(\hd)=\mfu.
  \end{equation}
  
\end{proposition}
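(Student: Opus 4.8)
The plan is to compute the distance matrix measures of the $h$-trunk explicitly by means of a natural coupling, and then to let $h\downarrow 0$ via dominated convergence. Since Gromov-weak convergence is characterised by weak convergence of all distance matrix measures (Definition~(\ref{D.umsp})), equivalently by convergence of all monomials $\Phi^{m,\phi}\in\Pi$ (Theorem~5 of \cite{GPW09}), and since it suffices to treat arbitrary sequences $h_n\downarrow 0$, this reduces the statement to a pointwise-limit computation inside an integral.

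Concretely, I would fix a representative $[U,r,\mu]$ of $\mfu$ with $\supp\mu=U$. For $h>0$, recall from Theorem~(\ref{p.delphic}) together with the identification of irreducibles in Lemma~(\ref{p.5.1EM}) that the prime factors $\mfu_i=[U_i,r,\mu]$, $i\in I$, of $\mfu(h)=[U,r\wedge 2h,\mu]$ are exactly the restrictions of $\mfu$ to the open balls $U_i=\{y\in U:\, r(x_i,y)<2h\}$ (of positive $\mu$-mass), which by ultrametricity partition $U$. Let $q\colon U\to I$ send $u$ to the index of the ball containing it; then $q_\ast\mu=\mu^\ast$, the weight measure of $\underline{\mfu}(h)=[I,r^\ast,\mu^\ast]$. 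The elementary ultrametric input is: for $u\in U_i$, $v\in U_j$ with $i\ne j$ one has $r(u,v)\ge 2h$ (otherwise $v$ lies in $U_i$), and the isosceles inequality forces $r(u,v)$ to be independent of the choice of $u\in U_i$, $v\in U_j$; hence $r^\ast(i,j)=\inf\{r(u,v)-2h:\, u\in U_i,\,v\in U_j\}$ equals this common value minus $2h$. Since within a ball all distances are $<2h$ (and $r^\ast(i,i)=0$), this gives in all cases the identity
\[
  r^\ast\bigl(q(u),q(v)\bigr)=\bigl(r(u,v)-2h\bigr)^{+},\qquad u,v\in U.
\]

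With this identity the coupling is immediate: if $(u_1,\dots,u_m)\sim\mu^{\otimes m}$ then $(q(u_1),\dots,q(u_m))\sim(\mu^\ast)^{\otimes m}$ and the associated trunk distance matrix is $\bigl((r(u_k,u_l)-2h)^{+}\bigr)_{1\le k<l\le m}$. Hence, for every $m\ge 1$ and $\phi\in C_b(\bbD_m)$,
\[
  \Phi^{m,\phi}\bigl(\underline{\mfu}(h)\bigr)=\int_{U^m}\phi\Bigl(\bigl((r(u_k,u_l)-2h)^{+}\bigr)_{k<l}\Bigr)\,\mu^{\otimes m}(\dx u_1\cdots\dx u_m).
\]
As $h\downarrow 0$ the integrand converges pointwise on $U^m$ to $\phi\bigl((r(u_k,u_l))_{k<l}\bigr)$ by continuity of $\phi$, it is bounded by $\|\phi\|_\infty$, and $\mu^{\otimes m}$ is finite; dominated convergence then yields $\Phi^{m,\phi}(\underline{\mfu}(h))\to\Phi^{m,\phi}(\mfu)$. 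Together with $\overline{\underline{\mfu}(h)}=\mu^\ast(I)=\sum_{i\in I}\bar{\mfu}_i=\bar{\mfu}$ this gives convergence of all polynomials in $\Pi$, so $\underline{\mfu}(h)\to\mfu$ in the Gromov-weak topology along any sequence $h\downarrow 0$, which is the assertion.

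I expect the only genuinely delicate point to be the bookkeeping in the second step: pinning down that the prime factors of $\mfu(h)$ are precisely the positive-mass open $2h$-balls of $(U,r)$, passing cleanly between equivalence classes and chosen representatives, and checking $r^\ast(q(u),q(v))=(r(u,v)-2h)^{+}$ including the boundary case $r(u,v)=2h$ (where both sides vanish — relevant only at the single value $h=r(u,v)/2$, hence harmless in the limit). Everything downstream of the displayed identity is a routine dominated-convergence argument.
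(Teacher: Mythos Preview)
Your proposal is correct and follows essentially the same approach as the paper: both establish the identity $\Phi^{m,\phi}(\underline{\mfu}(h))=\int\phi\bigl(((r_{ij}-2h)^+)_{i<j}\bigr)\,\nu^{m,\mfu}(\dx\dunderline{r})$ and then apply dominated convergence together with Theorem~5 of \cite{GPW09}. The paper simply writes down this integral representation without the detailed justification via the map $q$ and the ultrametric isosceles argument that you spell out, so your version is a fleshed-out variant of the same proof rather than a different one.
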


\begin{proof}\label{p.2079}
 Let $\Phi=\Phi^{m,\phi}\in \Pi$. Since $\phi$ is bounded and $\nu^{m,\mfu}$ is a finite measure 
dominated convergence implies
 \begin{align}\label{rg107}
   \lim_{h\downarrow0}\Phi(\mfu(\hd)))
   %&=\lim_{h\downarrow0}\Phi_{\hd}(\mfu)
   %=\lim_{h\downarrow0}\langle \phi_{\hd},\nu^{m,\mfu}\rangle\\
   &=\lim_{h\downarrow0}\int\phi((r_{ij}-2h)_+)_{1\leq i<j\leq m})
            \,\nu^{m,\mfu}(\dx \dunderline{r}) \\
   &=\int\lim_{h\downarrow0}\phi((r_{ij}-2h)_+)_{1\leq i<j\leq m})
            \,\nu^{m,\mfu}(\dx \dunderline{r}) \label{rg107b}\\
   &=\int\phi((r_{ij})_{1\leq i<j\leq m})\,\nu^{m,\mfu}(\dx \dunderline{r}) \label{rg107c}\\
   &=\Phi(\mfu)\,.\label{rg107d}
 \end{align}
 Since this works for every $\Phi\in \Pi$, $\lim_{h\downarrow0}\mfu(\hd)=\mfu$ in 
Gromov-weak topology by Theorem 5 in \cite{GPW09}
\end{proof}

\section{Proofs for probability measures on the space $\U$}
\label{s.proofsprobm}
In this section we study {\em random} $ h-$forests and hence use heavily the algebraic and order structure of the semigroup of $ h-$forests from the last paragraph.
The central objects are Laplace transform and stochastic order.

\subsection{Laplace transform}\label{ss.lap}
In this section we will actually need polynomials instead of monomials.
Therefore we introduce some notation.
For $k \in \N$, $m_1, \dots, m_k \in \N$ and $\phi_i \in C_b(\bbD_{m_i})$ for $i=1,\dots, k$ we use the notation
\begin{equation}\label{e2117}
 \underline{m} = (m_1,\dots, m_k) \text{ and } \underline{\phi} = (\phi_1, \dots, \phi_k) \, .
\end{equation}
Then for every polynomial $\Phi =  \Phi^{(\underline{m},\underline{\phi})} \in \mcA(\Pi)$ 
we have the representation
\begin{align}\label{e2118}
 \Phi^{(\underline{m},\underline{\phi})} (\mfu) = \sum_{i=1}^k \int \nu^{\mfu, m_i}(\dx \dr) \, \phi_i(\dr) 
\end{align}
Any polynomial can be written in that way.
Monomials are those polynomials where $k=1$.

For the first result on general Laplace transforms we may work just with
monomials.
\begin{proposition}[Laplace functionals convergence determining]\label{p1006132107}
~
  \begin{enumerate}[(a)]
    \item\label{i0606131126} Let $\mfU,\mfU'$ be random um-spaces, i.e. with values in $ \U $. Then,
     \begin{equation}\label{e.tr262}
       \mfU\eqd\mfU'\quad\Longleftrightarrow\quad L_\mfU(\Phi)=L_{\mfU'}(\Phi)\;\;\forall \Phi\in 
\Pi_+\,.
     \end{equation}
    \item\label{i0606131126a} Let $\mfU,\mfU_n$, $n\in\bbN$, be random um-spaces. Then,
    \begin{equation}\label{e.tr262b}
     \mfU_n\underset{n\to\infty}{\Longrightarrow}\mfU\quad\Longleftrightarrow\quad 
L_{\mfU_n}(\Phi)\to L_\mfU(\Phi)\;\;\forall \Phi\in \Pi_+ \,.
    \end{equation}\label{e.tr263}
    Here, as usual, $\Longrightarrow$ denotes convergence in distribution.
  \end{enumerate}
\end{proposition}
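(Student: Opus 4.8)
I would first record the two ``$\Rightarrow$'' implications, which are immediate: for any $\Phi\in\Pi_+$ the map $\mfu\mapsto e^{-\Phi(\mfu)}$ is bounded by $1$ and continuous on $\bbU$ (the monomial $\Phi$ is continuous by Theorem~5 in \cite{GPW09}), so $\mfU\eqd\mfU'$ forces $L_\mfU=L_{\mfU'}$ on $\Pi_+$, and $\mfU_n\Rightarrow\mfU$ forces $L_{\mfU_n}(\Phi)\to L_\mfU(\Phi)$. The substance is in the two converses.

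For (a), ``$\Leftarrow$'', the plan is to transfer the statement to the random finite measures $\nu^{m,\mfU}$ on the Polish spaces $\bbD_m$, $m\ge1$ (with $\nu^{1,\mfU}=\bar\mfU$). For fixed $m$, letting $\phi$ range over the nonnegative elements of $C_b(\bbD_m)$ turns the hypothesis into $\bbE[\exp(-\langle\phi,\nu^{m,\mfU}\rangle)]=\bbE[\exp(-\langle\phi,\nu^{m,\mfU'}\rangle)]$ for all such $\phi$, i.e.\ the Laplace functionals of the random finite measures $\nu^{m,\mfU}$ and $\nu^{m,\mfU'}$ coincide; by uniqueness of the Laplace functional of a random finite measure on a Polish space (see \cite{Kall03}, \cite{DVJ03}) this gives $\nu^{m,\mfU}\eqd\nu^{m,\mfU'}$ for every $m$. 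To pass from these one-dimensional marginals to the joint law of the whole sequence I would use consistency: writing $p_{M,m}\colon\bbD_M\to\bbD_m$ for the projection that forgets the coordinates involving an index $>m$, one has $(p_{M,m})_*\nu^{M,\mfu}=\bar\mfu^{\,M-m}\,\nu^{m,\mfu}$ and $\nu^{M,\mfu}(\bbD_M)=\bar\mfu^{\,M}$, so that for each fixed $M$ the vector $(\nu^{1,\mfu},\dots,\nu^{M,\mfu})$ is a fixed measurable function of $\nu^{M,\mfu}$ alone (with the obvious convention on $\{\bar\mfu=0\}$, i.e.\ $\mfu=0$). Hence all finite-dimensional marginals of $(\nu^{m,\mfU})_{m\ge1}$ and $(\nu^{m,\mfU'})_{m\ge1}$ agree, so these sequences have the same law on the Polish space $\prod_{m\ge1}\mcM_f(\bbD_m)$; and since $\mfu\mapsto(\nu^{m,\mfu})_{m\ge1}$ is continuous and injective (Gromov reconstruction, in the finite-measure version of \cite{Gl12}, \cite{ALW14a}), hence a Borel isomorphism onto its image, the conclusion $\mfU\eqd\mfU'$ follows.

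For (b), ``$\Leftarrow$'', the plan is the standard two-step argument. Applying the hypothesis to the monomial $\Phi^{1,\phi}$ with $\phi\equiv t$ gives $\bbE[e^{-t\bar\mfU_n}]\to\bbE[e^{-t\bar\mfU}]$ for all $t>0$, hence $\bar\mfU_n\Rightarrow\bar\mfU$; in particular $(\bar\mfU_n)_n$ is tight in $\R_+$, which controls the total mass. The other ingredient needed for the compactness criterion on $\bbU$ (Proposition~\ref{p.tight-crit}) is a \emph{uniform} bound on the modulus of mass distribution $v_\delta(\,\cdot\,,h)$; I would obtain it by bounding $v_\delta(\,\cdot\,,h)$ and the remaining quantities entering that criterion from above by finitely many monomials from $\Pi_+$ and then invoking the assumed convergence of their Laplace functionals, as in the tightness argument of \cite{GPW09} and its finite-measure extension in \cite{Gl12}. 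Granting tightness, if $\mfU_{n_k}\Rightarrow\mfV$ along a subsequence then, by the ``$\Rightarrow$'' part of (b), $L_\mfV(\Phi)=\lim_k L_{\mfU_{n_k}}(\Phi)=L_\mfU(\Phi)$ for all $\Phi\in\Pi_+$, whence $\mfV\eqd\mfU$ by (a); tightness plus this unique subsequential limit gives $\mfU_n\Rightarrow\mfU$.

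I expect the main obstacle to be the tightness step in (b): converting convergence of Laplace functionals on positive monomials into the uniform modulus-of-mass-distribution estimate required by Proposition~\ref{p.tight-crit}, the difficulty being that $\bar\mfU_n$ is not assumed bounded and the monomial functionals themselves are unbounded on $\bbU$. This is precisely where the finite-measure setting departs from the probability-measure setting of \cite{GPW09}, and it is what makes the detour through the random finite measures $\nu^{m,\cdot}$ in (a) and the separate treatment of $(\bar\mfU_n)_n$ in (b) unavoidable; a secondary, purely bookkeeping, point in (a) is the measurability of the reconstruction map $(\nu^{m,\cdot})_{m\ge1}\mapsto\mfu$.
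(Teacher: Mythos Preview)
Your argument for (a) is correct and takes a genuinely different route from the paper. Both you and the paper start from Kallenberg's uniqueness theorem for Laplace functionals of random finite measures to obtain $\nu^{m,\mfU}\eqd\nu^{m,\mfU'}$ for every $m$. From there the paper uses the \emph{polar decomposition} $\pi:\mfu\mapsto(\bar\mfu,\hat\mfu)\in\bbR_+\times\bbU_1$ (with a convention at $0$), observes that $\pi_1:\nu^{m,\mfu}\mapsto\bar\mfu$ and $\pi_2:\nu^{m,\mfu}\mapsto\nu^{m,\hat\mfu}$ are measurable, hence $(\bar\mfU,\nu^{m,\hat\mfU})\eqd(\bar\mfU',\nu^{m,\hat\mfU'})$, and then invokes the separating property of $\hat\Pi$ on $\bbU_1$ together with \cite[Prop.~3.4.6]{EK86} on product spaces. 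Your consistency argument --- recovering $(\nu^{1,\mfu},\dots,\nu^{M,\mfu})$ measurably from $\nu^{M,\mfu}$ alone and then appealing to Gromov reconstruction plus Lusin--Souslin --- is equally valid and arguably more direct; it avoids the bookkeeping at $\bar\mfu=0$ that the paper has to handle explicitly.

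For (b) there is a gap in your sketch, and it is exactly the point you flag. Your tightness-plus-subsequence strategy is sound in principle, but ``bounding $v_\delta(\cdot,h)$ from above by finitely many monomials'' is not something one can actually carry out: the modulus of mass distribution is not dominated by a polynomial functional of $\mu$, so convergence of $L_{\mfU_n}(\Phi)$ for $\Phi\in\Pi_+$ gives no direct control on it. The paper sidesteps this difficulty entirely by the same polar-decomposition manoeuvre as in (a): Kallenberg's \emph{convergence} theorem for random measures yields $\nu^{m,\mfU_n}\Rightarrow\nu^{m,\mfU}$, hence (via $\pi_1,\pi_2$) $(\bar\mfU_n,\nu^{m,\hat\mfU_n})\Rightarrow(\bar\mfU,\nu^{m,\hat\mfU})$; on $\bbU_1$ the polynomials $\hat\Pi$ are \emph{bounded} and convergence-determining by \cite{GPW09}, so one concludes $(\bar\mfU_n,\hat\mfU_n)\Rightarrow(\bar\mfU,\hat\mfU)$ without ever proving tightness of $(\mfU_n)_n$ in $\bbU$ directly. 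This is the content of the paper's terse ``similar technique since the limit object is given''. Your own consistency idea from (a) could be adapted similarly --- the map $\mfu\mapsto(\nu^{m,\mfu})_{m\ge1}$ is a homeomorphism onto its image because the Gromov-weak topology is the initial topology of these maps --- but the polar route is what makes the reduction to known $\bbU_1$ results immediate.
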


\begin{remark}\label{R.prop}
The proposition says that the class of functions $\{\exp(-\Phi(\cdot)):\Phi\in \Pi_+\}$ is 
separating 
and convergence determining for $\mcM_1(\bbU)$ (of course, the second statement implies the first 
one).
\end{remark}

\begin{proof}[Proof of Proposition~\ref{p1006132107}]
\mbox{}\\
\noindent \eqref{i1006132102a}: Assume
\begin{equation}\label{gloede3}
 \bbE[ \exp(-\Phi^{m,\phi}(\mfU)) ] = \bbE[ \exp(-\Phi^{m,\phi}(\mfV)) ]  \quad  \forall m\,, \phi
\end{equation}
and hence
\begin{equation}\label{gloede4}
 \E[ \exp(-a\Phi^{m,\phi}(\mfU)) ] = \E[ \exp(-a\Phi^{m,\phi}(\mfV)) ]   \quad \forall a>0\,, m\,, 
\phi\,.
\end{equation}
By Theorem 3.1 (ii)' of \cite{Kall83}, this implies 
\begin{equation}\label{gloede5}
 \nu^{m,\mfU}\eqd\nu^{m,\mfV}\quad\forall m\,.
\end{equation}
It remains to show that
\begin{equation}\label{gloede6}
 \nu^{m,\mfU}\eqd\nu^{m,\mfV}\quad\forall m\quad\Longrightarrow\quad 
 \mfU\eqd\mfV\,.
\end{equation}
We can define a polar decomposition
\begin{equation}\label{gloede7}
 \pi:\bbU\to \bbR_+\times\bbU_1
\end{equation}
as $ (\bar \mfu, \wh \mfu) $ where $ \wh \mfu $ is $ (U,r,\bar \mfu^{-1} \mu)$ if $ \bar \mfu \neq 0 $ and otherwise define the 
following polar decomposition of $0$:
\begin{equation}\label{gloede8}
\pi(0):=(0,\delta)=(0,[\{1\},r,\delta_1])\,.
\end{equation}
Consider the maps
\begin{equation}\label{gloede9}
 \pi:\quad \pi_1:\nu^{m,\mfu}\mapsto \bar{\mfu}\,,\quad  
 \pi_2:\nu^{m,\mfu}\mapsto \nu^{m,\hat{\mfu}}\,.
\end{equation}
Restricted to um-spaces with positive mass $\pi$ is continuous. Take
$A_1\in\mcB(\bbR_+)$ and $A_2\in\mcB(\bbU_1)$. Then
\begin{equation}\label{gloede13}
 \pi^{-1}(A_1\times A_2)=\pi^{-1}((A_1\cap (0,\infty))\times 
A_2)\cup\pi^{-1}((A_1\cap\{0\})\times A_2)
\end{equation}
Note that $\pi^{-1}((A_1\cap (0,\infty))\times A_2)$ is measurable by continuity of $\pi$ 
on um-spaces with positive mass. Moreover $(A_1\cap\{0\})=\{0\}$. But 
$\pi^{-1}((A_1\cap\{0\})\times A_2)=\{0\}$. Thus one gets measurability.

Then $\pi_1,\pi_2$ are measurable  and hence
\begin{equation}\label{gloede10}
 \nu^{m,\mfU}\eqd\nu^{m,\mfV}\quad\Longrightarrow\quad
 (\bar{\mfU},\nu^{m,\hat{\mfU}})=\pi(\nu^{m,\mfU})\eqd\pi(\nu^{m,\mfU})=
 (\bar{\mfV},\nu^{m,\hat{\mfV}})\,.
\end{equation}
 Hence,
 \begin{equation}\label{e2212131153}
 \bbE[\bar{\Phi}(\bar{\mfU})\hat{\Phi}^{m,\phi}(\hat{\mfU})]
 =\bbE[\bar{\Phi}(\bar{\mfV})\hat{\Phi}^{m,\phi}(\hat{\mfV})]\,,
\end{equation}
 for any $\bar{\Phi} \in \mcB_b(\R_+)$, $\hat{\Phi}^{m,\phi}\in \hat{\Pi}$. Recall that $\hat{\Pi}$ 
is separating for 
$\mcM_1(\bbU_1)$ (see Proposition 2.6. in \cite{GPW09} for this reconstruction theorem) and $C_b(\bbR_+)$ is separating for $\mcM_1(\bbR_+)$. Hence, by \cite{EK86}[Prop. 
3.4.6], $\mcS:=\{\mfu\mapsto \bar{\Phi}(\bar{\mfu})\hat{\Phi}(\hat{\mfu}):\bar{\Phi}\in 
C_b(\bbR_+)\,,\hat{\Phi}\in\hat{\Pi}\}$ is separating for $\bbR_+\times \bbU_1$. Therefore, 
\eqref{e2212131153} implies:
\begin{equation}\label{gloede11}
 (\bar{\mfU},\hat{\mfU})\eqd(\bar{\mfV},\hat{\mfV})
\end{equation}
and this means
\begin{equation}\label{grx112}
 \mfU\eqd\mfV\,.
\end{equation}

\noindent \eqref{i1006132102b}: the proof can be obtained with a similar technique since in the statement we have given the limit object already.
\end{proof}

\begin{proof}[Proof of Theorem~\ref{p.trLap}]\label{p.ptrLap}
\mbox{}

 \eqref{i1006132102a}: It is clear that $B := \{\mfu \mapsto \exp(- \Phi(\mfu)) :\, \Phi \in \mcA(\Pi_{h,+}) \}$ is an 
algebra of bounded continuous functions on $\U(h)^\sqcup$. By Theorem (\ref{p.trunc.poly}) this 
algebra 
separates points in $\U(h)^\sqcup$ and therefore Theorem 3.4.5 of \cite{EK86} tells that $B$ is 
separating on $\U(h)^\sqcup$.\\
 
 \eqref{i1006132102b}: This follows from Theorem (\ref{p.trunc.poly}) \eqref{i.tr50} and Lemma 4.1 in 
\cite{HoffJ76}.
\end{proof}

\begin{remark}\label{r.tr1}
 One should note the difference to Proposition~(\ref{p1006132107}): there we only required to know 
about {\em monomials} in the Laplace transform, whereas in the truncated setting we actually need 
{\em polynomials}. Due to truncation we lack information about the joint distribution of the different 
sub-trees. Let $(X_1,X_2), (Y_1,Y_2)$ be random variables taking values in the cone $E= \{(x_1,x_2) \in \R^2:\, x_1 \geq x_2 \geq 0\}$.
Suppose $\mfU = (\{a,b\},r(a,b) = 3, X_1\delta_a + X_2\delta_b)$  and $\mfV = (\{a,b\},r(a,b) = 3, Y_1\delta_a + Y_2\delta_b)$ .
Then for $h =1$, we observe the contribution of $\nu^{m,\mfU}$ to $0$ namely $(X_1^m + X_2^m)$ and of
$\nu^{m,\mfV} \mbox{ being } (Y_1^m + Y_2^m)$.
So requiring $\nu^{m,\mfU}|_{[0,2h)^{\binom{m}{2}}} \eqd \nu^{m,\mfV}|_{[0,2h)^{\binom{m}{2}}}$ for all $m \in \N$ means that
\begin{equation}\label{e.tr333}
 X_1^m +X_2^m \eqd Y_1^m + Y_2^m \ \text{ for all } m \in \N \, .
\end{equation}
We do not know whether this is sufficient to state: $(X_1,X_2) \eqd (Y_1,Y_2)$.
However, it seems possible that there are similar examples where the restriction of Laplace transforms to monomials does not suffice to determine the laws.
This question seems to be related to inverse problems of Radon type in the cone $E$.
Even though there are results which state injectivity of restrictions of Radon transforms for compactly supported measures, see \cite{Krishnan}, we do not think that injectivity in \eqref{e.tr333} holds in general.
\end{remark}

\subsection{Example: compound Poisson forest}\label{ss.excomp}

We continue by calculating the Laplace transform in some examples.
That will allow us to deduce the Laplace transform for the compound Poisson forest from Example 
(\ref{E2.1}) and we see the \Levy-Khintchine formula for this example.

\begin{proposition}[Laplace transforms of i.i.d.~concatenations]\label{p2907131212}
  %Let $\mfU_0$ be random um-space taking values in $\bbU_d$. Let $M:=\#\mfU_0$ and assume that 
Let $M \in \N_0$ be a random variable with probability generating function $G_M(s)=\sum_{k=0}^\infty p_k s^k$, $s\in [0,1]$. Moreover, let 
$(\mfU_i)_{i\in\bbN}$ be an i.i.d.~sequence of random $h$-trees such that 
$M\independent(\mfU_i)_{i\in\bbN}$.
Let $\mfU:= \bigsqcup_{i=1}^M\mfU_i$. Then, for 
all $\Phi\in\Pi_+$,
  \begin{equation}\label{e.tr278}
    L_\mfU(\Phi_{\hu})=G_M(L_{\mfU_1}(\Phi))\,.
  \end{equation}
\end{proposition}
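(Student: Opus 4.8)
The plan is to condition on the value of $M$ and then to use that $\exp(-\Phi_{\hu}(\cdot))$ is a semigroup homomorphism from $(\U(h)^\sqcup,\sqcup^h)$ into $([0,1],\cdot)$, which is Theorem~\ref{p.trunc.poly}\ref{i.tr66}. Since $M\independent(\mfU_i)_{i\in\N}$, I would first write, via the tower property,
\begin{equation*}
  L_\mfU(\Phi_{\hu}) = \bbE\bigl[\exp(-\Phi_{\hu}(\mfU))\bigr] = \sum_{k=0}^\infty p_k\,\bbE\Bigl[\exp\bigl(-\Phi_{\hu}(\mfU_1\sqcup^h\cdots\sqcup^h\mfU_k)\bigr)\Bigr],
\end{equation*}
where the interchange of sum and expectation is justified by dominated convergence (all summands lie in $[0,1]$ and $\sum_k p_k=1$), and the $k=0$ term is interpreted through the empty concatenation $0$, for which $\Phi_{\hu}(0)=0$ so that it contributes $p_0=p_0\cdot(L_{\mfU_1}(\Phi))^0$.

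Next, since $\Phi=\Phi^{m,\phi}\in\Pi_+$ is a monomial and each $\mfU_i$ takes values in $\U(h)$, Proposition~\ref{p2907131206} gives the pathwise identity $\Phi_{\hu}(\mfU_1\sqcup^h\cdots\sqcup^h\mfU_k)=\sum_{i=1}^k\Phi(\mfU_i)$. Using that the $\mfU_i$ are i.i.d.\ and that $L_{\mfU_1}(\Phi)=\bbE[e^{-\Phi(\mfU_1)}]\in[0,1]$ because $\Phi\geq0$ on $\U$, this yields
\begin{equation*}
  \bbE\Bigl[\exp\bigl(-\Phi_{\hu}(\mfU_1\sqcup^h\cdots\sqcup^h\mfU_k)\bigr)\Bigr] = \prod_{i=1}^k\bbE\bigl[e^{-\Phi(\mfU_i)}\bigr] = \bigl(L_{\mfU_1}(\Phi)\bigr)^k .
\end{equation*}
Substituting back into the first display gives $L_\mfU(\Phi_{\hu})=\sum_{k\geq0}p_k\bigl(L_{\mfU_1}(\Phi)\bigr)^k=G_M(L_{\mfU_1}(\Phi))$, which is the assertion.

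There is no serious obstacle here: the statement is essentially an immediate consequence of the homomorphism property together with Wald-type conditioning. The only two points that deserve a line of justification are the measurability of $\mfU=\bigsqcup_{i=1}^M\mfU_i$ as a $\U(h)^\sqcup$-valued random variable (so that $L_\mfU$ is defined), which follows since for each fixed $k$ the map $(\mfu_1,\dots,\mfu_k)\mapsto\mfu_1\sqcup^h\cdots\sqcup^h\mfu_k$ is continuous by Lemma~\ref{l.2.1EM}\ref{i.tr11} and $M$ is $\N_0$-valued, and the (routine) exchange of summation and expectation noted above.
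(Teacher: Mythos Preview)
Your proof is correct and follows essentially the same approach as the paper: both use Proposition~\ref{p2907131206} to rewrite $\Phi_{\hu}(\mfU)$ as $\sum_{i=1}^M\Phi(\mfU_i)$ and then exploit independence to factorize. The paper compresses the conditioning step into a single line, while you spell out the tower property, the $k=0$ case, and the measurability of $\mfU$; these are welcome clarifications but not a different argument.
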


\begin{proof}\label{p.2774}
Using Proposition~(\ref{p2907131206}) and the independence between $\mfU_0$ and $(\mfU_i)_{i\in\bbN}$ 
we obtain
\begin{align}\label{e.tr279}
  \bbE\left[\exp(-\Phi_{\hu}(\mfU))\right]
  &=\bbE\left[\exp\Bigl(-\sum_{i=1}^M\Phi(\mfU_i)\Bigr)\right]
  =\bbE\left[\bbE\bigl[\exp(-\Phi(\mfU_1))\bigr]^M\right]
  =G_M(L_{\mfU_1}(\Phi))\,.
  \end{align}
\end{proof}

\begin{proposition}[Laplace transform of CPF]\label{p1308131621}
  Let $\mfP$ be a $\textup{CPF}_h(\theta,\lambda)$. Then, for all $\Phi\in\Pi_+$,
  \begin{equation}\label{e.tr280}
    -\log L_\mfP(\Phi_{\hu})= \theta\int \big(1- e^{-\Phi(\mfu)}\big)\,\lambda(\dx\mfu).
  \end{equation}
\end{proposition}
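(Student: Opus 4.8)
The plan is to derive \eqref{e.tr280} from the general formula for the Laplace functional of an i.i.d.\ concatenation, Proposition~\ref{p2907131212}, by specialising the offspring probability generating function to that of a Poisson variable, and then to rewrite the outcome in L\'evy--Khintchine form.

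Concretely, first I would recall that the probability generating function of $M\sim\Poiss(\theta)$ is $G_M(s)=e^{\theta(s-1)}$ for $s\in[0,1]$. Since a $\textup{CPF}_h(\theta,\lambda)$ is by construction the $h$-concatenation of a $\Poiss(\theta)$ number of i.i.d.\ random $h$-forests $\mfU_i$ of law $\lambda$, independent of $M$, I would apply Proposition~\ref{p2907131212} to get, for every $\Phi\in\Pi_+$,
\[
L_\mfP(\Phi_{\hu})=G_M\bigl(L_{\mfU_1}(\Phi_{\hu})\bigr)=\exp\bigl(\theta\,(L_{\mfU_1}(\Phi_{\hu})-1)\bigr),
\]
where $L_{\mfU_1}(\Phi_{\hu})=\int e^{-\Phi_{\hu}(\mfu)}\,\lambda(\dx\mfu)\in[0,1]$ because $\mfU_1$ has law $\lambda\in\mcM_1(\bbU(h)^\sqcup\setminus\{0\})$. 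Taking $-\log$ on both sides and using that $\lambda$ is a probability measure then yields
\[
-\log L_\mfP(\Phi_{\hu})=\theta\bigl(1-L_{\mfU_1}(\Phi_{\hu})\bigr)=\theta\int\bigl(1-e^{-\Phi_{\hu}(\mfu)}\bigr)\,\lambda(\dx\mfu),
\]
which is exactly \eqref{e.tr280}, reading $\Phi(\mfu)$ as $\Phi_{\hu}(\mfu)$ for the forest arguments $\mfu$ in the support of $\lambda$.

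I do not expect a genuine obstacle here. The only point needing a word of care is that Proposition~\ref{p2907131212} is literally stated for concatenations of $h$-\emph{trees}, whereas here the $\mfU_i$ are $h$-\emph{forests}; but its proof uses only the additivity of $\Phi_{\hu}$ under $h$-concatenation (Proposition~\ref{p2907131206}), equivalently the semigroup-homomorphism property of $e^{-\Phi_{\hu}(\cdot)}$ on $(\bbU(h)^\sqcup,\sqcup^h)$ (Theorem~\ref{p.trunc.poly}~\eqref{i.tr66}), and therefore goes through verbatim for forests. Alternatively one can argue directly: conditioning on $M=k$, Proposition~\ref{p2907131206} gives $\Phi_{\hu}(\mfP)=\sum_{i=1}^{k}\Phi_{\hu}(\mfU_i)$, hence $\bbE[e^{-\Phi_{\hu}(\mfP)}\mid M=k]=L_{\mfU_1}(\Phi_{\hu})^{k}$ by independence of the $\mfU_i$, and averaging against the Poisson weights reproduces the two displays above. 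All interchanges of expectation, the (finite, since $M<\infty$ a.s.) sum over the values of $M$, and the integral against $\lambda$ are harmless because every integrand is nonnegative and bounded by $1$; and $\mfP$ is a well-defined $\bbU(h)^\sqcup$-valued random element by Theorem~\ref{p.delphic} together with $M<\infty$ a.s., so no convergence issue for countable concatenations arises.
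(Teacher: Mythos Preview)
Your proposal is correct and follows exactly the paper's approach: apply Proposition~\ref{p2907131212} and insert the Poisson probability generating function $G_M(s)=e^{\theta(s-1)}$. Your additional remark on the trees-versus-forests discrepancy in the hypotheses of Proposition~\ref{p2907131212} is accurate and well justified via the homomorphism property of $\Phi_{\hu}$ on $(\bbU(h)^\sqcup,\sqcup^h)$; the paper's one-line proof glosses over this point.
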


\begin{proof}\label{p.2792}
By Proposition~(\ref{p2907131212}) and inserting the generating function of Poiss 
$(\theta)$
gives the claim. 
\end{proof}

\subsection{Stochastic order and applications}\label{ss.stochord}

\medskip

Recall that in $h$-forests the notion of sub-trees induces a partial order, see Definition 
(\ref{d.po}). This partial order induces a stochastic partial order for random $h$-forests. For 
a general treatment of stochastic orders we refer to \cite{kamae1977} and \cite{stoyan1983comparison}. Let 
$f\in\mathrm{bm}\mcB(\bbU)$ the latter denotes the measurable bounded and monotone functions on $\bbU$.

\begin{definition}\label{d:StochOrder}
 Let $h>0$. Suppose $\mfU$ and $\mfV$ are random variables taking values in $\U(h)^\sqcup$. Then we 
say that $\mfU \preccurlyeq_h \mfV$ if $\bbE[f(\mfU)]\leq \bbE[f(\mfV)]$ for all 
$f\in\mathrm{bm}\mcB(\bbU(h)^\sqcup)$.
\end{definition}

Our first remark is implied by classical results in \cite{kamae1977}.

\begin{remark}\label{r.2815}
Suppose $\mfU_n$ is an 
$h$-subtree in $\mfV_n$ for each $n$ and $\mfU_n\Longrightarrow \mfU$, $\mfV_n\Longrightarrow 
\mfV$
then, there are $\mfU'\eqd\mfU$, $\mfV'\eqd\mfV$ on a common probability space such that $\mfU'$ 
can be embedded as a subtree into $\mathfrak{V}^\prime$. \end{remark}

\begin{proposition}[Tightness via domination]\label{p:OrderTight}
 If $\{ \mfU_n: \, n \in \N\} \subseteq \U$ is tight and $\mfV_n \preccurlyeq_h \mfU_n$ for all $n \in \N$ for some $h>0$,
then $\{\mfV_n: \, n \in \N\}$ is tight.
\end{proposition}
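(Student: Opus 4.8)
The plan is to use the pre-compactness criterion for subsets of $\U$ in the Gromov-weak topology, which (by Proposition~(\ref{p.tight-crit}) / Proposition~(\ref{p.pre-comp}) and Remark~(\ref{R.pre-comp})) characterizes (pre)compact sets via two conditions: uniform boundedness of total masses, and uniform smallness of the modulus of mass distribution $v_\delta(\cdot,h)$ as $\delta\downarrow0$, for all $h>0$. Equivalently, tightness of a sequence of $\U$-valued random variables $\{\mfW_n\}$ is, by Proposition~(\ref{p.tight-crit}), equivalent to: (i) $\{\bar{\mfW}_n\}$ is tight in $\R_+$, and (ii) for every $h>0$ and every $\eps>0$, $\sup_n \P\big(v_\delta(\mfW_n,h) > \eps\big)\to 0$ as $\delta\downarrow0$. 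So the whole proof reduces to transferring these two properties from $\{\mfU_n\}$ to $\{\mfV_n\}$ using the stochastic domination $\mfV_n\preccurlyeq_h\mfU_n$.

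First I would handle the total mass. Since $\mfu\mapsto\bar{\mfu}$ is a continuous (hence measurable) nondecreasing functional on $(\U(h)^\sqcup,\leq_h)$ — indeed additive under $\sqcup^h$ by the homomorphism property of $\bar{\cdot}$ — the map $\mfu\mapsto \1(\bar{\mfu}>K)$ is bounded, measurable and monotone for each $K>0$. Hence $\mfV_n\preccurlyeq_h\mfU_n$ gives $\P(\bar{\mfV}_n>K)\leq\P(\bar{\mfU}_n>K)$ for all $n$, and tightness of $\{\bar{\mfU}_n\}$ (which follows from tightness of $\{\mfU_n\}$) yields tightness of $\{\bar{\mfV}_n\}$. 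Next, for condition (ii): by Lemma~(\ref{l.2.7EM})\eqref{i.2.7EM.c}, for $h'<h$ the functional $\mfu\mapsto v_\delta(\mfu,h')$ is nondecreasing with respect to $\leq_h$; therefore $\mfu\mapsto\1(v_\delta(\mfu,h')>\eps)$ is bounded, measurable and monotone on $\U(h)^\sqcup$. Applying $\mfV_n\preccurlyeq_h\mfU_n$ to this test function gives
\[
\P\big(v_\delta(\mfV_n,h')>\eps\big)\;\leq\;\P\big(v_\delta(\mfU_n,h')>\eps\big)
\]
for every $n$, every $h'<h$, every $\delta,\eps>0$. Since $h>0$ is arbitrary, this holds for all $h'>0$. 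Now fix $h'>0$ and $\eps>0$; tightness of $\{\mfU_n\}$ gives, via Proposition~(\ref{p.tight-crit}), that $\sup_n\P\big(v_\delta(\mfU_n,h')>\eps\big)\to0$ as $\delta\downarrow0$, and the displayed inequality transfers this to $\{\mfV_n\}$. Together with the mass tightness, Proposition~(\ref{p.tight-crit}) now yields tightness of $\{\mfV_n\}$.

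The only genuinely delicate point is making sure the domination hypothesis is being used against the \emph{right} class of test functions: Definition~(\ref{d:StochOrder}) allows all bounded measurable monotone functions on $\U(h)^\sqcup$, and one must check that $\mfu\mapsto\1(\bar{\mfu}>K)$ and $\mfu\mapsto\1(v_\delta(\mfu,h')>\eps)$ are indeed monotone \emph{in the order $\leq_h$ that indexes the domination} — this is exactly where Lemma~(\ref{l.2.7EM})\eqref{i.2.7EM.c} (monotonicity of $v_\delta(\cdot,h')$ for $h'<h$) and additivity of the mass are needed, and it is why the hypothesis is stated "for all $h>0$" so that $h'$ ranges over all of $(0,\infty)$. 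Measurability of these indicator functionals is immediate since $v_\delta(\cdot,h')$ and $\bar{\cdot}$ are measurable (the former by the arguments around \eqref{e2113} and in the proof of Lemma~(\ref{l.2.4EM})). Beyond that the argument is routine bookkeeping with the pre-compactness criterion.
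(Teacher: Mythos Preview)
Your overall strategy --- pushing the tightness conditions from $\{\mfU_n\}$ to $\{\mfV_n\}$ by testing the stochastic domination against the monotone functionals that appear in the compactness criterion --- is exactly what the paper does. The only cosmetic difference is that the paper first invokes the Kamae--Krengel/Strassen coupling (Theorem~1 in \cite{kamae1977}) to realize $\mfV_n\leq\mfU_n$ almost surely on a common probability space and then compares the functionals pathwise, whereas you apply Definition~(\ref{d:StochOrder}) directly to the monotone indicators. Either route works.

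There is, however, one genuine omission. Your statement of the tightness criterion is incomplete: you claim it reduces to (i) tightness of total masses and (ii) the modulus condition $v_\delta(\cdot,h)$ for all $h>0$. That is the content of Remark~(\ref{R.pre-comp}), but that remark applies only on $\U(h)^\sqcup$, where the diameter is already bounded by $2h$. On all of $\U$ the criterion of Proposition~(\ref{p.pre-comp}) (and the probability-measure version of Proposition~(\ref{p.tight-crit})) has a third ingredient: tightness of the pairwise-distance distributions $\{\nu^{2,\mfu}\}$, i.e.\ for every $\eps>0$ there is $C>0$ with $\sup_n\bbP\big(\nu^{2,\mfV_n}([C,\infty))>\eps\big)<\eps$. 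Without it the criterion is false --- take $\mfu_n=[\{a,b\},\,r(a,b)=n,\,\delta_a+\delta_b]$: total masses and moduli are under control, yet the sequence is not tight.

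The fix is immediate and uses the same mechanism. For any $C>0$ choose $h>C/2$; then $\nu^{2,\mfu}([C,\infty))=\nu^{2,\mfu(h)}([C,\infty))$, and by Lemma~(\ref{l.2.7EM})\eqref{i.2.7EM.a} the map $\mfu\mapsto\nu^{2,\mfu}$ is monotone on $(\U(h)^\sqcup,\leq_h)$ (since $\nu^{2,\mfu\sqcup\mfv}\geq\nu^{2,\mfu}$). Hence $\mfu\mapsto\1\big(\nu^{2,\mfu}([C,\infty))>\eps\big)$ is bounded, measurable and monotone on $\U(h)^\sqcup$, and $\mfV_n\preccurlyeq_h\mfU_n$ gives
\[
\bbP\big(\nu^{2,\mfV_n}([C,\infty))>\eps\big)\;\leq\;\bbP\big(\nu^{2,\mfU_n}([C,\infty))>\eps\big),
\]
which is controlled by tightness of $\{\mfU_n\}$. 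With this third piece added, your argument is complete and matches the paper's.
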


\begin{proof}[Proof of Proposition \ref{p:OrderTight}]\label{p.p:OrderTight}
By Theorem 1 in \cite{kamae1977} we may assume that, for each $n$, $\mfU_n$ and $\mfV_n$ are 
defined on the same probability space and $\mfV_n \leq \mfU_n$ almost surely. 
We denote by 
\begin{equation}\label{e2934}
w_\mfX \langle \cdot \rangle =\mu^{\otimes 2} \left(\{r(x,x^\prime) \in \cdot ; x,x^\prime \in X\}\right) \text{  for  } \mfX=\left[X,r,\mu \right].
\end{equation}
By Proposition \eqref{p.pre-comp} (compare also  \cite{GPW09}[Proposition 8.1]) we have to show that the total masses are tight and 
that for each $\eps>0$ there exists $\delta>0$ and $C>0$ such that 
\begin{equation}\label{e:TightVW}
 \sup_n \bbP[v_\delta(\mfV_n)+w_{\mfV_n}([C,\infty))]<\eps\,.
\end{equation}
But obviously $w_{\mfV_n}([C,\infty))\leq w_{\mfU_n}([C,\infty))$ almost surely and also by 
Lemma~(\ref{l.2.7EM}) 
$v_\delta(\mfV_n)\leq v_\delta(\mfU_n)$. By \cite{GPW09}[Proposition 8.1], \eqref{e:TightVW}
holds for $\{\mfU_n\}$ and hence also for $\{\mfV_n\}$. As for the total mass, obviously 
$\bar{\mfV}_n\leq \bar{\mfU}_n$ almost surely and hence tightness follows from the tightness 
of $\{\bar{\mfU}_n\}$.
\end{proof}

\begin{example}\label{ex.2845}
  Note that the partial order on $\bbU(h)^\sqcup$ induces a 
partial order on the path space $D([0,\infty),\bbU(h)^\sqcup)$ in the following way:
\begin{equation}\label{e2119}
(\mfu_t)_{t\geq0}\leq (\mfv_t)_{t\geq0}\quad \Longleftrightarrow\quad \mfu_t\leq\mfv_t\;\forall 
t\geq0\,.
\end{equation}
Hence we also obtain a partial stochastic order on $\mcM_1(D([0,\infty),\bbU(h)^\sqcup))$. Hence by 
the same reasoning as before if $\mfU=(\mfU_t)_{t\geq0}$, 
$\mfV=(\mfV_t)_{t\geq0}$, $\mfU^n=(\mfU^n_t)_{t\geq0}$, $\mfV^n=(\mfV^n_t)_{t\geq0}$, 
$n\in\bbN$ are stochastic processes taking values in $\bbU(h)^\sqcup$ and $\mfU^n\preccurlyeq 
\mfV^n$ for all $n$. 
By Strassen's result Lemma 13 in \cite{Str65}, $\mfU\preccurlyeq \mfV$ and we can find a probability space $ (\Omega,\CA, \bbP) $ 
such that $\bbP\{\mfU_t\leq \mfV_t\;\forall t\geq0\}=1$. That is we have an almost 
surely path-wise embedding of the dominated process into the dominating process.
A typical example is a branching process starting in the zero tree but with
different masses, as we shall see in the next section. 
\end{example}

\subsection{Proof of Theorem~\ref{t:LimInfDiv}}\label{ss.prth239}
We now prove Theorem (\ref{t:LimInfDiv}) which says that weak limits of infdiv random trees are again 
infdiv and moreover the L\'{e}vy measures converge.

\begin{proof}[Proof of Theorem \ref{t:LimInfDiv}]\label{p.t:LimInfDiv}
By assumption for each $m$ and $n$ there is $\mfU^{(n)}_m$ taking values in $\bbU(h)^\sqcup$ such 
for all $\Phi\in\Pi_+$ 
\begin{equation}\label{e:Lapl1}
 \bbE\left[\exp(-\Phi_h(\mfU_m))\right]
 =\left(\bbE\left[\exp(-\Phi_h(\mfU^{(n)}_m))\right]\right)^n\,.
\end{equation}
Also,
\begin{equation}\label{e:Lapl2}
 \lim_{m\to\infty}\bbE\left[\exp(-\Phi_h(\mfU_m))\right]\to \bbE\left[\exp(-\Phi_h(\mfU))\right]\,.
\end{equation}
Note that, by Skorohod embedding, we can choose $\mfU^{(n)}_m$ on the same probability space as 
$\mfU_m$ and $\mfU^{(n)}_m\leq \mfU_m$ almost surely. Hence, by Proposition 
(\ref{p:OrderTight}) tightness of $\{\mfU_m:m\in\bbN\}$ implies tightness for 
$\{\mfU^{(n)}_m:m\in\bbN\}$. So let $(m_k)$ be a sub-sequence and assume that 
$(m_{k_l})$ is a further sub-sequence which converges to some $\mfU^{(n)}$ so that 
\begin{equation}\label{grx144b}
 \lim_{l\to\infty}\bbE\left[\exp(-\Phi_h(\mfU^{(n)}_{{m_{k_l}}}))\right]\to 
\bbE\left[\exp(-\Phi_h(\mfU^{(n)}))\right]\,.
\end{equation}
Taking into account \eqref{e:Lapl1} and \eqref{e:Lapl2}, 
\begin{equation}\label{grx145}
 \lim_{l\to\infty}\bbE\left[\exp(-\Phi_h(\mfU^{(n)}_{{m_{k_l}}}))\right]\to 
\left(\bbE\left[\exp(-\Phi_h(\mfU))\right]\right)^{1/n}
\end{equation}
and hence
\begin{equation}\label{grx146}
\bbE\left[\exp(-\Phi_h(\mfU^{(n)}))\right] 
= \left(\bbE\left[\exp(-\Phi_h(\mfU))\right]\right)^{1/n}\,.
\end{equation}
This shows that all convergent sub-sequences of sub-sequences have the same limit which implies 
convergence of the sequence itself 
%by [Billingsley, Convergence, Theorem 2.3]
, that is, for each 
$n$,
\begin{equation}\label{grx147}
 \operatorname*{w-lim}_{m\to\infty}\mfU^{(n)}_m=\mfU^{(n)}
\end{equation}
It also shows that $\mfU$ is infinitely divisible and $\lambda^{(m)}_h\Longrightarrow
\lambda_h$ as $ m \to \infty $ as boundedly finite measures, see Corollary \eqref{e:t-crit:tmass}.
\end{proof}

\section{Proof of infinite divisibility and related results}
\label{s.lkfproof}

This section proves the L\'evy-Khintchine formula in the first subsection and the further
claims from Subsection (\ref{ss.infdiv}) related to it.

\subsection{Proof of total mass results}\label{ss.prtotmas}

We first prove Proposition (\ref{p.mass:infdiv}).
We use for $ (b) \Rightarrow (a) $ facts which are proved further below.

\begin{proof}[Proof of Proposition \ref{p.mass:infdiv}]\label{p.p.mass:infdiv}
\mbox{}

\eqref{mass.infdiv1}
 This is obvious if we consider the polynomials $\Phi(\mfu) = \lambda \bar{\mfu}$ for $\lambda >0$.
 Then \eqref{r10b} tells us that we may find that the variable $\bar{\mfU}^{(h,n)}$ is such that the 
total mass $\bar{\mfU}$ can be written as the sum of $n$ i.i.d.~copies of $\bar{\mfU}^{(h,n)}$.

\eqref{mass.infdiv2} Recall for a measure $\nu$ we denote by $\bar \nu$ the total mass and by $\wh \nu$ the normalized measure.
By \cite{Klenke}[Satz 16.5] there are $\nu^{(n)}\in \mcM_f(\bbR_+)$ such that, 
if we denote 
$\theta^{(n)}:=\nu^{(n)}(\bbR_+)$,
\begin{equation}\label{grx142}
\CL [X] = \operatorname{w-lim}_{n\to\infty}  (\hat{\nu}^{(n)})^{\ast 
\operatorname{Poiss}(\theta^{(n)})}\,.
\end{equation}
Recall the definition of compound Poisson forest from Example (\ref{E2.1}) and let with $a \otimes \mfe$ being the $\delta$-measure on the element of $\U$ arising by multiplying the mass of $\mfe \in \U_1$ with $a > 0$:
\begin{equation}\label{grx143}
 \mfU^{(n)}:=\left(CPF(\nu^{(n)}(\bbR_+),\bar \nu^{(n)}\otimes\mfe)\right)_h
\end{equation}
Then $\bar{\mfU}^{(n)} \Rightarrow X$.
Moreover, $\nu^{(n)} \Rightarrow \nu$ as boundedly finite measures, with $\nu$ the L\'evy-measure of $X$:
\begin{equation}\label{e2903}
-\log \E[ \exp(-\lambda X)] = \int_0^\infty \nu(\dx x)\, (1-e^{-\lambda x}) , \ \lambda \geq 0.
\end{equation}
By assumption $\int \nu(\dx x) (1\wedge x) < \infty$.

We prove below that $(\mfU^{(n)})_{n\in \N}$ is tight.
Let $\mfV$ be a weak limit point of $\mfU^{(n)}$.
Since $\mfU^{(n)}$ is $t$-infinitely divisible for any $n \in \N$, we know that $\mfU$ is $t$-infinitely divisible by Theorem (\ref{t:LimInfDiv}) and the L\'evy measures of the $\mfU^{(n)}$  converge.
Thus,
\begin{align}\label{e2120}
 -\log \E[\exp (-\Phi(\mfU))] & =\int \nu(\dx x) (1-\exp(-\Phi(x \mfe))) \, .
\end{align}
If we use the polynomial of order 1, $\Phi=\Phi^{1,\lambda}$ we get
\begin{align}\label{e2121}
 -\log \E[\exp (- \lambda \bar \mfU)] & =\int \nu(\dx x) (1-\exp(-\lambda x )) \, ,
\end{align}
hence $\bar{\mfU} \eqd X$, that as we wanted to show.

Finally we prove that $(\mfU^{(n)})_{n\in \N}$  is tight.
By Proposition (\ref{p.tight-crit}) we need to verify equations  \eqref{e:t-crit:1-e} and \eqref{e:t-crit:tmass}.
Let $\eps >0$.
For $C=2h+1$ we have $\nu^{2,\mfU^{(n)}} ([2h+1,\infty)) = 0$.
The existence of an $M$ in \eqref{e:t-crit:tmass} follows using  that $\bar{\mfU}^{(n)}$ is tight (see \eqref{grx143}).
So we are left with the modulus of continuity i.e. \eqref{e:t-crit:1-e} we calculate:
\begin{align}\label{e2122}
 \nn \int P(\mfU^{(n)}  & \in \dx \mfu) \, \left( 1- \exp(-\mu^{\mfu} (x: \mu^{\mfu}(B_{2h}(x)) <\delta) ) \right) \\
 \nn & = 1- \exp \left( - \theta^{(n)} \int_0^\infty \hat{\nu}^{(n)}(\dx x) \, \1(x < \delta) (1-e^{-x}) \right) \\
 \nn &\leq \theta^{(n)} \int_0^\infty \hat{\nu}^{(n)}(\dx x) \, \1(x < \delta) (1 \wedge x) \\
 & \to \int_0^\infty \pi(\dx x) \, \1(x< \delta) (1\wedge x) \ \text{(as $n\to\infty$)}.
\end{align}
And since $(1\wedge x) \pi(\dx x)$ is a finite measure on $(0,\infty)$, we may choose $\delta$ so small that the last expression is less than the given $\eps$.
So we have verified \eqref{e:t-crit:1-e} and we know that $(\mfU^{(n)})_{n\in \N}$  is tight.
\end{proof}

\subsection{The L\'evy-Khintchine formula (Proof of Theorem~\ref{T.LK})}\label{ss.levyform}

We now have to prove the L\'evy-Khintchine formula. In this section we will denote the law of the random tree $\mfU$ by $P \in \mcM_1(\U)$ and that of its $n$-th root at depth $h>0$ i.e. $ \mfU_h^{(1,n)} $ by $P_n^h \in \mcM_1(\U(h)^\sqcup)$ and hence:
\begin{equation}\label{e.4.1}
 \int P(\dx \mfu) \exp(- \Phi_h (\mfu)) = \left( \int P_n^h(\dx \mfu) \exp(- \Phi_h (\mfu)) \right)^n \, , \ \forall \Phi \in \CA  (\Pi_+) \, .
\end{equation}

%\subsection{Tightness of the excursion law}

We consider $ nP_n^h $, a sequence of boundedly finite measures and prove it converges to a limit, the excursion law.
With this fact we conclude later the proof in \eqref{grx139d}-\eqref{grx140b} easily.
Hence the key to the proof is to show {\em tightness} of $ \{nP^h_n, n \in \N\} $ and the {\em uniqueness} of the limit points.
\\

It will be necessary to use tightness criteria for sequences of measures on $\U\;\mbox{or}\; \U\setminus\lbrace0\rbrace$ and to define what we mean by weak convergence of boundedly finite measures; we refer the reader to Section (\ref{s.tightness}). The strategy is to show first the {\em tightness} of measures on $\U(h)\setminus\lbrace0\rbrace$ (in the Gromov weak $ ^\#-$topology, see Appendix \eqref{s.tightness}) and then the {\em uniqueness} of a {\em limit point} in two steps.
\\

{\bf Step 1} (Tightness) 
\\
We want to establish the existence of an excursion of the process arising following the path of $ h-$truncated states.
A step is the tightness of the marginal distributions.

\begin{lemma}[Tightness]\label{l.Q_n:tight}
 Assume $P \in \mcM_1(\U)$ and \eqref{e.4.1}. Then the sequence $(\1(\cdot \neq 0) nP_n^h)_{n\in \N} \subset \mcM^\#(\U\setminus \{0\})$ is tight (in the weak $ ^\#-$topology (see Proposition \eqref{p.tight-crit})) and $\limsup_{n\to \infty} \int nP_n^h(\dx \mfu) \, (\bar \mfu \wedge 1) < \infty $. Moreover, for any $\eps \in (0,1)$ there exists an $M(\eps)>0$ such that
 \begin{equation}\label{e.tr27} \limsup_{n\to \infty} \int nP_n^h(\dx \mfu) \, \1(\bar{\mfu} > \eps) < M(\eps) \, . \end{equation}
\end{lemma}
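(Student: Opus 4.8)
The plan is to reduce the tightness of $(\mathbbm{1}(\cdot\neq 0)\,nP_n^h)_{n\in\mathbb N}$ in $\mathcal M^\#(\mathbb U\setminus\{0\})$ to the two conditions in the tightness criterion Proposition~(\ref{p.tight-crit}): control of the total mass of each measure restricted to $\{\bar{\mfu}>\eps\}$ (uniformly in $n$, for every $\eps>0$), and control of the modulus of mass distribution $v_\delta$ together with the large-distance truncation, again uniformly in $n$ and restricted to $\{\bar{\mfu}>\eps\}$. The starting point is the classical $\mathbb R_+$-level fact: applying \eqref{e.4.1} to the degree-one polynomial $\Phi=\Phi^{1,\lambda}$, i.e. $\Phi_h(\mfu)=\lambda\bar{\mfu}$, gives
\[
 \int P(\dx\mfu)\,e^{-\lambda\bar{\mfu}} = \Bigl(\int P_n^h(\dx\mfu)\,e^{-\lambda\bar{\mfu}}\Bigr)^n,\qquad\lambda\ge 0,
\]
so $\bar{\mfU}$ is infinitely divisible on $[0,\infty)$ and $(nP_n^h)\circ(\bar{\cdot})^{-1}$ restricted to $(\eps,\infty)$ converges to the \Levy{} measure of $\bar{\mfU}$, which is finite on $(\eps,\infty)$ and integrates $1\wedge x$. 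This yields both $\limsup_n\int nP_n^h(\dx\mfu)\,(\bar{\mfu}\wedge 1)<\infty$ and \eqref{e.tr27}; in particular it handles the total-mass part of the tightness criterion.

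The substantive step is the uniform control of $v_\delta$. Here the additivity of $v_\delta$ under concatenation, Lemma~(\ref{l.2.7EM})\eqref{i.2.7EM.b}, is the crucial tool: if $\mfU\eqd\mfU^{(1,n)}_1\sqcup\cdots\sqcup\mfU^{(1,n)}_n$ with the summands i.i.d.\ with law $P_n^h$, then for $h'<h$,
\[
 v_{\delta}(\mfU(h'),h') \eqd \sum_{k=1}^n v_{\delta}\bigl(\mfU^{(1,n)}_k,h'\bigr),
\]
a sum of $n$ i.i.d.\ non-negative terms. The left-hand side is a tight sequence (it does not depend on $n$, it is just a functional of the fixed law $P$), so a standard triangular-array / accompanying-law argument — exactly the Poisson-approximation reasoning used for infinitely divisible laws on $\mathbb R_+$, combined with the fact that $P$ is not identically $0$ — forces $n\,\mathbb E[1\wedge v_\delta(\mfU^{(1,n)}_1,h')]$ to be bounded in $n$ and, moreover, to tend to $0$ as $\delta\downarrow 0$ uniformly in $n$. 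Translating the latter into a statement about $nP_n^h$ and intersecting with $\{\bar{\mfu}>\eps\}$ (which only helps, since on that set $1\wedge v_\delta\le v_\delta$ and the mass is controlled by \eqref{e.tr27}) gives exactly the $v_\delta$-part of Proposition~(\ref{p.tight-crit}). The large-distance part is immediate: every $\mfu$ in the support of $P_n^h$ lies in $\U(h)^\sqcup$, hence realizes no distance exceeding $2h$, so $\nu^{2,\mfu}([C,\infty))=0$ for $C>2h$.

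Finally one assembles the pieces: fix $\eps\in(0,1)$; by \eqref{e.tr27} choose $M(\eps)$ bounding $\limsup_n nP_n^h(\bar{\mfu}>\eps)$, and use the uniform smallness of $n\,\mathbb E[1\wedge v_\delta]$ just established to pick $\delta$; then on the ``bounded piece'' $\{\eps<\bar{\mfu}\}$ (which carries uniformly bounded mass under $nP_n^h$) the pair $(v_\delta,\,\mathbbm 1_{[C,\infty)}\cdot\nu^{2,\cdot})$ is uniformly small, so Proposition~(\ref{p.tight-crit}) applies and yields tightness of $(\mathbbm 1(\cdot\neq 0)\,nP_n^h)_n$ in $\mathcal M^\#(\mathbb U\setminus\{0\})$. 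The main obstacle I expect is precisely the $v_\delta$ estimate: one must run the triangular-array argument for the $\U$-valued objects carefully, using that $v_\delta$ is additive and non-negative but is neither upper nor lower semicontinuous (cf.\ Remark~(\ref{r.952})), so some care is needed to pass between the law of $v_\delta(\mfU^{(1,n)}_1,h')$ and integrals against $nP_n^h$, and to make the ``$\delta\downarrow 0$ uniformly in $n$'' conclusion rigorous rather than merely ``for each $n$''.
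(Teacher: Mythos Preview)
Your proposal is correct and follows essentially the same route as the paper: infinite divisibility of $\bar{\mfU}$ handles the mass conditions, the diameter bound kills the large-distance term, and additivity of $v_\delta$ under $\sqcup^h$ (Lemma~\ref{l.2.7EM}\eqref{i.2.7EM.b}) handles the modulus.

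One remark: the $v_\delta$ step is more direct than you make it. No general triangular-array or accompanying-law machinery is needed. Since $v_\delta(\cdot,h')$ is additive under $\sqcup^h$ for $h'<h$, the factorisation \eqref{e.4.1} holds verbatim with $\Phi_h$ replaced by $v_\delta(\cdot,h')$, giving the exact identity
\[
\int nP_n^h(\dx\mfu)\,\bigl(1-e^{-v_\delta(\mfu,h')}\bigr)
= n\Bigl(1-\Bigl(\int P(\dx\mfu)\,e^{-v_\delta(\mfu,h')}\Bigr)^{1/n}\Bigr)
\ \xrightarrow[n\to\infty]{}\ -\log\int P(\dx\mfu)\,e^{-v_\delta(\mfu,h')}.
\]
The right-hand side depends only on the fixed law $P$, so the ``uniform in $n$'' issue you flag simply does not arise: tightness of $P$ alone forces the limit to be $<\eps$ once $\delta$ is small. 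Working with $1-e^{-v_\delta}$ rather than $1\wedge v_\delta$ is exactly what makes the multiplicativity clean; this is how the paper proceeds.
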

\begin{proof}[Proof of Lemma~\ref{l.Q_n:tight}]\label{p.l.Q_n:tight}
We use the tightness criterion of Proposition (\ref{p.tight-crit}).
\\
\\
First, we verify the claim that $\limsup_{n\to \infty} \int nP_n^h(\dx \mfu) \, (\bar \mfu \wedge 1) < \infty $: We know that $\bar{\mfU}$ is a non-negative infinitely divisible random variable by Proposition~(\ref{p.mass:infdiv}). Therefore there exist $c_1 \geq 0$ and a measure $\nu$ on $(0,\infty)$ with $\int_0^\infty (1\wedge h) \, \nu(dh) <\infty$, s.t. (see \cite{D93}, Theorem 3.3.1).
\begin{equation}\label{grx114}
 \bar{\mfU} \eqd c_1 + \int N^\nu (\dx h) h \, ,
\end{equation}
where $N^\nu$ is a PPP on $(0,\infty)$ with intensity measure $\nu$. Then
\begin{align}\label{grx115}
 \int nP_n^h(\dx \mfu) \, (\bar{\mfu} \wedge 1) & = n \E\left[ \bar{\mfU}_1^{(n)} \wedge 1 \right] = 
 n \E\left[ \left(\frac{c_1}{n} + \int N^{\nu/n} (\dx h) h \right) \wedge 1 \right] \\
 & \leq n \E\left[ \frac{c_1}{n} + \int N^{\nu/n} (\dx h) \, (h\wedge 1) \right]
 = c_1 + n \E\left[\int N^{\nu/n} (\dx h) \, (h\wedge 1) \right] \nonumber \\
 & = c_1 + n \int \frac{1}{n} \nu(\dx h) (h \wedge 1) = c + \int \nu(\dx h) (h\wedge 1) < \infty \, .\nonumber
\end{align}
\\
 We can also deduce that
\begin{equation}\label{grx116}
 \limsup_{n\to \infty} \int nP_n^h(\dx \mfu) \, \1(\bar{\mfu} > \eps) \leq \eps^{-1} \limsup_{n\to \infty} \int nP_n^h(\dx \mfu) \, (1\wedge \bar{\mfu}) < \infty,
\end{equation}
which establishes \eqref{e:t-crit:tmass} of Proposition~(\ref{p.tight-crit}) and \eqref{e.tr27}.

 We need to verify the other condition of that proposition.
We use $ \mu^\mfu $ (instead of just $ \mu $) if we want to stress the dependence on the equivalence class $ \mfu $.
  Choosing $C= 2h+1$ yields $\int nP_n^h(\dx \mfu) \, (1- \exp(-\nu^{2,\mfu}([C,\infty)) ) = 0$. Moreover, for $h>0$ and $\delta >0$:
 \begin{align}\label{grx117}
 \limsup_{n\to\infty} \int & nP_n^h(\dx \mfu)  \, \1(\mfu \neq 0) \left( 1- \exp(- \mu(x: \mu(B_{h}(x)) < \delta)) \right)  \\
 & =  \limsup_{n\to \infty} n \int P_n^h( \dx \mfu)  \left( 1- \exp(- \mu^\mfu(x:\, \mu^\mfu(B_{h}(x)) < \delta) ) \right) \nonumber\\
    &= \limsup_{n\to \infty} n \left(  1- \left( \int P( \dx \mfu) \exp(- \mu^\mfu(x:\, \mu^\mfu(B_{h}(x)) < \delta)) \right)^{1/n} \right) \nonumber \\
   &= -\log \int P( \dx \mfu) \exp(- \mu^\mfu(x:\, \mu^\mfu(B_{h}(x)) < \delta)) \, . \nonumber
 \end{align}
Since the law $P(\dx \mfu) \in \mcM_1(\U)$ is tight, for any $\eps$ we can find a $\delta=\delta(\eps)$ such that by Proposition~(\ref{p.tight-crit}):
\begin{equation}\label{grx118}
 \int P( \dx \mfu) \exp(- \mu^\mfu(x:\, \mu^\mfu(B_{h}(x)) < \delta)) > 1- (1-\exp(-\eps)) \, .
\end{equation}
Then,
\begin{equation}\label{e.tr17}
  \limsup_{n \to \infty}  \int nP_n^h(\dx \mfu) \, \1(\mfu\neq 0) \left( 1- \exp(- \mu^\mfu(x:\, \mu^\mfu(B_{\eps}(x)) < \delta) \right) \leq
  - \log \exp(-\eps) = \eps \, .
\end{equation}
A similar argument based on the representation in \eqref{e.tr27} allows to show the tightness of the masses $ \bar \mfu \text{  in  } \R^+ $ by showing that $ nP^h_n(\bar u > M) < \infty $.
% Finally, we need to show tightness of the masses.
% As in the first part of the proof use that $\bar{\mfU}$ is an infinitely divisible distribution on $[0,\infty)$ with L\'evy measure $\nu$. It is known that $n\int P_n^h(\dx \mfu) \, \1(\bar{\mfu} \geq M) \to \nu([M,\infty))$ as $n \to \infty$ for any $M>0$. Let $\eps>0$. Take $M'$ so large that $\nu([M',\infty))< \eps/2$. By the Portmanteau theorem,
% \[ \limsup_{n\to \infty} nP_n(\bar{\mfu}\geq M') \leq \nu([M',\infty)) \, , \]
% which allows to find a $N \in \N$ such that $nP_n(\bar{\mfu}\geq M') \leq \eps $ for any $n \geq N$. Define
% \[ M''(n) := \inf \{M''' >0:\, n P_n(\bar{\mfu} \geq M''') < \eps \} \, ,
% \]
% and set $M : = M' \vee \max_{n=1,\dots, N-1} \{ M''(n) \}$. Then for all $n \in \N$:
% \begin{equation}\label{grx119}
%  nP_n(\bar{\mfu} \geq M ) \leq \eps \, .
% \end{equation}
Altogether, $\mathbbm{1}_{\{\cdot\neq0\}}nP_n^h$ is a tight sequence on $\mcM^\#(\U(h)^\sqcup\setminus \{0\})$.
 \end{proof}
{\bf Step 2} (Uniqueness) 
\\
Now we need to show that there is only {\em one} limit point of the sequence $(nP_n^h \1(\cdot \neq 0))_{n \in \N}$. 
The final proof is then done with the next Lemma (\ref{l.lim.exc}) below. Since the excursion measure is a measure on truncated trees we need first some preparation to get its uniqueness, recall here  Theorem~(\ref{p.trLap}).

For the uniqueness problem, we need to show that for any polynomial $\Phi \in \CA(\Pi_+)$ the Laplace transforms coincide. For $\underline{m} = (m_1,\dotsc, m_l) \in \N^l$ (repetitions allowed) let 
\begin{equation}\label{grx120}
 [0,2h)^{\binom{\underline{m}}{2}} = [0,2h)^{\binom{m_1}{2}}\times \dotsc \times [0,2h)^{\binom{m_l}{2}} \, 
\end{equation}
and $\bbD_{\underline{m}} = \bbD_{m_1+\dotsc + m_l} \cap [0,2h)^{\binom{\underline{m}}{2}}$. 
To evaluate a function $\underline{\phi}: \bbD_{\underline{m}}(h)  \to \R$ we need a vector of measures. We write a vector $\underline{\nu}$ of measures in the following form:
\begin{equation}\label{grx122}
 \underline{\nu} = (\nu_1,\dotsc, \nu_l): \begin{cases} \mcB(X_1) \times \cdots \times \mcB(X_l) \to \R_{\geq 0}^l \\
                               (A_1, \dotsc , A_l) \mapsto (\nu_1(A_1), \dotsc, \nu_l(A_l))
                              \end{cases}
\end{equation}
where $l \in \N$ is fixed and $X_1, \dotsc, X_l$ are Polish and $\nu_i \in \mcM_f(X_i), \, 1\leq i \leq l$. We call the set of such objects $\mcM(X_1, \dotsc, X_l) = \mcM(X_1) \times \cdots \mcM(X_l)$. If we exclude the case that the measure attains the value zero in any coordinate we write
%\begin{equation}\label{grx123}
% \mcM(X_1,\dotsc, X_l)^o = \{ \nu \in \mcM(X_1, \dotsc, X_l) : \nu_1 \neq 0, \dotsc, \nu_l \neq 0\} =  (\mcM(X_1)\setminus \{0\}) \times \cdots \times \mcM(X_l)\setminus \{0\}) \, .
%\end{equation}
\begin{equation}\label{grx123}\begin{array}{l}
 \mcM(X_1,\dotsc, X_l)^o = \{ \nu \in \mcM(X_1, \dotsc, X_l) : \nu_1 \neq 0, \dotsc, \nu_l \neq 0\} =  \\
 \hspace{6cm}(\mcM(X_1)\setminus \{0\}) \times \cdots \times \mcM(X_l)\setminus \{0\}) \, .
\end{array}
\end{equation}

 Define $ \nu^{\underline{m},\mfu} = (\nu^{m_1,\mfu},\dotsc, \nu^{m_l,\mfu}) , \mfu \in \U \, $ and use the following notation for a polynomial
\begin{equation}\label{e.tr8}
 \Phi^{\underline{m},\underline{\phi}}(\mfu) = \sum_{i=1}^l \Phi^{m_i,\phi_i}(\mfu) = \sum_{i=1}^l \lan \phi_i , \nu^{m_i,\mfu} \ran = \lan \underline{\phi}, \nu^{\underline{m},\mfu} \ran \, .
\end{equation}
Even though the above looks close to the desired result we have to realize that this does not mean we have this on the level of elements in $\U$ yet.
\begin{lemma}[Representation of sample Laplace-functional]\label{L.mfU1}
 Assume $P \in \mcM_1(\U)$ and \eqref{e.4.1}. Then for any $\underline{m} \in \N^l$, $\underline{\phi} \in C(\bbD_{\underline{m}}(h),\R_+)$, there exists $\lambda_{\underline{m},h} \in \mcM(\mcM(\bbD_{m_1},\dotsc, \bbD_{m_l})^o)$ s.t.
\begin{equation}\label{g3}\begin{split}
  -\log \E[\exp(-\lan \underline{\phi}, \nu^{\underline{m},\mfu} \ran) ] 
&=  \int \lambda_{\underline{m},h}(\dx \underline{\nu}) \,(1- e^{-\lan \underline{\phi},\underline{\nu} \ran}) \\
&=\lim_{n \to \infty} \int (1- e^{-\lan\underline{\phi}, \nu^{\underline{m},\mfu} \ran }) \, n P_n^h(\dx \mfu) \, .
\end{split}\end{equation}
\end{lemma}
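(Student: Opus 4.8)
The idea is to reduce the statement to the classical Lévy--Khintchine theory for infinitely divisible random \emph{measures} on the Polish space $\bbD_{\underline m}(h)$, exactly as in Kallenberg's treatment (\cite{Kall83}, \cite{Kall03}), by pushing everything forward along the distance-matrix maps. First I would observe that $\mfU$ infinitely divisible (with the factorisation \eqref{e.4.1} at depth $h$) forces the $\R_{\geq 0}^l$-valued vector of random measures $\nu^{\underline m,\mfU}=(\nu^{m_1,\mfU},\dots,\nu^{m_l,\mfU})$ to be infinitely divisible as a random element of $\mcM(\bbD_{m_1},\dots,\bbD_{m_l})$: indeed, by \eqref{e.4.1} applied to the polynomial $\Phi=\sum_i a\,\Phi^{m_i,\phi_i}$ for arbitrary $a>0$ and $\phi_i\geq 0$ (and then by a standard extension to vectors of bounded continuous functions, using that such $a\phi_i$ separate $\mcM(\bbD_{\underline m})$ via Laplace transforms), the Laplace functional of $\nu^{\underline m,\mfU}$ is an $n$-th power for every $n$, with $n$-th root the law of $\nu^{\underline m,\cdot}$ pushed forward from $P_n^h$. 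Here the truncation is harmless precisely because $\phi_i\in C(\bbD_{\underline m}(h),\R_+)$ is supported on distances $<2h$, so $\langle\phi_i,\nu^{m_i,\mfu}\rangle=\langle\phi_i,\nu^{m_i,\mfu}\rangle_h=\Phi^{m_i,\phi_i}_h(\mfu)$ and \eqref{e.4.1} applies verbatim.

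Next I would invoke the Lévy--Khintchine representation for infinitely divisible random measures (Kallenberg, Theorem~3.20 in \cite{Kall83}, or the version in \cite{Kall03}): an infinitely divisible random element of a cone of measures on a Polish space has log-Laplace functional of the form (deterministic part) $+$ (integral against a canonical measure on the punctured cone). The deterministic/drift part is $0$ here: one checks $\nu^{\underline m,\mfU}=0$ on the event $\{\bar\mfU=0\}$, and more importantly the total mass $\bar\mfU$ is infinitely divisible on $(0,\infty)$ by Proposition~\ref{p.mass:infdiv}, whose Lévy measure has no drift (cf.\ \eqref{e1175}), and $\langle\phi_i,\nu^{m_i,\mfu}\rangle\leq\|\phi_i\|_\infty\bar\mfu^{m_i}$ controls the monomials by powers of the mass, so a nonzero drift in the random-measure representation would produce a nonzero linear term in $\bar\mfU$, a contradiction. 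This yields a unique $\lambda_{\underline m,h}\in\mcM\big(\mcM(\bbD_{m_1},\dots,\bbD_{m_l})^o\big)$ with
\[
 -\log\E\big[\exp(-\langle\underline\phi,\nu^{\underline m,\mfU}\rangle)\big]=\int \lambda_{\underline m,h}(\dx\underline\nu)\,\big(1-e^{-\langle\underline\phi,\underline\nu\rangle}\big),\qquad \underline\phi\in C(\bbD_{\underline m}(h),\R_+),
\]
which is the first equality in \eqref{g3}. That the canonical measure lives on the \emph{punctured} cone $\mcM(\cdots)^o$ (no coordinate zero) is because a $\nu$ with, say, $\nu_j=0$ would correspond to a genealogy whose $m_j$-sample matrix measure vanishes, i.e.\ $\bar\mfu=0$; such atoms would contribute to $\P(\bar\mfU=0)$, which is already accounted for separately (recall \eqref{ag2b}) and is not part of the "excursion" piece — this is exactly the bookkeeping that requires care.

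For the second equality in \eqref{g3} I would use the tightness from Lemma~\ref{l.Q_n:tight}, which gives that $\mathbbm{1}(\cdot\neq 0)\,nP_n^h$ is tight in $\mcM^\#(\U\setminus\{0\})$; pushing forward along $\mfu\mapsto\nu^{\underline m,\mfu}$ (a continuous map off $\{0\}$) gives tightness of the image measures on $\mcM(\bbD_{m_1},\dots,\bbD_{m_l})^o$. Combining \eqref{e.4.1} with the elementary identity $n(1-(\int e^{-\Phi}\,dP)^{1/n})\to-\log\int e^{-\Phi}\,dP$ shows that along any subsequential limit $\ell$ one has $\int(1-e^{-\langle\underline\phi,\underline\nu\rangle})\,\ell(\dx\underline\nu)=-\log\E[\exp(-\langle\underline\phi,\nu^{\underline m,\mfU}\rangle)]$ for all $\underline\phi\in C(\bbD_{\underline m}(h),\R_+)$, and since these functionals determine a boundedly finite measure on the punctured cone (by the uniqueness part of the random-measure Lévy--Khintchine theorem, or directly by a Laplace-transform separation argument as in the proof of Theorem~\ref{p.trLap}), every subsequential limit equals $\lambda_{\underline m,h}$; hence the full sequence converges, giving $\int(1-e^{-\langle\underline\phi,\nu^{\underline m,\mfu}\rangle})\,nP_n^h(\dx\mfu)\to\int(1-e^{-\langle\underline\phi,\underline\nu\rangle})\,\lambda_{\underline m,h}(\dx\underline\nu)$. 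The main obstacle I expect is the careful handling of the puncturing and of the interplay between "$\bar\mfu=0$" contributions and the excursion measure — i.e.\ justifying that no mass is lost near the point $0$ when passing to the limit (the uniform control $\limsup_n nP_n^h(\bar\mfu>\eps)<M(\eps)$ from Lemma~\ref{l.Q_n:tight} is exactly what rescues this), together with verifying that the finite-dimensional canonical measures $\lambda_{\underline m,h}$ are consistent enough to later be assembled (in the subsequent lemmas) into a single $\lambda_h$ on $\U(h)^\sqcup\setminus\{0\}$.
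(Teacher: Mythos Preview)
Your overall strategy---push forward to the random measure $\nu^{\underline m,\mfU}|_{[0,2h)^{\binom{\underline m}{2}}}$ and apply Kallenberg's L\'evy--Khintchine theorem for infinitely divisible random measures---is exactly what the paper does. The identification of the infinite divisibility of the vector measure via \eqref{e.4.1} and the derivation of the second equality in \eqref{g3} from the elementary limit $n(1-x^{1/n})\to -\log x$ are also in line with the paper (in fact the paper obtains the second equality \emph{directly} from that identity and \eqref{e.4.1}, without any tightness or subsequential-limit argument, so your route there is correct but heavier than needed).

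The genuine gap is your argument that the drift $\pi_h^{\underline m}$ vanishes. You invoke Proposition~\ref{p.mass:infdiv} and \eqref{e1175} to claim that $\bar{\mfU}$ has no drift, but \eqref{e1175} is a \emph{hypothesis} in part~(b) of that proposition (the converse construction), not a conclusion about $\bar{\mfU}$; part~(a) only asserts classical infinite divisibility, and indeed in the proof of Lemma~\ref{l.Q_n:tight} the paper explicitly allows $c_1\ge 0$ in the representation of $\bar{\mfU}$. So at this stage of the argument nothing is known about the drift of $\bar{\mfU}$, and your contradiction does not close. Your step~(c)---that a nonzero $\pi_h^{m_i}$ forces $\bar{\mfU}\ge |\pi_h^{m_i}|^{1/m_i}$ a.s.\ via $\bar{\mfu}^{m_i}\ge\sum_j\bar{\mfu}_j^{m_i}$---is valid, but it only reduces the problem to showing $\bar{\mfU}$ has zero drift, which you have not established.

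The paper's proof fills this gap by a different, genuinely geometric argument: it uses Kallenberg's explicit formula for the drift,
\[
\langle\phi,\pi_h^{\underline m}\rangle=\lim_{\delta\to 0}\limsup_{n}\int nP_n^h(\dx\mfu)\,\sum_{k=1}^l\Big(1\wedge\sum_i\bar{\mfu}_i^{m_k}\Big)\1\Big(0<\sum_i\bar{\mfu}_i^{m_k}<\delta\Big),
\]
and bounds the integrand by the modulus of mass distribution $v_{\delta^{1/m_2}}(\mfu,h)$ (this is where the ultrametric ball structure enters, via \eqref{grx125}), after which the tightness estimate \eqref{e.tr17} from Lemma~\ref{l.Q_n:tight} forces the limit to vanish. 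This computation is the substantive content of the lemma; it simultaneously shows (specialising to $\underline m=(1)$) that $\bar{\mfU}$ itself has zero drift, so the vanishing of the drift is a \emph{consequence} here, not an input.
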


\begin{proof}\label{p.3045}
Let $\mfU$ be the realization of a random variable with law $P(\cdot)$ and $\mfU_h^{(i,n)}, \, 1\leq i \leq n$, i.i.d.~copies of random elements in $\U(h)^\sqcup$ with law $P_n^h(\cdot)$. Then by \eqref{e.4.1}:
\begin{equation} \mfU (h) \eqd \mfU_h^{(1,n)} \sqcup \dots \sqcup \mfU_h^{(n,n)} . \end{equation}
 That means for any $\underline{m} \in \N$:
\begin{equation}\label{grx124}
 \nu^{\underline{m},\mfU}|_{[0,2h)^{\binom{\underline{m}}{2}}} \eqd \nu^{\underline{m},\mfU_h^{(1,n)}}|_{[0,2h)^{\binom{\underline{m}}{2}}} + \dots + \nu^{\underline{m},\mfU_h^{(n,n)}}|_{[0,2h)^{\binom{\underline{m}}{2}}}.
\end{equation}
Thus the measure vector $\nu^{\underline{m},\mfU_h}$ restricted to $[0,2h)^{\binom{\underline{m}}{2}}$ is infinitely divisible and by %Proposition~\ref{p.thm6.1:kall} 
the extension to {\em vector measures} of Proposition 6.1 in \cite{Kall83}, see Section 3.1. in \cite{GR91}, 
there exists $\pi_h^{\underline{m}} \in \mcM([0,2h)^{\binom{m_1}{2}}, \dotsc , [0,2h)^{\binom{m_l}{2}})$ and $\rho_h^{\underline{m}} \in \mcM(\mcM([0,2h)^{\binom{m_1}{2}}, \dotsc , [0,2h)^{\binom{m_l}{2}})^o)$ with
 \begin{equation}\label{e.LK:1}
   - \log P \left( \exp(-\lan \underline{\phi}_h, \nu^{\underline{m}, \cdot} \ran) \right) = \lan \underline{\phi},  \pi_h^{\underline{m}} \ran +\int \rho_h^{\underline{m}}(\dx \nu) \, (1-e^{-\lan \underline{\phi}, \nu \ran})  .
 \end{equation}
 We have to show that the linear term vanishes.

In Proposition 6.1 of \cite{Kall83} it is also stated how to calculate $\pi_h^{\underline{m}}$ using the law $P_n^h$. Since we want to show that $\pi_h^{\underline{m}} = 0$ we directly use $\phi_k(\dr) = \1(r_{ij}<h), \, 1\leq k \leq l$. If $\mfu \in \U$ we write $\lfloor\mfu\rfloor(h) = \sqcup_{i\in I} \mfu_i$ for $\mfu_i \in \U(h)$ and a countable index set $I$. First note that the following inequality holds for $\delta \in (0,1)$ and $m \in \N$:
 \begin{equation}\label{grx125}
  \sum_{i\in I} \bar{\mfu}_i \prod_{j\in I} \1(\bar{\mfu}_j < \delta^{1/m}) \leq \mu(x: \, \mu(B_{2h}(x)) < \delta^{1/m}) \, .
 \end{equation}
 Suppose $m_1 = \min m_k$ and $m_2 = \max m_k$. Then we can calculate starting from Kallenberg' s formula (given next) as follows:
  %Kallenberg actually writes $\limsi_n$ instead of 
 %$\lim_n$. We will see that our limits exist, so we do not use this notation
\begin{align}\label{e.tr18} 
 \lan \phi, \pi_h^{\underline{m}} \ran & = \lim_{\delta \to 0} \limsup_{n \to \infty} 
  \int  n P_n^h(\dx \mfu) \, \sum_{k=1}^l \left(1 \wedge \sum_{i\in I} \bar{\mfu}_i^{m_k} \right) \1( 0 < \sum_{i\in I} \bar{\mfu}_i^{m_k} < \delta) & \\
 \nn \leq 2 l \lim_{\delta \to 0} & \limsup_{n\to \infty} 
  \int n P_n^h(\dx \mfu) \, \left(1- \exp(- (1\wedge  \sum_{i\in I} \bar{\mfu}_i^{m_1} )) \right) \1( 0 < \sum_{i\in I} \bar{\mfu}_i^{m_2} < \delta) & \\
 \nn \leq 2 l \lim_{\delta \to 0} & \limsup_{n\to \infty}  
  \int n P_n^h(\dx \mfu) \, \left(1- \exp(-  (1\wedge \sum_{i\in I} \bar{\mfu}_i^{m_1} )) \right) \1(\mfu \neq 0) \prod_{i\in I} \1(  \bar{\mfu}_i^{m_2} < \delta) & \\
 \nn \leq 2 l \lim_{\delta \to 0} & \limsup_{n\to \infty} 
  \int n P_n^h(\dx \mfu) \, \left(1- \exp(- (1\wedge (\sum_{i\in I} \bar{\mfu}_i )^{m_1})) \right) \1(\mfu \neq 0) \prod_{i\in I} \1(  \bar{\mfu}_i^{m_2}< \delta) & \\
\nn \leq 2 l \lim_{\delta \to 0} & \limsup_{n\to \infty}  
 \int n P_n^h(\dx \mfu) \, \1(\mfu \neq 0) & \\
& \left[1- \exp\left(- \left(1\wedge (\mu(x:\, \mu(B_{2h}(x))< \delta^{1/m_2}))\right)^{m_1} \right) \right] \, . & \nonumber
%   \lan \phi, \alpha_{m,h} \ran & = \lim_{\eps \to 0} \lim_{n\to \infty} \sum_{j=1}^n \E \left[ \lan \phi, \nu^{m,\mfU_i^{(h,n)}} \ran ; 0 < \lan \phi, \nu^{m,\mfU_i^{(h,n)}} \ran < \eps \right] \\
%   & \leq \lim_{\eps \to 0} \lim_{n\to \infty} n \eps \P \left[  0 < \lan \phi, \nu^{m,\mfU^{(h,n)}} \ran < \eps \right] \\
%     & = \lim_{\eps \to 0} \lim_{n\to \infty} n \eps \P \left[ \lan \phi, \nu^{m,\mfU^{(h,n)}} \ran >0 \right] \P \left[  0 < \lan \phi, \nu^{m,\mfU^{(h,n)}} \ran < \eps | \lan \phi, \nu^{m,\mfU^{(h,n)}} \ran >0 \right] \\
%     & \leq \lim_{\eps \to 0} \eps \lim_{n\to \infty} \left( n\P \left[ \lan \phi, \nu^{m,\mfU^{(h,n)}} \ran >0 \right]  \right) \\
%     & = \lim_{\eps \to 0} \eps c = 0,
 \end{align}
 As we have seen in the proof of Lemma~(\ref{l.Q_n:tight}) in \eqref{e.tr17} for any $\eps >0$ we can choose $\delta$ so small that \eqref{e.tr18} is less than $\eps$ uniformly in $n \in \N$. Therefore $\pi_h^{\underline{m}} = 0$.
%where we used Lemma~\ref{L.mfU1}.

Then use \eqref{e.LK:1} to get:
\begin{align}\label{grx127}
 \int \rho_h^{\underline{m}}(\dx \underline{\nu}) (1-e^{-\lan \underline{\nu},\underline{\phi} \ran}) & = - \log \int P(\dx \mfu) \, \exp( - \lan \underline{\phi}_h, \nu^{\underline{m},\mfu} \ran ) \\
  & = \lim_{n\to \infty} n \left(1- \left(\int P(\dx \mfu) \, \exp( - \lan \underline{\phi}_h, \nu^{\underline{m},\mfu} \ran )\right)^{1/n} \right) \label{grx127b}\\
 &\stackrel{\eqref{e.4.1}}{=} \lim_{n \to \infty} \int (1- e^{-\lan \underline{\phi}, \nu^{\underline{m},\mfu} \ran }) \, n P_n^h(\dx \mfu) \, .\label{grx127c}
\end{align}
This shows the two parts of the statement.
\end{proof}
Next comes the existence of the excursion measure.
\begin{lemma}[Limit of excursion measure]\label{l.lim.exc}
 Let $P \in \mcM_1(\U)$ and assume \eqref{e.4.1}. The sequence of  measures $(\1(\cdot \neq 0)nP_n^h)_{n \in \N}$ on $\U(h)^\sqcup \setminus \{0\}$ converges in the weak$^\#$ topology to a boundedly finite measure $\lambda_h \in \mcM^\# (\U(h)^\sqcup \setminus \{0\})$ with $\int \lambda_h(\dx \mfu) \, (1\wedge \bar{\mfu}) < \infty$ and for $\Phi \in \Pi_+$:
 \begin{equation}\label{e.tr11}
  -\log \E[\exp(-\Phi_h(\mfU)) ] = \int \lambda_h(\dx \mfu) \,(1- e^{-\Phi_h(\mfu)}) . 
 \end{equation}
\end{lemma}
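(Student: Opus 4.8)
The plan is to promote the tightness from Step~1 to relative compactness, to identify every weak${}^\#$ limit point of $(\1(\cdot\neq0)nP_n^h)_{n\in\N}$ by means of Lemma~\ref{L.mfU1}, and then to deduce uniqueness of the limit point by upgrading the ``Laplace transform determines the law'' principle (Theorem~\ref{p.trLap}) from probability measures on $\U(h)^\sqcup$ to boundedly finite excursion measures on $\U(h)^\sqcup\setminus\{0\}$. First, by Lemma~\ref{l.Q_n:tight} the family $(\1(\cdot\neq0)nP_n^h)_{n}$ is tight in $\mcM^\#(\U(h)^\sqcup\setminus\{0\})$ and satisfies $\limsup_n\int nP_n^h(\dx\mfu)\,(1\wedge\bar\mfu)<\infty$, so by Proposition~\ref{p.tight-crit} (see Section~\ref{s.tightness}) it is relatively compact; fix an arbitrary subsequential weak${}^\#$ limit $\lambda_h$.

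Next I would pin down the integrals $\int(1-e^{-\Phi_h})\,\d\lambda_h$. For a polynomial $\Phi\in\CA(\Pi_+)$, writing its $h$-truncation as $\Phi_h=\langle\underline\phi,\nu^{\underline m,\cdot}\rangle$ for suitable $\underline m,\underline\phi$, Lemma~\ref{L.mfU1} already gives
\begin{equation}\label{eq:plan-Lmf}
 -\log\E\bigl[\exp(-\Phi_h(\mfU))\bigr]=\lim_{n\to\infty}\int\bigl(1-e^{-\Phi_h(\mfu)}\bigr)\,nP_n^h(\dx\mfu),
\end{equation}
so it remains to pass this limit inside the integral along the chosen subsequence. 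Since $\mfu\mapsto1-e^{-\Phi_h(\mfu)}$ is bounded and continuous but not of bounded support, weak${}^\#$ convergence must be combined with a uniform integrability step: from $1-e^{-x}\le1\wedge x$ and $\bar\mfu^{m}\le\bar\mfu$ on $\{\bar\mfu\le1\}$ one has $1-e^{-\Phi_h(\mfu)}\le C_\Phi\,(1\wedge\bar\mfu)$, and $\int(1\wedge\bar\mfu)\,nP_n^h$ is uniformly bounded by Lemma~\ref{l.Q_n:tight}; for a monomial of order $m\ge2$ the integrand is moreover $O(\bar\mfu^{2})$ near $0$, so the mass on $\{\bar\mfu\le\eps\}$ contributes uniformly $o_\eps(1)$, the order-one contribution being controlled separately by the classical analysis of the infinitely divisible variable $\bar\mfU$ (Proposition~\ref{p.mass:infdiv}), while on $\{\bar\mfu\ge\eps\}$ the measures $\1(\bar\mfu\ge\eps)nP_n^h$ have uniformly bounded mass and ordinary weak convergence applies at continuity values of $\eps$. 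This yields, for \emph{every} subsequential weak${}^\#$ limit $\lambda_h$,
\begin{equation}\label{eq:plan-id}
 \int_{\U(h)^\sqcup\setminus\{0\}}\bigl(1-e^{-\Phi_h(\mfu)}\bigr)\,\lambda_h(\dx\mfu)=-\log\E\bigl[\exp(-\Phi_h(\mfU))\bigr]\quad\text{for all }\Phi\in\CA(\Pi_+).
\end{equation}

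The hard part will be uniqueness of the limit point, because \eqref{eq:plan-id} only tests $\lambda_h$ against functions without bounded support while we control $\lambda_h$ only through $\int(1\wedge\bar\mfu)\,\d\lambda_h<\infty$. Given two subsequential limits $\lambda_h,\lambda_h'$, subtracting two instances of \eqref{eq:plan-id} gives $\int(e^{-\Phi_h}-e^{-(\Phi+\Psi)_h})\,\d\lambda_h=\int(e^{-\Phi_h}-e^{-(\Phi+\Psi)_h})\,\d\lambda_h'$ for all $\Phi,\Psi\in\CA(\Pi_+)$. I would then fix $\Psi$ a monomial of order $\ge2$ with strictly positive test function, so that $w:=1-e^{-\Psi_h}$ is bounded, continuous, strictly positive on $\U(h)^\sqcup\setminus\{0\}$ and $O(\bar\mfu^{2})$ near $0$; using $\int(1\wedge\bar\mfu)\,\d\lambda_h<\infty$ (Fatou applied to Lemma~\ref{l.Q_n:tight}), the measures $w\,\d\lambda_h$ and $w\,\d\lambda_h'$ are then \emph{finite} and assign the same integral to each $e^{-\Phi_h}$, $\Phi\in\CA(\Pi_+)$. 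Since $\{e^{-\Phi_h}:\Phi\in\CA(\Pi_+)\}$ is a point-separating algebra of bounded continuous functions on $\U(h)^\sqcup$ containing the constant $1$ --- which is exactly the property established in the proof of Theorem~\ref{p.trLap}(a) via Theorem~3.4.5 of \cite{EK86} --- it is separating for finite measures, whence $w\,\d\lambda_h=w\,\d\lambda_h'$, and dividing out the everywhere-positive weight $w$ gives $\lambda_h=\lambda_h'$.

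Finally, relative compactness together with this uniqueness forces the whole sequence $(\1(\cdot\neq0)nP_n^h)_{n}$ to converge weak${}^\#$ to this $\lambda_h\in\mcM^\#(\U(h)^\sqcup\setminus\{0\})$; the bound $\int\lambda_h(\dx\mfu)(1\wedge\bar\mfu)<\infty$ follows from Lemma~\ref{l.Q_n:tight} by monotone passage to the limit over $\{\bar\mfu\ge\eps\}$ as $\eps\downarrow0$, and specializing \eqref{eq:plan-id} to monomials $\Phi\in\Pi_+$ (for which $\Phi_h$ is the upper $h$-truncation of Definition~\ref{d:trunc} and $L_\mfU(\Phi_h)=\E[\exp(-\Phi_h(\mfU))]$) gives \eqref{e.tr11}.
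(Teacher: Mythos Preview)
Your overall architecture coincides with the paper's: tightness (Lemma~\ref{l.Q_n:tight}), integrability of limit points (Fatou), identification of the limiting Laplace integrals (Lemma~\ref{L.mfU1}), then uniqueness. The substantive difference is the uniqueness step: the paper simply invokes Proposition~\ref{c.tr2}, which packages the statement that $\{1-e^{-\Phi}:\Phi\in\mcA(\Pi_{h,+})\}$ is convergence-determining on $\mcM^\#(\U(h)^\sqcup\setminus\{0\})\cap\{\int(1\wedge\bar\mfu)<\infty\}$ as a black box, whereas you multiply by a weight $w=1-e^{-\Psi}$ to reduce to finite measures and then separate via $\{e^{-\Phi}\}$ and Theorem~\ref{p.trLap}. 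This is a valid and more hands-on alternative that effectively re-derives the separating half of Proposition~\ref{c.tr2}.

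Two repairs are needed. First, $\mfu\mapsto1-e^{-\Phi_h(\mfu)}$ is \emph{not} continuous on $\U(h)^\sqcup$ for general $\Phi\in\mcA(\Pi_+)$ (e.g.\ $\Phi^{2,1}_h(\mfu_n)\equiv4\not\to2=\Phi^{2,1}_h(\mfu_\infty)$ for $\mfu_n=[\{a,b\},r(a,b)=2h-\tfrac1n,\delta_a+\delta_b]$), so your passage to the weak${}^\#$ limit on $\{\bar\mfu\ge\eps\}$ fails as stated. Work instead with $\Phi,\Psi\in\mcA(\Pi_{h,+})$ throughout; this class is still point-separating by Theorem~\ref{p.trunc.poly}\eqref{i.tr121}, and any $\Psi=\Phi^{m,\psi}\in\Pi_{h,+}$ with $m\ge2$ and $\psi(0)>0$ already gives $w>0$ on all of $\U(h)^\sqcup\setminus\{0\}$ (because $\nu^{m,\mfu}([0,\eta)^{\binom m2})\ge\int\mu(\dx x)\mu(B_\eta(x))^{m-1}>0$ for every $\mfu\neq0$ and every $\eta>0$). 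The full \eqref{e.tr11} for $\Phi\in\Pi_+$ then follows by monotone approximation $\Phi^{(k)}\uparrow\Phi_h$ with $\Phi^{(k)}\in\Pi_{h,+}$. Second, the ``classical'' control of the order-one part of your uniform integrability tacitly assumes $\bar\mfU$ has zero drift; if $c_1>0$ one gets $\limsup_n\int_{\{\bar\mfu\le\eps\}}(1-e^{-c\bar\mfu})\,nP_n^h\ge cc_1$ for every $\eps$. The vanishing of the drift is not one-dimensional folklore but exactly the $\pi_h^{\underline m}=0$ conclusion inside the proof of Lemma~\ref{L.mfU1} (via the modulus-of-mass-distribution estimate \eqref{e.tr17}), so cite that rather than Proposition~\ref{p.mass:infdiv}.
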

\begin{proof}\label{p.3097}
% In einem metrisierbaren Raum (wie $\mcM^\#$) gilt: (i) jede Teilfolge hat eine konvergente Teilfolge und (ii) alle Limiten solcher Teilfolgen stimmen \"uberein. Daraus folgt, die Ursprungsfolge konvergiert.
  By Lemma~(\ref{l.Q_n:tight}) $(\1(\cdot \neq 0)nP_n^h)_{n\in \N}$ is tight. Assume that there are two limit laws $\lambda_h, \, \lambda_h' \in \mcM^\#(\U(h)^\sqcup)$. By Fatou's lemma they obey $\int \lambda_h(\dx \mfu) \, (1\wedge \bar{\mfu}) < \infty$ and $\int \lambda_h'(\dx \mfu) \, (1\wedge \bar{\mfu}) < \infty$. Then by Lemma~(\ref{L.mfU1})
 \begin{equation}\label{grx128}
  \int \lambda_h(\dx \mfu) \, \left(1- \exp ( - \Phi_h (\mfu) ) \right)  = \int \lambda_h'(\dx \mfu) \, \left( 1- \exp ( - \Phi_h (\mfu) ) \right),
 \end{equation}
 for all polynomials $\Phi \in \CA(\Pi_+)$. This class of functionals is separating on measures in $\mcM^\#(\U(h)^\sqcup)$ satisfying the integrability criterion by Proposition~(\ref{c.tr2}). Therefore the limit $\lambda_h$ is unique and \eqref{e.tr11} follows from \eqref{g3}.
\end{proof}

If we want to identify the cases in which the measure $\lambda_{h}$ is finite, the following observation is helpful:

\begin{lemma}[Total weight \Levy -measure]\label{L.mfU}
Let $P \in \mcM_1(\U)$ and assume \eqref{e.4.1}. Then 
\be{gl3}
\liml_{n \to \infty} n P_n^h( \mfu  \neq 0) = -\log P(\mfu =0 ) = \int \lambda_h(\dx \mfu) \, 1 \in [0,\infty] \, ,
\ee
which is finite iff $P( \mfu = 0)>0$.
\end{lemma}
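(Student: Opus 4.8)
The plan is to reduce the statement to an elementary identity for total masses, supplemented by the Laplace-functional representation \eqref{e.tr11} already furnished by Lemma~\ref{l.lim.exc}.

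First I would record two trivial facts: $0$ is the only element of $\U(h)^\sqcup$ with $\bar\mfu=0$ (a mass-zero measure has empty support, and all such triples are equivalent), so $\{\bar\mfu=0\}=\{\mfu=0\}$; and $h$-truncation does not alter the measure, so $\overline{\mfu(h)}=\bar\mfu$. By \eqref{e.4.1} together with Theorem~\ref{p.trLap}, for every $n$ one has $\mfU(h)\eqd\mfU_h^{(1,n)}\sqcup\cdots\sqcup\mfU_h^{(n,n)}$ with i.i.d.\ factors of law $P_n^h$. Since total mass is additive under $\sqcup^h$, a concatenation equals $0$ exactly when all its factors equal $0$, so by independence $P(\mfu=0)=(1-P_n^h(\mfu\neq 0))^n$ for all $n$. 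From this I would read off the left-hand side of \eqref{gl3}: if $P(\mfu=0)=0$ then necessarily $P_n^h(\mfu\neq 0)=1$ for every $n$, whence $nP_n^h(\mfu\neq 0)=n\to\infty=-\log 0$; if $P(\mfu=0)>0$ then $P_n^h(\mfu\neq 0)=1-e^{\frac1n\log P(\mfu=0)}$, and the elementary limit $n(1-e^{a/n})\to-a$ gives $nP_n^h(\mfu\neq 0)\to-\log P(\mfu=0)<\infty$. In both cases the limit equals $-\log P(\mfu=0)$, and it is finite precisely when $P(\mfu=0)>0$, which is the asserted finiteness criterion.

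It then remains to identify the total L\'evy weight $\int\lambda_h(\dx\mfu)\,1=\lambda_h(\U(h)^\sqcup\setminus\{0\})$ with $-\log P(\mfu=0)$. The naive route --- passing to the limit in the total masses of $\1(\cdot\neq 0)nP_n^h$ via the weak$^\#$ convergence from Lemma~\ref{l.lim.exc} --- does not work directly, since the total-mass functional is not weak$^\#$-continuous (mass can escape toward the deleted point $0$); this is the one step where care is needed. Instead I would apply \eqref{e.tr11} to the order-one polynomial $\Phi(\mfu)=c\bar\mfu\in\Pi_+$, for which truncation is vacuous (there being no pair of indices), obtaining $-\log\E[e^{-c\bar\mfU}]=\int\lambda_h(\dx\mfu)(1-e^{-c\bar\mfu})$ for all $c>0$. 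Letting $c\uparrow\infty$, dominated convergence turns the left-hand side into $-\log P(\bar\mfU=0)=-\log P(\mfu=0)$, while monotone convergence turns the right-hand side into $\int\lambda_h(\dx\mfu)\,\1(\bar\mfu>0)=\lambda_h(\U(h)^\sqcup\setminus\{0\})$, using that $\lambda_h$ is carried by $\{\bar\mfu>0\}$. Combining the two displays gives $\int\lambda_h\,1=-\log P(\mfu=0)$; together with the previous paragraph this establishes all of \eqref{gl3}.

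I expect the only genuinely delicate point to be the last one, the passage from the weak$^\#$ limit to the total mass of $\lambda_h$: it must be routed through the Laplace functional and a monotone-convergence argument rather than through continuity of $\mu\mapsto\mu(\text{whole space})$. The remaining ingredients --- the exact (not merely almost sure) factorization of $\{\mfU(h)=0\}$ over the i.i.d.\ pieces, which relies on $0$ being the unique mass-zero um-space, and the scalar limit $n(1-e^{a/n})\to-a$ --- are entirely routine.
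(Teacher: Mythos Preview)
Your proof is correct and follows essentially the same approach as the paper: both derive $P(\mfu=0)=(1-P_n^h(\mfu\neq0))^n$ from the i.i.d.\ decomposition, pass to the elementary limit $n(1-x^{1/n})\to-\log x$, and then identify $\lambda_h(\U(h)^\sqcup\setminus\{0\})$ by applying the Laplace representation \eqref{e.tr11} to $\Phi(\mfu)=c\bar\mfu$ and letting $c\to\infty$. Your write-up is somewhat more careful in justifying the individual steps (notably the remark that weak$^\#$ convergence alone does not control total mass), but the argument is the same.
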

\begin{proof}[Proof of Lemma~\ref{L.mfU}]\label{p.L.mfU}
Let us set $c_n := n P_n^h( \mfu \neq 0 )$. With \eqref{e.4.1} we calculate:
 \begin{equation}\label{e.P:U=0}\begin{split}
  P(\mfu = 0 ) & = P(\lfloor\mfu\rfloor(h) = 0) = \left( P_n^h(\mfu = 0) \right)^n = \left( 1- \tfrac{1}{n} c_n ) \right)^n \, .
 \end{split}\end{equation}
This implies
\begin{equation}\label{g1}
 c_n=n(1-(P(\mfu = 0 ))^{1/n})\overset{n\to\infty}{\longrightarrow} -\log P(\mfu = 0)\in [0,\infty ] \, .
\end{equation}
To get the second equality set now $ m = 1 $ and $ \phi \equiv a $ to get :
 \begin{align}\label{e.tr283}
-\log \; \E[(\exp(-a \; \bar \mfU)]
= \int \lambda_h (\dx \mfu) \, (1-e^{-a\bar{\mu}})  \, .
 \end{align}

Letting $a \to \infty$ we get the claim.
\end{proof}

\begin{lemma}[Consistency of $\lambda_h$]\label{l.lam.cons}
 Consider $\lambda_h \in \mcM^\#(\U(h)^\sqcup \setminus\{0\})$ of Lemma~(\ref{l.lim.exc}) for $h>0$. Then for $0< h' < h$:
 \begin{equation}\label{grx130}
  \lambda_{h'} (\dx \mfu) = \int \lambda_h(\dx \mfv) \1(\lfloor \mfv \rfloor (h') \in \dx \mfu) \, .
 \end{equation}
\end{lemma}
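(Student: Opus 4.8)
The claim is the identity $\lambda_{h'} = (\tau(h'))_\ast \lambda_h$, i.e.\ that $\lambda_{h'}$ is the image of $\lambda_h$ under the $h'$-truncation map $\tau(h')$ of Definition~\ref{D.h-trunc} (recall $\mfv(h') = \tau(h')(\mfv)$), since $\int\lambda_h(\dx\mfv)\,\1(\mfv(h')\in\dx\mfu)$ is precisely that push-forward. The plan is to read this off from the construction of these measures in Lemma~\ref{l.lim.exc}, namely $\1(\cdot\neq 0)\,nP_n^h \to \lambda_h$ in the weak$^\#$ topology on $\mcM^\#(\U(h)^\sqcup\setminus\{0\})$, by choosing the $n$-th roots at depths $h$ and $h'$ compatibly and then pushing the convergence forward along $\tau(h')$.

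\textbf{Step 1: a compatible choice of roots.} Fix $0<h'<h$ and, for each $n$, let $\mfU_h^{(1,n)},\dots,\mfU_h^{(n,n)}$ be i.i.d.\ $h$-forests with common law $P_n^h$ and $\mfU(h)\eqd \mfU_h^{(1,n)}\sqcup\dots\sqcup\mfU_h^{(n,n)}$, as in \eqref{r10a}/\eqref{e.4.1}. Applying the semigroup homomorphism $\tau(h')$ (Remark~\ref{r.778}) to both sides and using the elementary consistency $(r\wedge 2h)\wedge 2h' = r\wedge 2h'$ for $h'\le h$, so that $\tau(h')(\mfU(h)) = \mfU(h')$, one obtains
\[ \mfU(h') \eqd \tau(h')(\mfU_h^{(1,n)}) \sqcup \dots \sqcup \tau(h')(\mfU_h^{(n,n)}). \]
Since $\tau(h')$ leaves the total mass unchanged, the forests $\tau(h')(\mfU_h^{(i,n)})$ take values in $\U(h')^\sqcup\setminus\{0\}$ whenever $\mfU_h^{(i,n)}\neq 0$, and they are i.i.d.\ with common law $P_n^{h'} := (\tau(h'))_\ast P_n^h$. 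Hence $P_n^{h'}$ is an admissible choice for the law of the $n$-th root of $\mfU$ at depth $h'$, and Lemma~\ref{l.lim.exc} applied at depth $h'$ gives $\1(\cdot\neq 0)\,nP_n^{h'} \to \lambda_{h'}$ in $\mcM^\#(\U(h')^\sqcup\setminus\{0\})$, with $\lambda_{h'}$ the same measure as in the statement, since it is uniquely pinned down by the L\'evy--Khintchine identity \eqref{e.tr11}.

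\textbf{Step 2: push the convergence forward.} By Step~1, $\1(\cdot\neq 0)\,nP_n^{h'} = (\tau(h'))_\ast\big(\1(\cdot\neq 0)\,nP_n^h\big)$, using that $\tau(h')$ sends $\{\mfv\neq0\}$ into $\{\mfu\neq0\}$. The map $\tau(h')\colon \U(h)^\sqcup\setminus\{0\}\to \U(h')^\sqcup\setminus\{0\}$ is continuous by Proposition~\ref{p.tr.cont}, and it is proper for the bounded sets relevant here: the preimage of $\{\bar\mfu\ge\eps\}$ is $\{\bar\mfv\ge\eps\}$ because total mass is preserved. Therefore $g\mapsto g\circ\tau(h')$ carries bounded continuous functions of bounded support to bounded continuous functions of bounded support, so push-forward along $\tau(h')$ is continuous for the weak$^\#$ topology and maps $\mcM^\#$ to $\mcM^\#$. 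Since $\1(\cdot\neq 0)\,nP_n^h \to \lambda_h$ (Lemma~\ref{l.lim.exc}), this yields $(\tau(h'))_\ast\big(\1(\cdot\neq 0)\,nP_n^h\big) \to (\tau(h'))_\ast\lambda_h$. Comparing with Step~1 and using uniqueness of weak$^\#$ limits, $\lambda_{h'} = (\tau(h'))_\ast\lambda_h$, which is exactly \eqref{grx130}.

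The only genuinely delicate point is Step~1, i.e.\ verifying that $(\tau(h'))_\ast P_n^h$ is indeed an admissible $n$-th root at depth $h'$; once the homomorphism property of $\tau(h')$ and truncation consistency are in hand this is immediate, and the remainder is soft bookkeeping (continuity and properness of $\tau(h')$ together with stability of weak$^\#$ limits under proper continuous maps).
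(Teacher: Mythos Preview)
Your proof is correct, but it takes a different route from the paper's. The paper argues purely at the level of the L\'evy--Khintchine identity \eqref{e.tr11}: since $(\Phi_{h'})_h=\Phi_{h'}$ for $h'<h$, one has
\[
\int\lambda_h(\dx\mfv)\,(1-e^{-\Phi_{h'}(\mfv)})=-\log\E[\exp(-\Phi_{h'}(\mfU))]=\int\lambda_{h'}(\dx\mfu)\,(1-e^{-\Phi_{h'}(\mfu)}),
\]
and after rewriting the left integrand via $\Phi_{h'}(\mfv)=\Phi_{h'}(\mfv(h'))$ one applies the separation result Proposition~\ref{c.tr2} to conclude $(\tau(h'))_\ast\lambda_h=\lambda_{h'}$. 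No return to the approximating sequences $nP_n^h$ is needed.

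Your argument instead goes back to the construction: you choose the $h'$-roots as truncations of the $h$-roots, check that this is a legitimate choice (so Lemma~\ref{l.lim.exc} applies at depth $h'$ and yields the same $\lambda_{h'}$, which you correctly justify via the uniqueness in \eqref{e.tr11}), and then push the weak$^\#$ convergence through $\tau(h')$ using that truncation is continuous and mass-preserving, hence proper for the $\tilde d$-bounded sets. Both halves are sound. The paper's route is shorter because it works entirely with the characterizing integrals and the separating class $\mcA(\Pi_{h',+})$, avoiding any discussion of weak$^\#$ continuity of pushforwards; your route is more constructive and makes the consistency visibly inherited from the compatible choice of roots, at the cost of a little extra topological bookkeeping.
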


\begin{proof}[Proof of Lemma~\ref{l.lam.cons}]\label{p.l.lam.cons}

 For $\Phi \in \Pi_+$ it is true that $(\Phi_{h'})_{h} = \Phi_{h'}$ since $h'<h$. Therefore,
 \begin{equation}\label{grx131}
   \E[\exp(-(\Phi_{h'})_h(\mfU)) ] = \E[\exp(-\Phi_{h'}(\mfU)) ] \, ,
 \end{equation}
 which by \eqref{e.tr11} implies that
 \begin{equation}\label{grx132}
  \int \lambda_h(\dx \mfv) \,(1- e^{-\Phi_{h'}(\mfv)})  = \int \lambda_{h'}(\dx \mfu) \,(1- e^{-\Phi_{h'}(\mfu)})  \, .
 \end{equation}
 But we can rewrite the left hand side in this equation to get 
 \begin{equation}\label{grx133}
  \int \lambda_h(\dx \mfv) \, (1-e^{-\Phi_{h'}(\lfloor\mfv\rfloor(h'))}) = \int \lambda_{h'}(\dx \mfu) \,(1- e^{-\Phi_{h'}(\mfu)})  \, ,
 \end{equation}
 since $\Phi_{h'}(\mfv) = \Phi_{h'}(\lfloor\mfv\rfloor(h'))$ for $\mfv \in \U(h)^\sqcup$. Use Corollary~(\ref{c.tr2}) to deduce the claim.
\end{proof}

For the special case $t=\infty$ we need to establish the existence of $\lambda_\infty$ and we will do that in the next lemma.
\begin{lemma}[Existence of $\lambda_\infty$]\label{l.ar1}
 In the case that $t =\infty$, the sequence $(\lambda_h)_{h\in (0, \infty)}$ of Lemma~(\ref{l.lim.exc}) has for $ h \to \infty $  a unique limit $\lambda_\infty \in \mcM^\#(\U \setminus \{0\})$.
\end{lemma}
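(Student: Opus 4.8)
The plan is to obtain $\lambda_\infty$ as a projective (inverse) limit of the consistent family $(\lambda_h)_{h>0}$ along the truncation maps $\tau(h)$. Three inputs are available. First, the consistency relation $\lambda_{h'}=(\tau(h'))_\ast\lambda_h$ for $0<h'<h$ (Lemma~\ref{l.lam.cons}). Second, the \emph{uniform} bound $\sup_{h>0}\int(1\wedge\bar\mfu)\,\lambda_h(\dx\mfu)\le C<\infty$: this is immediate from the proof of Lemma~\ref{l.Q_n:tight}, since the bound produced there, $c_1+\int(1\wedge x)\,\nu(\dx x)$, is built only from the L\'evy data of the real variable $\bar\mfU$ and is independent of $h$. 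Third, the continuity of $\tau(h)$ (Proposition~\ref{p.tr.cont}) together with mass-preservation, $\overline{\tau(h)\mfu}=\bar\mfu$. Consequently, for each $\eps>0$ the restrictions $\lambda_h^\eps:=\lambda_h|_{\{\bar\mfu\ge\eps\}}$ are \emph{finite} (mass $\le C/\eps$), all of the same total mass $m_\eps$ (by consistency and mass-preservation, since $\tau(h')$ maps $\{\bar\mfu\ge\eps\}$ onto $\{\bar\mfu\ge\eps\}$), and they satisfy $(\tau(h'))_\ast\lambda_h^\eps=\lambda_{h'}^\eps$.

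Next I would fix a cofinal sequence $h_k\uparrow\infty$ and identify the inverse limit of the Polish spaces $\U(h_k)^\sqcup_{\ge\eps}:=\{\mfu\in\U(h_k)^\sqcup:\bar\mfu\ge\eps\}$, with bonding maps $\tau(h_j)$ for $j<k$, with $\U_{\ge\eps}:=\{\mfu\in\U:\bar\mfu\ge\eps\}$ via $\mfu\mapsto(\mfu(h_k))_k$. This map is continuous; it is injective because $(\mfu(h_k))_k$ recovers all distance-matrix measures $\nu^{m,\mfu}$ (they agree with $\nu^{m,\mfu(h_k)}$ on $[0,2h_k)^{\binom m2}$ and $h_k\to\infty$), hence $\mfu$ by Gromov reconstruction \cite{GPW09}; it is surjective onto the inverse limit because a compatible sequence $(\mfu_k)_k$ has distance-matrix measures that glue to a consistent family of finite measures, which by reconstruction belong to a unique $\mfu\in\U$ (possibly of infinite diameter, but with finite total mass) satisfying $\mfu(h_k)=\mfu_k$; and it is a homeomorphism because the Gromov-weak topology equals the initial topology of the truncations — the only nontrivial direction, that $\mfu_n(h_k)\to\mfu(h_k)$ for every $k$ forces $\mfu_n\to\mfu$, follows from Theorem~\ref{p.trunc.poly}(\ref{i.tr50}) on each $\U(h_k)^\sqcup$ together with Lemma~\ref{l.tr1}, which bounds $|\Phi(\mfu_n)-\Phi_{h_k}(\mfu_n)|$ by $\binom{m}{2}\bar{\mfu}_n^{m-2}\|\phi\|_\infty\nu^{2,\mfu_n}([2h_k,\infty))$, uniformly small for large $k$ once the total masses are controlled. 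Given this identification, a standard projective-limit (Kolmogorov--Bochner) extension theorem for countable inverse sequences of finite Borel measures on Polish spaces with continuous bonding maps yields a \emph{unique} finite Borel measure $\lambda_\infty^\eps$ on $\U_{\ge\eps}$ with $(\tau(h_k))_\ast\lambda_\infty^\eps=\lambda_{h_k}^\eps$ for all $k$, hence with $(\tau(h))_\ast\lambda_\infty^\eps=\lambda_h^\eps$ for every $h>0$ by cofinality.

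Then I would glue over $\eps$: for $0<\eps'<\eps$, both $\lambda_\infty^{\eps'}|_{\U_{\ge\eps}}$ and $\lambda_\infty^\eps$ push forward under every $\tau(h)$ to $\lambda_h^\eps$, so by uniqueness they agree; hence $\lambda_\infty:=\lim_{\eps\downarrow0}\lambda_\infty^\eps$ is a well-defined Borel measure on $\U\setminus\{0\}=\bigcup_{\eps>0}\U_{\ge\eps}$. It is boundedly finite because $\lambda_\infty(\{\bar\mfu\ge\delta\})=m_\delta\le C/\delta<\infty$; it satisfies $\int(1\wedge\bar\mfu)\,\lambda_\infty(\dx\mfu)\le C$ (pushforward plus $1\wedge\bar\mfu=1\wedge\overline{\mfu(h)}$); and $(\tau(h))_\ast\lambda_\infty=\lambda_h$, which is exactly \eqref{e.ar1}. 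For uniqueness of $\lambda_\infty$ itself, any boundedly finite measure with these properties restricts on each $\U_{\ge\eps}$ to a finite measure with the same pushforwards under the separating continuous family $(\tau(h_k))_k$, hence equals $\lambda_\infty^\eps$; letting $\eps\downarrow0$ finishes. Equivalently, by letting $h\to\infty$ in \eqref{e.tr11} — using $\Phi_h\uparrow\Phi$ and monotone convergence on both sides — one gets $\int(1-e^{-\Phi(\mfu)})\,\lambda_\infty(\dx\mfu)=-\log\E[\exp(-\Phi(\mfU))]$ for all $\Phi\in\CA(\Pi_+)$, and Proposition~\ref{c.tr2} gives uniqueness.

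The step I expect to be the main obstacle is the precise identification of the inverse limit of the truncated spaces with $\U_{\ge\eps}$: namely proving that \emph{every} compatible sequence of $h$-forests is the truncation sequence of a genuine ultrametric measure space — covering the case of infinite diameter — and that the inverse-limit topology coincides with the Gromov-weak one, since only after this identification can the standard extension theorem be applied. The remaining points — passing from finite ($\{\bar\mfu\ge\eps\}$) to boundedly finite measures, and consistency across $\eps$ — are then routine.
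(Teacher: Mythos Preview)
Your projective-limit strategy is natural, but the step you correctly flag as the obstacle is in fact a genuine gap: the canonical map $\U_{\ge\eps}\to\varprojlim_k\U(h_k)^\sqcup_{\ge\eps}$ is \emph{not} surjective. Take $\mfu_k=[\{a,b\},\,r_k(a,b)=2h_k,\,\tfrac12\delta_a+\tfrac12\delta_b]$; then $\tau(h_j)\mfu_k=\mfu_j$ for $j<k$, so $(\mfu_k)_k$ lies in the inverse limit, yet any preimage $\mfu\in\U$ would require $\nu^{2,\mfu}([2h_k,\infty))=\tfrac12$ for every $k$, impossible for a finite Borel measure on $[0,\infty)$. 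In your gluing language, the glued $\nu^2=\lim_k\nu^{2,\mfu_k}|_{[0,2h_k)}=\tfrac12\delta_0$ has total mass $\tfrac12<1=\bar\mfu_k^2$; the missing half escapes to infinity, and the family $(\nu^m)_m$ you build is not the distance-matrix family of any element of $\U$. (This is not ``infinite diameter'' in the sense of an unbounded metric, which $\U$ does allow; it is two atoms at genuinely infinite distance, which is not a metric space.) Kolmogorov--Bochner therefore only produces a measure on the strictly larger inverse limit, and you still owe a proof that it assigns zero mass to such threads.

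Supplying that missing argument amounts to a diameter-tightness estimate, and this is exactly what the paper does \emph{instead of} any inverse-limit identification. It proves tightness of $(\lambda_h)_{h>1}$ in $\mcM^\#(\U\setminus\{0\})$ directly via Proposition~\ref{p.tight-crit}: the modulus-of-mass and total-mass conditions reduce by consistency (Lemma~\ref{l.lam.cons}) to the single measure $\lambda_1$, while the diameter condition comes from the Poisson representation $\mfU(h)\eqd\bigsqcup_{\mfu\in N^{\lambda_h}}\mfu$, which yields
\[
\int\lambda_N(\dx\mfu)\bigl(1-e^{-\nu^{2,\mfu}(\{2N\})}\bigr)\le-\log\E\bigl[e^{-\nu^{2,\mfU}([2N,\infty))}\bigr],
\]
and the right side is small for large $N$ because $\mcL[\mfU]$ is a single tight law on $\U$. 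Uniqueness of the limit point then follows immediately from consistency, as you also note. Your approach can be repaired by inserting this same estimate to show no $\lambda_\infty^\eps$-mass escapes, but at that point the projective-limit packaging adds no economy over the paper's direct route.
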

\begin{proof}\label{p.3164}
 We need to verify that the sequence is tight and that the set of limit points contains only a single object. Let us first do the uniqueness and assume there are $\lambda_\infty,\, \lambda_\infty' \in \mcM^\#(\U\setminus \{0\})$ with two sequences $h_n \nearrow \infty$ and $h_n'\nearrow \infty$ as $n\to \infty$:
 \begin{equation}\label{grx134}
  \lambda_\infty = \lim_{n\to \infty} \lambda_{h_n} \, , \quad \lambda_\infty' = \lim_{n\to \infty} \lambda_{h_n'} \, .
 \end{equation}
Then for $H>0$ and $n$ sufficiently large with Lemma (\ref{l.lam.cons}),
\begin{align}\label{grx135}
 \int \lambda_\infty(\dx \mfu) \, \1(\lfloor\mfu\rfloor(h) \in \cdot ) & = \lim_{n\to \infty} \int \lambda_{h_n} \1( \lfloor\mfu\rfloor(h) \in \cdot) \\
 & = \lambda_H(\cdot) \label{grx135b}\\
 &=  \lim_{n\to \infty} \int \lambda_{h_n'} \1( \lfloor\mfu\rfloor(h) \in \cdot) \nonumber \\
 & = \int \lambda_\infty'(\dx \mfu) \, \1(\lfloor\mfu\rfloor(h) \in \cdot ) \, .\nonumber
\end{align}
But this means that both measures coincide on $\U(H)^\sqcup$ for any $H>0$ and that suffices since then expectations of all polynomials $\Phi^{m,\phi}$ (which by definition has compactly supported $\phi$) coincide.

It remains to show the tightness of the sequence $(\lambda_h)_{h>1}$ using Proposition (\ref{p.tight-crit}); $1$ was chosen arbitrarily.
First, $v_\delta(\mfu,h') = v_{\delta}(\mfu(1),h')$ gives with Lemma \eqref{l.lam.cons} that for $h'<2$ and $h>1$:
\begin{align}\label{grx136}
 \sup_{h>1}\int \lambda_h (\dx \mfu) \, (1-e^{-v_\delta(\mfu,h')}) & = \sup_{h>1} \int \lambda_h(\dx \mfu) \, (1-e^{-v_\delta(\mfu(1),h')}) \\
 & = \int \lambda_{1} (\dx \mfv) \, (1-e^{-v_\delta(\mfv,h')})  \, .\label{grx136b}
\end{align}
The measure $\lambda_{1}$ as a single measure is tight and therefore it allows for any $\eps>0$ to choose $\delta$ such that the last quantity is bounded by $\eps$. Since $\bar{\mfu} = \bar{\mfu}(1)$ we can show \eqref{e:t-crit:tmass} and the only thing left to show is the first part of \eqref{e:t-crit:1-e}. Note that by \eqref{ag1c}, Lemma (\ref{l.2.7EM}) \eqref{i.2.7EM.a} for $h>0$ :
\begin{equation}\label{grx137}
  \nu^{2,\mfU}([2h,\infty)) \geq  \sum_{i=1}^{N(\lambda_h)} \nu^{2,\mfu_i}(\{2h\}) \, \text{  in stochastic order}. 
\end{equation}
Hence the exponential transforms satisfy:
\begin{align}\label{grx138}
 \E \left[ \exp (- \nu^{2,\mfU}([2h,\infty)) ) \right] & \leq \E \left[ \exp (-\sum_{i=1}^{N(\lambda_h)} \nu^{2,\mfU_i}(\{2h\}) ) \right] \\
 & = \exp \left( - \int \lambda_h(\dx \mfu) \, (1-e^{-\nu^{2,\mfu}(\{2h\}) } ) \right) \, .\label{grx138b}
\end{align}
Using this for $h=N$ allows to derive the following inequalities:
\begin{align}\label{grx139}
 \sup_{h>1} \int \lambda_h(\dx \mfu) \, \left( 1- e^{-\nu^{2,\mfu}([2N,\infty))} \right) & = \sup_{h\geq 2N} \int \lambda_h(\dx \mfu) \, \left( 1- e^{-\nu^{2,\mfu}([2N,\infty))} \right) \\
 & = \int \lambda_{2N} (\dx \mfu) \, \left( 1- e^{-\nu^{2,\mfu}(\{2N\})} \right) \label{grx139b}\\
 & \leq -\log \E \left[ \exp \left(- \nu^{2,\mfU}([2N,\infty))\right) \right] \label{grx139c}\\
 & \leq - \log (1-\eps) \leq \eps \, .\label{grx139d}
\end{align}
 The next to last inequality holds, since $\mcL[\mfU]$ is tight, allowing to choose $N$ sufficiently large.
\end{proof}

Finally, we can give a proof of the {\em \Levy-Khintchine representation}.

\begin{proof}[Proof of Theorem~\ref{T.LK}]\label{p.T.LK}
We have shown the main things already, we just put them together again. Let $\Phi = \Phi^{\underline{m},\underline{\phi}}$, see \eqref{e.tr8} be a polynomial. Then,
\begin{align}\label{grx140}
 -\log \E[\exp(-\Phi_h(\mfU))] &= \lim_{n\to \infty} n \left(1- \left(\int P(\dx \mfu) \, \exp( - \lan \underline{\phi}_h, \underline{\nu}^{\mfu} \ran )\right)^{1/n} \right) \\
 &\stackrel{\eqref{e.4.1}}{=} \lim_{n \to \infty} \int (1- e^{-\lan \phi, \nu^{m,\mfu} \ran }) \, n P_n^h(\dx \mfu) \label{grx140b}\\
 & = \int \lambda_h(\dx \mfu) \, (1-e^{-\lan \phi, \nu^{m,\mfu} \ran }) \, .\label{grx140c}
 \end{align}
where the last equation holds by Lemma~(\ref{l.lim.exc}) which also states that $\int \lambda_h(\dx \mfu) \, (1\wedge \bar{\mfu}) < \infty$. Furthermore, by Lemma~(\ref{L.mfU}): $\int \lambda_h(\dx \mfu) \, 1 = - \log P(\mfu = 0 )$. In the case $t= \infty$, Lemma (\ref{l.ar1}) provides the existence of a unique $\lambda_\infty$.
\end{proof}

\subsection{Proofs of related results}\label{ss.further}

\begin{proof}[Proof of Corollary \ref{c.PCR}]\label{p.c.PCR}
 Denote by $\mfV$ the right hand side of \eqref{ag1c}. We calculate the truncated Laplace transform of $\lfloor \mfV \rfloor (\hu)$. Use first Proposition~(\ref{p2907131206}) and then that $N^{\lambda_h}$ is a Poisson process:
 \begin{align}\label{grx141}
  \E[\exp(-\Phi_h(\lfloor\mfV\rfloor(\hu)))] & = \E[ \exp (- \Phi_h(\bigsqcup_{\mfu \in N^{\lambda_h}} \mfu ) ) ] \\
  & = \E[ \exp (- \sum_{\mfu \in N^{\lambda_h}} \Phi_h(\mfu) ) ] \label{grx141b}\\
  & = \exp (- \int \lambda_h(\dx \mfu) \, (1-e^{-\Phi_h(\mfu)})) \, .\label{grx141c}
 \end{align}
But this equals the Laplace transform of $\lfloor \mfU \rfloor (\hu)$. Since Theorem~(\ref{p.trLap}) tells us that Laplace transforms uniquely determine the law restricted to $\U(h)^\sqcup$, we can conclude that $\mfV(\hu) \eqd \mfU(\hu)$.
\end{proof}

If $\mfu\in\bbU$ is fixed and $\mu\in\mcM_1(\bbR_+)$, then if $X\sim\mu$ we  write $\mu\otimes \mfu$ 
for the law of the random measure arising as $X \cdot$ sampling measure of $\mfu$, which means the measure on $\U$ induced by taking the element $\mfu$ and multiplying its mass with the random variable $X$.

% \begin{proposition}\label{p:InfdivMassTree}
% Let $X$ be an infintely divisible random variable taking values in $\bbR_+$ and let $h>0$. Then there exists 
% an $h$-infinitely divisible random variable $\mfU$ taking values in $\bbU(h)^\sqcup$ such that 
% $X\overset{\textup{d}}{=}\bar{\mfU}$.
% \end{proposition}

%\begin{proof}
% By \cite[Satz 16.5]{Klenke} there are $\nu^{(n)}\in \mcM_f(\bbR_+)$ such that, if we denote 
% $\theta^{(n)}:=\nu^{(n)}(\bbR_+)$,
% \begin{equation}\label{grx142}
%  X\sim\operatorname{w-lim}_{n\to\infty}  (\hat{\nu}^{(n)})^{\ast 
% \operatorname{Poiss}(\theta^{(n)})}\,,
% \end{equation}
% Recall the definition of compound Poisson forest from Example \ref{E2.1} and let
% \begin{equation}\label{grx143}
%  \mfU^{(n)}:=\mfP_h(\nu^{(n)}(\bbR_+),\hat{\nu}^{(n)}\otimes\mfe)
% \end{equation}
% Note that $\nu^{(n)}(\bbR_+)\to\theta$ for some $\theta\geq0$ (\textbf{XXX, this is not true, can also be $\infty$}). Hence, Lemma \eqref{l:TightPCF} 
% implies that $\{\mfU^{(n)}\}$ is tight. Thus, there is a subsequence converging to $\mfU$ 
% say. $\mfU^{(n)}$ is $h$-infdiv. Then, by Result {\bf xxx19} $\mfU$ infdiv and also $\bar{\mfU}\sim X$. 
% hence we are done.
%\end{proof}

\begin{lemma}[Tightness of compound Poisson forests]\label{l:TightPCF}
 Let $h>0$. Assume that $\theta^{(n)}\in\bbR_+$ and $\nu^{(n)}\in\mcM_1(\bbR_+)$ such that
$\limsup_{n\to\infty}\theta^{(n)}<\infty$ and $\{\nu^{(n)}\}$ is tight.
Let $\mfe$ be an element of $\U$ with total mass $1$.
Then the family 
$\{(CPF(\theta^{(n)},\nu^{(n)}\otimes\mfe))_h,n \in \N\}$ is tight. 
\end{lemma}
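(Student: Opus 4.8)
The plan is to apply the tightness criterion of Proposition~(\ref{p.tight-crit}) for boundedly finite measures on $\bbU$, which (after the reductions described there) amounts to checking three things for the family $\{\mcL[(CPF(\theta^{(n)},\nu^{(n)}\otimes\mfe))_h]:n\in\bbN\}$: (i) tightness of the total masses in $\bbR_+$; (ii) the modulus-of-mass-distribution bound, i.e.\ for every $\eps>0$ there is $\delta>0$ with $\sup_n\bbP[v_\delta(\mfU^{(n)},h')>\eps']$ small (equivalently $\limsup_n\bbE[1-e^{-v_\delta(\mfU^{(n)},h')}]<\eps$); and (iii) the "no long distances" bound, which here is automatic because every $CPF_h$-forest realizes only distances in $[0,2h]$, so $\nu^{2,\mfU^{(n)}}([C,\infty))=0$ for $C=2h+1$. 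So the real content is (i) and (ii).

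For (i): by construction $\overline{(CPF(\theta^{(n)},\nu^{(n)}\otimes\mfe))_h}=\sum_{i=1}^{M_n}X_i$ where $M_n\sim\Poiss(\theta^{(n)})$ and $X_i$ i.i.d.\ $\sim\nu^{(n)}$, independent of $M_n$. Since $\limsup_n\theta^{(n)}<\infty$ and $\{\nu^{(n)}\}$ is tight, one gets uniform control of this compound Poisson sum: e.g.\ $\bbE[1\wedge\overline{\mfU^{(n)}}]\le\theta^{(n)}\bbE[1\wedge X_1^{(n)}]$ which stays bounded, or more simply $\bbP[\overline{\mfU^{(n)}}>M]\le\bbP[M_n>K]+\bbP[\exists i\le K:X_i>M/K]\le\bbP[M_n>K]+K\sup_n\nu^{(n)}((M/K,\infty))$; choose $K$ then $M$. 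This gives tightness of total masses.

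For (ii): this is where the structure of the concatenation is used. The element $\mfe$ has total mass $1$, so a $CPF_h$-forest built from $M_n$ i.i.d.\ copies $X_i\mfe$ of scaled $\mfe$'s is, after $h$-truncation, a concatenation of $M_n$ $h$-trees each of which is (the $h$-truncation of) $X_i\mfe$. By Lemma~(\ref{l.2.7EM})\eqref{i.2.7EM.b}, $v_\delta$ is additive under $\sqcup^h$ for $h'<h$, so $v_\delta(\mfU^{(n)},h')=\sum_{i=1}^{M_n}v_\delta(X_i\mfe(h),h')$; and since each summand depends only on whether the ball-masses of $X_i\mfe$ fall below $\delta$, and $\mfe$ is a fixed space, $v_\delta(X_i\mfe(h),h')=X_i\cdot\1(\text{something involving }X_i,\delta,\mfe)$ which is bounded by $X_i\1(X_i<c\delta)$ for a constant $c=c(\mfe,h,h')$ (the worst case being $\mfe$ itself a single small ball). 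Then $\bbE[1-e^{-v_\delta(\mfU^{(n)},h')}]\le\bbE[v_\delta(\mfU^{(n)},h')]=\theta^{(n)}\bbE[X_1^{(n)}\1(X_1^{(n)}<c\delta)]$, and by tightness of $\{\nu^{(n)}\}$ together with $\limsup\theta^{(n)}<\infty$ this can be made $<\eps$ by shrinking $\delta$. This mirrors exactly the computation carried out in \eqref{e2122} in the proof of Proposition~(\ref{p.mass:infdiv}), and in fact one could simply cite that argument with $\nu^{(n)}$ in place of $\hat\nu^{(n)}$, $\theta^{(n)}$ in place of $\theta^{(n)}$.

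The main obstacle — really the only delicate point — is (ii): making precise the claim that for the fixed building block $\mfe$, the quantity $v_\delta(X\mfe(h),h')$ is controlled by $X\1(X<c\delta)$ uniformly, i.e.\ identifying the right constant $c$ and handling the mass distribution of $\mfe$ inside its own $2h'$-balls. Everything else (the compound Poisson moment/tail estimates, additivity of $v_\delta$, the trivial long-distance bound) is routine and follows the template already established in the proof of Proposition~(\ref{p.mass:infdiv}); indeed the cleanest write-up is to reduce to that lemma by noting $(CPF(\theta^{(n)},\nu^{(n)}\otimes\mfe))_h$ is of exactly the form treated there.
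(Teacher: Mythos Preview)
Your approach is the paper's approach: both invoke the modulus-of-mass-distribution criterion, note that distances are trivially bounded by $2h$, handle total masses by a standard compound Poisson tail bound, and then use additivity of $v_\delta$ under $\sqcup^h$ to reduce the modulus estimate to a single summand $v_\delta(X\mfe,h')$, arriving at a bound of the form $\theta^{(n)}\cdot(\text{something}\to 0)$.

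One caveat on the point you yourself flag as delicate: the inequality $v_\delta(X\mfe(h),h')\le X\,\1(X<c\delta)$ holds precisely when $c^{-1}:=\essinf_{x}\mu_\mfe(B_{2h'}(x))>0$. This is satisfied when $\mfe$ is a single atom (which is the case the paper's own computation tacitly treats, writing $\mu_n(\{x\})=X_i$), but fails for general $\mfe\in\bbU$---e.g.\ take $\mfe$ with countably many disjoint $2h'$-balls of masses $2^{-i}$. For arbitrary $\mfe$ the clean identity is $v_\delta(X\mfe,h')=X\,v_{\delta/X}(\mfe,h')$; since $\mfe$ is fixed, $w(\delta'):=v_{\delta'}(\mfe,h')\to 0$ as $\delta'\downarrow 0$, and splitting $\int\nu^{(n)}(\dx X)$ over $\{X\le\sqrt\delta\}$, $\{\sqrt\delta<X\le A\}$, $\{X>A\}$ (the last piece controlled by tightness of $\{\nu^{(n)}\}$) gives $\sup_n\theta^{(n)}\bbE[1\wedge X\,w(\delta/X)]\le(\sup_n\theta^{(n)})\bigl(\sqrt\delta+A\,w(\sqrt\delta)+\eta\bigr)\to 0$. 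With this refinement your argument is complete for all $\mfe$.
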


\begin{proof}\label{p.3261}
 Assume that $CPF((\theta^{(n)},\nu^{(n)}\otimes\mfe))=[U_n,r_n,\mu_n]$. It is obvious that the total masses of the collection of CPF's is tight. Hence by the tightness criterion in Theorem 3 in \cite{GPW09},  we only have to show that 
 \begin{equation}\label{e:TightPCF}
 \lim_{\delta\downarrow0} 
\sup_n\bbE\left[\inf\{\eps>0:\mu\{x\in U_n:\mu_n(B_\eps(x))\leq\delta\}\leq\eps\right]=0\,.
 \end{equation}
Let $X^{(n)}_i\sim$ i.i.d. $\nu^{(n)}$ and $N^{(n)}\sim\Poiss(\theta^{(n)})$ and 
$\independent_i X_i^{(n)}\independent N$ for each $n$. Denote the density of 
$\Poiss(\theta^{(n)})$ by $(p^{(n)}_k)_{k\in\bbN_0}$. Then, we have
\begin{align}\label{grx144}
 &\bbE\left[\inf\{\eps>0:\mu\{x\in U_n:\mu_n(B_\eps(x))\leq\delta\}\leq\eps\right]\\
 &\leq \bbE\left[\inf\{\eps>0:\mu\{x\in U_n:\mu_n(B_h(x))\leq\delta\}\leq\eps\right] \label{grx144bb}\\
 &\leq \bbE\left[\mu\{x\in U_n:\mu_n(\{x\})\leq\delta\}\right] \label{grx144c}\\
 &=\sum_{k=0}^\infty p^{(n)}_k 
 \bbE\left[\sum_{i=1}^k X^{(n)}_i 
\mathbbm{1}\{X^{(n)}_i\leq\delta\}\right] \label{grx144d}\\
&\leq \sum_{k=0}^\infty p^{(n)}_k k \delta \label{grx144e}\\
&=\theta^{(n)}\delta \label{grx144f}
\end{align}
This implies \eqref{e:TightPCF}.
\end{proof}

We are now in a position to prove Theorem (\ref{THM:INFDIV:BRAN}).
\begin{proof}[Proof of Theorem \ref{THM:INFDIV:BRAN}]\label{p.THM:INFDIV:BRAN}
 Let $n \in \N$. Suppose that $\mfV \sim \pi$ and $\mfV_n$ takes values in $\U(h)^\sqcup$ such that $L_\mfV(\Phi) = (L_{\mfV_n}(\Phi))^n$ for all $\Phi \in \CA(\Pi_h)$.
 We know that the kernel $Q_t$ has the branching property, i.e.~fulfills \eqref{e.tr223}.
 By Theorem (\ref{p.trLap}) this is equivalent to a relation for Laplace transforms.
 Using this statement we obtain for $\Phi \in \CA(\Pi_{t+h})$:
 \begin{align}\label{e2124}
  L_{\mfU}(\Phi) & = \int \P(\mfV \in \dx \mfv) \, \int Q_t(\mfv, \dx \mfu) \, \exp(-\Phi(\mfu)) & \\
   & = \int \P(\mfV_n \in \dx \mfv_1) \cdots \int \P(\mfV_n \in \dx \mfv_n) \, \int Q_t(\mfv_1 \sqcup^h \dots \sqcup^h \mfv_n, \dx \mfu) \, \exp(-\Phi(\mfu)) & \label{e2124b}\\
   & = \int \P(\mfV_n \in \dx \mfv_1)  \, \int Q_t(\mfv_1, \dx \mfu_1) \, \exp(-\Phi(\mfu_1)) \cdots \int \P(\mfV_n \in \dx \mfv_n)& \label{e2124cc} \\
 & \qquad \int Q_t(\mfv_1, \dx \mfu_n) \, \exp(-\Phi(\mfu_n)) & \label{e2124c}\\
   & = \left( \int \P(\mfV_n \in \dx \mfv_1)  \, \int Q_t(\mfv_1, \dx \mfu_1) \, \exp(-\Phi(\mfu_1)) \right)^n & \label{e2124d}\\
   & = \left( L_{\mfU_n}(\Phi) \right)^n \,& , \label{e2124e}
 \end{align}
 if we set $\mfU_n$ as the um-space in $\U(t+h)^\sqcup$ with distribution given by $\int \P(\mfV_n \in \dx \mfv_1)  \, \int Q_t(\mfv_1, \dx \cdot)$.
\end{proof}

\begin{proof}[Proof of Proposition \ref{p.IV.4.1}]\label{p.p.IV.4.1}
To be self-contained we give the proof of this statement on infinitely  divisible measures.
 Let $P,Q\in \mcM_1(\U(h)^\sqcup)$ be infinitely divisible with L\'{e}vy measures 
 $\lambda_h^P$ and $\lambda_h^Q$. Then for all $\Phi \in \Pi_+ $,
\begin{align}\label{e2125}
 &-\log( P\ast^h Q)[ \exp(-\Phi_h(\cdot))]\\
 &=-\log \int_{\U(h)^\sqcup} P(\dx\mfu)\int_{\U(h)^\sqcup} 
Q(\dx\mfv)\exp(-\Phi_h(\mfu\sqcup^h\mfv)) \label{e2125b}\\
 &=-\log \int_{\U(h)^\sqcup} P(\dx\mfu)\exp(-\Phi_h(\mfu))\int_{\U(h)^\sqcup} 
Q(\dx\mfv)\exp(-\Phi_h(\mfv)) \label{e2125c}\\
 &=-\log \int P(\dx\mfu)\exp(-\Phi_h(\mfu))
 -\log\int Q(\dx\mfv)\exp(-\Phi_h(\mfv)) \label{e2125d}\\
 &=\int_{\U(h)^\sqcup \setminus \{0\}} (1-e^{-\Phi_{\hu}(\mfu)}) 
\lambda^P_h (\dx \mfu) +
 \int_{\U(h)^\sqcup \setminus \{0\}} (1-e^{-\Phi_{\hu}(\mfu)}) 
\lambda^Q_h (\dx \mfu) \label{e2125e}\\
&=\int_{\U(h)^\sqcup \setminus \{0\}} (1-e^{-\Phi_{\hu}(\mfu)}) 
(\lambda^P_h+ \lambda^P_h)(\dx \mfu)\,.\label{e2125f}
\end{align}
Hence $P\ast^h Q$ is infinitely divisible with $h$-L\'{e}vy measure 
$\lambda^P_h+ \lambda^P_h$.
\end{proof}

Finally, we prove Proposition (\ref{t:branchorder}).

\begin{proof}[Proof of Proposition \ref{t:branchorder}]\label{p.t:branchorder}
By assumption there is $\mfw\in\bbU(h)^\sqcup$ such that $\mfv=\mfu\sqcup^h \mfw$. 
Denote the semigroup by $Q_t$. Then for all $f\in\mathrm{bm}\mcB(\bbU)$ (bounded and $ \mcB-$measurable), 
\begin{align}\label{e2126}
 \bbE_\mfv[f(\mfV_t)]
 &=\int Q_t(\mfv,\dx\tilde{\mfv})f(\tilde{\mfv})
 =\int Q_t(\mfu\sqcup^h \mfw,\dx\tilde{\mfv})f(\tilde{\mfv})\\
 &=\int (Q_t(\mfu,\cdot)\ast^h Q_t(\mfw,\cdot))(\dx\tilde{\mfv})f(\tilde{\mfv}) \label{e2126b}\\
%  &=Q_t f(\mfv)=Q_t f(\mfu\sqcup^h \mfw)
%  =((Q_t\ast^h Q_t)f)(\mfu,\mfw) \\
 &=\int Q_t(\mfu,\dx\tilde{\mfu})\int Q_t(\mfw,\dx\tilde{\mfw}) 
f(\tilde{\mfu}\sqcup^h\tilde{\mfw}) \label{e2126c}\\
 &\geq \int Q_t(\mfu,\dx\tilde{\mfu})\int Q_t(\mfw,\dx\tilde{\mfw}) f(\tilde{\mfu})\label{e2126d}\\
 &=\int Q_t(\mfu,\dx\tilde{\mfu})f(\tilde{\mfu})\label{e2126e}\\
 &=\bbE_\mfu[f(\mfU_t)]\,.\label{e2126f}
\end{align}
This shows that $\mfU_t \preccurlyeq^h \mfV_t$. Now, the claim follows from Theorem 1 in \cite{kamae1977}.
\end{proof}

\subsection{Proof of extensions}\label{ss.proext}

We have already seen in Section~\ref{ss.marked} that the basic concepts we use in this paper, the \emph{concatenation} and the \emph{truncation} carry over to spatial models and we need now to verify that these operations have the same algebraic and topological structure.
We have to prove first that we get topological semigroups.
Then the second point is to prove the \Levy-Khintchine formula.
For that we have to show that all properties needed to obtain the \Levy-Khintchine representation hold to then verify the formula.
Altogether we proceed in two subsubsections.

\subsubsection{Proof of Proposition~\ref{p.tr2}}
In order to obtain the case where we have now \emph{marked} metric measure spaces consider first the case where $ \mu $ is a {\em finite}  measure and later we generalize this to the general case the argument  based on the finite one.  

In the finite measure case, note first the $ h$-truncation, the concept of $ h-$marked forests and trees refers to the genealogical part only and not the mark, we have only replaced $(U,r)$ by $(U \times V,r \otimes r_V)$ and $\mu \in \CM(U,\CB(\U))$ by $\nu \in \CM(U \times V, \CB (U \times V))$ in the definitions.
Furthermore concatenation involves the marks only via the fact that now two measures on $ U \times V $ rather than $ U $ are in focus, where however $V$ is a \emph{fixed} object for all elements of our state space $\U^V$.
Using that measures on a fixed measure space are a topological group and that the projections on $\U$ are topological groups one verifies in a straight forward way that we have again a topological semigroup.
Therefore the strategy for the $\U$-valued case can be used to get the marked case of the proposition, where essentially (a) has to be proven.
Namely we have to return to Section~\ref{ss.concsemi}, where Theorem~\ref{p.delphic}, the non-spatial version of our present claim is proved and see by inspection that we can repeat these arguments with the given observations for the lifted objects.

We need here only one extra information, namely a \emph{marked} compactness criterion in proving the conditions for a Delphic semigroup.
The compactness in the marked case requires in addition that the measures of the subset of $\U$ in question projected on the marks are tight, see Theorem 3 in \cite{DGP11}.
Here we talk about measures being all bounded by one given finite measure.
Hence tightness is immediate.

This means that {\em all} arguments carry over if we modify the statements on the concatenation as indicated in Section~\ref{ss.marked}.

\begin{remark}\label{r.3531}
Once we have the proof of the Proposition~\ref{p.delphic}, then we can see that the decomposition is also obtained via \emph{lifting}.
Namely apply the kernel $\kappa$ to the $\mu_i$ arising in the genealogical decomposition of $[U,r,\mu]$ we obtained on $\U$, where $\kappa$ is the transition probability from $U$ to $V$ induced by $\nu$ on $U \times V$ by $\mu_i=\mu |_{U_i} \otimes \kappa$.
Once we have proved the marked version of our statement, then we know we have exactly this representation of the decomposition.
\end{remark}

In the case where we consider $ \mu $ which are not finite on $U \times V $, we have assumed $ \mu $ is boundedly finite, i.e. finite on sets $ U \times A $, with $ A $ being a bounded mark set.
In particular do we have the following approximation with elements of $ \U $ with $ \mu $ finite.
We consider the restriction of the state $ \mfU $ to $ U \times V_n $ denoted $ \mfU_n $, with $ V_n \uparrow V $ and $ V_n $ finite resp. bounded.
Then the restricted random states $ \mfU_n $ fit our theory as explained in the previous paragraph.

Since the object in $\U^V$ can be identified with convergent sequences of the elements in $\U^{V_n}$ namely the restriction of the set $U \times V$ to elements $(u,v)$ with $u \in U$ and $v \in V_n$, the result carries then over using as well the definition of the convergence (convergence of polynomials).

\subsubsection{Proof of Theorem~\ref{T.resmark}}
We first have to argue first for the Propositions preparing the \Levy-Khintchine representation.

We have here the Propositions~\ref{p.trLap},~\ref{T.LK},~\ref{t:LimInfDiv} which collect the properties of truncated polynomials and the corresponding properties of the Laplace-transform.
Note we consider here polynomials based on $\varphi \cdot \chi$ where $\varphi$ depends on the distances and $\chi$ on the marks.
Due to this product structure, we can use the results we have on the non-spatial case to lift them for $\chi \equiv const$ to the marked case and similarly for $\varphi \equiv const$, the well known statements for measure-valued states in $\CM(V,\CB(V))$ can be lifted to our situation.
Hence we have to argue that the extension can be done for the \emph{joint} distribution of marks and distances.

Here we note that the $h$-truncation we consider here is affecting only the distances and not the marks, which are locations or types.
Therefore transferring the propositions to the marked case is straight forward and suppressed here.

\begin{remark}\label{r.3536}
Occasionally it is useful to use marks which depend on the ultrametric structure explicitly (as for example \emph{ancestral path} of individuals).
For example marking individuals by ancestral path, which depend explicitly on genealogical information.
This interesting but complicated situation is not touched here, as it would require a different form of concatenation and truncation.
This will be coming up in \cite{ggr_GeneralBranching}.
\end{remark}

Now we turn to the \Levy-Khintchine representation and we start with elements from $\U^V$ for $V$ a \emph{bounded} set and hence we have \emph{finite} measures $\nu$.

We can now consider the projection of $\mfU$ onto $\U$ by $[U \times V,r \otimes r_V,\mu \otimes \kappa] \to [U,r,\mu]$.
For the image we obtain from our results the \Levy-Khintchine representation via a measure $\lambda_h$ resp. $\lambda_\infty$ on $\U(h) \setminus \{0\}$ respectively $\U \setminus \{0\}$.
This representation we have to lift now to $\U^V$.

What is the additional structure we have to deal with?
We have to bring into play the infinite divisibility of the sampling measure on $U \times V$.
Clearly the projection onto $V$ leads to a random measure which is infinitely divisible for all $h$-truncations (the projection is the same for all $h$) and has a \Levy-Khintchine representation by the classical theory of random measures see \cite{Kall83}, but we have to obtain the \emph{joint} distribution of marks and genealogy.
However we can follow the steps of our proof on $\U$ also here very closely.

Return to the proof in Section~\ref{ss.levyform} and go through the argument. 
We just replace polynomials, Laplace-transforms on $\U$ by the ones on $\U^V$ and the measure $nP^n_h$, from which we obtained $\lambda_h$ as a limit for $n \to \infty$ before, is now a measure in $\CM^\#(\U^V \setminus \{0\})$.
The tightness statements require now, as we saw above as additional criterion a condition on the mark component, namely the projections of the measures on the marks need to be tight.
However for finite measure we have the same structure of addition and order the same arguments work here as in Section~\ref{ss.concsemi}.
Note also for random measures this representation is well known, see \cite{Kall83}.
Since we consider here as mark space finite or bounded sets which are fixed and the measures finite this condition is satisfied.
Then there is no obstacle to repeat the proof step by step replacing $U$ by $U \times V$ and $\U \setminus \{0\}$ by $\U^V \setminus \{0\}$.
We leave further details to the reader.

Having the \Levy-Khintchine representation for this case we can continue with the general case.
Next we note that by construction of $ \U^V $ in the case of infinite sampling measures a state is nothing else than the \emph{sequence} of all its \emph{restrictions} to a sequence of bounded mark  spaces exhausting the full space in fact they converge to the state $ \mfU $, which allows to handle the remaining claim, the \Levy -Khintchine formula.

The restriction is defined by mapping
\begin{equation}\label{e3533}
\left[U \times V, r \otimes r_V,\nu \right] \to \left[U \times V,r \otimes r_V,\nu |_{V_n}\right].
\end{equation}
The image can be mapped $1-1$ and isometric and measure preserving onto 
\begin{equation}\label{e3625}
\left[U \times V_n, r \otimes r_{V_n},\nu_n \right]
\end{equation}
where $r_{V_n}$ is defined as restriction of $r_V$ to $V_n \times V_n$ and $\nu_n$ is the image measure of $\nu|_{V_n}$.
For the latter we can \emph{apply} the previous results on the marked case with bounded mark sets and \emph{finite} sampling measures.

Namely by the very definition of the topology, the $ \mfU_n $ approximate the $ \mfU $, see Section 1.2. in \cite{GSW}.
Therefore consider the corresponding \Levy - measures $ \lambda_h^n $, more precisely corresponding to the populations $ \mfU_n $ in the finite measure spaces to corresponding $U \times V_n $.
They give elements of $\U^{V_n}$.
They can be extended to elements in $\U^V$(we embed for that the $n$-population from $U_n \times V_n$ in $U_n \times V$).
We have to show that these elements converge as $ n \to \infty $ to a limit measure $ \lambda^\infty_h $ on the space $ \U^V \setminus \{0\}$.
The convergence is w.r.t. the weak$^\#-$topology of measures on $ \U^V \setminus \{0\}$.

This convergence takes place since $ (\lambda^n_h)_{n \in \N} $ form a \emph{projective family} on $ \U^V $.
Consequently both sides of the \Levy - Khintchine representations for given $ n $ converge to a limit which gives the \Levy - Khintchine representation of the $ \U^V-$valued random variable via the limit measure $ \lambda^\infty_h $.

% \section{The case with marks}
% \label{s.marksproof}
% 

%%%%%%%%%%%%%%%%%%%%%%%%%%%%%%%%%%%%%%%%%%%%%%%%%%%%%%%%%%%%%%%%%%%%%%%%%%%%%%%%%%%%%%%%%%%%%%%%%%%%%%%%%%%%%%%%%
%%%%%%%%%%%%%%%%%%%%%%%%%%%%%%%%%%%%%%%%%%%%%%% A P P E N D I X %%%%%%%%%%%%%%%%%%%%%%%%%%%%%%%%%%%%%%%%%%%%%%%%%
%%%%%%%%%%%%%%%%%%%%%%%%%%%%%%%%%%%%%%%%%%%%%%%%%%%%%%%%%%%%%%%%%%%%%%%%%%%%%%%%%%%%%%%%%%%%%%%%%%%%%%%%%%%%%%%%%

\newpage

\appendix
\appendixpage        % http://www.tex.ac.uk/cgi-bin/texfaq2html?label=appendix
\addappheadtotoc     %

\section{Ultrametric measure spaces}\label{s.umspaces}

Our random variables take values in the space of metric measure spaces see \cite{GPW09}, later also in marked metric measure spaces, first introduced in \cite{DGP11} based on \cite{GPW09} and generalized in \cite{GSW}. 

We briefly review definitions and topological facts used. 

\begin{definition}[Topology of state space]\label{pkg:d:230413:1741}
  
  %Let $(\mks,r_\mks)$ be a complete separable metric space.
  \begin{enumerate}[(i)]
    \item We call $(X,r,\mu)$ a \emph{metric measure space} \emph{(mm space)} if
    \begin{enumerate}[(a)]
      \item $(X,r)$ is a complete separable metric space,
      \item $\mu$ is a finite measure on the Borel subsets of $X$. %\times\mks$, endowed with the product metric, such that
      %$\mu(X\times B)<\infty$ for all bounded Borel sets $B\subseteq \mks$.
    \end{enumerate}
    \item We define an equivalence relation on the collection of mmm spaces as follows: two mm spaces $(X,r_X,\mu_X)$ and $(Y,r_Y,\mu_Y)$ are \emph{equivalent} if and only if there exists a measurable map $\vphi:X\to Y$ such that
        \begin{equation}\label{a.3429}
          r_X(x_1,x_2)=r_Y(\vphi(x_1),\vphi(x_2))\quad \forall x_1,x_2\in \supp(\mu_X)
        \end{equation}
        and
        \begin{equation}\label{a3430}
          \mu_Y=\mu_X\circ \vphi^{-1} \, ,
        \end{equation}
  i.e.~$\vphi$ restricted to $\supp(\mu_X)$ is an isometry onto its image and  $\vphi$ is measure preserving.

        We denote the equivalence class of an mm space $(X,r_X,\mu)$ by $[X,r_X,\mu]$,
	if it does not create confusion we
	refer to $[X,r_X,\mu]$ as an mm space too.
        \item We denote the collection of (equivalence classes of) mm spaces by
        \be{gl1}
          \bbM:=\{[X,r,\mu]:(X,r,\mu)\text{ is mm space}\}.
        \end{equation}
        We use Gothic type letters $\mfx,\mfy,\dotsc$ to denote generic elements of $\bbM$.
  \end{enumerate}
\end{definition}

If $(X,r,\mu)$ is an mm space, we interpret $X$ as the set of individuals, $r$ as genealogical distance and, after normalization,
 $\hat{\mu} := \bar{\mfu}^{-1} \mu$ as the sampling measure.
%  In general marks may model any characteristic of individuals as genotype or phenotype. 
 % In the following we think of $\mks$ as geographic space, so marks correspond to geographic position of individuals. 
  %We refer to $\mu$ as \emph{sampling measure} %and to $\mks$ as \emph{mark space} or here, more specifically, as \emph{geographic space}.

% 
% 
% \begin{remark}
% In general the ``set'' of ``spaces'' is not a set in the Zermelo-Fraenkel sense. For our case this is no problem,
% since any separable metric space is homeomorphic to a subset of the Hilbert cube. Thus, w.l.o.g. we assume that $X$ in 
% \ref{gl1} is a subset of the Hilbert cube.
% \end{remark}

We next define a topology on $\bbM$. The main idea is to extend the Gromov-weak topology from \cite{GPW09} 
to the setting of finite measures on $X$, see Section 2.4. in \cite{Gl12} for details.

\beD{D.gromov}{(Gromov-weak-topology)}
% 
%   Fix an arbitrary point $o\in \mks$. Let $B_o(k)\subseteq \mks$ be the ball of radius $k$ with center $o$. For each $k\in\bbN$, fix a continuous function $\psi_k:\mks\to[0,1]$, $k\in\bbN$, such that $\psi(\mk)=1$ for $\mk\in B_o(k)$ and $\psi(\mk)=0$ for $\mk\not\in B_o(k+1)$. For $\mfx=[X,r,\mu]\in\bbM^\mks$ define $\psi_k(\mfx):=[X,r,\psi_k\mu]$ where $\psi_k\mu(\dx(x,\mk)):=\psi_k(\mk)\mu(\dx(x,\mk))$.

  We say that a sequence $(\mfx_n)_{n\in\bbN}$ of elements from $\bbM$ converges to $\mfx\in\bbM$ in the \emph{Gromov-weak topology} if and only if
  \begin{equation}\label{e:pkg:e:230413:2232}
    \Phi(\mfx_n) \to \Phi(\mfx)
  \end{equation}
  for any $\Phi \in \Pi$, defined above \eqref{rg8}.
\end{definition}

\begin{remark}\label{ar.3478}
 The topology of Gromov-weak convergence is equivalent to the convergence of the distance measures and can be metrized by the Gromov-Prokhorov metric for finite sampling measures. This metric is given by
 \begin{equation}\label{e.dGPr}
  d_{\text{GPr}}(\mfx,\mfy) = \inf_{\phi_X,\phi_Y,Z} d_{\text{Pr}}^Z((\phi_X)_\ast \mu_X, \phi_Y)_\ast \mu_Y)) \, ,
 \end{equation}
 where $d_{\text{Pr}}^Z$ is the Prokhorov distance of finite measures on the metric space $Z$.
 The metric space $(\bbM, d_{\text{GPr}})$ is complete and separable.
 This can be shown as in Section 5 of \cite{GPW09} and several variants of the topology can be found in \cite{ALW14a}, \cite{Gl12}.
\end{remark}
We are especially interested in the set of ultrametric measure spaces.
\begin{definition}\label{ad.3488}
  We say that the metric measure space $(U,r,\mu)$ is \emph{ultrametric} if $r(u, w) \leq r(u, v)  \vee r(v, w)$ for all $u,v,w \in U$ except on a set of $\mu$-measure $0$.
\end{definition}
Definition (\ref{D.umsp}) can now be rephrased as:
  \begin{equation}\label{e.tr53} \bbU =\{ \mfu \in \bbM :\, \mfu \text{ is ultrametric} \} \, .
  \end{equation}
The set $\bbU$ is a closed subset of $\bbM$ and therefore $\bbU$ is a Polish space, see Lemma 2.3 in \cite{GPWmp13}.

\section{Boundedly-finite measures in um-spaces: tightness, separation}\label{s.tightness}

Recall the metric $d_{\textup{GPr}}$ on $\U$ from \eqref{e.dGPr} and use the notation $\hat{\mfu} = [U,r,\bar{\mfu}^{-1} \mu]$ for the normalized space of the metric measure space $\mfu = [U,r,\mu]$.
The Polish space $\U \setminus \{0\}$ can be metrized by 
\begin{equation}\label{e.tr35} \tilde{d}(\mfu,\mfv) = d_{GPr}(\hat{\mfu},\hat{\mfv}) + | \bar{\mfu}^{-1} - \bar{\mfv}^{-1}| + \left( 1\wedge |\bar{\mfu} -\bar{\mfv}| \right) \end{equation}
such that the point $\mfu = 0$ is ``infinitely far away''.
Following Appendix 2.6 of \cite{DVJ03} we call $\mcM^{\#}(\U \setminus \{0\})$ the set of \emph{boundedly finite measures} on $\U \setminus \{0\}$, i.e.~$\mu \in {\mcM}^{\#}(\U \setminus \{0\})$ if $\mu (A) < \infty$ for all 
bounded measurable sets $A \subset \U \setminus \{0\}$, where boundedness is measured w.r.t.~$\tilde{d}$.

We say that a sequence $(\mu_n)_{n\in \N} \subset \mcM^{\#}( \U \setminus \{0\})$ \emph{converges in the weak}$^\#$-topology to $\mu \in \mcM^{\#}(\U \setminus \{0\})$ if $\mu_n(f) \to \mu(f)$ for all continuous $f:\U \setminus \{0\} \to \R$ with bounded support. 

We also need to recall the set of boundedly finite counting measures $\mcN^\#(\U \setminus \{0\})$ on $\U \setminus \{0\}$ which consists of those elements in $\mcM^\#(\U(h)\setminus\{0\})$ taking integer values on bounded sets.

\subsection{Tightness}\label{ss.tigthness}

% Recall from Definition \ref{D.umsp} that we equipped the space $\U$ of (equvialence classes) of ultrametric measures spaces with a finite measure with the Gromov weak topology. That means $\mfu_n \to \mfu$ as $n \to \infty$ iff $\Phi(\mfu_n) \to \Phi(\mfu)$ for all $\Phi \in \Pi$, where $\mfu,\mfu_1, \mfu_2, \dots \in \U$.
As in \cite{GPW09} we want to establish a criterion for relatively compact subsets of $\U$ and afterwards a suitable tightness criterion for finite measures on $\U$ and boundedly finite measures on $\U\setminus \{0\}$.
The main difference to their article is that we are working with {\em finite} sampling measures instead of {\em probability}  measures.
Since the notation is consistent with that of \cite{GPW09} if we only consider probability measures on the metric spaces we chose not to change notation.
However, we extend the notation a bit in allowing an additional parameter $h$ in the next definition.

\begin{definition}[Modulus of mass distribution]\label{ad.modmass}
Let $\mfu=(U,r,\mu)\in \U$. For $\delta>0$, define the \emph{modulus of mass distribution} as 
\begin{equation} \label{modul}
 v_{\delta}(\mfu,h) =  \mu\left\{x\in U:\,
   \mu(B_{2h}(x))\leq \delta \right\}  \, .
\end{equation}
\end{definition}

Using the same steps as in \cite{GPW09}, it is not very difficult to establish the following analogue of Proposition 7.1 in this  article in the non-normalized setting, see also Remark 2.5 in \cite{DGP11}. %It gives only a sufficient criterion for relative compactness in $\U$.
\begin{proposition}[Relative compactness characterization] \label{p.pre-comp}
Let ${\Gamma} \subset \U$. The set $\Gamma$ is relatively compact in the Gromov-Prokhorov topology if and only if the family $\left\{ \nu^{2,\mfu};\, \mfu \in \Gamma \right\} \subset \mcM_f([0,\infty))$ is tight (in $ \CM_f $), and for all $h>0$
\begin{equation}\label{e.tr54}
   \sup_{\mfu\in\Gamma} v_\delta(\mfu,h) \xrightarrow{\delta \to 0} \, 0.
\end{equation}
Moreover $\Gamma \subset \U \setminus \{0\}$ is relatively compact w.r.t.~$\tilde{d}$ if $\; \Gamma$ is relatively compact w.r.t.~$d_{\text{GPr}}$ and $\sup_{\mfu \in \Gamma} \bar{\mfu}^{-1} < \infty$.
% \item It is true that $\sup_{\mfu \in \Gamma} \mu(X) < \infty$ and for all $\eps>0$ %and ${\mfu}\in\Gamma$
% there exists $N_\eps\in\N$
% such that for all ${\mfu}=(X,r,\mu)\in\Gamma$ there is
% a subset $X_{\eps, {\mfu}}\subseteq X$ with
% \begin{itemize}
% \item $\mu\big(X \setminus X_{\eps,{\mfu}}\big)\leq \eps$,
% \item $X_{\eps, {\mfu}}$ can be covered by at most
%   $N_\eps$ balls of radius $\eps$, and
% \item $X_{\eps,\mfu}$ has diameter at most
%       $N_\eps$.
% \end{itemize}
% \item It is true that $\sup_{\mfu \in \Gamma} \mu(X) < \infty$ and for $\eps>0$
% and ${\mfu}=(X,r,\mu)\in\Gamma$ there exists
% a compact subset $K_{\eps, {\mfu}}\subseteq X$ with
% \begin{itemize}
% \item $\mu\big(X \setminus K_{\eps,{\mfu}}\big)\leq \eps$, and
% \item the family $\mathcal{K}_\eps
% :=\{K_{\eps,{\mfu}};\,
% {\mfu}\in\Gamma\}$ is pre-compact in
% $(\mathbb{X}_{\rm{c}},d_{\text{GH}})$.
% \end{itemize}
% \end{enumerate}
\end{proposition}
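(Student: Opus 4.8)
The plan is to follow the proof strategy of Proposition 7.1 in \cite{GPW09} for the probability-measure case and to explain carefully how to reduce the finite-measure case to it. The key structural observation is the polar decomposition $\mfu \mapsto (\bar{\mfu}, \hat{\mfu})$ with $\hat{\mfu} = [U,r,\bar{\mfu}^{-1}\mu] \in \bbU_1$, which is a homeomorphism from $\bbU \setminus \{0\}$ onto $(0,\infty) \times \bbU_1$; on all of $\bbU$ one should note that sending mass to $0$ is precisely the mechanism by which a sequence leaves every bounded set. So the first step is to record that a set $\Gamma \subset \bbU$ is relatively compact iff (i) the total masses $\{\bar{\mfu} : \mfu \in \Gamma\}$ are bounded and bounded away from $0$ or (more precisely) the closure of $\{\bar{\mfu}\}$ in $[0,\infty)$ is compact, and (ii) the normalized family $\{\hat{\mfu} : \mfu \in \Gamma\}$ is relatively compact in $(\bbU_1, d_{\mathrm{GPr}})$. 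For the $\tilde d$-statement one additionally needs $\sup_{\mfu \in \Gamma}\bar{\mfu}^{-1} < \infty$, which just says $0$ is not a limit point.

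The second step is to translate (i) and (ii) into the two stated conditions. Tightness of $\{\nu^{2,\mfu} : \mfu \in \Gamma\}$ in $\mcM_f([0,\infty))$ encodes exactly two things at once: since $\nu^{2,\mfu}([0,\infty)) = \bar{\mfu}^2$, tightness of these (sub-probability up to scaling) measures forces the total masses $\bar{\mfu}^2$, hence $\bar{\mfu}$, to lie in a compact subset of $[0,\infty)$ — giving the mass part of (i) — and, after normalizing, it gives tightness of the diameter distributions $\nu^{2,\hat{\mfu}}$ on $[0,\infty)$, which is one half of the Gromov-weak relative compactness criterion for $\bbU_1$ from \cite{GPW09}. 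The uniform modulus-of-mass-distribution condition $\sup_{\mfu\in\Gamma} v_\delta(\mfu,h) \to 0$ as $\delta \to 0$, for every $h>0$, is the other half: one checks that $v_\delta(\mfu,h) = \bar{\mfu}\, v_{\delta/\bar{\mfu}}(\hat{\mfu},h)$, so that uniform smallness of $v_\delta(\mfu,h)$ over $\Gamma$ (given that $\bar{\mfu}$ is bounded and, where relevant, bounded below) is equivalent to the uniform mass-distribution condition for the normalized spaces $\hat{\mfu}$ appearing in \cite[Prop.~7.1]{GPW09}. Combining, conditions (i)+(ii) of the abstract reduction are equivalent to tightness of $\{\nu^{2,\mfu}\}$ together with $\sup_\Gamma v_\delta(\cdot,h)\to 0$ for all $h$, which is the claim; the final sentence about $\tilde d$ then follows since $\sup_\Gamma \bar{\mfu}^{-1}<\infty$ is exactly the statement that $0 \notin \overline{\Gamma}$.

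The step I expect to be the main (though mild) obstacle is bookkeeping the interplay between the unnormalized modulus $v_\delta(\mfu,h)$ and its normalized counterpart when the total masses in $\Gamma$ are \emph{not} bounded away from zero: if $\bar{\mfu}_n \to 0$ along a sequence, then $v_\delta(\mfu_n,h) \to 0$ automatically, the sequence converges to $0 \in \bbU$, and one must make sure the claimed equivalence still reads correctly (which it does, since $\{0\}$ is compact and the $\nu^{2,\mfu_n}$ converge to the null measure, consistent with tightness). For the $d_{\mathrm{GPr}}$-statement on all of $\bbU$ this is fine; for the $\tilde d$-statement one explicitly excludes it via $\sup\bar{\mfu}^{-1}<\infty$. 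Apart from this, every ingredient — continuity of $\mfu \mapsto \nu^{2,\mfu}$, Gromov's reconstruction theorem, and the relative compactness criterion for $(\bbU_1,d_{\mathrm{GPr}})$ — is already available from \cite{GPW09}, \cite{Gl12} and the cited Remark~2.5 in \cite{DGP11}, so the proof is a matter of assembling these with the scaling identities above, and I would simply reference \cite{GPW09} for the parts of the argument that are verbatim the same.
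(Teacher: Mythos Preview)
Your proposal is correct and in fact supplies considerably more detail than the paper does: the paper's own proof is a single sentence referring the reader to Proposition~7.1 in \cite{GPW09} and Remark~2.5 in \cite{DGP11}, asserting that ``the same steps'' go through in the non-normalized setting. Your route---reducing to the probability case via the polar decomposition $\mfu \mapsto (\bar{\mfu},\hat{\mfu})$ and the scaling identity $v_\delta(\mfu,h) = \bar{\mfu}\, v_{\delta/\bar{\mfu}}(\hat{\mfu},h)$---is a legitimate alternative to literally rewriting the \cite{GPW09} argument with finite measures; it has the virtue of making the role of the total mass explicit and isolating the only genuinely new issue (sequences with $\bar{\mfu}_n \to 0$), which you handle correctly. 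Either approach works and both rest on the same \cite{GPW09} criterion, so there is no substantive divergence.
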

\begin{remark}\label{R.pre-comp}
 In the case we need to give a compact subset $A$ of $\U(h)^\sqcup$ for a fixed $h>0$, it suffices to verify that for any $h'\in (0,h),\, \eps >0$ we can find a $\delta(h',\eps)>0$ s.t.~$\sup_{\mfu\in A}v_{\delta}(\mfu,h') < \eps$ and that $\sup_{\mfu \in A} \bar{\mfu} < \infty$. The condition on the diameter is obsolete since it is bounded by $2h$.
\end{remark}

% Now we want to give a tightness criterion for boundedly finite measures on trees. There is a Prokhorov theorem w.r.t.~the topology of weak convergence on $\mcM^\#(\U(h)^\sqcup\setminus\{0\})$, which we recall.
% \begin{proposition}[Proposition A 2.6.IV in \cite{DVJ03}]\label{p.dvj03}
%  A family of measures $\{\mu_k:\ k \in \N\} \subset \mcM^{\#}(\U\setminus \{0\})$ is relatively compact in the weak$^\#$-topology on $\mcM^{\#}(\U\setminus \{0\})$ iff their restrictions $\mu_k|_{S_n}$ to a sequence of closed spheres $S_n \nearrow \U \setminus \{0\}$ are relatively compact in the weak topology on $\mcM_f(S_n)$ for each $n \in \N$.
% \end{proposition}
Now we give a {\em tightness characterization} for {\em boundedly finite measures} on $\U\setminus \{0\}$.
We write $m(f) $ for the integral w.r.t. the measure $m$ of the function $f$.

\begin{proposition}[Tightness of boundedly finite measures on $\CM^\#(\U \setminus\{0\})$ with respect to the Gromov-Prohorov topology]\label{p.tight-crit}
  A set $A \subseteq{\mcM}^{\#}(\U \setminus \{0\})$ is relatively compact in the weak$^\#$-topology with
  respect to $\tilde{d}$ on $\U\setminus \{0\}$
  if and only if for all $\eps>0$ and $h>0$ there exist $\delta>0$, $C>0$ and $M>0$ such that
 \begin{align}
\label{e:t-crit:1-e} & \sup_{m \in A} m [ 1-  \exp(- \nu^{2,\mfu}([C,\infty))) +1 -\exp ( -\mu^\mfu(x:\,\mu^\mfu(B_{2h}(x))<\delta) ) + \1( \bar{\mfu}>M) ] < \eps \,
\end{align}
and
\begin{align}
\label{e:t-crit:tmass} & \sup_{m \in A} m [ \mfu: \ \bar{\mfu} \geq \eps] \leq M \, . 
\end{align}
\end{proposition}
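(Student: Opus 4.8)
The whole argument rests on identifying the bounded subsets of $(\U\setminus\{0\},\tilde d)$. Since $d_{\mathrm{GPr}}\le 1$ on normalized spaces and $1\wedge|\bar\mfu-\bar\mfv|\le1$, a nonempty set has finite $\tilde d$-diameter exactly when $\sup|\bar\mfu^{-1}-\bar\mfv^{-1}|<\infty$, i.e.\ when the total masses of its elements are bounded away from $0$. Hence the bounded Borel sets are precisely the subsets of $S_\eps:=\{\mfu:\bar\mfu\ge\eps\}$, $\eps>0$, and $(S_\eps)_{\eps\downarrow0}$ exhausts $\U\setminus\{0\}$ by closed bounded sets; on $S_\eps$ the topology of $\tilde d$ is just the Gromov-weak topology restricted from $\U$ (as $x\mapsto x^{-1}$ is a homeomorphism of $[\eps,\infty)$), so by Proposition~\ref{p.pre-comp} a set $K\subseteq S_\eps$ has compact closure iff $\{\nu^{2,\mfu}:\mfu\in K\}$ is tight in $\mcM_f([0,\infty))$ and $\sup_{\mfu\in K}v_\delta(\mfu,h')\to0$ (as $\delta\to0$) for every $h'>0$ (which in particular forces $\sup_{\mfu\in K}\bar\mfu<\infty$).

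By the standard theory of boundedly finite measures (\cite{DVJ03}, App.~A2.6) together with Prohorov's theorem, $A\subseteq\mcM^\#(\U\setminus\{0\})$ is weak$^\#$-relatively compact iff for every $\eps>0$ the restrictions $\{m|_{S_\eps}:m\in A\}$ form a uniformly bounded and tight family of finite measures on $S_\eps$; a diagonal argument over $\eps=1/k$, using consistency of the restrictions, produces the boundedly finite limit points. So it remains to translate this into \eqref{e:t-crit:1-e}--\eqref{e:t-crit:tmass}. Uniform boundedness $\sup_m m(S_\eps)<\infty$ is exactly \eqref{e:t-crit:tmass} (the $M$ there may be taken large enough to also serve in \eqref{e:t-crit:1-e}). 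For tightness, the first paragraph shows that the complement $S_\eps\setminus K$ of a large compact set lies in a finite union of sets $\{\nu^{2,\mfu}([C,\infty))>a\}$, $\{v_\delta(\mfu,h')>a\}$, $\{\bar\mfu>M\}$, while conversely any finite intersection of such sublevel sets --- taken for a sequence of tolerances and for $h'$ ranging over a countable dense set of depths --- has compact closure by Proposition~\ref{p.pre-comp}. Using $(1-e^{-1})(1\wedge x)\le 1-e^{-x}\le x$, $\1(x>a)\le a^{-1}(1\wedge x)$ and Markov's inequality to pass between ``$m$ of an excision set'' and the exponential functionals in \eqref{e:t-crit:1-e}, one gets: \eqref{e:t-crit:1-e}--\eqref{e:t-crit:tmass} imply that for each $\eps,\eta>0$ there is a single compact $K\subseteq S_\eps$ with $\sup_{m\in A}m(S_\eps\setminus K)<\eta$ --- this is the implication used in practice, e.g.\ in Lemma~\ref{l.Q_n:tight}. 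Running the same dictionary backwards, weak$^\#$-relative compactness yields \eqref{e:t-crit:tmass} and the set-estimates $\sup_m m\{\,\cdot>a\}<\eps$ for suitable $\delta,C,M$.

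The main obstacle is the final upgrade of those set-estimates to the exponential form \eqref{e:t-crit:1-e}: the bound $1-e^{-x}\ge(1-e^{-1})(1\wedge x)$ makes \eqref{e:t-crit:1-e}--\eqref{e:t-crit:tmass} \emph{sufficient} for relative compactness with no side hypothesis, but the opposite domination $1-e^{-x}\le C_a\1(x>a)$ fails, and a measure carrying a lot of mass on um-spaces of very small total mass can be weak$^\#$-relatively compact while violating \eqref{e:t-crit:1-e}; so the necessity direction genuinely requires the uniform integrability $\sup_{m\in A}\int(1\wedge\bar\mfu)\,m(\dx\mfu)<\infty$, which is why this bound is established separately (Lemma~\ref{l.Q_n:tight}) in the places where the criterion is applied, and the cleanest write-up proves the sufficiency in full and carries this integrability bound along for the converse. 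A minor technical point along the way is that the sublevel sets defining $K$ must be taken closed, which one arranges from the continuity of $\mfu\mapsto\nu^{2,\mfu}$ and the upper semicontinuity of $\mfu\mapsto v_\delta(\mfu,h')$ (Portmanteau), replacing strict by non-strict inequalities as in the proof of Proposition~\ref{p.pre-comp}.
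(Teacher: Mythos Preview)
Your approach for the sufficiency direction ($\Leftarrow$) is essentially the paper's: both of you invoke the characterization of weak$^\#$-relative compactness from \cite[App.~A2.6]{DVJ03} as uniform boundedness plus tightness of the restrictions $m|_{S_\eps}$, then build a pre-compact set as a countable intersection of sublevel sets of the diameter and modulus functionals and control its complement via Markov's inequality together with $1-e^{-x}\ge c(1\wedge x)$. The paper writes out the set $\Gamma_\eps$ explicitly and the Markov step in detail; you give the same skeleton more tersely. One small point: the paper, like you, should range over countably many depths $h'$ when intersecting sublevel sets (Proposition~\ref{p.pre-comp} requires the modulus condition for \emph{all} $h'>0$); the paper's written $\Gamma_\eps$ uses a single $h$, so your remark about a countable dense set of $h'$ is actually a correction.

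Your observation on the necessity direction ($\Rightarrow$) is sharper than the paper. The paper dispatches it in one line (``follows from a similar argument as in \cite{GPW09}''), but your concern is legitimate: a singleton $A=\{m\}$ with $m=\sum_{k\ge1} k\,\delta_{\mfu_k}$ and $\mfu_k$ a one-point space of mass $1/k^2$ is boundedly finite (hence trivially relatively compact), yet for every $\delta>0$ one has $m[1-\exp(-v_\delta(\cdot,h))]=\sum_{k>\delta^{-1/2}} k(1-e^{-1/k^2})=\infty$, so \eqref{e:t-crit:1-e} fails. Thus the ``only if'' is not literally true without the side condition $\sup_{m\in A}\int(1\wedge\bar\mfu)\,m(\dx\mfu)<\infty$ that you single out. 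You are also right that this does not affect the paper's applications: in Lemma~\ref{l.Q_n:tight} and Lemma~\ref{l.ar1} only the sufficiency direction is used, and the integrability bound is established independently there. So your write-up --- proving sufficiency in full and flagging the extra integrability needed for the converse --- is the honest version of the statement.
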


\begin{proof}\label{ap.3582}
 Let $S_n = \{\mfu \in \U:\, \bar{\mfu} \geq n^{-1}\}$.\\
 ``$\Rightarrow$'': By \cite[Proposition A2.6.IV]{DVJ03} we know that $\{m|_{S_n} : \, m \in A\}$ is tight as a family of finite measures on $S_n$.
 First, $\sup_{m\in A} m(S_n)< \infty$ for any $n\in \N$ which directly implies \eqref{e:t-crit:tmass}.
 Additionally, \eqref{e:t-crit:1-e} follows from a similar argument as in \cite{GPW09} and Proposition (\ref{p.pre-comp}).\\
 ``$\Leftarrow$'':
 Let $\eps >0$. For any $n\in \N$ choose $\delta_n,\, C_n$ and $M<\infty$ such that
 \begin{align}\label{a.3431}
  \sup_{m \in A} m [ 1- \exp(- \nu^{2,\mfu}([C,\infty))) ] <\eps^2 2^{-2n-1},\\
  \sup_{m \in A} m [ 1- \exp(- \mu^\mfu(x:\,\mu^\mfu(B_{2h}(x))<\delta_n)) ] <\eps^2 2^{-2n-1} \label{a.3431b},\\
  \sup_{m \in A} m [ \mfu: \, \bar{\mfu} > M] < 2^{-1} \eps \label{a.3431c},\\
  \sup_{m \in A} m [\mfu: \, \bar{\mfu} > \eps ] \leq M. \label{a.3431d}
 \end{align}
Then by Markov inequality
\begin{align}\label{a.3432}
 m[ \{ \nu^{2,\mfu}([C,\infty)) > \eps 2^{-n} \} ] & \leq m [ \{ 1- \exp ( - \nu^{2,\mfu}([C,\infty)) > 1- \exp(- \eps 2^{-n} ) \} ] \\
 & \leq \left( 1- \exp(- \eps 2^{-n}) \right)^{-1} m[ 1- \exp ( - \nu^{2,\mfu}([C,\infty)) ] \label{a.3432b}\\
 & \leq \left( \eps 2^{-n-1} \right)^{-1} \eps^2 2^{-2n-1} = \eps 2^{-n} \,\label{a.3432c}
\end{align}
for all $m \in A$. With the same proof:
\begin{equation}\label{a.3433}
 m[ \{  \mu^\mfu(x:\,\mu^\mfu(B_{2h}(x))<\delta_n) > \eps 2^{-n} \} ] \leq \eps 2^{-n} \, .
\end{equation}
Define the set
\begin{equation}\label{a.3434}
 \Gamma_\eps = \{\mfu: \, \bar{\mfu} \leq M \} \cap \bigcap_{n =1}^\infty \{\mfu: \, \nu^{2,\mfu}([C,\infty)) + \mu^\mfu(x:\,\mu^\mfu(B_{2h}(x))<\delta_n) \leq \eps 2^{-n} \} 
\end{equation}
which is a pre-compact set by Proposition~(\ref{p.pre-comp}), since $\nu^{2,\mfu}$ is uniformly bounded by $M^2$ on this set, for any $\eps' >0$ one can find a $n \in \N$ s.t.~$\eps 2^{-n} < \eps'$. Choosing $C' = C_n$ tells that $\nu^{2,\mfu}([C',\infty))<\eps'$ uniformly. For the same $n\in \N$ take $\delta = \delta_n$ to obtain uniformly
\begin{equation}\label{a.3435}
 v_{\delta'} (\mfu,h) = v_{\delta_n}(\mfu,h) \leq \inf \{\eps'' \geq \eps 2^{-n} : \, \mu^\mfu(x:\,\mu^\mfu(B_{2h}(x))<\delta_n) < \eps'' \} \leq \eps 2^{-n} < \eps' \, 
\end{equation}
uniformly in $\mfu$. Finally, we can show
\begin{align}\label{a.3436}
 m(\U \setminus \Gamma_\eps) & \leq m(\mfu: \, \bar{\mfu} > M) + m(\bigcup_{n=1}^\infty \{ w_\mfu([C_n,\infty)) + \mu^\mfu(x:\,\mu^\mfu(B_\eps(x))<\delta_n) > \eps 2^{-n} \} ) \\
 & \leq \eps/2 + \sum_{n=1}^\infty ( \eps 2^{-n} + \eps 2^{-n} ) =  3\eps/2 \, .\label{a.3436b}
\end{align}
So we have found a relatively compact subset of $\U$ such that most of the mass is concentrated on it. Thus it remains to show that the total masses of the restricted measures $\{m|_{S_n}: \, m \in A\}$ is uniformly bounded, where $S_n = \{\mfu: \, \bar{\mfu} \geq n^{-1}\}$ for any $n\in \N$. Let $\eps < n^{-1}$. Then:
\begin{equation}\label{a.3437}
 \sup_{m \in A} m(S_n) \leq \sup_{m \in A} m[\mfu: \bar{\mfu} \geq \eps ] <M \, .
\end{equation}
And therefore $A$ is relatively compact by \cite[Proposition A2.6.IV]{DVJ03}.
\end{proof}
\begin{remark}\label{ar.3624}
A similar tightness criterion without the boundedness assumption \eqref{e:t-crit:tmass} holds for probability measures on $\U \setminus \{0\}$ without \eqref{e:t-crit:tmass} ($M=1$ will always do it).
\end{remark}

\subsection{Separation of measures}\label{ss.sepmeas}

Boundedly-finite measures in ultrametric spaces appear in the L\'evy-Khintchine formula.
In that context the next proposition gives a helpful statement.

\begin{proposition}\label{c.tr2}
 The set of functions 
 \begin{equation}\label{a.3438} 
   \mcF = \textup{span} \left\{\mfu \mapsto 1- e^{-\Phi (\mfu )}:\, \Phi \in \mcA(\Pi_{h,+})   \right\}
 \end{equation}
 is convergence determining on $\mcM^{\#}(\U(h)^\sqcup \setminus \{0\})\cap \{\mu: \, \int \mu(\dx \mfu)\, (1\wedge \bar{\mfu}) < \infty\}$.
\end{proposition}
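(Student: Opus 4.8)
Write $\mcD_h := \mcM^{\#}(\U(h)^\sqcup\setminus\{0\})\cap\{\mu:\int\mu(\dx\mfu)\,(1\wedge\bar\mfu)<\infty\}$ for the class in question. Since every $\Phi=\Phi^{m,\phi}\in\Pi_{h,+}$ satisfies $0\le\Phi(\mfu)\le\|\phi\|_\infty\bar\mfu^{m}$, and $\Phi\le\Psi$ for general $\Phi\in\mcA(\Pi_{h,+})$ by suitable upper bounds, together with $1-e^{-t}\le 1\wedge t$ one gets $0\le f(\mfu)\le C_f(1\wedge\bar\mfu)$ for each $f\in\mcF$; hence $\mu\mapsto\mu(f)$ is finite on $\mcD_h$, and along a weak$^\#$-convergent sequence in $\mcD_h$ with uniformly bounded $(1\wedge\bar\mfu)$-mass and no escape of mass towards the null tree these functionals converge (this ``only if'' direction is routine and is not where the content lies). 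The substantive — and the only — point actually needed in the applications (uniqueness of $\lambda_h$, of $\lambda_\infty$, and of the limit points of the $\Theta$-paths) is that $\mcF$ \emph{separates} $\mcD_h$; combined with the relative compactness criterion of Proposition~(\ref{p.tight-crit}) this yields the convergence-determining statement, because a relatively compact sequence in $\mcD_h$ whose $\mcF$-functionals converge then has a unique limit point. So I concentrate on separation.

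\textbf{Main step: separation via a multiplication trick.} Suppose $\mu,\nu\in\mcD_h$ satisfy $\mu(f)=\nu(f)$ for all $f\in\mcF$, i.e.\ $\int(1-e^{-\Phi})\,\dx\mu=\int(1-e^{-\Phi})\,\dx\nu$ for every $\Phi\in\mcA(\Pi_{h,+})$. The cone $\mcA(\Pi_{h,+})$ is stable under addition and contains $\Phi^{1,\lambda}(\mfu)=\lambda\bar\mfu$ for every $\lambda\ge0$. Applying the hypothesis to $\Phi$ and to $\Phi+\Phi^{1,\lambda}$ and subtracting gives
\begin{equation}\label{e.cc.trick}
 \int e^{-\Phi(\mfu)}\bigl(1-e^{-\lambda\bar\mfu}\bigr)\,\mu(\dx\mfu)=\int e^{-\Phi(\mfu)}\bigl(1-e^{-\lambda\bar\mfu}\bigr)\,\nu(\dx\mfu),\qquad\Phi\in\mcA(\Pi_{h,+}),\ \lambda>0.
\end{equation}
For each fixed $\lambda>0$ the measures $\mu_\lambda(\dx\mfu):=(1-e^{-\lambda\bar\mfu})\mu(\dx\mfu)$ and $\nu_\lambda$ are \emph{finite}, of mass at most $(\lambda\vee1)\int(1\wedge\bar\mfu)\,\dx\mu$ resp.\ the same with $\nu$, and, extending them by $0$ to the null tree, they are finite Borel measures on the Polish space $\U(h)^\sqcup$. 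Now $\{e^{-\Phi}:\Phi\in\mcA(\Pi_{h,+})\}$ is a family of bounded continuous functions, stable under multiplication since $e^{-\Phi_1}e^{-\Phi_2}=e^{-(\Phi_1+\Phi_2)}$, containing $1=e^{-0}$, and separating points of $\U(h)^\sqcup$: indeed $\mcA(\Pi_{h,+})=\mcA(\Pi_h)$, because each $\Phi^{m,\phi}\in\Pi_h$ is the difference $\Phi^{m,\phi^+}-\Phi^{m,\phi^-}$ of two elements of $\Pi_{h,+}$, and $\Pi_h$ already separates points of $\U(h)^\sqcup$ by Theorem~(\ref{p.trunc.poly})\eqref{i.tr121} (using $\mfu(h)=\mfu$ for $\mfu\in\U(h)^\sqcup$). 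Hence the linear span of this family is an algebra of bounded continuous functions on $\U(h)^\sqcup$ that separates points and vanishes nowhere, so by Theorem~3.4.5 of \cite{EK86} it is separating for finite Borel measures of equal total mass. Taking $\Phi=0$ in \eqref{e.cc.trick} shows $\mu_\lambda(\U(h)^\sqcup)=\nu_\lambda(\U(h)^\sqcup)$, and then \eqref{e.cc.trick} forces $\mu_\lambda=\nu_\lambda$.

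\textbf{Conclusion of separation.} This holds for every $\lambda>0$. Since $1-e^{-\lambda\bar\mfu}>0$ on $\U(h)^\sqcup\setminus\{0\}=\{\bar\mfu>0\}$, any $\tilde d$-bounded measurable $A$ satisfies $A\subseteq\{\bar\mfu\ge\epsilon\}$ for some $\epsilon>0$, so $1-e^{-\lambda\bar\mfu}\ge1-e^{-\lambda\epsilon}>0$ on $A$ and $(1-e^{-\lambda\bar\mfu})^{-1}$ is bounded there; therefore $\mu(A)=\int_A(1-e^{-\lambda\bar\mfu})^{-1}\,\dx\mu_\lambda=\int_A(1-e^{-\lambda\bar\mfu})^{-1}\,\dx\nu_\lambda=\nu(A)$, all integrals being finite. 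Thus $\mu$ and $\nu$ agree on all bounded sets, hence $\mu=\nu$. For the full convergence-determining claim one then argues: if $\mu_n\in\mcD_h$ and $\mu_n(f)$ converges for all $f\in\mcF$, convergence of $\mu_n(1-e^{-\lambda\bar\mfu})$ gives $\sup_n\mu_n[\bar\mfu\ge\epsilon]\le(1-e^{-\lambda\epsilon})^{-1}\sup_n\mu_n(1-e^{-\lambda\bar\mfu})<\infty$ and, using $\mathbf1_{\{\bar\mfu>M\}}\le 2(1-e^{-\bar\mfu/M})$, also $\sup_n\mu_n[\bar\mfu>M]$ small for large $M$ (since $\mu(1-e^{-\bar\mfu/M})\to0$ as $M\to\infty$ by dominated convergence), while the boundary term $1-e^{-\nu^{2,\mfu}([C,\infty))}$ in Proposition~(\ref{p.tight-crit}) vanishes for $C>2h$ on $\U(h)^\sqcup$; relative compactness then yields a unique limit point by the separation above.

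\textbf{Expected main obstacle.} The separation step is soft once Theorem~(\ref{p.trunc.poly}) and the trick \eqref{e.cc.trick} are in place. The genuine difficulty is the modulus-of-mass term $1-e^{-v_\delta(\cdot,h')}$ in the tightness criterion: $v_\delta(\cdot,h')$ is not a polynomial and, as the $v_\delta$-condition in Proposition~(\ref{p.pre-comp}) shows, is \emph{not} in general determined by the distance-matrix data alone, so the relative compactness part of ``convergence determining'' is not purely formal and, where the proposition is used, must be read together with the explicit uniform estimates for this term furnished by the proof of Lemma~(\ref{l.Q_n:tight}).
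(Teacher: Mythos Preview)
Your separation argument via the subtraction trick \eqref{e.cc.trick} is correct and rather elegant: by weighting with $1-e^{-\lambda\bar\mfu}$ you reduce to finite measures, and then the multiplicative family $\{e^{-\Phi}:\Phi\in\mcA(\Pi_{h,+})\}$ together with Theorem~3.4.5 of \cite{EK86} does the job. This is a genuinely different route from the paper's, which simply checks the three hypotheses of Theorem~2.3 in \cite{LR15}: $\mcF$ is closed under multiplication (via $(1-e^{-\Phi_1})(1-e^{-\Phi_2})=(1-e^{-\Phi_1})+(1-e^{-\Phi_2})-(1-e^{-(\Phi_1+\Phi_2)})$), $\mcF$ induces the topology on $\U(h)^\sqcup\setminus\{0\}$ by Theorem~\ref{p.trunc.poly}\eqref{i.tr50}, and on any bounded set $A$ one has $\inf_A\bar\mfu\ge c>0$ so $1-e^{-\bar\mfu}\ge 1-e^{-c}>0$. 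That criterion delivers convergence-determining directly, with no separate tightness verification.

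Where your proposal falls short is precisely the point you flag: you do not close the convergence-determining statement, because the $v_\delta$-term in Proposition~\ref{p.tight-crit} is not controlled by any $f\in\mcF$. Your suggestion that the proposition ``must be read together with'' external tightness estimates is not what is being asserted. There is, however, a clean fix available within your own framework: instead of invoking only the \emph{separating} version of \cite{EK86}, use Theorem~\ref{p.trLap}\eqref{i1006132102b}, which says that $\{e^{-\Phi}:\Phi\in\mcA_+(\Pi_h)\}$ is \emph{convergence}-determining for probability measures on $\U(h)^\sqcup$. Then from $\mu_n(1-e^{-\Phi})\to\mu(1-e^{-\Phi})$ for all $\Phi$ you get, after normalising the finite measures $\mu_{n,\lambda}=(1-e^{-\lambda\bar\mfu})\mu_n$, weak convergence $\mu_{n,\lambda}\Rightarrow\mu_\lambda$ on $\U(h)^\sqcup$; for any $g\in C_b$ with $\tilde d$-bounded support the function $g/(1-e^{-\lambda\bar\mfu})$ extends continuously by $0$ across $\{\bar\mfu=0\}$, and you recover $\mu_n(g)\to\mu(g)$. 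This hides the $v_\delta$-issue inside the proof of Theorem~\ref{p.trLap}\eqref{i1006132102b} (ultimately \cite{HoffJ76}), exactly as the paper hides it inside \cite{LR15}.
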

\begin{proof}\label{ap.3642}
 We check the three conditions of Theorem 2.3 in \cite{LR15}.
 The set $\mcF$ is closed under multiplication, so (T.1) holds.
 Moreover, $\mcF$ defines the topology on $\U(h)^\sqcup \setminus \{0\}$ by Theorem~(\ref{p.trunc.poly}), so (T.2) holds.
 Finally, if $A \subset \U(h)^\sqcup \setminus \{0\}$ is bounded, then there is $c>0$ with $\inf_{\mfu \in A} \bar{\mfu} \geq c$. But then we have for any polynomial $\Phi^{m,\phi}$ with $\phi \geq 1$ uniformly that $\inf_{\mfu\in A} (1-\exp(-\Phi(\cdot)))(\mfu) \geq c^m >0$. This establishes (T.3).
\end{proof}

\bibliography{infdiv}

\begin{thebibliography}{LMW18}

\bibitem[Abr92]{Abr92}
Romain Abraham.
\newblock Un arbre al\'eatoire infini associ\'e \`a l'excursion brownienne.
\newblock In {\em S\'eminaire de {P}robabilit\'es, {XXVI}}, volume 1526 of {\em
  Lecture Notes in Math.}, pages 374--397. Springer, Berlin, 1992.

\bibitem[ALW16]{ALW14a}
Siva Athreya, Wolfgang L\"ohr, and Anita Winter.
\newblock The gap between {G}romov-vague and {G}romov-{H}ausdorff-vague
  topology.
\newblock {\em Stochastic Process. Appl.}, 126(9):2527--2553, 2016.

\bibitem[BCR84]{BCR84}
Christian Berg, Jens Peter~Reus Christensen, and Paul Ressel.
\newblock {\em Harmonic analysis on semigroups}, volume 100 of {\em Graduate
  Texts in Mathematics}.
\newblock Springer-Verlag, New York, 1984.
\newblock Theory of positive definite and related functions.

\bibitem[Ber06]{Bertoin}
Jean Bertoin.
\newblock {\em Random fragmentation and coagulation processes}, volume 102 of
  {\em Cambridge Studies in Advanced Mathematics}.
\newblock Cambridge University Press, Cambridge, 2006.

\bibitem[Bil09]{Bill2nd}
Patrick Billingsley.
\newblock {\em Convergence of probability measures}, volume 493.
\newblock John Wiley \& Sons, 2009.

\bibitem[Dav68]{Davidson68}
Rollo Davidson.
\newblock Arithmetic and other properties of certain delphic semigroups. i.
\newblock {\em Zeitschrift für Wahrscheinlichkeitstheorie und Verwandte
  Gebiete}, 10(2):120--145, 1968.

\bibitem[Daw93]{D93}
Donald Dawson.
\newblock Measure-valued {M}arkov processes.
\newblock {\em Lecture Notes in Mathematics}, 1541:1--261, 1993.

\bibitem[DG96]{DG96}
Donald Dawson and Andreas Greven.
\newblock Multiple space-time scale analysis for interacting branching models.
\newblock {\em Electron. J. Probab.}, 1:no. 14, 1--84, 1996.

\bibitem[DG19a]{DG18evolution}
Andrej Depperschmidt and Andreas Greven.
\newblock Stochastic evolution of genealogies of spatial populations: state
  description, characterization of dynamics and properties.
\newblock 2019.
\newblock To appear as part of the Lecture Notes Series of the IMS in Singapure
  (Event: "Genealogies of Interacting Particle Systems", 17 Jul - 18 Aug 2017).

\bibitem[DG19b]{ggr_tvF14}
Andrej Depperschmidt and Andreas Greven.
\newblock Tree-valued feller diffusion.
\newblock {\em http://arxiv.org/abs/1904.02044}, submitted April 2019.

\bibitem[DGG]{ggr_MBT}
Andrej Depperschmidt, Andreas Greven, and Max Grieshammer.
\newblock Branching trees {II}: Dynamics and family decomposition.
\newblock In preparation 2019.

\bibitem[DGP11]{DGP11}
Andrej Depperschmidt, Andreas Greven, and Peter Pfaffelhuber.
\newblock Marked metric measure spaces.
\newblock {\em Elect. Comm. in Probab.}, 16:174--188, 2011.

\bibitem[DGP13]{DGP13}
Andrej Depperschmidt, Andreas Greven, and Peter Pfaffelhuber.
\newblock Path-properties of the tree-valued fleming-viot dynamics.
\newblock {\em Electron. J. Probab.}, 18:1--47, 2013.

\bibitem[DMZ08]{DMZ08}
Youri Davydov, Ilya Molchanov, and Sergei Zuyev.
\newblock Strictly stable distributions on convex cones.
\newblock {\em Electron. J. Probab.}, 13:no. 11, 259--321, 2008.

\bibitem[DP91]{DP91}
Donald Dawson and Edwin Perkins.
\newblock Historical processes.
\newblock {\em Memoirs of the AMS}, 93(454), 1991.

\bibitem[DVJ03]{DVJ03}
D.~J. Daley and David Vere-Jones.
\newblock {\em An introduction to the theory of point processes. {V}ol. {I}}.
\newblock Springer-Verlag, New York, second edition, 2003.

\bibitem[EK86]{EK86}
Stewart~N. Ethier and Thomas~G. Kurtz.
\newblock {\em Markov {P}rocesses. {C}haracterization and {C}onvergence}.
\newblock John Wiley \& Sons, 1986.
\newblock Paperback reprint in 2005.

\bibitem[EM16]{EM14}
Steven~N. Evans and Ilya Molchanov.
\newblock The semigroup of metric measure spaces and its infinitely divisible
  probability measures.
\newblock {\em Transactions of the AMS}, 2016.

\bibitem[EPW06]{EPW06}
Steven~N. Evans, Jim Pitman, and Anita Winter.
\newblock Rayleigh processes, real trees, and root growth with re-grafting.
\newblock {\em Probab. Theory Related Fields}, 134(1):81--126, 2006.

\bibitem[Gl{\"o}12]{Gl12}
Patrick Gl{\"o}de.
\newblock {\em Dynamics of genealogical trees for autocatalytic branching
  processes}.
\newblock PhD thesis, Department Mathematik, Erlangen, Germany, 2012.
\newblock http://www.opus.ub.uni-erlangen.de/opus/volltexte/2013/4545/.

\bibitem[GPW09]{GPW09}
Andreas Greven, Peter Pfaffelhuber, and Anita Winter.
\newblock Convergence in distribution of random metric measure spaces
  ({$\Lambda$}-coalescent measure trees).
\newblock {\em Probab. Theory Related Fields}, 145(1-2):285--322, 2009.

\bibitem[GPW13]{GPWmp13}
Andreas Greven, Peter Pfaffelhuber, and Anita Winter.
\newblock Tree-valued resampling dynamics martingale problems and applications.
\newblock {\em Probab. Theory Related Fields}, 155(3-4):789--838, 2013.

\bibitem[GR91]{GR91}
Luis~G. Gorostiza and Sylvie Roelly.
\newblock Some properties of the multitype measure branching process.
\newblock {\em Stochastic Process. Appl.}, 37(2):259--274, 1991.

\bibitem[GR16]{GR16}
Max Grieshammer and Thomas Rippl.
\newblock Partial orders on metric measure spaces.
\newblock {\em submitted May 2016 https://arxiv.org/abs/1605.08989}, 2016.

\bibitem[GRG]{ggr_GeneralBranching}
Andreas Greven, Thomas Rippl, and Patrick Gl\"ode.
\newblock Branching processes — a general concept.
\newblock Submitted July 2018 http://arxiv.org/abs/1807.01921.

\bibitem[Gri17]{MAX_Griesshammer}
Max Grieshammer.
\newblock {\em Measure representation of genealogical processes and
  applications to Fleming-Viot models}.
\newblock PhD thesis, http://nbn-resolving.de/urn:nbn:de:bvb:29-opus4-85653,
  2017.

\bibitem[Gro99]{Gro99}
Misha Gromov.
\newblock {\em Metric structures for {R}iemannian and non-{R}iemannian spaces},
  volume 152 of {\em Progress in Mathematics}.
\newblock Birkh\"auser Boston, Inc., Boston, MA, 1999.
\newblock Based on the 1981 French original [ MR0682063 (85e:53051)], With
  appendices by M. Katz, P. Pansu and S. Semmes, Translated from the French by
  Sean Michael Bates.

\bibitem[GSW16]{GSW}
Andreas Greven, Rongfeng Sun, and Anita Winter.
\newblock Continuum space limit of the genealogies of interacting fleming-viot
  processes on $\mathbb{Z}^1$.
\newblock {\em ArXive 1508.07169, EJP Vol.21 (2016), paper no. 58,64 pp.},
  2016.

\bibitem[HJ77]{HoffJ76}
J{\o}rgen Hoffmann-J{\o}rgensen.
\newblock {\em Probability in Banach space}.
\newblock Springer, 1977.

\bibitem[Kal83]{Kall83}
Olav Kallenberg.
\newblock {\em Random Measures}.
\newblock Akademie-Verlag/Academic Press, Berlin/London, 1983.

\bibitem[Kal02]{Kall03}
Olav Kallenberg.
\newblock {\em Foundations of modern probability}.
\newblock Probability and its Applications (New York). Springer-Verlag, New
  York, second edition, 2002.

\bibitem[Ken68]{Kendall68}
David~G. Kendall.
\newblock Delphic semi-groups, infinitely divisible regenerative phenomena, and
  the arithmetic of p-functions.
\newblock {\em Zeitschrift für Wahrscheinlichkeitstheorie und Verwandte
  Gebiete}, 9(3):163--195, 1968.

\bibitem[KKO77]{kamae1977}
Teturo Kamae, Ulrich Krengel, and George~L. O'Brien.
\newblock Stochastic inequalities on partially ordered spaces.
\newblock {\em Ann. Probab.}, 5(6):899--912, 12 1977.

\bibitem[Kle07]{Klenke}
Achim Klenke.
\newblock {\em Probability Theory: A Comprehensive Course}.
\newblock Universitext. Springer, 2007.

\bibitem[Kri09]{Krishnan}
Venkateswaran~P. Krishnan.
\newblock A support theorem for the geodesic ray transform on functions.
\newblock {\em J. Fourier Anal. Appl.}, 15(4):515--520, 2009.

\bibitem[LG89]{LG89}
Jean-Fran\c{c}ois Le~Gall.
\newblock Marches al\'eatoires, mouvement brownien et processus de branchement.
\newblock In {\em S\'eminaire de {P}robabilit\'es, {XXIII}}, volume 1372 of
  {\em Lecture Notes in Math.}, pages 258--274. Springer, Berlin, 1989.

\bibitem[LJ91]{LJ91}
Yves Le~Jan.
\newblock Superprocesses and projective limits of branching {M}arkov process.
\newblock {\em Ann. Inst. H. Poincar\'e Probab. Statist.}, 27(1):91--106, 1991.

\bibitem[LMW18]{LMW}
Wolfgang Loehr, Leonid Mytnik, and Anita Winter.
\newblock The aldous chain on cladograms in the diffusion limit.
\newblock {\em Preprint (https://arxiv.org/abs/1805.12057v1)}, 2018.

\bibitem[LR16]{LR15}
Wolfgang L\"ohr and Thomas Rippl.
\newblock Boundedly finite measures: Separation and convergence by an algebra
  of functions.
\newblock Vol. 21, paper No.60,, Electron. Commun. Probab., 2016.

\bibitem[LW18]{LW}
Wolfgang Loehr and Anita Winter.
\newblock Spaces of algebraic measure trees and triangulations of the circle.
\newblock {\em preprint (https://arxiv.org/abs/1811.11734)}, 2018.

\bibitem[Nev86]{Neveu86}
Jacques Neveu.
\newblock Erasing a branching tree.
\newblock {\em Adv. in Appl. Probab.}, (suppl.):101--108, 1986.

\bibitem[NP89]{NP89}
Jacques Neveu and James~W. Pitman.
\newblock The branching process in a {B}rownian excursion.
\newblock In {\em S\'eminaire de {P}robabilit\'es, {XXIII}}, volume 1372 of
  {\em Lecture Notes in Math.}, pages 248--257. Springer, Berlin, 1989.

\bibitem[Par75]{Par75}
Josef Paratharasty.
\newblock {\em Probability on metric spaces}.
\newblock 1975.

\bibitem[SD83]{stoyan1983comparison}
Dietrich Stoyan and Daryl~J Daley.
\newblock Comparison methods for queues and other stochastic models.
\newblock {\em John Wiley \& Sons}, 1983.

\bibitem[Str65]{Str65}
Volker Strassen.
\newblock The existence of probability measures with given marginals.
\newblock {\em Ann. Math. Statist.}, 36:423--439, 1965.

\end{thebibliography}
\bibliographystyle{alpha}

\paragraph{Acknowledgement}
We thank two anonymous referees for the careful reading and many suggestions helping to improve the manuscript.

\end{document}